\crefname{section}{Sect.}{Sect. }
\Crefname{section}{\textsection}{\textsection}
\numberwithin{equation}{section}
\theoremstyle{plain}
\newtheorem{theorem}{Theorem}[section]
\newtheorem{theoremletter}{Theorem}
\newtheorem{lemma}[theorem]{Lemma}
\newtheorem{lemmaletter}[theoremletter]{Lemma}
\newtheorem{corollary}[theorem]{Corollary}
\theoremstyle{remark}
\newtheorem{remark}[theorem]{Remark}
\newtheorem{claim}{Claim}
\theoremstyle{definition}
\newtheorem{definition}[theorem]{Definition}
\def\Xint#1{\mathchoice
{\XXint\displaystyle\textstyle{#1}}%
{\XXint\textstyle\scriptstyle{#1}}%
{\XXint\scriptstyle\scriptscriptstyle{#1}}%
{\XXint\scriptscriptstyle\scriptscriptstyle{#1}}%
\!\int}
\def\XXint#1#2#3{{\setbox0=\hbox{$#1{#2#3}{\int}$ }
\vcenter{\hbox{$#2#3$ }}\kern-.587  \wd0}}
\def\dashint{\Xint-}
\newcommand{\leqnomode}{\tagsleft@true}
\newcommand{\reqnomode}{\tagsleft@false}
\title[Quasilinear elliptic equations with Orlicz growth]{Finite energy solutions of quasilinear elliptic equations with Orlicz growth}
\author[E. da Silva]{Estevan Luiz da Silva \orcidlink{0009-0005-8039-1084}} 
\author[J. M.\ do \'O]{Jo\~ao Marcos do \'O*\orcidlink{0000-0001-7039-4365}}
\address[E. da Silva]{Department of Mathematics,
	Federal University of Pernambuco
	\newline\indent
	50740-540, Recife - PE, Brazil}
\email{\href{mailto:estevan.luiz@ufpe.br}{estevan.luiz@ufpe.br}}
\address[J.M. do \'O]{Department of Mathematics,
Federal University of Para\'{\i}ba
\newline\indent
58051-900, Jo\~ao Pessoa-PB, Brazil}
\email{\href{mailto:jmbo@mat.ufpb.br}{jmbo@mat.ufpb.br}}
\thanks{* Corresponding author.}
\subjclass[2000]{35J60, 35B09, 35B45, 35J92, 35A23, 35J62, 35C45}
\keywords{Nonlinear elliptic equations, Wolff potentials, $p$-Laplacian, Quasilinear equations, Pointwise estimates}
\begin{document}

%
%
\begin{abstract} 
We present a sufficient condition, expressed in terms of Wolff potentials, for the existence of a finite energy solution to the measure data $(p,q)$-Laplacian equation with a “sublinear growth” rate. Furthermore, we prove that such a solution is minimal. Additionally, a lower bound of a suitably generalized Wolff-type potential is necessary for the existence of a solution, even if the energy is not finite. Our main tools include integral inequalities closely associated with $(p,q)$-Laplacian equations with measure data and pointwise potential estimates,  which are crucial for establishing the existence of solutions to this type of problem.  This method also enables us to address other nonlinear elliptic problems involving a general class of quasilinear operators.
\end{abstract}

\maketitle



%
%

\section{Introduction}  

This paper aims to establish sufficient conditions for the existence of finite energy solutions to quasilinear elliptic equations of the specified type:
\begin{equation}\label{equation nonstandard}\tag{$P$}
    -\mathrm{div}\bigg(\frac{g(|\nabla u|)}{|\nabla u|}\nabla u\bigg)=\sigma \, g(u^{\gamma}) \quad\mbox{in}\quad \mathds{R}^n,
 \end{equation}
where $n\geq 3$ and $\sigma$ belongs to the set of all nonnegative Radon measures on $\mathds{R}^n$, denoted here by $ M^+(\mathds{R}^n)$.  
A typical example of the nonlinear term we consider is:
\begin{equation}\label{our g}\tag{$A_1$}
    g(t)=t^{p-1}+t^{q-1},
\end{equation}
for  $1 < p < q <\infty$, and $\gamma$ satisfying,
 \begin{equation}\label{condition gamma}\tag{$A_2$}
0<\gamma<\min\left\{\frac{p-1}{q-1},\frac{1}{q-p}\right\}.
\end{equation}
In particular, $\gamma<1$ describes exactly the situation of ``sublinear growth''. 
For this nonlinearity in \eqref{our g},  Eq.~\eqref{equation nonstandard} becomes
\begin{equation}\label{pq-problem}
    -\big(\Delta_p u + \Delta_q u\big)=\sigma\,\big(u^{\gamma(p-1)}+u^{\gamma(q-1)}\big) \quad\mbox{in}\quad \mathds{R}^n,
\end{equation}
where $\Delta_pu=\mathrm{div}(|\nabla u|^{p-2}\nabla u)$ is the usual $p$-Laplace operator. 
An important characteristic of \eqref{pq-problem} is the presence of two differential operators with different growth rates. The problem integrates the effects of a general nonlinearity and an unbalanced operator, resulting in a complex interplay between these elements. This combination presents distinct challenges in both analysis and solution methodologies. 

If $p=q$ we have $g(t)=t^{p-1}$ and Eq.~\eqref{equation nonstandard} simplifies to:
\begin{equation}\label{equation standard}
       -\Delta_p u=\sigma \, u^{\gamma(p-1)} \quad\mbox{in}\quad \mathds{R}^n, 
\end{equation} 
with $0<\gamma<1$. We emphasize that for additional examples of quasilinear operators, we recommend referring to, for instance, \cite{MR1691019, MR1433028}.

Our approach employs tools of Orlicz-Sobolev Spaces. For more details, see Section~\ref{preliminaries orlicz}.
Let $G(t)=\int_0^t g(s)\,\mathrm{d}s=t^p/p+t^q/q$, it is known that $G$ is a $N$-function. 
Associated with $G$, consider the Orlicz space $L^G(\mathds{R}^n)$. We are interested in finite
energy solutions $u\in \mathcal{D}^{1,G}(\mathds{R}^n)$ to \eqref{equation nonstandard}, where $\mathcal{D}^{1,G}(\mathds{R}^n)$ is the homogeneous Orlicz-Sobolev space defined as the space of all functions $u\in L_{\mathrm{loc}}^G(\mathds{R}^n)$ that admit weak
derivatives $\partial_k u\in L^G(\mathds{R}^n)$ for $k=1,\ldots,n$.

Setting $f(t)=g(t^{\gamma})$ and $F(t)=\int_0^t f(s)\,\mathrm{d} s$ for $t\geq 0$, a nonnegative function $u$ is called a finite energy solution to \eqref{equation nonstandard} if $u\in \mathcal{D}^{1,G}(\mathds{R}^n)\cap L^F(\mathds{R}^n,\mathrm{d}\sigma)$, and it satisfies \eqref{equation nonstandard} weakly, that is,
\begin{equation*}
    \int_{\mathds{R}^n}\frac{g(|\nabla u|)}{|\nabla u|}\nabla u\cdot \nabla\varphi\, \mathrm{d} x = \int_{\mathds{R}^n}f(u)\varphi \,\mathrm{d}\sigma \quad \forall \varphi\in C_c^{\infty}(\mathds{R}^n).
\end{equation*}
Note that $u$ is a finite energy solution to \eqref{equation nonstandard} if and only if $u\in \mathcal{D}^{1,G}(\mathds{R}^n)$ and it is a  critical point of the functional
\begin{equation*}
    J(v)=\int_{\mathds{R}^n}G(|\nabla v|)\,\mathrm{d} x - \int_{\mathds{R}^n} F(v)\,\mathrm{d}\sigma, \quad v\in \mathcal{D}^{1,G}(\mathds{R}^n).
\end{equation*}
For the precise definitions of the Orlicz-Sobolev spaces considered here, see Section~\ref{preliminaries orlicz}. 
We will look for positive finite energy solutions.

{\it Some challenges} arise naturally in studying quasilinear equations of the form \eqref{equation nonstandard}. Since the equation does not assume homogeneity, i.e., \( g(|\lambda \xi|) = |\lambda|^{p-1} g(|\xi|) \), the computations are more complicated; in particular, our class of solutions is not invariant under scalar multiplication. When \(\sigma\) is the null measure, the regularity of solutions to \eqref{equation nonstandard}, i.e., critical points of \( J \), is known only for the ratio \( q/p \) sufficiently close to \( 1 \) depending on \( n \) \cite{MR0969900, MR1094446}. This, together with the lack of homogeneity, presents difficulties in showing \textit{a priori} estimates (local boundedness of the function) in the general case \( 1 < p < q < \infty \) for solutions to \eqref{equation nonstandard}. Our approach overcomes these problems by using elements of nonlinear potential theory. To be precise, we consider the potential of
Wolff-type $\mathbf{W}_{G}\sigma$ defined by
\begin{equation}\label{definition wolff}
    \mathbf{W}_{G}\sigma(x)=\int_0^{\infty} g^{-1}\bigg(\frac{\sigma(B(x,t))}{t^{n-1}}\bigg)\,\mathrm{d}t,
\end{equation}
 where $\sigma\in {M}^+(\mathds{R}^n)$ and $B(x,t)$ is the open ball of radius $t>0$ centered at $x\in \mathds{R}^n$. Observe that $\mathbf{W}_G\sigma$ may be infinity. When $p=q$, note that $\mathbf{W}_{G}\sigma$ becomes the so-called Wolff potential
\begin{equation}\label{wolff potential standard}
    \mathbf{W}_p\sigma(x):=\mathbf{W}_{G}\sigma(x)= \int_{0}^{\infty}\bigg(\frac{\sigma(B(x,t))}{t^{n-1}}\bigg)^{\frac{1}{p-1}}\,\mathrm{d} t=\int_{0}^{\infty}\bigg(\frac{\sigma(B(x,t))}{t^{n-p}}\bigg)^{\frac{1}{p-1}}\,\frac{\mathrm{d} t}{t}.
\end{equation}

{\it The proof strategy} to obtain solutions for \eqref{equation nonstandard} can be outlined as follows. Initially, we examine, in a general framework, the following integral equation involving the Wolff potential
\begin{equation}\label{wolff integral equation}\tag{$S$}
    u=\mathbf{W}_{G}\big(f(u)\mathrm{d}\sigma\big) \quad \mbox{in}\quad \mathds{R}^n
\end{equation}
where $f(t)=g(t^\gamma)$, and $u$ is a nonnegative $\sigma$-measurable function which belongs to $L_{\mathrm{loc}}^{f}(\mathds{R}^n, \mathrm{d}\sigma)$. 
 Next, starting from a suitable function, an iterative method is employed to prove the existence of solutions to  \eqref{wolff integral equation}. 
 We use the method of successive approximations to complete the argument for the existence of solutions to \eqref{equation nonstandard} (finite energy). In this context,  solutions to \eqref{wolff integral equation} act as an upper barrier for the solution to \eqref{equation nonstandard}, allowing us to control the successive approximations. As we will see, no one additional relation of $p$ and $q$ will be imposed, much less on the ratio $q/p$. Our results are new even for nonnegative functions $\sigma\in L_{\mathrm{loc}}^1(\mathds{R}^n)$, here $\mathrm{d}\sigma =\sigma\,\mathrm{d} x$.

In the context of quasilinear elliptic equations with measure data, it is natural to try to relate \eqref{equation nonstandard} to \eqref{wolff integral equation} for $g$ given by \eqref{our g}. In the case $p=q$, Wolff potentials appeared in the notable works of T. Kilpel\"{a}inen and J. Mal\'{y} \cite{MR1264000, MR1205885}. Indeed, from \cite[Corollary~4.13]{MR1264000}, there exists a constant $K=K(n,p)\geq 1$ such that it holds
\begin{equation}\label{Kilpelainen-Maly estimates}
K^{-1}\mathbf{W}_{p} \mu(x) \leq u(x) \leq K\,\mathbf{W}_{p}\mu(x) \quad \forall x\in \mathds{R}^n,
\end{equation}
provided $u$ is a nonnegative solution in the potential-theoretic sense of
\begin{equation*}
\left\{
\begin{aligned}
    -\Delta_p u&= \mu, \quad \mbox{in}\quad \mathds{R}^n, \\
  \inf_{\mathds{R}^n}u&=0.
\end{aligned}
     \right.
\end{equation*}
A type of Orlicz counterpart of this result with nonnegative measure $\mu$
was established with the potential \eqref{definition wolff} in \cite{MR1975101,MR4803728}.

Using estimates from \eqref{Kilpelainen-Maly estimates}, C. Dat and I. Verbistky in \cite{MR3311903} were able to provide a necessary and sufficient condition in terms of $\mathbf{W}_p\sigma$ to construct a solution to \eqref{equation standard}, which possesses finite energy with respect to the functional $J$  given previously (considering $p=q$).  For related results, see also \cite{MR3556326, MR3567503}.

Inspired by these ideas, we imposed conditions on the Wolff potentials $\mathbf{W}_p\sigma$ and $\mathbf{W}_q\sigma$ to establish a sufficient condition for the existence of solutions with finite energy to \eqref{equation nonstandard}. 
Additionally, a necessary condition for this existence is presented in terms of $\mathbf{W}_G\sigma$.
This perspective provides new insights into the connection between potential estimates and elliptic equations with Orlicz growth.


\subsection{Assumptions} 
We will require that $\sigma\in M^+(\mathds{R}^n)$ satisfies the following conditions 
\begin{equation}\label{cond sufficient to exist}
    \left\{
    \begin{aligned}
    &\mathbf{W}_p\sigma^{\frac{1}{1-\gamma}}, \, \mathbf{W}_q\sigma^{\frac{1}{1-\gamma}} &\in& &L^{F}(\mathds{R}^n,\mathrm{d} \sigma)&,  \\
    & \mathbf{W}_p\sigma^{\frac{p-1}{p-1-\gamma(q-1)}},\, \mathbf{W}_q\sigma^{\frac{p-1}{p-1-\gamma(q-1)}}&\in& &L^{F}(\mathds{R}^n,\mathrm{d} \sigma)&,  \\
    & \mathbf{W}_p\sigma^{\frac{q-1}{q-1-\gamma(p-1)}},\, \mathbf{W}_q\sigma^{\frac{q-1}{q-1-\gamma(p-1)}}&\in&  &L^{F}(\mathds{R}^n,\mathrm{d} \sigma)&.
    \end{aligned}
    \right.
\end{equation}
This condition extends partially the condition stated in \cite{MR3311903}, see Remark~\ref{remark extension of Igor} below.

\subsection{Description of the results}
We can now state our main findings.

\begin{theorem}\label{existencewolffequation}
Let $g$ be given by \eqref{our g}, and let $\sigma\in M^+(\mathds{R}^n)$. The equation \eqref{wolff integral equation} has a nontrivial solution $u\in L^F(\mathds{R}^n,\mathrm{d}\sigma)$ whenever \eqref{cond sufficient to exist} holds.
\end{theorem}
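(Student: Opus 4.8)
\emph{Proof strategy.} The plan is to solve \eqref{wolff integral equation} for $g$ as in \eqref{our g} by a monotone iteration centered on the order‑preserving operator $\mathcal{T}v:=\mathbf{W}_{G}\big(f(v)\,\mathrm{d}\sigma\big)$, acting on nonnegative $\sigma$‑measurable functions: since $f(t)=g(t^{\gamma})$ is nondecreasing and $\mathbf{W}_{G}$ is monotone with respect to the measure, $v_{1}\le v_{2}$ implies $\mathcal{T}v_{1}\le\mathcal{T}v_{2}$, and a nontrivial solution is a nontrivial fixed point of $\mathcal{T}$. The first step is to record the structural estimates for $\mathbf{W}_{G}$ attached to $g(t)=t^{p-1}+t^{q-1}$ that drive everything: the quasi‑subadditivity $\mathbf{W}_{G}(\mu+\nu)\le C\big(\mathbf{W}_{G}\mu+\mathbf{W}_{G}\nu\big)$; the scaling inequalities $\mathbf{W}_{G}(\lambda\mu)\le\lambda^{1/(p-1)}\mathbf{W}_{G}\mu$ for $\lambda\ge1$, $\mathbf{W}_{G}(\lambda\mu)\le\lambda^{1/(q-1)}\mathbf{W}_{G}\mu$ for $0<\lambda\le1$, together with the matching lower bound $\mathbf{W}_{G}(\lambda\mu)\ge\min\{\lambda^{1/(p-1)},\lambda^{1/(q-1)}\}\,\mathbf{W}_{G}\mu$ for all $\lambda>0$ (each a direct consequence of the elementary inequality $g\big(\lambda^{1/(p-1)}t\big)\ge\lambda g(t)\ge g\big(\lambda^{1/(q-1)}t\big)$ for $\lambda\ge1$, and its reverse for $\lambda\le1$); and the two‑sided comparison $\mathbf{W}_{G}\mu\le\min\{\mathbf{W}_{p}\mu,\mathbf{W}_{q}\mu\}$ and $\mathbf{W}_{G}\mu\gtrsim\min\{\mathbf{W}_{p}\mu,\mathbf{W}_{q}\mu\}$ on the relevant scales. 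These are precisely the integral inequalities associated with the $(p,q)$‑Laplacian mentioned in the introduction.

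The heart of the proof is to exhibit an invariant order interval for $\mathcal{T}$ contained in $L^{F}(\mathds{R}^n,\mathrm{d}\sigma)$. Splitting $f(v)=v^{\gamma(p-1)}+v^{\gamma(q-1)}$ and using quasi‑subadditivity together with $\mathbf{W}_{G}\nu\le\min\{\mathbf{W}_{p}\nu,\mathbf{W}_{q}\nu\}$, one bounds $\mathcal{T}v$ by a sum of four classical sublinear Wolff terms of the form $\mathbf{W}_{p}\big(v^{\gamma(p-1)}\mathrm{d}\sigma\big)$, $\mathbf{W}_{q}\big(v^{\gamma(p-1)}\mathrm{d}\sigma\big)$, $\mathbf{W}_{p}\big(v^{\gamma(q-1)}\mathrm{d}\sigma\big)$, $\mathbf{W}_{q}\big(v^{\gamma(q-1)}\mathrm{d}\sigma\big)$. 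Feeding into each the self‑improving estimates of Wolff‑potential theory for $\mathbf{W}_{p}$ and $\mathbf{W}_{q}$ — guided by the fixed‑point heuristics $u=\mathbf{W}_{p}(u^{\gamma(q-1)}\mathrm{d}\sigma)\rightsquigarrow u=(\mathbf{W}_{p}\sigma)^{\frac{p-1}{p-1-\gamma(q-1)}}$ and its companions — produces a bound for $\mathcal{T}v$ in terms of the six potential‑powers appearing in \eqref{cond sufficient to exist}, valid whenever $v$ is already controlled by them. This is exactly where \eqref{condition gamma} is consumed: $\gamma<\frac{p-1}{q-1}$ guarantees $p-1-\gamma(q-1)>0$ (and, trivially, $q-1-\gamma(p-1)>0$), while the second constraint in \eqref{condition gamma} keeps the competing exponents in the admissible range. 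Writing $\Phi$ for the sum of those six profiles, one obtains $\mathcal{T}v\le C\Phi$ whenever $v\le C\Phi$, i.e.\ $C\Phi$ is a supersolution; and $\Phi\in L^{F}(\mathds{R}^n,\mathrm{d}\sigma)$ is exactly hypothesis \eqref{cond sufficient to exist} (equivalently, $\mathcal{T}$ maps a large ball of $L^{F}(\mathds{R}^n,\mathrm{d}\sigma)$ into itself, by the corresponding integral version of these estimates).

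Next I would build a nontrivial subsolution below this barrier. Choose a ball $B_{0}$ with $\sigma(B_{0})>0$ and with $\mathbf{1}_{B_{0}}\mathrm{d}\sigma$ regular enough that $w:=\mathbf{W}_{G}\big(\mathbf{1}_{B_{0}}\mathrm{d}\sigma\big)$ is finite and bounded below by some $\delta_{0}>0$ on $B_{0}$. For $\varepsilon>0$ small, $u_{0}:=\varepsilon\,w$ is a subsolution: from $f(\varepsilon w)\ge\varepsilon^{\gamma(q-1)}f(w)$, the lower scaling bound for $\mathbf{W}_{G}$, and $\mathcal{T}w\ge\mathbf{W}_{G}\big(f(\delta_{0})\mathbf{1}_{B_{0}}\mathrm{d}\sigma\big)\ge c_{1}w$ (with $c_{1}=\min\{f(\delta_{0})^{1/(p-1)},f(\delta_{0})^{1/(q-1)}\}>0$), the inequality $u_{0}\le\mathcal{T}u_{0}$ reduces to $\varepsilon^{\,1-\gamma(q-1)/(p-1)}\le c_{1}$, which is solvable for small $\varepsilon$ precisely because $\gamma<\frac{p-1}{q-1}$ by \eqref{condition gamma}; if necessary one replaces $w$ by $\min\{w,\Phi\}$, or absorbs $u_{1}=\mathcal{T}u_{0}$ into the invariant ball, so that $u_{0}$ lies below the barrier. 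Then the iterates $u_{j+1}:=\mathcal{T}u_{j}$ form a nondecreasing sequence trapped in the invariant region, and passing to the limit $u:=\lim_{j}u_{j}$ — monotone convergence commutes both with $\mathbf{W}_{G}$ and with integration against $\mathrm{d}\sigma$ — gives $u=\mathbf{W}_{G}\big(f(u)\,\mathrm{d}\sigma\big)$ with $0<u_{0}\le u$ and $u\in L^{F}(\mathds{R}^n,\mathrm{d}\sigma)$, which is the claim.

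\textbf{Main obstacle.} The decisive difficulty is the invariance/supersolution estimate of the second paragraph. Because $g$ is not homogeneous, no single power absorbs the nonlinear term, and one must run the sublinear self‑improvement through $\mathbf{W}_{p}$ and $\mathbf{W}_{q}$ \emph{simultaneously}, keeping track at every scale of which branch of $g$ (hence which of the exponents $1/(p-1)$, $1/(q-1)$) is active; this bookkeeping is the source of the six‑fold condition \eqref{cond sufficient to exist} and of its interaction with \eqref{condition gamma}. A secondary, more technical point is to arrange the nontrivial subsolution to lie below the upper barrier, so that the monotone sequence genuinely remains in the invariant region.
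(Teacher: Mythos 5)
Your overall framework --- monotone iteration of $\mathcal{T}v:=\mathbf{W}_G\big(f(v)\,\mathrm{d}\sigma\big)$ from a subsolution, with the Monotone Convergence Theorem to pass to the limit --- matches the paper's. The genuine gap is in the step you yourself call the heart of the proof: you assert that the profile $\Phi$ built from the six potential-powers in \eqref{cond sufficient to exist} furnishes a \emph{pointwise} invariant order interval, i.e.\ that $C\Phi$ is a pointwise supersolution of \eqref{wolff integral equation}. But the self-improvement of Wolff potentials only yields a \emph{lower} bound: Lemma~\ref{estimativelambda} proves $\mathbf{W}_G\big(g((\mathbf{W}_G\sigma)^\alpha)\,\mathrm{d}\sigma\big)\geq\lambda(\mathbf{W}_G\sigma)^{1+\alpha}$, which is precisely what makes the paper's subsolution $u_0=\varepsilon(\mathbf{W}_G\sigma)^{1/(1-\gamma)}$ work. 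The matching pointwise \emph{upper} bound $\mathbf{W}_s\big((\mathbf{W}_s\sigma)^a\,\mathrm{d}\sigma\big)\lesssim(\mathbf{W}_s\sigma)^{1+a/(s-1)}$ is false in general and is not implied by the purely integral hypothesis \eqref{cond sufficient to exist}; Brezis--Kamin type pointwise estimates of that form require a separate capacitary hypothesis (see \cite{MR3556326}), and the paper in fact lists establishing them under a $G$-capacitary condition as an open problem in Section~\ref{Further works}. So there is no pointwise barrier to invoke, and your parenthetical claim that the pointwise invariance is ``equivalent'' to $\mathcal{T}$ mapping a large ball of $L^F$ into itself is where the proof slips: only the modular statement holds.

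The control that actually holds under \eqref{cond sufficient to exist} is modular, not pointwise. The paper's Lemma~\ref{technical lemma orlicz} --- whose engine is the pointwise reduction $\mathbf{W}_s(u^r\,\mathrm{d}\sigma)\le(M_\sigma u^r)^{1/(s-1)}\mathbf{W}_s\sigma$ followed by H\"older's inequality and the $L^{(1+\alpha)/r}(\mathrm{d}\sigma)$-boundedness of the centered maximal operator $M_\sigma$ --- yields
\[
\int_{\mathds{R}^n}F(\mathcal{T}v)\,\mathrm{d}\sigma\;\le\;c\Big(\rho^{\frac{p-1}{q-1}\gamma}+\rho^{\gamma}+\rho^{\frac{q-1}{p-1}\gamma}\Big),\qquad \rho=\int_{\mathds{R}^n}F(v)\,\mathrm{d}\sigma .
\]
Both halves of \eqref{condition gamma} are consumed exactly here: $\gamma<\frac{p-1}{q-1}$ makes $\frac{q-1}{p-1}\gamma<1$ and keeps $r<s-1$ in the inner claim, while $\gamma<\frac{1}{q-p}$ keeps $\alpha>r-1$ so that the maximal-function argument applies to all eight cross terms. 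Because the iterates $u_j$ are nondecreasing, $\rho_j=\int F(u_j)\,\mathrm{d}\sigma$ then satisfies $h(\rho_j)\le 0$ for $h(t)=t-c\big(t^{\frac{p-1}{q-1}\gamma}+t^\gamma+t^{\frac{q-1}{p-1}\gamma}\big)$, whose nonpositivity set is bounded since all three exponents are $<1$; this gives $\sup_j\rho_j<\infty$ and hence $u=\lim_j u_j\in L^F(\mathds{R}^n,\mathrm{d}\sigma)$. If you replace your pointwise barrier with this modular self-bounding argument, the rest of your scheme goes through; the paper's choice $u_0=\varepsilon(\mathbf{W}_G\sigma)^{1/(1-\gamma)}$ is also cleaner than $w=\mathbf{W}_G(\mathbf{1}_{B_0}\mathrm{d}\sigma)$ because $u_0\in L^F(\mathds{R}^n,\mathrm{d}\sigma)$ is immediate from \eqref{cond sufficient to exist}, so no patching of the subsolution is needed.
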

   It is our interest to know whether condition \eqref{cond sufficient to exist} can be shortened in a more uncomplicated condition (see \eqref{cond sufficient to exist idea} below). We will apply the previous theorem to obtain solutions to Eq. \eqref{equation nonstandard}.

In the following theorem, we obtain a minimal solution to \eqref{equation nonstandard}. This means that $u$ is a finite energy solution to \eqref{equation nonstandard}, and for any solution $v$ to \eqref{equation nonstandard} of finite energy, we must have $u\leq v$ pointwise, i.e.  $u(x) \leq v(x) \ \forall x\in \mathds{R}^n$.

\begin{theorem}\label{solution to orlicz-measure eq}
Suppose $1< p<q<n$. Let $g$ be given by \eqref{our g}, and let $\sigma\in M^+(\mathds{R}^n)$ satisfying \eqref{cond sufficient to exist}. Then there exists a nontrivial solution $u\in \mathcal{D}^{1,G}(\mathds{R}^n)\cap L^F(\mathds{R}^n,\mathrm{d}\sigma)$ to \eqref{equation nonstandard}. Furthermore, $u$ is minimal.
For $q\geq n$, \eqref{equation nonstandard} has only the trivial solution $u=0$.
\end{theorem}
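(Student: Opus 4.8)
The plan is to use the solution of the Wolff integral equation produced by Theorem~\ref{existencewolffequation} as a two‑sided global barrier and to build a finite energy solution of \eqref{equation nonstandard} by a monotone Kilpel\"ainen--Mal\'y iteration trapped between two fixed multiples of that barrier. Throughout, let $K=K(n,p,q)\ge1$ be the constant in the Orlicz counterpart of \eqref{Kilpelainen-Maly estimates} for the potential \eqref{definition wolff} (established in \cite{MR1975101,MR4803728}): if $\mu\in M^+(\mathds{R}^n)$ has $\mathbf W_G\mu\not\equiv\infty$, then the $\mathcal A$-superharmonic solution of $-\mathrm{div}\big(g(|\nabla v|)|\nabla v|^{-1}\nabla v\big)=\mu$ normalized by $\inf_{\mathds{R}^n}v=0$ satisfies $K^{-1}\mathbf W_G\mu\le v\le K\mathbf W_G\mu$; denote that solution by $T(z)$ when $\mu=f(z)\,\mathrm d\sigma$, and note that $T$ is monotone by the comparison principle since $f$ is nondecreasing. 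Two elementary scaling facts for $g(t)=t^{p-1}+t^{q-1}$ and $f(t)=t^{\gamma(p-1)}+t^{\gamma(q-1)}$ (with $p\le q$) drive everything: $\mathbf W_G(\lambda\mu)\le\lambda^{1/(p-1)}\mathbf W_G\mu$ and $f(\lambda t)\le\lambda^{\gamma(q-1)}f(t)$ for $\lambda\ge1$, with both inequalities reversed for $0<\lambda\le1$. Let $w\in L^F(\mathds{R}^n,\mathrm d\sigma)$ be the nontrivial solution of \eqref{wolff integral equation} from Theorem~\ref{existencewolffequation}, so $w=\mathbf W_G(f(w)\,\mathrm d\sigma)$, $w$ finite q.e., $\inf_{\mathds{R}^n}w=0$. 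Put $C:=K^{1/(1-\gamma(q-1)/(p-1))}$ --- well defined and $\ge1$ \emph{exactly} because $\gamma<\frac{p-1}{q-1}$ in \eqref{condition gamma} --- and $b:=1/C\le1$. Chaining the two scalings with the fixed‑point identity $w=\mathbf W_G(f(w)\mathrm d\sigma)$ gives $T(Cw)\le K\,C^{\gamma(q-1)/(p-1)}w\le Cw$ and $T(bw)\ge K^{-1}b^{\gamma(q-1)/(p-1)}w\ge bw$; i.e.\ $Cw$ is a supersolution and $bw$ a subsolution of \eqref{equation nonstandard} in the potential‑theoretic sense.

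\textbf{Construction.} I would then iterate $u_0:=bw$, $u_{k+1}:=T(u_k)$. Every step is legitimate because $f(u_k)\,\mathrm d\sigma\le f(Cw)\,\mathrm d\sigma$ and $\mathbf W_G(f(Cw)\mathrm d\sigma)<\infty$ q.e.; the subsolution property gives $u_0\le u_1$, and monotonicity of $T$ with the supersolution property of $Cw$ yield, by induction, $bw=u_0\le u_1\le\cdots\le u_k\le Cw$. Hence $u_k\uparrow u$ with $bw\le u\le Cw$; in particular $u\not\equiv0$ and $u\in L^F(\mathds{R}^n,\mathrm d\sigma)$ since $F$ satisfies a $\Delta_2$-condition. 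For finite energy, test the $k$-th equation with $u_k$ and use $g(t)t\ge pG(t)$ together with the pointwise bound $tf(t)\le(1+\gamma(q-1))F(t)$:
\begin{equation*}
  p\int_{\mathds{R}^n}G(|\nabla u_k|)\,\mathrm d x\le\int_{\mathds{R}^n}u_k\,f(u_{k-1})\,\mathrm d\sigma\le\int_{\mathds{R}^n}(Cw)\,f(Cw)\,\mathrm d\sigma\le c(C,\gamma)\int_{\mathds{R}^n}F(w)\,\mathrm d\sigma<\infty ,
\end{equation*}
uniformly in $k$. Thus $\nabla u_k$ is bounded in $L^G(\mathds{R}^n)$, its weak limit is $\nabla u$ (using $u_k\to u$ in $L^1_{\mathrm{loc}}$), so $u\in\mathcal D^{1,G}(\mathds{R}^n)$ with $\int_{\mathds{R}^n}G(|\nabla u|)\,\mathrm d x<\infty$; and since $f(u_{k-1})\mathrm d\sigma\uparrow f(u)\mathrm d\sigma$, stability of $\mathcal A$-superharmonic solutions under monotone convergence of the data --- reinforced by the uniform energy bound to pass to the limit in the nonlinear term --- gives that $u$ solves \eqref{equation nonstandard}. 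So $u$ is a nontrivial finite energy solution.

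\textbf{Minimality and the case $q\ge n$.} For minimality, let $v$ be any positive finite energy solution; by the strong maximum principle $v>0$, $\inf_{\mathds{R}^n}v=0$, and $v\ge K^{-1}\mathbf W_G(f(v)\,\mathrm d\sigma)$. The scalings give the self‑improvement that $v\ge cw$ implies $v\ge K^{-1}c^{\gamma(q-1)/(p-1)}w$; since $1-\gamma(q-1)/(p-1)>0$, iterating this drives the constant monotonically to the fixed point $b=K^{-1/(1-\gamma(q-1)/(p-1))}$, so that $v\ge bw=u_0$ once one knows $v\ge c_0w$ for \emph{some} $c_0>0$ (obtained from $v>0$, $\sigma\not\equiv0$ and the comparability $v\asymp\mathbf W_G(f(v)\mathrm d\sigma)$). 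Monotonicity of $T$ then gives $u_k=T^k(u_0)\le T^k(v)=v$ for all $k$, hence $u\le v$. When $q\ge n$, the integral $\int^\infty(\mu(\mathds{R}^n)/t^{n-1})^{1/(q-1)}\,\mathrm d t$ diverges for every nonzero $\mu\in M^+(\mathds{R}^n)$, so $\mathbf W_G\mu\equiv\infty$; a nontrivial finite energy solution $u$ would satisfy $u>0$, whence $f(u)\mathrm d\sigma$ is nonzero, and (after checking $\inf_{\mathds{R}^n}u=0$) $u\ge K^{-1}\mathbf W_G(f(u)\mathrm d\sigma)\equiv\infty$, a contradiction, forcing $u\equiv0$.

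\textbf{Main obstacle.} Because the operator is not homogeneous, the constant $K>1$ in the pointwise potential estimate cannot be normalized away, in contrast with the homogeneous case treated in \cite{MR3311903}; the crux is the \emph{quantitative} trapping of the whole iteration inside $[bw,Cw]$, which is possible precisely because the sublinearity exponent $1-\gamma(q-1)/(p-1)$ is positive --- i.e.\ $\gamma<\frac{p-1}{q-1}$ --- while the companion bound $\gamma<\frac1{q-p}$ enters through Theorem~\ref{existencewolffequation}. Inside the minimality part, the delicate step is the non‑degeneracy estimate $v\ge c_0w$ for an arbitrary positive finite energy solution $v$, which seeds the bootstrap; the remaining ingredients are a routine blend of the comparison principle, the two scalings, and standard Orlicz--Sobolev compactness.
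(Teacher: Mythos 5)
Your construction of the solution is a genuine alternative to the paper's. The paper applies Theorem~\ref{existencewolffequation} to the \emph{rescaled} measure $\big((q/p)^{1/(p-1)}K\big)^{q-1}\sigma$ to absorb the Kilpel\"ainen--Mal\'y constant, seeds the iteration with $u_0=\varepsilon(\mathbf W_G\sigma)^{1/(1-\gamma)}$, and uses the rescaled Wolff solution $v$ as an upper barrier; you instead keep $\sigma$, take the Wolff solution $w$, and trap the iterates in $[bw,\,Cw]$ with $C=K^{1/(1-\theta)}$, $\theta=\gamma(q-1)/(p-1)<1$. The fixed-point calculation $K C^{\theta}=C$, $K^{-1}b^{\theta}=b$ is correct, and $Cw$ is indeed a supersolution of \eqref{wolff integral equation} (since $\mathbf W_G(f(Cw)\mathrm d\sigma)\le C^{\theta}w\le Cw$), so Lemma~\ref{wolff inequality lemma} applies to $Cw$ and licenses every step of the iteration via Lemma~\ref{existence in D1G orlicz} and Corollary~\ref{maly's result pratical}. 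The existence half of the theorem goes through with your bracketing, and it is arguably cleaner than the paper's: the contraction $\theta<1$ is laid bare, and the rescaling of the measure is dispensed with.

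The minimality half, however, has a genuine gap. Your bootstrap $v\ge c\,w\ \Rightarrow\ v\ge K^{-1}c^{\theta}w$ correctly drives $c$ upward to the fixed point $b$, \emph{but only once a seed $v\ge c_0 w$ with $c_0>0$ is in hand}, and you have not produced one. The comparability $v\asymp\mathbf W_G(f(v)\mathrm d\sigma)$ and positivity of $v$ do not yield it: matching $\mathbf W_G(f(v)\mathrm d\sigma)$ against $w=\mathbf W_G(f(w)\mathrm d\sigma)$ from below requires $f(v)\ge c_1 f(w)$ $\sigma$-a.e., which after unwinding $f$ is equivalent to a lower bound $v\ge c_1' w$ — exactly what you are trying to seed. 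Theorem~\ref{estimativainferior orlicz} gives $v\ge C'(\mathbf W_G\sigma)^{1/(1-\gamma)}$ and also $w\ge C''(\mathbf W_G\sigma)^{1/(1-\gamma)}$, but these run in the \emph{same} direction and do not convert into $v\ge c_0 w$, because $w$ need not be bounded above by a multiple of $(\mathbf W_G\sigma)^{1/(1-\gamma)}$. The paper avoids this precisely by choosing $u_0=\varepsilon(\mathbf W_G\sigma)^{1/(1-\gamma)}$, so that the lower bound from Theorem~\ref{estimativainferior orlicz} seeds $u_0\le v$ directly. The simplest repair to your argument is to start the iteration from $u_0=\varepsilon(\mathbf W_G\sigma)^{1/(1-\gamma)}$ while still using $Cw$ as the upper barrier (choosing $\varepsilon$ small enough that $u_0\le Cw$, which follows from Theorem~\ref{estimativainferior orlicz} applied to $w$); then both existence and minimality go through and you retain the cleanliness of your bracketing. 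A minor further remark: in the case $q\ge n$ your appeal to $u\ge K^{-1}\mathbf W_G(f(u)\mathrm d\sigma)$ presupposes $\inf_{\mathds R^n}u=0$, which is not automatic when constants lie in $\mathcal D^{1,G}(\mathds R^n)$; the paper's Remark~\ref{measure and capacity orlicz} (vanishing $G$-capacity forces the Riesz measure to vanish, then Liouville gives $u$ constant) is the argument that actually closes that case.
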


\subsection{Related literature}
Problems involving elliptic operators governed by Orlicz-Sobolev spaces and general $p,q$-growth have been systematically investigated in the literature, particularly those where the right-hand side is a Radon measure. This line of research began with the papers \cite{MR1163440, MR1025884}, which addressed operators modeled on the $p$-Laplacian ($p=q$). We refer to several contributions on this topic \cite{MR1975101, MR2352517, MR3712027, MR3347481, MR3720844, MR4803728} and the references therein. In this context, potential estimates are well-established and precise tools for analyzing these types of problems. In \cite{MR4803728}, I. Chlebicka, F. Giannetti, and A. Zatorska-Goldstein established sharp pointwise bounds expressed in terms of \eqref{definition wolff} for a broad class of solutions to problems with Orlicz growth. They also derived powerful corollaries, providing regularity results for the local behavior of solutions, particularly when the measure data satisfies conditions in the relevant scales of generalized Lorentz, Marcinkiewicz, or Morrey types. Our method relies on these potential estimates in a specific form, as seen on the right side of \eqref{equation nonstandard}. This work aims to motivate the study of equations like \eqref{equation nonstandard} by linking integral equations \eqref{wolff integral equation} with the technique of Wolff-type potentials $\mathbf{W}_G\sigma$, following ideas from \cite{MR3311903}. We also note a different approach from ours, the notion of SOLA (Solutions Obtained as Limits of Approximations), introduced in \cite{MR1025884}. This method is effective when the Radon measure on the right side is bounded, providing known local upper bounds for the eventual solutions.  We mention that the authors have previously employed this approach to investigate other types of problems. For example, in \cite{MR4711145}, we applied this methodology to the study of quasilinear Lane-Emden type systems with sub-natural growth terms, and in \cite{artigo2}, we extended it to the analysis of Hessian Lane-Emden type systems involving measures with sub-natural growth terms.



\subsection{Outline of the paper} 
In Section~\ref{preliminaries orlicz}, we compile fundamental results from nonlinear potential theory in Orlicz-Sobolev spaces. Section~\ref{Wolff potentials section orlicz} establishes the framework for proving Theorem~\ref{existencewolffequation}. In Section~\ref{applications orlicz}, we present a detailed proof of Theorem~\ref{solution to orlicz-measure eq}, along with some preliminary results and remarks. Section~\ref{Potential estimates App} provides proofs of specific potential estimates used in this work, utilizing a Harnack-type inequality. Finally, Section~\ref{Further works} suggests several potential future directions for addressing questions related to quasilinear problems with Orlicz growth, such as \cref{equation nonstandard}.

\subsection{Notations}
\begin{itemize}
    \item $\Omega$ is a domain in $\mathds{R}^n$.
    \item As usual, we use the letters $c$, $\tilde{c}$, $C$, and $\tilde{C}$, with or without subscripts, to denote different constants.
    \item $\chi_E$ denotes the characteristic function of a set $E$.
    \item ${M}^+(\Omega)$ denotes the set of all nonnegative Radon measures $\sigma$ defined on $\Omega$. Often, we use the Greek letters $\mu$ and $\omega$ to denote Radon measures.
    \item $C(\Omega)$ denotes the set of all continuous functions on $\Omega$.
    \item $C_c^{\infty}(\Omega)$ denotes the set of all infinitely differentiable functions with compact support in $\Omega$.
    \item $L^0(\Omega, \mathrm{d}\mu)$ denotes the set of measurable functions on $\Omega$ with respect to $\mu \in {M}^+(\Omega)$. If $\mu$ is the Lebesgue measure, we write $L^0(\Omega)$.
    \item $L^s(\Omega, \mathrm{d}\mu)$ denotes the local $L^s$ space with respect to $\mu \in {M}^+(\Omega)$, $s > 0$. If $\mu$ is the Lebesgue measure, we write $L_{\mathrm{loc}}^s(\Omega)$.
    \item We denote by $\sigma(E) = \int_E \mathrm{d}\sigma$ the measure of any $\sigma$-measurable subset $E$ of $\Omega$. When $\sigma$ is the Lebesgue measure, we write $|E| = \sigma(E)$.
\end{itemize}

\section{Preliminaries}\label{preliminaries orlicz} 
For an overview of Orlicz space theory, we refer to the books  \cite{MR2424078, MR0126722, MR1113700, MR3931352} and references therein. Let us remark that some of their references deal with generalized Orlicz growth.
   \subsection{Young Functions}
\begin{definition}[Young function]
    A function $G:[0,\infty)\to [0,\infty)$ is said to be a Young function if $G$ is convex, strictly increasing and satisfies $G(0)=0$.
\end{definition}
\begin{definition}[$N$-functions and its Conjugate]
 A Young function $G:[0,\infty)\to [0,\infty)$ is an $N$-function if
\begin{equation*}
    \lim_{t\to 0^+}\frac{G(t)}{t}=0 \quad \mbox{and}\quad \lim_{t\to\infty}\frac{G(t)}{t}=\infty.
\end{equation*}
The function $G^{\ast}(s):=\sup_{t>0}\{s t - G(t)\}$, for $s\in [0,\infty)$, is called the complementary function of $G$.
\end{definition}
Equivalently, we may define $G^{\ast}$ by
\begin{equation*}
    G^{\ast}(s)=\int_0^sg^{-1}(r)\,\mathrm{d}r, \quad s\geq 0,
\end{equation*}
where $g=G'$. Clearly, by definition, it holds 
\begin{equation}\label{young inequality}
    t s\leq G(t)+G^{\ast}(s) \quad \forall t,s\geq 0.
\end{equation}
Equality occurs in \eqref{young inequality}  if and only if either $t=g^{-1}(s)$ or $s=g(t)$. In addition, it holds $(G^{\ast})^{\ast}(t)=G(t)$ for all $t\geq0$.

Unless otherwise stated, we assume that all $N$-functions $G$ in this work belong to $C^2(0,\infty)$ where $g=G'$ satisfies
\begin{equation}\label{assumption on g}
  p-1\leq \frac{t g'(t)}{g(t)}\leq q-1 \quad\forall t>0.
\end{equation}
for some $1<p\leq q<\infty$. In particular, we have
\begin{equation}\label{data condition elementary}
    p\leq \frac{t g(t)}{G(t)}\leq q \quad\forall t>0,
\end{equation}
Indeed, by \eqref{assumption on g}, $t\mapsto g(t)/t^{p-1}$ is a nondecreasing function and $t\mapsto g(t)/t^{q-1}$ is a nonincreasing function. Consequently, for all $t>0$
\begin{equation*}
    \begin{aligned}
        G(t)=&\int_0^t g(s)\,\mathrm{d} s=\int_0^t s^{p-1}\frac{g(s)}{s^{p-1}}\,\mathrm{d} s\leq \frac{t g'(t)}{p},\\
        G(t)=&\int_0^t g(s)\,\mathrm{d} s=\int_0^t s^{q-1}\frac{g(s)}{s^{q-1}}\,\mathrm{d} s\geq \frac{t g'(t)}{q}.
    \end{aligned}
\end{equation*}

 In customary terminology, condition~\eqref{data condition elementary} is known as \textit{$\Delta_2$ and $\nabla_2$ condition}. A typical example of a function that satisfies \eqref{data condition elementary} is our model $G(t)=t^p/p + t^q/q$, with $g(t)=t^{p-1}+t^{q-1}$, where $1<p\leq q<\infty$.
The inequalities \eqref{data condition elementary} have some basic applications. In the next lemma, we emphasize some of that, where we omit the easy proof which can be found, e.g., in \cite[Section~2]{MR1975101}, or \cite[Lemma~2.10]{MR4258773}.
\begin{lemmaletter}
    Suppose $g$ satisfies \eqref{assumption on g}. Then \eqref{data condition elementary} holds. In particular, $t\mapsto G(t)/t^p$ is a nondecreasing function, and $t\mapsto G(t)/t^q$ is a nonincreasing function, for $t>0$. Moreover, 

\begin{flushleft}
{$\mathbf {(i)}$} for all $\alpha \geq 0$ and $t>0$,
\begin{align}
     \min\{\alpha^{p-1},\alpha^{q-1}\}g(t)&\leq g(\alpha t) \leq\max\{\alpha^{p-1},\alpha^{q-1}\}g(t),\label{estimate on g} \\
         \min\{\alpha^{p},\alpha^{q}\}G(t)&\leq G(\alpha t) \leq\max\{\alpha^{p},\alpha^{q}\}G(t), \label{estimate on G} \\
         \left(\frac{p}{q}\right)^{\frac{1}{p-1}}\min\{\alpha^{\frac{1}{p-1}},\alpha^{\frac{1}{q-1}}\}g^{-1}(t)&\leq g^{-1}(\alpha t)\leq \left(\frac{q}{p}\right)^{\frac{1}{p-1}}\max\{\alpha^{\frac{1}{p-1}},\alpha^{\frac{1}{q-1}}\}g^{-1}(t) ,\label{estimate on g-1}\\
         \min\{\alpha^{\frac{p}{p-1}},\alpha^{\frac{q}{q-1}}\}G^{\ast}(t)&\leq G^{\ast}(\alpha t) \leq \max\{\alpha^{\frac{p}{p-1}},\alpha^{\frac{q}{q-1}}\}G^{\ast}(t);\label{estimate on G ast}
\end{align}
 \\
{$\mathbf {(ii)}$}   for all $t>0$, setting  ${c}=g^{-1}(1)$, it holds
\begin{equation}\label{estimate g sum}
         g^{-1}(t) \leq {c}\bigg(\frac{q}{p}\bigg)^{\frac{1}{p-1}}\big(t^{\frac{1}{p-1}}+t^{\frac{1}{q-1}}\big),
\end{equation}
{$\mathbf{(iii)}$} for all $t>0$, it holds
\begin{equation}\label{relation G and tilde G}
    c_{1}\,G(t)\leq G^{\ast}(g(t)))\leq c_2\, G(t),
\end{equation}
for some positive constants $c_1,\, c_2$ depending only on $p$ and $q$.
  \end{flushleft}
  \end{lemmaletter}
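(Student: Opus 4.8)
\emph{Proof strategy.} The whole statement is an exercise in squeezing the single structural hypothesis \eqref{assumption on g} into logarithmic–derivative bounds for each of $g$, $G$, $g^{-1}$, $G^{\ast}$ and then integrating. First I would observe that \eqref{assumption on g} says precisely that $t\mapsto g(t)/t^{p-1}$ is nondecreasing and $t\mapsto g(t)/t^{q-1}$ is nonincreasing, since $\tfrac{d}{dt}\log\!\big(g(t)/t^{p-1}\big)=\tfrac1t\big(\tfrac{tg'(t)}{g(t)}-(p-1)\big)\ge 0$ and likewise for the $q$-ratio. Writing $G(t)=\int_0^t \tfrac{g(s)}{s^{p-1}}\,s^{p-1}\,\mathrm ds$ and bounding $g(s)/s^{p-1}\le g(t)/t^{p-1}$ for $s\le t$ (respectively $g(s)/s^{q-1}\ge g(t)/t^{q-1}$) gives $\tfrac{tg(t)}{q}\le G(t)\le \tfrac{tg(t)}{p}$, i.e.\ \eqref{data condition elementary}; differentiating $G(t)/t^{p}$ and $G(t)/t^{q}$ and inserting \eqref{data condition elementary} then yields that $G(t)/t^{p}$ is nondecreasing and $G(t)/t^{q}$ is nonincreasing. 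This already disposes of the ``in particular'' clause.

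For part $\mathbf{(i)}$, the estimates \eqref{estimate on g} and \eqref{estimate on G} drop out of these monotonicities: comparing the monotone ratio of $g$ at the points $t$ and $\alpha t$ gives, for $\alpha\ge 1$, $g(\alpha t)\ge\alpha^{p-1}g(t)$ and $g(\alpha t)\le\alpha^{q-1}g(t)$, with both inequalities reversing for $0<\alpha\le 1$ and the case $\alpha=0$ trivial; taking $\min$ and $\max$ of $\{\alpha^{p-1},\alpha^{q-1}\}$ over the two regimes yields \eqref{estimate on g}, and the identical computation with $G(t)/t^{p}$, $G(t)/t^{q}$ yields \eqref{estimate on G}. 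To reach the $g^{-1}$ and $G^{\ast}$ estimates I would transport the structure across the inversion and the Legendre transform. Since \eqref{assumption on g} forces $g>0$ and $g'>0$ on $(0,\infty)$, $g$ is an increasing $C^{1}$ bijection of $(0,\infty)$, and putting $t=g^{-1}(s)$ one computes $\tfrac{s(g^{-1})'(s)}{g^{-1}(s)}=\tfrac{g(t)}{tg'(t)}\in[\tfrac1{q-1},\tfrac1{p-1}]$; equivalently, and without differentiating, the substitution $s=g(t)$ turns ``$g(t)/t^{q-1}$ nonincreasing'' into ``$g^{-1}(s)/s^{1/(q-1)}$ nondecreasing'' and ``$g(t)/t^{p-1}$ nondecreasing'' into ``$g^{-1}(s)/s^{1/(p-1)}$ nonincreasing''. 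Re-running the comparison argument gives \eqref{estimate on g-1} (in fact with constant $1$, so the stated factors $(p/q)^{\pm1/(p-1)}$ hold a fortiori); integrating $g^{-1}$ and using these two monotonicities promotes \eqref{data condition elementary} to its $G^{\ast}$-version $\tfrac{q}{q-1}\le \tfrac{sg^{-1}(s)}{G^{\ast}(s)}\le\tfrac{p}{p-1}$, hence ``$G^{\ast}(s)/s^{q/(q-1)}$ nondecreasing'' and ``$G^{\ast}(s)/s^{p/(p-1)}$ nonincreasing'', and the comparison argument once more produces \eqref{estimate on G ast}.

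Part $\mathbf{(iii)}$ is the quickest: the equality case of Young's inequality \eqref{young inequality} at $s=g(t)$ reads $tg(t)=G(t)+G^{\ast}(g(t))$, so $G^{\ast}(g(t))=tg(t)-G(t)$, and feeding in \eqref{data condition elementary} gives $(p-1)G(t)\le G^{\ast}(g(t))\le(q-1)G(t)$, which is \eqref{relation G and tilde G} with $c_{1}=p-1$, $c_{2}=q-1$. Finally part $\mathbf{(ii)}$ is a corollary of \eqref{estimate on g-1}: applying it with base point $1$ and scaling factor $t$ gives $g^{-1}(t)\le c\,(q/p)^{1/(p-1)}\max\{t^{1/(p-1)},t^{1/(q-1)}\}\le c\,(q/p)^{1/(p-1)}\big(t^{1/(p-1)}+t^{1/(q-1)}\big)$ with $c=g^{-1}(1)$, which is \eqref{estimate g sum}.

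All of the steps are routine; the only point that genuinely requires care is the bookkeeping in the transport step, namely keeping straight which of the two exponents controls the upper and which the lower bound in each of the regimes $\alpha\ge 1$ and $0<\alpha\le 1$, and checking that the structural condition is carried in the correct direction through $g\mapsto g^{-1}$ (exponents $p-1,q-1$ becoming $\tfrac1{q-1},\tfrac1{p-1}$) and through $G\mapsto G^{\ast}$ (exponents $p,q$ becoming $\tfrac{q}{q-1},\tfrac{p}{p-1}$), in both cases with the order of the two exponents reversed because $p\le q$.
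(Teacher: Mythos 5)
Your proof is correct. Note that the paper itself does not prove this lemma — it explicitly omits the argument and cites \cite[Section~2]{MR1975101} and \cite[Lemma~2.10]{MR4258773} — so there is no internal proof to compare against; but your route is the standard one for $\Delta_2/\nabla_2$-type lemmas (monotonicity of $g(t)/t^{p-1}$ and $g(t)/t^{q-1}$ from the logarithmic-derivative bound, integration to get \eqref{data condition elementary}, then transport of the two monotone ratios under inversion $g\mapsto g^{-1}$ and the Legendre transform $G\mapsto G^{\ast}$, with \eqref{relation G and tilde G} read off from the equality case of Young's inequality $tg(t)=G(t)+G^{\ast}(g(t))$). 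All the exponent bookkeeping checks out: $p\leq q$ gives $\tfrac1{q-1}\leq\tfrac1{p-1}$ and $\tfrac{q}{q-1}\leq\tfrac{p}{p-1}$, which is exactly what makes the $\min$/$\max$ assignments in \eqref{estimate on g-1} and \eqref{estimate on G ast} come out right in both regimes $\alpha\geq 1$ and $\alpha\leq 1$. One worthwhile observation you make is that the same comparison argument that gives \eqref{estimate on g} and \eqref{estimate on G} with constant $1$ also gives \eqref{estimate on g-1} with constant $1$, so the prefactors $(p/q)^{\pm 1/(p-1)}$ in the paper's statement are not needed — your conclusion is strictly sharper and implies the stated one. (For the record, the paper's inline derivation of \eqref{data condition elementary} just before the lemma contains a typo, $tg'(t)$ where $tg(t)$ is meant; your computation is the correct one.)
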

 
\subsection{Orlicz Spaces}
\begin{definition}\label{Orlicz space}
    The Orlicz space $L^G(\Omega)$ is understood as the set
    \begin{equation*}
        L^G(\Omega):=\left\{u\in L^0(\Omega): \int_{\Omega}G(\lambda|u|)\,\mathrm{d} x<\infty\  \mbox{for some }\lambda>0\right\}.
    \end{equation*}
    $L^G(\Omega)$ is a Banach space with the Luxemburg norm
    \begin{equation*}
        \|u\|_{L^G}:=\inf_{\lambda>0}\left\{\lambda>0: \int_{\Omega}G\Big(\frac{|u|}{\lambda}\Big)\,\mathrm{d} x\leq 1\right\}.
    \end{equation*}
\end{definition}
In view of \eqref{data condition elementary}, by \cite[Lemma~3.1.3]{MR3931352}, we have
\begin{equation*}
     L^G(\Omega)=\left\{u\in L^0(\Omega): \int_{\Omega}G(|u|)\,\mathrm{d} x<\infty\right\}.
\end{equation*}
From \cite[Corollary~3.2.8]{MR3931352}, it holds
\begin{equation}\label{relation norm and modular}
    \|u\|_{L^G}\leq \int_{\Omega}G(|u|)\,\mathrm{d}\sigma +1 \quad \forall u\in  L^G(\Omega).
\end{equation}
Furthermore, putting ${\rho_G(u)}=\int_\Omega G(|u|)\,\mathrm{d} x$, the following lemma establishes the relation between $\rho_G(\cdot)$ and  norm $\|\cdot\|_{L^G}$ \cite[Lemma~3.2.9]{MR3931352}. The function $\rho_G(\cdot)$ is called \textit{modular}.
\begin{lemmaletter}\label{lemma relation norm and modular}
    For all $u\in L^G(\Omega)$, it holds
    \begin{equation*}
        \begin{aligned}
           \min\{{\rho_G(u)}^{\frac{1}{p}},{\rho_G(u)}^{\frac{1}{q}}\}&\leq {\|u\|_{L^G}} \leq \max\{{\rho_G(u)}^{\frac{1}{p}},{\rho_G(u)}^{\frac{1}{q}}\}, \\
         \min\{\|u\|_{L^G}^{p},\|u\|_{L^G}^{q}\}&\leq {\rho_G(u)} \leq \max\{\|u\|_{L^G}^{p},\|u\|_{L^G}^{q}\}. 
        \end{aligned}
    \end{equation*}
\end{lemmaletter}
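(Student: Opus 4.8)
The plan is to derive all four inequalities from the two-sided scaling estimate \eqref{estimate on G} together with the definition of the Luxemburg norm; the $\Delta_2$--$\nabla_2$ hypothesis \eqref{data condition elementary} enters \emph{only} through \eqref{estimate on G}. Throughout write $\lambda=\|u\|_{L^G}$ and $r=\rho_G(u)=\int_{\Omega}G(|u|)\,\mathrm{d}x$. If $u=0$ both sides of every inequality vanish, so assume $u\neq0$; then $0<\lambda<\infty$, and $0<r<\infty$ since $u\in L^G(\Omega)$ and $G$ satisfies the $\Delta_2$ condition. Put $S=\{\mu>0:\rho_G(|u|/\mu)\le 1\}$, so that $\lambda=\inf S$. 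Because $G$ is increasing, $\mu\mapsto\rho_G(|u|/\mu)$ is nonincreasing; hence $S\supseteq(\lambda,\infty)$, and if $0<\mu<\lambda$ then $\mu\notin S$, i.e.\ $\rho_G(|u|/\mu)>1$. No further regularity of $G$ is used.

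I would first prove $\min\{\lambda^{p},\lambda^{q}\}\le r\le\max\{\lambda^{p},\lambda^{q}\}$. For $\mu>\lambda$, apply \eqref{estimate on G} pointwise with $\alpha=\mu$ to $G(|u|)=G(\mu\cdot|u|/\mu)$, giving $G(|u|)\le\max\{\mu^{p},\mu^{q}\}\,G(|u|/\mu)$; integrating over $\Omega$ and using $\mu\in S$ yields $r\le\max\{\mu^{p},\mu^{q}\}$, and letting $\mu\downarrow\lambda$ gives $r\le\max\{\lambda^{p},\lambda^{q}\}$. For the lower bound, for $0<\mu<\lambda$ apply \eqref{estimate on G} with $\alpha=1/\mu$ to $G(|u|/\mu)$, obtaining $G(|u|/\mu)\le\max\{\mu^{-p},\mu^{-q}\}\,G(|u|)$; integrating and using $\rho_G(|u|/\mu)>1$ gives $1<\max\{\mu^{-p},\mu^{-q}\}\,r=r/\min\{\mu^{p},\mu^{q}\}$, i.e.\ $r>\min\{\mu^{p},\mu^{q}\}$, and letting $\mu\uparrow\lambda$ gives $r\ge\min\{\lambda^{p},\lambda^{q}\}$.

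For the remaining pair $\min\{r^{1/p},r^{1/q}\}\le\lambda\le\max\{r^{1/p},r^{1/q}\}$, the elementary fact to record (using $p\le q$) is that the increasing bijection $\mu\mapsto\max\{\mu^{p},\mu^{q}\}$ of $(0,\infty)$ onto itself sends $\min\{r^{1/p},r^{1/q}\}$ to $r$, while $\mu\mapsto\min\{\mu^{p},\mu^{q}\}$ sends $\max\{r^{1/p},r^{1/q}\}$ to $r$ (in each case it equals $\mu^{q}$ for $\mu\ge1$ and $\mu^{p}$ for $\mu\le1$). One can then either invert the inequalities of the previous paragraph algebraically, or repeat the argument directly: if $\min\{\mu^{p},\mu^{q}\}\ge r$ then $\rho_G(|u|/\mu)\le\max\{\mu^{-p},\mu^{-q}\}\,r=r/\min\{\mu^{p},\mu^{q}\}\le1$, so $\mu\in S$ and $\lambda\le\mu$; taking $\mu=\max\{r^{1/p},r^{1/q}\}$ gives $\lambda\le\max\{r^{1/p},r^{1/q}\}$. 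Symmetrically, if $\max\{\mu^{p},\mu^{q}\}<r$ then $\rho_G(|u|/\mu)\ge\min\{\mu^{-p},\mu^{-q}\}\,r=r/\max\{\mu^{p},\mu^{q}\}>1$, so $\mu\notin S$ and $\mu\le\lambda$; letting $\max\{\mu^{p},\mu^{q}\}\uparrow r$, i.e.\ $\mu\uparrow\min\{r^{1/p},r^{1/q}\}$, gives $\lambda\ge\min\{r^{1/p},r^{1/q}\}$.

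The argument is essentially bookkeeping, and I do not expect a genuine obstacle. The only points requiring care are: (i) the Luxemburg norm is an \emph{infimum}, so both bounds on $\lambda$ must be reached through a limit in $\mu$, and the relation of $\lambda$ to $S$ must be extracted purely from the monotonicity of $\mu\mapsto\rho_G(|u|/\mu)$ (so that no continuity of the modular is invoked); and (ii) one must consistently track whether the relevant base ($\mu$ or $r$) is $\ge1$ or $\le1$ in order to identify correctly which exponent, $p$ or $q$, realizes the minimum or maximum in \eqref{estimate on G}. Once \eqref{estimate on G} is granted, these are the entire content of the proof.
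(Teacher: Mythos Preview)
Your proof is correct and complete; the argument via the scaling estimate \eqref{estimate on G} together with the infimum definition of the Luxemburg norm is exactly the standard route to these inequalities. The paper itself does not give a proof but simply cites \cite[Lemma~3.2.9]{MR3931352}, so there is no in-paper argument to compare against; your write-up is essentially what that reference does.
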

The following result \cite[Lemma~3.2.11]{MR3931352} is the generalization of the classical H\"{o}lder's inequality in Lebesgue spaces to Orlicz spaces.
\begin{lemmaletter}\label{Holder-Orlicz}
    For all $u\in L^{G}(\Omega)$ and $v\in L^{G^{\ast}}(\Omega)$, it holds
    \begin{equation*}
        \bigg|\int_{\Omega}u v\,\mathrm{d} x\bigg|\leq 2\|u\|_{L^G}\|v\|_{L^{G^{\ast}}}.
    \end{equation*}
\end{lemmaletter}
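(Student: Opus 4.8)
The plan is to obtain the estimate as an immediate consequence of Young's inequality \eqref{young inequality} together with the defining property of the Luxemburg norm. First I would dispose of the degenerate cases: since $\|\cdot\|_{L^G}$ and $\|\cdot\|_{L^{G^{\ast}}}$ are norms, if $\|u\|_{L^G}=0$ then $u=0$ a.e.\ and both sides of the claimed inequality vanish, and similarly if $\|v\|_{L^{G^{\ast}}}=0$. As $u\in L^G(\Omega)$ and $v\in L^{G^{\ast}}(\Omega)$ force both norms to be finite, we may assume $\lambda:=\|u\|_{L^G}\in(0,\infty)$ and $\mu:=\|v\|_{L^{G^{\ast}}}\in(0,\infty)$.

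The only genuine point is the normalization
\[
\int_\Omega G\!\left(\frac{|u|}{\lambda}\right)\mathrm{d} x\le 1,\qquad \int_\Omega G^{\ast}\!\left(\frac{|v|}{\mu}\right)\mathrm{d} x\le 1 .
\]
To get the first, I would pick $\lambda_k\downarrow\lambda$ with $\int_\Omega G(|u|/\lambda_k)\,\mathrm{d} x\le 1$, which is possible by the definition of the infimum; since $G$ is continuous and nondecreasing, $G(|u|/\lambda_k)\uparrow G(|u|/\lambda)$ pointwise, so the monotone convergence theorem gives the bound. (Equivalently, under the standing $\Delta_2$ and $\nabla_2$ conditions this is essentially the content of Lemma~\ref{lemma relation norm and modular}, which forces $\rho_G(u/\lambda)\le 1$.) The same argument applies to $G^{\ast}$ and $v$.

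Finally I apply \eqref{young inequality} pointwise with $t=|u(x)|/\lambda$ and $s=|v(x)|/\mu$ to get
\[
\frac{|u(x)|\,|v(x)|}{\lambda\mu}\le G\!\left(\frac{|u(x)|}{\lambda}\right)+G^{\ast}\!\left(\frac{|v(x)|}{\mu}\right)\quad\text{for a.e.\ }x\in\Omega ,
\]
and integrating over $\Omega$, the two normalization bounds yield $\tfrac{1}{\lambda\mu}\int_\Omega|uv|\,\mathrm{d} x\le 2$, whence
\[
\left|\int_\Omega uv\,\mathrm{d} x\right|\le \int_\Omega|uv|\,\mathrm{d} x\le 2\lambda\mu = 2\,\|u\|_{L^G}\|v\|_{L^{G^{\ast}}} .
\]
The main (and really the only) obstacle is justifying the normalization step — that the modular evaluated at the Luxemburg norm does not exceed $1$ — which requires a minimal regularity of $G$ (continuity suffices) plus a short limiting argument; everything after that is a one-line computation, and the constant $2$ is forced by adding the two modular bounds.
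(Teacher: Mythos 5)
Your proof is correct and is the standard argument: normalize by the Luxemburg norms, verify via monotone convergence (or, under $\Delta_2$/$\nabla_2$, via Lemma~\ref{lemma relation norm and modular}) that the modular at the norm is at most $1$, apply Young's inequality \eqref{young inequality} pointwise, and integrate. The paper simply cites this lemma from \cite[Lemma~3.2.11]{MR3931352} rather than proving it, and your argument reproduces the textbook proof found there.
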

\begin{remark}
    Under condition \eqref{data condition elementary}, $L^{G}(\Omega)$ is reflexive, separable, and uniformly convex. In light of H\"{o}lder's inequality, the dual space $\big(L^{G}(\Omega)\big)^{\ast}$ coincides with  $L^{G^{\ast}}(\Omega)$ (see \cite[Chapther~2, Section~3]{MR3931352} for more details).
\end{remark}
\begin{definition}
Let  $\{u_j\}\in L^{G}(\Omega)$ be a sequence and let $u\in L^{G}(\Omega)$. We say that $u_j$ converges weakly to $u$ in $L^{G}(\Omega)$ if
\begin{equation*}
    \lim_{j\to \infty}\int_{\Omega}u_jv\,\mathrm{d} x =\int_{\Omega}uv\,\mathrm{d} x \quad\forall v\in L^{G^{\ast}}(\Omega).
\end{equation*}
As usual, we write $u_j\rightharpoonup u$ in $L^G(\Omega)$. The weak convergence of vector-valued  
functions in $L^G(\Omega;\mathds{R}^n)$ has an obvious interpretation regarding the coordinate functions. 
\end{definition}
The following result will be useful in some of our arguments \cite[Theorem~2.1]{MR4274293}
\begin{theoremletter}\label{Lemma weakly convergence}
    Let $\{u_j\}$ be a bounded sequence in $ L^{G}(\Omega)$. If $u=\lim_j u_j$ pointwise in $\Omega$ (almost everywhere), then $u_j\rightharpoonup u$ in $L^G(\Omega)$.
\end{theoremletter}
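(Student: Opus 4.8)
The plan is first to place the pointwise limit inside $L^G(\Omega)$ and then to identify it as the weak limit of a suitable subsequence, arguing by contradiction. Since $\{u_j\}$ is bounded, say $\|u_j\|_{L^G}\le M$ for all $j$, Lemma~\ref{lemma relation norm and modular} gives $\int_\Omega G(|u_j|)\,\mathrm{d}x\le\max\{M^p,M^q\}$; applying Fatou's lemma to $G(|u_j|)\to G(|u|)$ a.e.\ yields $\int_\Omega G(|u|)\,\mathrm{d}x\le\max\{M^p,M^q\}<\infty$, so by the $\Delta_2$-characterization of $L^G(\Omega)$ recorded right after Definition~\ref{Orlicz space} we get $u\in L^G(\Omega)$. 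In particular the assertion ``$u_j\rightharpoonup u$ in $L^G(\Omega)$'' is meaningful, and it is what remains to prove.

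Suppose it failed. Then there would be $\varphi\in L^{G^\ast}(\Omega)$, a number $\varepsilon_0>0$, and a subsequence with $\bigl|\int_\Omega(u_{j_k}-u)\varphi\,\mathrm{d}x\bigr|\ge\varepsilon_0$ for all $k$. Since $L^G(\Omega)$ is reflexive (and separable) and $\{u_{j_k}\}$ is bounded, after passing to a further subsequence (not relabelled) we may assume $u_{j_k}\rightharpoonup v$ in $L^G(\Omega)$ for some $v\in L^G(\Omega)$. The heart of the argument is to show $v=u$ a.e.: once this is known, $\int_\Omega(u_{j_k}-u)\varphi\,\mathrm{d}x\to 0$ along this subsequence, contradicting the choice of $\varphi$ and $\varepsilon_0$, and since the putative failure was arbitrary, $u_j\rightharpoonup u$ would follow.

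To identify $v$, fix a ball $B\subset\Omega$. First note that $\chi_A\in L^{G^\ast}(\Omega)$ for every measurable $A$ with $|A|<\infty$, since $\int_\Omega G^\ast(\chi_A/\lambda)\,\mathrm{d}x=G^\ast(1/\lambda)\,|A|$ and, by \eqref{estimate on G ast}, $G^\ast$ obeys the same doubling-type bounds as $G$; hence $G^\ast(1/\lambda)\to\infty$ as $\lambda\to0^+$, which moreover forces $\|\chi_A\|_{L^{G^\ast}}\to0$ as $|A|\to0$. Given $\delta>0$, Egorov's theorem supplies $A_\delta\subset B$ with $|B\setminus A_\delta|<\delta$ on which $u_{j_k}\to u$ uniformly. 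Testing $u_{j_k}\rightharpoonup v$ against $\chi_{A_\delta}$ gives $\int_{A_\delta}u_{j_k}\,\mathrm{d}x\to\int_{A_\delta}v\,\mathrm{d}x$, while uniform convergence gives $\int_{A_\delta}u_{j_k}\,\mathrm{d}x\to\int_{A_\delta}u\,\mathrm{d}x$; hence $\int_{A_\delta}(v-u)\,\mathrm{d}x=0$. Letting $\delta\to0$ and using absolute continuity of the integral for the $L^1(B)$ functions $u$ and $v$, we obtain $\int_B(v-u)\,\mathrm{d}x=0$ for every ball $B\subset\Omega$, whence $v=u$ a.e.\ in $\Omega$. (Alternatively one can bypass Egorov by verifying uniform integrability of $\{u_{j_k}\}$ on $B$ directly from Lemma~\ref{Holder-Orlicz}, via $\int_A|u_{j_k}|\,\mathrm{d}x\le 2M\,\|\chi_A\|_{L^{G^\ast}}$, and then invoking Vitali's convergence theorem.)

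The step I expect to be the main obstacle is precisely the identification $v=u$: pointwise a.e.\ convergence is not preserved under a duality pairing by itself, so one must inject a compactness-type ingredient — here Egorov's theorem together with the observation that characteristic functions of finite-measure sets are legitimate test functions in $L^{G^\ast}(\Omega)$. The remaining ingredients (the Fatou bound for membership in $L^G(\Omega)$, reflexivity to extract a weakly convergent subsequence, and the subsequence-of-subsequence bookkeeping) are routine.
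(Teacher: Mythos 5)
Your proof is correct. Note, however, that the paper does not contain its own proof of this statement: Theorem~\ref{Lemma weakly convergence} is simply quoted with a citation to \cite[Theorem~2.1]{MR4274293}, so there is no internal argument to compare against; your blind attempt supplies a proof the paper delegates elsewhere.

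As for the argument itself, every step checks out. Fatou's lemma applied to $G(|u_j|)$, combined with the norm--modular comparison of Lemma~\ref{lemma relation norm and modular}, does place $u$ in $L^G(\Omega)$. The contradiction scaffolding is sound: the negation of weak convergence produces one fixed $\varphi\in L^{G^\ast}(\Omega)$ and a subsequence on which $\big|\int(u_{j_k}-u)\varphi\,\mathrm{d} x\big|\ge\varepsilon_0$, and reflexivity of $L^G(\Omega)$ (recorded in the remark after Lemma~\ref{Holder-Orlicz}) lets you extract a weak limit $v$ along a further subsequence. The identification $v=u$ via Egorov is the genuine content, and it works: $\chi_{A_\delta}\in L^{G^\ast}(\Omega)$ because $G^\ast$ is finite-valued, the weak pairing and the uniform convergence give $\int_{A_\delta}(v-u)\,\mathrm{d} x=0$, and since $u,v\in L^G(\Omega)\subset L^1(B)$ on any ball $B$, absolute continuity of the integral lets $\delta\to0$ to obtain $\int_B(v-u)\,\mathrm{d} x=0$, whence $v=u$ a.e.\ by Lebesgue differentiation. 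The alternative route you sketch --- uniform integrability on balls from $\int_A|u_{j_k}|\,\mathrm{d} x\le 2M\|\chi_A\|_{L^{G^\ast}}$ followed by Vitali --- is also valid and perhaps slightly cleaner, since it yields $u_{j_k}\to u$ in $L^1(B)$ directly without invoking Egorov. Either way the subsequence bookkeeping closes the contradiction, so the proof is complete.
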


\subsection{Orlicz-Sobolev spaces}
\begin{definition}
    The Orlicz-Sobolev space $W^{1,G}(\Omega)$ is understood as the set of all functions $u \in L^G(\Omega)$ which admit weak derivatives $\partial_i u \in L^G(\Omega)$ for $i=1,\ldots,n$; that is,
    \begin{equation*}
        W^{1,G}(\Omega)=\{u\in L^G(\Omega): |\nabla u|\in L^{G}(\Omega)\},
    \end{equation*}
    equipped with the norm
    \begin{equation*}
        \|u\|_{W^{1,G}}=\|u\|_{L^G}+\|\nabla u\|_{L^{G}}.
    \end{equation*}
    By $W_0^{1,G}(\Omega)$ we denote the closure of $C_c^{\infty}(\Omega)$ in $W^{1,G}(\Omega)$. As usual, $W_{\mathrm{loc}}^{1,G}(\Omega)$ is the set of all functions $u$ such that $u\in W^{1,G}(U)$ for all open subset $U$ compactly contained in $\Omega$.
\end{definition}
\begin{remark}
    Similar to the preceding remark, assuming \eqref{data condition elementary}, $W^{1,G}(\Omega)$ is a Banach space, separable, uniformly convex, and reflexive (see \cite[Theorem~6.1.4]{MR3931352}). This also holds for  $W_0^{1,G}(\Omega)$.
\end{remark}

Next, we state a \textit{modular} Poincar\'e inequality \cite[Lemma~2.2]{doi:10.1080/03605309108820761}, which will be useful in our results.

\begin{lemmaletter}\label{modular poincare ineq}
Let $B_R$ be a ball with a radius $R$. There exists a constant $c=c(n,p,q)>0$ such that 
\begin{equation*}
    \int_{B_R}G\bigg(\frac{|u|}{R}\bigg)\,\mathrm{d} x\leq c\int_{B_R}G(|\nabla u|)\,\mathrm{d} x \quad \forall u\in W_0^{1,G}(B_R).
\end{equation*}
\end{lemmaletter}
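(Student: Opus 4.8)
\smallskip
\noindent\emph{Proof strategy.} The plan is to reduce the statement to smooth compactly supported functions on the unit ball, where the classical one‑dimensional slicing argument applies. First I would remove the parameter $R$ by scaling: given $u\in W_0^{1,G}(B_R)$, put $v(y):=R^{-1}u(Ry)$, so that $v\in W_0^{1,G}(B_1)$ and $\nabla v(y)=(\nabla u)(Ry)$; after the change of variables $x=Ry$ the inequality $\int_{B_1}G(|v|)\,\mathrm{d}y\le c\int_{B_1}G(|\nabla v|)\,\mathrm{d}y$ becomes exactly $\int_{B_R}G(|u|/R)\,\mathrm{d}x\le c\int_{B_R}G(|\nabla u|)\,\mathrm{d}x$ with the \emph{same} constant, so it suffices to treat $R=1$. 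Next, since $W_0^{1,G}(B_1)$ is the $W^{1,G}$‑closure of $C_c^{\infty}(B_1)$ and $G$ satisfies \eqref{data condition elementary}, norm convergence forces modular convergence (Lemma~\ref{lemma relation norm and modular}), and, using the doubling bound once more, $\rho_G(u_j)\to\rho_G(u)$ and $\rho_G(\nabla u_j)\to\rho_G(\nabla u)$ along an approximating sequence $u_j\in C_c^{\infty}(B_1)$; hence it is enough to prove $\int_{B_1}G(|u|)\,\mathrm{d}x\le c\int_{B_1}G(|\nabla u|)\,\mathrm{d}x$ for $u\in C_c^{\infty}(B_1)$, the general case following by Fatou's lemma on the left and modular continuity on the right.

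For such $u$, extended by zero to $\mathds{R}^n$, write $x=(x_1,x')\in\mathds{R}\times\mathds{R}^{n-1}$. The fundamental theorem of calculus gives $|u(x)|\le\int_{-1}^{1}|\partial_1 u(t,x')|\,\mathrm{d}t=2\cdot\frac{1}{2}\int_{-1}^{1}|\partial_1 u(t,x')|\,\mathrm{d}t$. Since $G$ is convex and nondecreasing, Jensen's inequality over $(-1,1)$ together with \eqref{estimate on G} for $\alpha=2$, that is $G(2s)\le 2^{q}G(s)$, yields $G(|u(x)|)\le 2^{q-1}\int_{-1}^{1}G(|\partial_1 u(t,x')|)\,\mathrm{d}t\le 2^{q-1}\int_{-1}^{1}G(|\nabla u(t,x')|)\,\mathrm{d}t$ (using $|\partial_1 u|\le|\nabla u|$). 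Integrating over $x\in B_1$, invoking Fubini's theorem, and using that the right‑hand side is independent of $x_1$ while the $x_1$‑section of $B_1$ has length at most $2$, one obtains $\int_{B_1}G(|u|)\,\mathrm{d}x\le 2^{q}\int_{B_1}G(|\nabla u|)\,\mathrm{d}x$, since $\nabla u$ is supported in $B_1$. Thus $c=2^{q}$ works for $R=1$, and by the scaling step above the same constant serves for every $R>0$ (so in fact $c$ can be taken to depend on $q$ only).

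The computation itself is just the textbook proof of the Poincar\'e inequality, so the only nontrivial ingredients are structural: the $\Delta_2$‑type growth bound \eqref{estimate on G}, needed to absorb the multiplicative constant produced by Jensen's inequality, and the compatibility of the norm and modular topologies on $W_0^{1,G}(B_1)$, needed for the density reduction. Both are consequences of \eqref{data condition elementary}, so no real obstacle remains; the argument would genuinely fail for $N$‑functions violating the doubling condition, in which case one would have to be content with a norm‑based Poincar\'e inequality instead of a modular one.
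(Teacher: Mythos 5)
The paper quotes this modular Poincar\'e inequality as a known result, citing Lieberman's article (\cite{doi:10.1080/03605309108820761}, Lemma~2.2), and gives no proof of its own, so there is no in-paper argument to measure yours against directly. Your proof is correct and self-contained: the rescaling $v(y)=R^{-1}u(Ry)$ multiplies both sides by the same factor $R^{-n}$, which reduces the problem to $R=1$ with an unchanged constant; for $u\in C_c^\infty(B_1)$ the slicing estimate $|u(x)|\le\int_{-1}^{1}|\partial_1u(t,x')|\,\mathrm{d}t$, together with Jensen's inequality for the average $\frac{1}{2}\int_{-1}^1$ and the doubling bound \eqref{estimate on G} with $\alpha=2$, gives $G(|u(x)|)\le 2^{q-1}\int_{-1}^1 G(|\nabla u(t,x')|)\,\mathrm{d}t$, and Fubini with the section length bound $\le 2$ produces $\int_{B_1}G(|u|)\,\mathrm{d}x\le 2^{q}\int_{B_1}G(|\nabla u|)\,\mathrm{d}x$; density then yields the statement on all of $W_0^{1,G}(B_R)$.

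The one step stated tersely is the modular continuity $\rho_G(\nabla u_j)\to\rho_G(\nabla u)$ from $\|\nabla u_j-\nabla u\|_{L^G}\to 0$. Lemma~\ref{lemma relation norm and modular} gives only $\rho_G(\nabla u_j-\nabla u)\to 0$, not directly the convergence of the modulars of $\nabla u_j$. The standard repair, which you allude to with ``using the doubling bound once more'', is to write for $\theta\in(0,1)$
\begin{equation*}
G(|\nabla u_j|)\le\theta\,G\!\left(\tfrac{|\nabla u_j-\nabla u|}{\theta}\right)+(1-\theta)\,G\!\left(\tfrac{|\nabla u|}{1-\theta}\right)\le\theta^{1-q}G(|\nabla u_j-\nabla u|)+(1-\theta)^{1-q}G(|\nabla u|),
\end{equation*}
integrate, send $j\to\infty$, then $\theta\to 0^+$, and combine with Fatou's lemma for the reverse inequality. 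This is exactly the sort of verification the $\Delta_2$-condition is designed to underwrite, so the reduction to smooth functions is sound and the proof goes through, giving the constant $c=2^q$ (indeed independent of $n$ and $p$), which is a sharper statement than the $c=c(n,p,q)$ in the lemma.
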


\begin{definition}\label{Dirichelt-Orlicz-Sobolev space}
The homogeneous Sobolev-Orlicz space $\mathcal{D}^{1,G}(\Omega)$ is understood as the set
\begin{equation*}
    \mathcal{D}^{1,G}(\Omega)=\{u\in W_{\mathrm{loc}}^{1,G}(\Omega): |\nabla u|\in L^{G}(\Omega)\}.
\end{equation*}
In this space, we have the following seminorm
\begin{equation}\label{seminorm Dirichlet-Orlicz-Sobolev space}
 \|u\|_{\mathcal{D}^{1,G}}=\|\nabla u\|_{L^{G}}.
\end{equation}
\end{definition}
\begin{remark}\label{remark q>n orlicz}
  Consider $\Omega=\mathds{R}^n$. When  $q\geq n$ in \eqref{data condition elementary}, all constants functions belong to  $\mathcal{D}^{1,G}(\mathds{R}^n)$.  Indeed, appealing to \cite[Lemma~3.7.7]{MR3931352}, from \eqref{data condition elementary} we have the following inclusions
    \begin{equation}\label{immersion dirichlet}
         \mathcal{D}^{1,p}(\mathds{R}^n)\cap  \mathcal{D}^{1,q}(\mathds{R}^n)\subset  \mathcal{D}^{1,G}(\mathds{R}^n)\subset  \mathcal{D}^{1,p}(\mathds{R}^n)+ \mathcal{D}^{1,q}(\mathds{R}^n),
    \end{equation}
    where $\mathcal{D}^{1,p}(\mathds{R}^n),\; \mathcal{D}^{1,q}(\mathds{R}^n)$ are the classical homogenous Sobolev spaces, and $\mathcal{D}^{1,p}(\mathds{R}^n)+ \mathcal{D}^{1,q}(\mathds{R}^n)=\{u+v: u\in\mathcal{D}^{1,p}(\mathds{R}^n), \, v\in \mathcal{D}^{1,q}(\mathds{R}^n)\}$. The space $\mathcal{D}^{1,q}(\mathds{R}^n)$ retains all constants functions if $q\geq n$. This is elucidated in \cite[page~48]{MR1461542} for $q=n$, while the case $q>n$ follows from the classical Morrey inequality.  Consequently, the inclusions in \eqref{immersion dirichlet} provide the desired fact. 

    On the other hand, when $1<p\leq q<n$, in light of the classical Sobolev Inequality (see, for instance, \cite[Corollary~1.77]{MR1461542}), the only constant function in  $\mathcal{D}^{1,p}(\mathds{R}^n)$ and $\mathcal{D}^{1,q}(\mathds{R}^n)$ is the zero function, which also occurs in $\mathcal{D}^{1,G}(\mathds{R}^n)$ by \eqref{immersion dirichlet}. Hence the assignment \eqref{seminorm Dirichlet-Orlicz-Sobolev space} defines a norm on $\mathcal{D}^{1,G}(\mathds{R}^n)$. Moreover, by using the standard mollifier functions, analysis similar to that in the proof of \cite[Theorem~6.4.4]{MR3931352} allows us to infer that $C_c^{\infty}(\mathds{R}^n)$ is dense in $\mathcal{D}^{1,G}(\mathds{R}^n)$, provided \eqref{seminorm Dirichlet-Orlicz-Sobolev space} is a norm.
\end{remark}

\begin{definition}\label{definition supersolution subsolution orlicz}
\begin{flushleft}
    {$\mathbf{(i)}$} A continuous function $u\in W_{\mathrm{loc}}^{1,G}(\Omega) $ is called  a $G$-harmonic function in $\Omega$ if it satisfies $\mathrm{div}\big({g(|\nabla u|)}/{|\nabla u|}\nabla u\big)=0$ weakly in $\Omega$, that is
\begin{equation*}
    \int_\Omega \frac{g(|\nabla u|)}{|\nabla u|}\nabla u\cdot\nabla \varphi\,\mathrm{d} x=0 \quad \forall \varphi\in C_c^{\infty}(\Omega).
\end{equation*}
    {$\mathbf{(ii)}$} We say that $u\in W_{\mathrm{loc}}^{1,G}(\Omega)$ is a $G$-supersolution in $\Omega$ if $-\mathrm{div}\big({g(|\nabla u|)}/{|\nabla u|}\nabla u\big)\geq0$ weakly, that is $u$ satisfies
    \begin{equation*}
    \int_\Omega \frac{g(|\nabla u|)}{|\nabla u|}\nabla u\cdot\nabla \varphi\,\mathrm{d} x\geq 0  \quad \forall \varphi\in C_c^{\infty}(\Omega),\,\varphi\geq 0.
\end{equation*}
Finally, we say that $u\in W_{\mathrm{loc}}^{1,G}(\Omega)$ is a $G$-subsolution in $\Omega$ if $-u$ is $G$-supersolution in $\Omega$. In order to simplify the notation, the term 
``$\,G$-'' is omitted when there is no ambiguity.
\end{flushleft}
\end{definition}
Existence and uniqueness of harmonic functions are proven in \cite{MR4274293, MR4258794, MR2834769}. The following lemma gives a version of the comparison principle for supersolutions and subsolutions \cite[Lemma~3.5]{MR4482108}. 
\begin{lemmaletter}\label{comparison principle super/subsolution}
    Let $u\in W_{\mathrm{loc}}^{1,G}(\Omega)$ be a supersolution and $v\in W_{\mathrm{loc}}^{1,G}(\Omega)$ be a subsolution in $\Omega$. If $\min\{u-v,0\}\in W_0^{1,G}(\Omega)$, then $u\geq v$ almost everywhere in $\Omega$.
\end{lemmaletter}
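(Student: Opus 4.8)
The plan is to run the standard monotone-operator comparison argument in the Orlicz framework. Write $\mathcal{A}(\xi)=g(|\xi|)\,\xi/|\xi|$ for $\xi\in\mathds{R}^n\setminus\{0\}$ and $\mathcal{A}(0)=0$, so that, by definition, a supersolution $u$ satisfies $\int_\Omega\mathcal{A}(\nabla u)\cdot\nabla\psi\,\mathrm{d}x\ge0$ and a subsolution $v$ satisfies $\int_\Omega\mathcal{A}(\nabla v)\cdot\nabla\psi\,\mathrm{d}x\le0$ for every $0\le\psi\in C_c^\infty(\Omega)$. The analytic input I would record first is the strict monotonicity of $\mathcal{A}$: from \eqref{assumption on g} one derives, for all $\xi,\eta\in\mathds{R}^n$,
\[
\big(\mathcal{A}(\xi)-\mathcal{A}(\eta)\big)\cdot(\xi-\eta)\ \ge\ c(p,q)\,\frac{g(|\xi|+|\eta|)}{|\xi|+|\eta|}\,|\xi-\eta|^2\ \ge\ 0,
\]
with strict inequality whenever $\xi\ne\eta$ — a by-now routine computation under the $\Delta_2\cap\nabla_2$ bounds \eqref{data condition elementary}.

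Next I would take as test function $\varphi:=\max\{v-u,0\}=-\min\{u-v,0\}$, which by hypothesis lies in $W_0^{1,G}(\Omega)$, is nonnegative, and satisfies $\nabla\varphi=(\nabla v-\nabla u)\chi_{\{v>u\}}$ a.e. Since $u,v\in W_{\loc}^{1,G}(\Omega)$ we have $\mathcal{A}(\nabla u),\mathcal{A}(\nabla v)\in L^{G^{\ast}}_{\loc}(\Omega)$ by \eqref{relation G and tilde G}, so these pair integrably with $\nabla\varphi\in L^{G}(\Omega)$ at least locally via the Orlicz Hölder inequality (Lemma~\ref{Holder-Orlicz}); combining this with the density of $C_c^\infty(\Omega)$ in $W_0^{1,G}(\Omega)$ — and, on unbounded $\Omega$, a cutoff $\eta_R$ together with the bounds $|\nabla\eta_R|\le C/R$ and $\varphi\in L^G(\Omega)$ to absorb the $\varphi\,\nabla\eta_R$ term as $R\to\infty$ — one may legitimately use $\varphi$ as a test function, obtaining
\[
\int_\Omega\mathcal{A}(\nabla u)\cdot\nabla\varphi\,\mathrm{d}x\ \ge\ 0
\qquad\text{and}\qquad
\int_\Omega\mathcal{A}(\nabla v)\cdot\nabla\varphi\,\mathrm{d}x\ \le\ 0 .
\]

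Subtracting the two inequalities and substituting the formula for $\nabla\varphi$ gives
\[
0\ \le\ \int_{\{v>u\}}\big(\mathcal{A}(\nabla u)-\mathcal{A}(\nabla v)\big)\cdot(\nabla v-\nabla u)\,\mathrm{d}x\ \le\ 0,
\]
the right-hand inequality being the monotonicity bound of the first step with the sign reversed. Hence the integrand vanishes a.e.\ on $\{v>u\}$, and by strictness $\nabla u=\nabla v$ a.e.\ there, i.e.\ $\nabla\varphi=0$ a.e.\ in $\Omega$. Since $\varphi\in W_0^{1,G}(\Omega)$ has vanishing gradient, extending it by zero to a large ball containing $\Omega$ and applying the modular Poincaré inequality (Lemma~\ref{modular poincare ineq}) forces $\varphi\equiv0$ (for unbounded $\Omega$ one argues instead that the only constant in the relevant space is $0$). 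Therefore $(v-u)^+=0$, that is $u\ge v$ a.e.\ in $\Omega$, as claimed. The step I expect to be the main obstacle is the admissibility one: making rigorous that the merely locally-$W^{1,G}$ super- and subsolutions may be tested against the globally-defined $W_0^{1,G}$ function $\varphi$ with all integrals finite (and controlling the behaviour at infinity when $\Omega$ is unbounded); the remainder is the textbook comparison scheme for strictly monotone operators.
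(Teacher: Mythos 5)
The paper does not actually prove this lemma: it is a ``lemmaletter'' cited from \cite[Lemma~3.5]{MR4482108}, so there is no internal proof to compare against, and I will assess your blind attempt on its own merits.

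The overall architecture you set up — test with $\varphi=\max\{v-u,0\}\in W_0^{1,G}(\Omega)$, $\varphi\ge 0$, invoke strict monotonicity of $\mathcal{A}(\xi)=g(|\xi|)\xi/|\xi|$, conclude $\nabla\varphi=0$ a.e., then use the modular Poincar\'e inequality on a fixed ball (respectively, that a nonzero constant cannot lie in $W_0^{1,G}$ of an infinite-measure domain) to force $\varphi\equiv 0$ — is the standard comparison scheme, and it is the right idea. Two small remarks first: the quantitative monotonicity inequality you record, with the factor $g(|\xi|+|\eta|)/(|\xi|+|\eta|)$, is the degenerate-case form and need not hold as stated for $1<p<2$; but only strict positivity is used, which the paper itself records as \eqref{monotonicity usual g} via \cite[Theorem~1]{MR4518648}. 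The Poincar\'e step is correct.

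The genuine gap is exactly the step you flag — admissibility of the test function — and the sketched cutoff resolution does not close it. Testing with $\eta_R\varphi$ produces the error term $\int_\Omega\mathcal{A}(\nabla u)\cdot(\varphi\nabla\eta_R)\,\mathrm{d}x$ supported on an annulus $A_R$ where $\nabla\eta_R\ne 0$; by Orlicz--H\"older this is bounded by $2\|\mathcal{A}(\nabla u)\|_{L^{G^\ast}(A_R)}\,\|\varphi\nabla\eta_R\|_{L^{G}(A_R)}$. The second factor tends to $0$ as $R\to\infty$ because $\varphi\in L^G(\Omega)$, but the first is simply not under control: $u\in W^{1,G}_{\loc}(\Omega)$ gives no global, nor tail, bound on $\|\mathcal{A}(\nabla u)\|_{L^{G^\ast}}$, which can grow arbitrarily along the annuli. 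The same obstruction appears even when $\Omega$ is bounded — the data are only locally $W^{1,G}$ so $\mathcal{A}(\nabla u)$ may degenerate near $\partial\Omega$, and the approximating $C_c^\infty$ functions do not sit inside any fixed compact subset. What is missing is an approximation adapted to the sign structure of the problem: for instance, replacing $\varphi$ by a compactly supported truncation $\min\{\varphi,\eta_j\}$ with $0\le\eta_j\in C_c^\infty(\Omega)$, so that the inequality from monotone testing yields a good-sign integrand that one can send to the limit by monotone convergence on the nested sets $\{v>u\}\cap\mathrm{supp}\,\eta_j$, rather than passing to the limit in a quantity with no definite sign. As written, the proposal leaves an honest hole at precisely the delicate point it identifies.
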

Let $\mu$ be a Radon measure (not necessarily nonnegative), and consider the following
quasilinear elliptic equation with data measure
 \begin{equation}\label{eq. general}
 -\mathrm{div}\bigg(\frac{g(|\nabla u|)}{|\nabla u|}\nabla u\bigg)=\mu  \quad \mbox{in}\quad \Omega.
\end{equation}
\begin{definition}\label{def supersolution}
    A function $u\in W_{\mathrm{loc}}^{1,G}(\Omega)$ is a solution to \eqref{eq. general} if
    \begin{equation*}
        \int_{\Omega}\frac{g(|\nabla u|)}{|\nabla u|}\nabla u\cdot \nabla\varphi\,\mathrm{d} x=\int_{\Omega}\varphi \,\mathrm{d}\mu \quad \forall\varphi\in C_c^{\infty}(\Omega).
    \end{equation*}
\end{definition}
Note that if $\mu$ is nonnegative, then a solution to \eqref{eq. general} is a supersolution, in sense of Definition~\ref{def supersolution}. The following result will be needed in Section~\ref{applications orlicz}, and it deals with 
the existence of solutions to \eqref{eq. general}. It is a consequence of \cite[Theorem~4.3]{MR4568881}. 
\begin{theoremletter}\label{existence in W0}
Let $\mu$ be a Radon measure in $\big(W_{0}^{1,G}(\Omega)\big)^{\ast}$. Then there exists a unique $u\in W_{0}^{1,G}(\Omega)$ satisfying \eqref{eq. general}. 
\end{theoremletter}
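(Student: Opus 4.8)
The plan is to recast \eqref{eq. general} as the operator equation $\mathcal{A}u=\mu$ in the dual space $\big(W_0^{1,G}(\Omega)\big)^{\ast}$, where $\mathcal{A}\colon W_0^{1,G}(\Omega)\to\big(W_0^{1,G}(\Omega)\big)^{\ast}$ is given by
\begin{equation*}
\langle \mathcal{A}u,\varphi\rangle=\int_{\Omega}\frac{g(|\nabla u|)}{|\nabla u|}\,\nabla u\cdot\nabla\varphi\,\mathrm{d} x,\qquad u,\varphi\in W_0^{1,G}(\Omega),
\end{equation*}
and then to apply the Browder--Minty surjectivity theorem for monotone, coercive, hemicontinuous operators (equivalently, to minimize the associated convex energy). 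The argument splits into a well-posedness step and a structural step, carried out in that order.

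First I would check that $\mathcal{A}$ is well defined, bounded, and continuous. Writing $A(\xi)=g(|\xi|)\xi/|\xi|$ for $\xi\ne0$ and $A(0)=0$, one has $|A(\xi)|=g(|\xi|)$, so \eqref{relation G and tilde G} gives $G^{\ast}(|A(\xi)|)=G^{\ast}(g(|\xi|))\le c_2\,G(|\xi|)$; hence $\nabla u\in L^G(\Omega;\mathds{R}^n)$ forces $A(\nabla u)\in L^{G^{\ast}}(\Omega;\mathds{R}^n)$, and by the Orlicz--H\"older inequality (Lemma~\ref{Holder-Orlicz}) the map $\varphi\mapsto\int_{\Omega}A(\nabla u)\cdot\nabla\varphi\,\mathrm{d} x$ is a bounded functional on $W_0^{1,G}(\Omega)$. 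Converting the modular bound $\int_{\Omega}G^{\ast}(|A(\nabla u)|)\,\mathrm{d} x\le c_2\int_{\Omega}G(|\nabla u|)\,\mathrm{d} x$ into a norm bound via Lemma~\ref{lemma relation norm and modular} yields boundedness of $\mathcal{A}$ on bounded sets, while continuity follows from the continuity of $A$, the growth bound \eqref{estimate on g}, and dominated convergence.

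Next I would establish strict monotonicity, hemicontinuity, and coercivity. Since \eqref{assumption on g} forces $tg'(t)/g(t)\ge p-1>0$, the scalar function $g$ is strictly increasing, so the standard pointwise inequality for the vector field $A$ gives $\big(A(\xi)-A(\eta)\big)\cdot(\xi-\eta)\ge0$ for all $\xi,\eta\in\mathds{R}^n$, with equality only if $\xi=\eta$; integrating, $\langle\mathcal{A}u-\mathcal{A}v,u-v\rangle>0$ whenever $u\ne v$ in $W_0^{1,G}(\Omega)$, which already yields uniqueness. Hemicontinuity of $t\mapsto\langle\mathcal{A}(u+tv),w\rangle$ again follows from dominated convergence. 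For coercivity I would use the left inequality in \eqref{data condition elementary} in the form $g(t)\,t\ge p\,G(t)$, so that $\langle\mathcal{A}u,u\rangle=\int_{\Omega}g(|\nabla u|)|\nabla u|\,\mathrm{d} x\ge p\int_{\Omega}G(|\nabla u|)\,\mathrm{d} x$; combining this with the modular Poincar\'e inequality (Lemma~\ref{modular poincare ineq}), which makes $\|\nabla u\|_{L^G}$ an equivalent norm on $W_0^{1,G}(\Omega)$, and with Lemma~\ref{lemma relation norm and modular}, one obtains $\langle\mathcal{A}u,u\rangle\ge c\min\{\|u\|_{W_0^{1,G}}^p,\|u\|_{W_0^{1,G}}^q\}$, hence $\langle\mathcal{A}u,u\rangle/\|u\|_{W_0^{1,G}}\to\infty$ as $\|u\|_{W_0^{1,G}}\to\infty$. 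The Browder--Minty theorem then produces a solution $u\in W_0^{1,G}(\Omega)$ of $\mathcal{A}u=\mu$, unique by strict monotonicity; equivalently, since $\mathcal{A}$ is the Gateaux derivative of the strictly convex, coercive, weakly lower semicontinuous functional $v\mapsto\int_{\Omega}G(|\nabla v|)\,\mathrm{d} x-\langle\mu,v\rangle$ on $W_0^{1,G}(\Omega)$, its unique minimizer solves \eqref{eq. general}.

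I expect the main obstacle to be the Orlicz-space bookkeeping in the first step: one must verify carefully, using the $\Delta_2/\nabla_2$ structure \eqref{data condition elementary} and the comparison \eqref{relation G and tilde G}, that $u\mapsto A(\nabla u)$ maps $W_0^{1,G}(\Omega)$ continuously into $L^{G^{\ast}}(\Omega;\mathds{R}^n)$, so that $\mathcal{A}u$ genuinely lies in the dual, and likewise that coercivity is phrased in the correct norm --- which is exactly where the Poincar\'e inequality, and any hypothesis on $\Omega$ guaranteeing it, enters. Once these points are settled, the remaining monotone-operator verifications are routine.
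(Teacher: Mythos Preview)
The paper does not prove this statement itself; it simply cites \cite[Theorem~4.3]{MR4568881} and moves on. Your Browder--Minty argument is correct and is precisely the standard route to such a result; in fact the paper later carries out essentially the same verification (well-definedness via \eqref{relation G and tilde G} and Lemma~\ref{Holder-Orlicz}, monotonicity via \eqref{monotonicity usual g}, coercivity via \eqref{data condition elementary} and Lemma~\ref{lemma relation norm and modular}) in the proof of Lemma~\ref{existence in D1G orlicz} for the homogeneous space $\mathcal{D}^{1,G}(\mathds{R}^n)$, invoking \cite[Theorem~2.1]{MR0259693} in place of Browder--Minty.

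One small point worth tightening: you invoke Lemma~\ref{modular poincare ineq} to make $\|\nabla u\|_{L^G}$ an equivalent norm on $W_0^{1,G}(\Omega)$, but that lemma is stated in the paper only for balls. For a general bounded $\Omega$ the modular Poincar\'e inequality still holds (and the cited reference \cite{MR4568881} works in that generality), so this is only a matter of citing the right version; you already flagged the dependence on $\Omega$ yourself. For unbounded $\Omega$ the statement as written is formally ambiguous, but every application of Theorem~\ref{existence in W0} in the paper is on a ball $B_{j+1}$, where your argument goes through verbatim.
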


We introduce the notion of the $G$-capacity of a compact subset of $\Omega\subseteq\mathds{R}^n$ following \cite{MR4258773}. The $G$-capacity will be 
 used to ensure that if $u\in \mathcal{D}^{1,G}(\mathds{R}^n)$ is a solution to \eqref{eq. general}, then $u=0$ whether $q\geq n$ in \eqref{data condition elementary}.
\begin{definition}\label{definition G capacity}
    Let $E\subset \Omega$ be a compact subset, we define $\mathrm{cap}_G(E,\Omega)$, $G$-capacity of $E$ with respect to $\Omega$, by
    \begin{equation*}
        \mathrm{cap}_G(E,\Omega)=\inf\left\{\int_{\Omega}G(|\nabla \varphi|)\,\mathrm{d} x: \varphi\in C_c^{\infty}(\Omega),\; \varphi\geq 1 \; \mbox{on}\; E\right\}.
    \end{equation*}
    We set $\mathrm{cap}_G(E)=\mathrm{cap}_G(E,\mathds{R}^n)$ when $\Omega=\mathds{R}^n$.
\end{definition}
Observe that for $G_p(t)=t^p/p$, $\mathrm{cap}_{G_p}(\cdot, \Omega)$ coincides with the usual $p$-capacity with respect to $\Omega$, see for instance \cite{MR2305115, MR1411441, MR1461542}. In general, we may define equivalently
    \begin{equation*}
          \mathrm{cap}_G(E,\Omega)=\inf\left\{\int_{\Omega}G(|\nabla \varphi|)\,\mathrm{d} x: \varphi\in \mathcal{D}^{1,G}(\Omega),\; \varphi\geq 1 \; \mbox{in a neighborhood of}\; E\right\}.
    \end{equation*}
This follows by the same method as in the proof of \cite[Theorem~2.3 (iii)]{MR1461542}. Note that by Remark~\ref{remark q>n orlicz}, $\mathrm{cap}_G(E)=0$ for all compact set $E\subset\mathds{R}^n$, since the function $1$ belongs to $\mathcal{D}^{1,G}(\mathds{R}^n)$ if $q\geq n$ in \eqref{data condition elementary}.
\begin{remark}\label{measure and capacity orlicz}
        Suppose $q\geq n$ and let $\mu\in M^+(\mathds{R}^n)\cap\big(\mathcal{D}^{1,G}(\mathds{R}^n)\big)^{\ast}$, then a solution $u$ to \eqref{eq. general} in $\mathcal{D}^{1,G}(\mathds{R}^n)$ must be constant. To see this, we show first that $\mu$ must be \textit{absolutely continuous} with respect to the $G$-capacity, that is, $\mu(E)=0$ whenever $ \mathrm{cap}_G(E)=0$ for all compact sets $E\subset\mathds{R}^n$. Fix $E\subset\mathds{R}^n$ a compact set and let $\varphi\in C_c^{\infty}(\mathds{R}^n)$ such that $\varphi=1$ on $E$. Then, by testing \eqref{eq. general} with such $\varphi$ and combining Cauchy-Schwarz Inequality with Lemma~\ref{Holder-Orlicz},  it follows
        \begin{equation*}
     \begin{aligned}
    \mu(E)& =\int_E\varphi\,\mathrm{d}\mu\leq \int_{\mathds{R}^n}\varphi\,\mathrm{d}\mu\\
    & =\int_{\mathds{R}^n}\frac{g(|\nabla u|)}{|\nabla u|}\nabla u\cdot \nabla\varphi\,\mathrm{d} x \leq  2\|g(|\nabla u|)\|_{L^{G^{\ast}}}\|\nabla \varphi\|_{L^G}.
        \end{aligned}
       \end{equation*}
From \eqref{relation G and tilde G} and Lemma~\ref{lemma relation norm and modular}, we have 
\begin{equation*}
   \mu(E)\leq 2\,c\,\big(\rho_G(|\nabla u|)+ 1\big) \max\left\{\big(\rho_G(|\nabla \varphi|)\big)^{\frac{1}{p}},\big(\rho_G(|\nabla \varphi|)\big)^{\frac{1}{q}}\right\},
\end{equation*}
where $\rho_G(\cdot)$ is the modular function and $c=c(p,q)>0$. Consequently, 
\begin{equation*}
\mu(E)\leq C\,\max\left\{\big(\mathrm{cap}_G(E)\big)^{\frac{1}{p}},\big(\mathrm{cap}_G(E)\big)^{\frac{1}{q}}\right\},  
\end{equation*}
 with $C=C\big(p,q,\rho_G(|\nabla u|)\big)>0$.
Next, since $q\geq n$, $\mathrm{cap}_G(E)$ vanishes for all compact set $E\subset\mathds{R}^n$, whence $\mu =0$ by the inner regularity. From a Liouville-type theorem \cite[Theorem~4.1]{MR3918402}, $u$ must be constant.
    \end{remark}

\subsection{Superharmonic functions}
Let $\mu\in \big(W_{\mathrm{loc}}^{1,G}(\Omega)\big)^{\ast}$. Here, we extend the notion of the distributional solutions to \eqref{eq. general}, where $u$ does not necessarily belong to $W_{\mathrm{loc}}^{1,G}(\Omega)$.  To be more precise, we will understand solutions in the following potential-theoretic sense using $G$-superharmonic functions.

\begin{definition}\label{def superharmonic}

A function $u:\Omega\to(-\infty,\infty) \cup \{\infty\}$ is $G$-superharmonic in $\Omega$ if 
\begin{flushleft}
{$\mathbf{(i)}$} $u$ is lower semicontinuous,
\\
{$\mathbf{(ii)}$} $u$ is not identically infinite in any component of $\Omega$,
\\
{$\mathbf{(iii)}$} for each open subset  $V$ compactly contained in $\Omega$ and each harmonic function $h$ in $V$ such that $h \in  C(\overline{V})$  and $h\leq u$ in $\partial V$ implies $h\leq u$ in $V$.
\end{flushleft}
We denote $\mathcal{S}_G(\Omega)$, the class of all $G$-superharmonic functions in $\Omega$.
In order to simplify the notation, the term ``$\,G$-'' is omitted when there is no ambiguity.
\end{definition}
   
For $u \in \mathcal{S}_G(\Omega)$ we define its truncation as follows 
\begin{equation*}
    T_k(u)=\min (k,\max(u,-k)), \quad \forall k >0. 
\end{equation*}
We emphasize that  $\{T_k(u)\}$ is a sequence of supersolutions in $\Omega$ \cite[Lemma~4.6]{MR4482108}. Then there exists a unique measurable function $Z_u:\Omega\to\mathds{R}^n$ satisfying
\begin{equation*}
    Z_u(x)=\lim_{k\to \infty}\nabla \big(T_k(u)\big)(x) \quad \mbox{almost everywhere in }\Omega.
\end{equation*}
We denote $Z_u$ by $Du$ and call it a \textit{generalized gradient} of $u$. See details in \cite[Remark~4.13]{MR4482108}.
If $u\in W_{\mathrm{loc}}^{1,G}(\Omega)$, then clearly $Du=\nabla u$ since in this case $\nabla \big(T_k(u)\big)=\chi_{\{-k<u<k\}}\nabla u$. 

By the Riesz Representation Theorem \cite[Theorem~6.22]{MR1817225}, there exists a unique measure $\mu_k\in  {M}^+(\Omega)$ satisfying
\begin{equation*}
    \int_\Omega \frac{g(|\nabla T_k(u)|)}{|\nabla T_k(u)|}\nabla T_k(u)\cdot\nabla \varphi\,\mathrm{d} x =\int_\Omega \varphi\, \mathrm{d}\mu_k, \quad \forall\varphi\in C_c^{\infty}(\Omega), \varphi\geq 0.
\end{equation*}
On the other hand, from  \cite[Lemma~4.12]{MR4482108}, the sequence $\{g(\nabla T_k(u))\}$ is bounded in $L^1(B)$, for all open ball $B\subset\Omega$. Consequently, by Fatou's lemma, $g(|Du|)\in L_{\mathrm{loc}}^1(\Omega)$, and since $Du=\lim_k \nabla \big(T_k(u)\big)$ (pointwise), it holds
\begin{equation*}
    \int_\Omega \frac{g(|D u|)}{|D u|}D u\cdot\nabla \varphi\,\mathrm{d} x = \lim_{k\to \infty}\int_\Omega \frac{g(|\nabla T_k(u)|)}{|\nabla T_k(u)|}\nabla T_k(u)\cdot\nabla \varphi\,\mathrm{d} x, \quad \forall\varphi\in C_c^{\infty}(\Omega).
\end{equation*}
Therefore, using the Riesz Representation Theorem again, there exists a unique measure $\mu=\mu_u\in  {M}^+(\Omega)$ such that 
\begin{equation*}
\int_\Omega \frac{g(|D u|)}{|D u|}D u\cdot\nabla \varphi\,\mathrm{d} x=  \int_\Omega \varphi\, \mathrm{d}\mu, \quad \forall\varphi\in C_c^{\infty}(\Omega), \varphi\geq 0.
\end{equation*}
This means  
\begin{equation*}
-\mathrm{div}\bigg(\frac{g(|D u|)}{|D u|}D u\bigg)= \mu \quad \mbox{in}\quad \Omega.    
\end{equation*}
In the literature, $\mu_u$  is called the \textit{Riesz measure} of $u$.
   \begin{definition}\label{potential-theoretic sense} 
For $ \sigma \in {M}^+(\Omega)$, we say that $u$ is  a solution in the potential-theoretic sense to the equation
\begin{equation*}
  -\mathrm{div}\bigg(\frac{g(|\nabla u|)}{|\nabla u|}\nabla u\bigg)= \sigma \quad \mbox{in}\quad \Omega
\end{equation*}
   if $u \in \mathcal{S}_G(\Omega)$ and $\mu_u=\sigma$.
   \end{definition}
   Let $f(t)=g(t^{\gamma})$ with $\gamma>0$, $t\geq0$. In light of Definition~\ref{potential-theoretic sense}, if $\sigma\in  {M}^+(\Omega)$, then a function $u$ is a solution (in the potential-theoretic sense) to the equation
\begin{equation}
    -\mathrm{div}\bigg(\frac{g(|\nabla u|)}{|\nabla u|}\nabla u\bigg)= \sigma\,f(u) \quad \mbox{in}\quad \Omega
\end{equation}
whenever $u$ is nonnegative and satisfies
\begin{equation}
     \left\{
\begin{aligned}
& u\in \mathcal{S}_G(\Omega)\cap L_{\mathrm{loc}}^{f}(\Omega, \mathrm{d} \sigma), \\ 
& \mathrm{d}\mu_u=f(u)\mathrm{d}\sigma.
\end{aligned}
\right.
\end{equation}
\begin{definition}\label{def supersolution/superharmonic }
    Let $\sigma\in M^{+}(\mathds{R}^n)$. A function $u$ is a supersolution to \eqref{equation nonstandard} if $u$ is nonnegative and satisfies
    \begin{equation}\label{supersolution condition}
     \left\{
\begin{aligned}
& u\in \mathcal{S}_G(\mathds{R}^n)\cap L_{\mathrm{loc}}^{f}(\mathds{R}^n, \mathrm{d} \sigma), \\ 
&\int_{\mathds{R}^n} \frac{g(|D u|)}{|D u|}D u\cdot\nabla \varphi\,\mathrm{d} x \geq   \int_{\mathds{R}^n}\varphi f(u)\,\mathrm{d}\sigma \quad \forall \varphi\in C_c^{\infty}(\mathds{R}^n), \varphi \geq 0.
\end{aligned}
\right.
\end{equation}
Finally, the notion of solution to \eqref{equation nonstandard} is defined similarly by replacing ``$\geq$'' with ``$=$'' in \eqref{supersolution condition}.
\end{definition}
\begin{definition}
     Let $\sigma\in M^{+}(\mathds{R}^n)$. A function $u$ is a solution of finite energy to \eqref{equation nonstandard} whenever $u$ is a nonnegative solution to \eqref{equation nonstandard} and $u\in \mathcal{D}^{1,G}(\mathds{R}^n)\cap L^F(\mathds{R}^n,\mathrm{d}\sigma)$.
\end{definition}

Next, we will state some fundamental results of the potential theory of quasilinear elliptic equations with Orlicz growth,  like \eqref{eq. general}. We start with the following theorem that will be used to prove that a pointwise
limit of a sequence of superharmonic functions is a superharmonic function \cite[Theorem~2]{MR4482108}.
\begin{theoremletter}[Harnack’s Principle]\label{harnack principle orlicz5}
   Let $\{u_j\}$ be a sequence of superharmonic functions, with each $u_j$ finite almost everywhere in $\Omega$. If  $\{u_j\}$ is nondecreasing,
then the pointwise limit function $u = \lim_j u_j$ is superharmonic function, unless $u\equiv \infty$. Moreover,
if $u_j$ is nonnegative for all $j\geq 1$, then up to a subsequence, one has $D u=\lim_j Du_j$ in the set $\{u<\infty\}$.
\end{theoremletter}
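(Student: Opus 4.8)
The plan is to verify, for $u=\lim_j u_j$, the three defining conditions of a $G$-superharmonic function (Definition~\ref{def superharmonic}), and then, when the $u_j$ are nonnegative, to upgrade the pointwise convergence $u_j\to u$ to almost-everywhere convergence of the generalized gradients by a Minty-type monotonicity argument applied to truncations.

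For the first assertion I would argue as follows. Lower semicontinuity of $u$ is immediate, since an increasing pointwise limit (a supremum) of lower semicontinuous functions is lower semicontinuous. For the comparison property, fix an open set $V\Subset\Omega$ and a harmonic $h\in C(\overline V)$ with $h\le u$ on $\partial V$, and let $\varepsilon>0$; the sets $\{u_j>h-\varepsilon\}$ are open (by lower semicontinuity of $u_j$ and continuity of $h$), increase in $j$, and cover the compact set $\partial V$ because $h-\varepsilon<u=\sup_j u_j$ there, so $h-\varepsilon\le u_j$ on $\partial V$ for all large $j$; the comparison property of the superharmonic function $u_j$ (applied to the harmonic comparison function $h-\varepsilon$) then gives $h-\varepsilon\le u_j$ in $V$, and letting $j\to\infty$ and $\varepsilon\to0$ yields $h\le u$ in $V$. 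It remains to see that, assuming $u\not\equiv\infty$, $u$ is not identically infinite on the relevant component: I would pick $x_0$ with $u(x_0)<\infty$ and, on a ball $B(x_0,2r)\Subset\Omega$, apply the weak Harnack inequality to the nonnegative supersolutions $u_j-\inf_{B(x_0,2r)}u_j$ (legitimate since $T_k(u_j)$ is a supersolution), together with $\inf_{B(x_0,2r)}u_j\le u_j(x_0)\le u(x_0)$, to get a uniform bound $\dashint_{B(x_0,r)}u_j^{s_0}\,\mathrm{d}x\le C$ for some $s_0>0$; Fatou's lemma then gives $u\in L^{s_0}(B(x_0,r))$, hence $u<\infty$ a.e.\ near $x_0$, and a connectedness argument spreads this over the component of $x_0$, so (ii) holds there.

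For the ``moreover'' part I would localize. Fix balls $B'\Subset B\Subset\Omega$ and a height $\lambda>0$, write $\mathcal A(\xi)=g(|\xi|)\xi/|\xi|$, and consider $v_j:=T_\lambda(u_j)=\min(u_j,\lambda)$, which are bounded supersolutions increasing to $v:=\min(u,\lambda)$. A Caccioppoli estimate — testing the supersolution inequality for $v_j$ with $(\lambda-v_j)\eta^{q}$ for a cutoff $\eta\in C_c^\infty(B)$ with $\eta\equiv1$ on $B'$, and using \eqref{data condition elementary}, \eqref{relation G and tilde G}, Young's inequality and Lemma~\ref{Holder-Orlicz} — yields a bound on $\int_{B}\eta^{q}G(|\nabla v_j|)\,\mathrm{d}x$ uniform in $j$; since $v_j\to v$ in $L^1_{\mathrm{loc}}$ this forces $\nabla v_j\rightharpoonup\nabla v$ in $L^{G}_{\mathrm{loc}}(B)$ and $\{g(|\nabla v_j|)\}$ bounded in $L^{G^*}_{\mathrm{loc}}(B)$. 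Next, testing the supersolution inequality for $v_j$ with $(v-v_j)\eta\ge0$ and discarding the nonnegative Riesz-measure term gives
\begin{equation*}
\int_{B}\eta\,\mathcal A(\nabla v_j)\cdot(\nabla v-\nabla v_j)\,\mathrm{d}x\;\ge\;-\int_{B}(v-v_j)\,\mathcal A(\nabla v_j)\cdot\nabla\eta\,\mathrm{d}x,
\end{equation*}
whose right-hand side tends to $0$ because $v-v_j\to0$ in $L^{G}(\operatorname{supp}\nabla\eta)$ (dominated convergence, the bound being $\lambda$) while $g(|\nabla v_j|)$ stays bounded in $L^{G^*}$; moreover $\int_{B}\eta\,\mathcal A(\nabla v)\cdot(\nabla v-\nabla v_j)\,\mathrm{d}x\to0$ since $\eta\,\mathcal A(\nabla v)\in L^{G^*}_{\mathrm{loc}}(B)$ and $\nabla v_j\rightharpoonup\nabla v$. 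Subtracting, and using monotonicity of $\mathcal A$ so that the integrand $\big(\mathcal A(\nabla v_j)-\mathcal A(\nabla v)\big)\cdot(\nabla v_j-\nabla v)\ge0$, I would conclude $\int_{B'}\big(\mathcal A(\nabla v_j)-\mathcal A(\nabla v)\big)\cdot(\nabla v_j-\nabla v)\,\mathrm{d}x\to0$, so this nonnegative integrand tends to $0$ in $L^1(B')$ and, along a subsequence, a.e., whence strict monotonicity of $\mathcal A$ (from \eqref{assumption on g}) forces $\nabla v_j\to\nabla v$ a.e.\ in $B'$. Finally, at $\mathcal L^n$-a.e.\ $x\in\{u<\infty\}\cap B'$ with $\lambda>u(x)$ one has $u_j(x)\le u(x)<\lambda$ for all $j$, so $Du_j(x)=\nabla T_\lambda(u_j)(x)=\nabla v_j(x)$ and $Du(x)=\nabla v(x)$, giving $Du_j(x)\to Du(x)$; a diagonal extraction over $\lambda\in\mathds{N}$ and an exhaustion of $\Omega$ by balls then removes the auxiliary parameters.

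The hard part will be the limiting argument in the last step: one must organise the test-function manipulations so that the Riesz measures of the truncations $v_j$ (known only to be nonnegative) contribute harmlessly, and so that the lower-order ``$\nabla\eta$'' term genuinely vanishes in the Orlicz scale — this is precisely where the uniform Caccioppoli bound and the $\Delta_2$--$\nabla_2$ estimates are needed — after which strict monotonicity of the Orlicz operator closes the argument. Step~1 is comparatively soft except for the a.e.\ finiteness, which rests on a weak Harnack inequality for nonnegative supersolutions, the same ingredient exploited in Section~\ref{Potential estimates App}.
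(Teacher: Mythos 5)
The paper does not prove Theorem~\ref{harnack principle orlicz5}: it is quoted directly from \cite[Theorem~2]{MR4482108}, so there is no in-text argument for you to match. Your blind reconstruction is, however, a sound rendering of the standard nonlinear-potential-theory proof (the Orlicz analogue of the $p$-harmonic case in Heinonen--Kilpel\"ainen--Martio). Lower semicontinuity is as you say; the shifted-competitor trick for the comparison property is correct once one observes that $h-\varepsilon$ remains $\mathcal{A}$-harmonic (the operator only sees $\nabla h$); and the a.e.\ finiteness via a weak Harnack inequality for the shifted nonnegative superharmonic functions $u_j-\inf_{B}u_j$, Fatou, and a connectedness argument is the right route. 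For the gradient claim, your truncation/Caccioppoli/Minty scheme is also the intended one: a uniform modular bound for $T_\lambda(u_j)$, weak $L^G$-convergence of $\nabla T_\lambda(u_j)$ to $\nabla T_\lambda(u)$ (legitimate because, once part one is established, $T_\lambda(u)$ is itself a supersolution and lies in $W^{1,G}_{\mathrm{loc}}$), the Minty test with $(T_\lambda(u)-T_\lambda(u_j))\eta$, strict monotonicity of $\mathcal{A}$, and identification of the generalized gradients on $\{u<\lambda\}$. Two points deserve to be made explicit: (a) nest the cutoffs---use one $\eta_0$ for the Caccioppoli bound on a ball $B_0$ and a second $\eta$ supported in the interior of $\{\eta_0\equiv 1\}$ for the Minty test, since the weak-$L^G$ convergence needed in $\int\eta\,\mathcal{A}(\nabla v)\cdot(\nabla v-\nabla v_j)\,\mathrm{d}x$ is only secured where the first bound holds; and (b) the paper's weak Harnack \eqref{weak harnack inequality} is stated only for $g$ as in \eqref{our g}, so to cover the general Orlicz reading of Theorem~\ref{harnack principle orlicz5} one should invoke the Orlicz weak Harnack inequality of \cite{MR4482108} rather than \eqref{weak harnack inequality}.
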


The following theorem describes the main technical result, which will be decisive in linking supersolution to \eqref{equation nonstandard} with solutions to \eqref{wolff integral equation}, provided $g$ is the function given by \eqref{our g}. Its proof is postponed to Section~\ref{Potential estimates App}.
\begin{theorem}\label{Maly's type result}
    Let $g$ be the function given by \eqref{our g}. Suppose that $u$ is a superharmonic function in $B(x_0,2R)$, and let $\mu=\mu_u\in M^+(B(x_0,2R))$. Then there exist constants $C_1>0$ and $C_2>0$ depending only on $n, \,p$ and $q$ such that
    \begin{equation}\label{potential estimate thm}
        C_1\mathbf{W}_G^R\mu(x_0)\leq u(x_0)\leq C_2\big(\inf_{B(x_0,R)}u+\mathbf{W}_G^R\mu(x_0)\big).
    \end{equation}
    Here $\mathbf{W}_G^R\sigma$ is the $R$-truncated Wolff potential of $\sigma\in M^+(\mathds{R}^n)$ defined by
    \begin{equation*}
    \mathbf{W}_G^R\sigma(x)=\int_0^Rg^{-1}\bigg(\frac{\sigma(B(x,t))}{t^{n-1}}\bigg)\,\mathrm{d} t, \quad x\in\mathds{R}^n.
    \end{equation*}
\end{theorem}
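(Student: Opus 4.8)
\textbf{Proof strategy for Theorem~\ref{Maly's type result}.}
The plan is to deduce the Orlicz-growth pointwise estimate \eqref{potential estimate thm} for the concrete operator with $g(t)=t^{p-1}+t^{q-1}$ from the already-available general Orlicz potential estimates (the ``Orlicz counterpart'' of the Kilpel\"ainen--Mal\'y bounds mentioned after \eqref{Kilpelainen-Maly estimates}, i.e.\ the results of \cite{MR1975101,MR4803728}), \emph{plus} the elementary algebraic inequalities \eqref{estimate on g}--\eqref{estimate g sum} that compare $g$, $g^{-1}$ and their pure-power analogues. Concretely, I would first note that $g$ from \eqref{our g} satisfies the structural condition \eqref{assumption on g} with the given $p,q$, so the general theory applies and yields constants $C_1,C_2=C_1,C_2(n,p,q)$ and an abstract inequality of the form $C_1\,\mathbf{W}_G^R\mu(x_0)\le u(x_0)\le C_2(\inf_{B(x_0,R)}u+\mathbf{W}_G^R\mu(x_0))$ with $\mathbf{W}_G^R\mu(x)=\int_0^R g^{-1}\!\big(\mu(B(x,t))/t^{n-1}\big)\,\mathrm dt$. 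That is essentially the statement; the real content is making sure the Wolff-type potential in that abstract estimate is \emph{exactly} the one in \eqref{definition wolff} (truncated), which for the model $g$ it is by construction.

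The key steps, in order, would be: (1) verify \eqref{assumption on g} for $g(t)=t^{p-1}+t^{q-1}$ — a one-line computation since $tg'(t)/g(t)=\big((p-1)t^{p-1}+(q-1)t^{q-1}\big)/(t^{p-1}+t^{q-1})$ lies between $p-1$ and $q-1$; (2) invoke the Harnack-type inequality of Section~\ref{Potential estimates App} (the paper signals the proof ``utilizes a Harnack-type inequality'') to pass from Caccioppoli/energy estimates on dyadic annuli $B(x_0,2^{-j}R)$ to a comparison between $u$ at the centre and the localized measure density $\mu(B(x_0,2^{-j}R))/(2^{-j}R)^{n-1}$, exactly as in the Kilpel\"ainen--Mal\'y scheme; (3) sum the resulting telescoping inequality over the scales $2^{-j}R$, $j\ge 0$, using that $g^{-1}$ is increasing and the doubling/quasi-homogeneity bounds \eqref{estimate on g-1}, to recognize the sum $\sum_j g^{-1}\big(\mu(B(x_0,2^{-j}R))/(2^{-j}R)^{n-1}\big)2^{-j}R$ as comparable to the integral $\mathbf{W}_G^R\mu(x_0)$; (4) for the lower bound, use the weak Harnack inequality for nonnegative supersolutions together with the same dyadic decomposition, again reorganizing the sum into $\mathbf{W}_G^R\mu(x_0)$ via \eqref{estimate on g-1}. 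Throughout, the fact that $G$ satisfies both $\Delta_2$ and $\nabla_2$ (equivalently \eqref{data condition elementary}) is what makes all the sums and the passage between modular and norm (Lemma~\ref{lemma relation norm and modular}) quantitatively controlled by $n,p,q$ alone.

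\textbf{Main obstacle.}
The delicate point is the oscillation/iteration estimate at the heart of the Kilpel\"ainen--Mal\'y argument: one must show that the ``excess'' of $u$ over its infimum on $B(x_0,r)$ decays, in a summable way, in terms of the localized Wolff tails, and the lack of homogeneity of $g$ (noted in the introduction as a genuine difficulty) means one cannot simply rescale. The way around it is precisely the quasi-homogeneity bounds \eqref{estimate on g}--\eqref{estimate on G ast}: they provide two-sided power comparisons $\min\{\alpha^{p-1},\alpha^{q-1}\}g(t)\le g(\alpha t)\le\max\{\alpha^{p-1},\alpha^{q-1}\}g(t)$ (and analogously for $G$, $g^{-1}$, $G^\ast$), which replace exact scaling and keep every constant depending only on $n,p,q$. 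Once the iteration is closed with these substitutes, summing the geometric series over dyadic radii and comparing with the integral defining $\mathbf{W}_G^R\mu$ is routine; I would present only the skeleton of the iteration and the summation, citing \cite{MR1264000,MR1975101,MR4803728} for the parts that transcribe verbatim. Finally, the statement for $q\ge n$ in Theorem~\ref{solution to orlicz-measure eq} (that only $u\equiv 0$ survives) is a separate matter handled by Remark~\ref{measure and capacity orlicz} and is not needed here.
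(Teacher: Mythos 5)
Your high-level strategy is the same one the paper follows: the Kilpel\"ainen--Mal\'y / Korte--Kuusi dyadic iteration scheme, adapted to $(p,q)$-growth by replacing exact $p$-homogeneity with the two-sided power comparisons \eqref{estimate on g}--\eqref{estimate on G ast}, combined with the weak Harnack inequality and Harnack inequality from \cite{MR3348922}, and then summed over dyadic radii to reconstruct $\mathbf{W}_G^R\mu(x_0)$. You also correctly identify why the theorem is stated only for the concrete $g$ of \eqref{our g} rather than a general Orlicz $g$: that is exactly where the Harnack inequalities are available in the $(p,q)$ literature. But there is a genuine gap in the upper bound that your sketch does not close. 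Testing $u$ against cutoffs on dyadic balls is not enough, because the iteration must control the excess of $u$ over $\inf_{B_R}u$ across scales in a telescoping way while the data is a general Radon measure. The paper's device for this (as in \cite{MR2604619} and \cite{MR4803728}) is the Poisson modification $v=P(u,\Omega)$ of $u$ over the disjoint union of annuli $\Omega=\bigcup_k \tfrac{3}{2}B_k\setminus\overline{B}_k$ shrinking to $x_0$: $v$ is $\mathcal{A}$-harmonic on the annuli, agrees with $u$ off them, and satisfies $\mu_v(B_k)=\mu(B_k)$, which permits comparison against auxiliary solutions $v_k\in W_0^{1,G}(\tfrac{4}{3}B_{k+1})$ with data $\mu_v$ and yields the telescoping inequality $\max_{\partial\frac{4}{3}B_{k+2}}v-\max_{\partial\frac{4}{3}B_{k+1}}v\le c\,R_k\,g^{-1}\bigl(\mu_v(\tfrac{4}{3}B_{k+1})/R_k^{n-1}\bigr)$ that sums to $\mathbf{W}_G^R\mu(x_0)$. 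Without introducing this (or an equivalent mechanism) the upper-bound iteration does not close.

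A second omission: $u$ is only assumed $\mathcal{A}$-superharmonic, so it need not lie in $W^{1,G}_{\mathrm{loc}}$ and cannot be tested directly. The paper handles this in Remark~\ref{redution potential estimate} by approximating $u$ pointwise from below by a nondecreasing sequence of continuous $\mathcal{A}$-supersolutions $u_j$ vanishing on $\partial B_R$, passing the gradients and Riesz measures to the limit (with the $\limsup/\liminf$ semicontinuity of $\mu_j$ on open and closed balls, and the observation that $t\mapsto\mu(B(x_0,t))$ has at most countably many jumps), and only then proving \eqref{potential estimate thm} for continuous supersolutions. Your proposal also wavers between citing \cite{MR1975101,MR4803728} outright and reproducing the iteration; the paper chooses to reprove the estimate precisely because the literature does not literally give the truncated, $(n,p,q)$-explicit bound for superharmonic functions under a general $\mathcal{A}$-operator with Orlicz structure, so ``check the Wolff potential is the right one'' undersells the required work.
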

It will need the following consequence of Theorem~\ref{Maly's type result} for our purpose.
\begin{corollary}\label{maly's result pratical}
    Let $g$ be the function given by \eqref{our g}. Suppose that $u$ is a superharmonic function in $\mathds{R}^n$ with $\inf_{\mathds{R}^n}u=0$, and let $\mu=\mu_u\in M^+(\mathds{R}^n)$. Then there exists constant $K\geq 1$ depending only on $n,\,p$ and $q$ such that
    \begin{equation*}
       K^{-1}\,\mathbf{W}_G\mu(x)\leq u(x)\leq K\,\mathbf{W}_G\mu(x) \quad \forall x\in\mathds{R}^n.
    \end{equation*}
\end{corollary}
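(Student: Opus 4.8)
The plan is to derive Corollary~\ref{maly's result pratical} from Theorem~\ref{Maly's type result} by letting the radius $R\to\infty$ and exploiting the hypothesis $\inf_{\mathds{R}^n}u=0$. First I would fix an arbitrary point $x\in\mathds{R}^n$ and, for each $R>0$, apply Theorem~\ref{Maly's type result} on the ball $B(x,2R)$ with $x_0=x$ and $\mu=\mu_u$ (restricted to $B(x,2R)$, which is consistent because the Riesz measure is defined locally). This gives
\begin{equation*}
    C_1\,\mathbf{W}_G^R\mu(x)\leq u(x)\leq C_2\Big(\inf_{B(x,R)}u+\mathbf{W}_G^R\mu(x)\Big),
\end{equation*}
with $C_1,C_2$ depending only on $n,p,q$ and, crucially, \emph{independent of $R$}. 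For the lower bound, I let $R\to\infty$ and use monotone convergence: $t\mapsto g^{-1}\big(\mu(B(x,t))/t^{n-1}\big)$ is nonnegative, so $\mathbf{W}_G^R\mu(x)\uparrow\mathbf{W}_G\mu(x)$, yielding $C_1\,\mathbf{W}_G\mu(x)\leq u(x)$.

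For the upper bound, the point is that $\inf_{B(x,R)}u\to\inf_{\mathds{R}^n}u=0$ as $R\to\infty$. Indeed $R\mapsto\inf_{B(x,R)}u$ is nonincreasing and bounded below by $\inf_{\mathds{R}^n}u=0$, and since $\bigcup_{R>0}B(x,R)=\mathds{R}^n$ one has $\inf_{R>0}\inf_{B(x,R)}u=\inf_{\mathds{R}^n}u=0$; hence $\inf_{B(x,R)}u\to0$. Passing to the limit in the upper bound of Theorem~\ref{Maly's type result} (again using $\mathbf{W}_G^R\mu(x)\uparrow\mathbf{W}_G\mu(x)$, whether finite or infinite) gives $u(x)\leq C_2\,\mathbf{W}_G\mu(x)$. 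Setting $K=\max\{C_1^{-1},C_2\}\geq1$, which depends only on $n,p,q$, and noting that $x\in\mathds{R}^n$ was arbitrary, we obtain $K^{-1}\mathbf{W}_G\mu(x)\leq u(x)\leq K\,\mathbf{W}_G\mu(x)$ for all $x\in\mathds{R}^n$.

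One should check two small points for rigor. First, that Theorem~\ref{Maly's type result} is applicable for every $R>0$: $u$ is superharmonic on all of $\mathds{R}^n$, hence in particular on each $B(x,2R)$, and its Riesz measure on $B(x,2R)$ is the restriction of $\mu_u$, so the hypotheses are met with uniform constants. Second, one must be careful that $u(x)$ could a priori be $+\infty$ at some points; but the lower bound forces $\mathbf{W}_G\mu(x)<\infty$ wherever $u(x)<\infty$, and conversely the upper bound shows $u(x)<\infty$ wherever $\mathbf{W}_G\mu(x)<\infty$, so the stated two-sided inequality holds as an identity of extended-real-valued quantities with no inconsistency. The main (and essentially only) obstacle is making sure the constants in Theorem~\ref{Maly's type result} are genuinely independent of $R$ — which is exactly how that theorem is stated — and correctly justifying $\inf_{B(x,R)}u\to0$; both are routine once the uniformity is in hand, so this corollary is a short limiting argument rather than a substantive new proof.
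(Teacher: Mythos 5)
Your proof is correct and follows exactly the same route as the paper: fix $x$, apply Theorem~\ref{Maly's type result} on $B(x,2R)$, note that the constants are $R$-independent, use $\inf_{B(x,R)}u\to\inf_{\mathds{R}^n}u=0$ together with $\mathbf{W}_G^R\mu(x)\uparrow\mathbf{W}_G\mu(x)$ as $R\to\infty$, and take $K$ to be the larger of $C_1^{-1}$ and $C_2$. The only cosmetic difference is that the paper writes $K=\max\{C_2,C_1^{-1},1\}$, explicitly forcing $K\geq1$, whereas you assert $\max\{C_1^{-1},C_2\}\geq1$ without argument — harmless, but if you want it airtight simply include the $1$ in the max as the paper does.
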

\begin{proof}
    Fix $x\in\mathds{R}^n$. Then clearly $u$ is superharmonic in $B(x,2R)$ for all $R>0$. By Theorem~\ref{Maly's type result}, there exist constants $C_1=C_1(n,p,q)>0$ and $C_2=C_2(n,p,q)>0$ such that
    \begin{equation*}
         C_1\mathbf{W}_G^R\mu(x)\leq u(x)\leq C_2\big(\inf_{B(x,R)}u+\mathbf{W}_G^R\mu(x)\big) \quad \forall R>0.
    \end{equation*}
    Since $\inf_{\mathds{R}^n}u=0$, $\lim_{R\to \infty}\inf_{B(x,R)}u=0$. Consequently, being $C_1$ and $C_2$ independent of $R$, letting $R\to\infty$ in the previous bounds, we arrive at
    \begin{equation*}
        C_1\mathbf{W}_G\mu(x)\leq u(x)\leq C_2\mathbf{W}_G\mu(x).
    \end{equation*}
    Setting $K=\max\{C_2, (C_1)^{-1}, 1\}$, Corollary~\ref{maly's result pratical} is proved since $x$ was arbitrary.
\end{proof}

 \begin{remark}\label{remark supersolution and superharmonic}
       Combining \cite[Lemma~4.4]{MR4482108} with \cite[Theorem~7.16]{MR2305115}, we infer that for each $u$ supersolutiuon in $\Omega$ in the sense of Definition~\ref{def supersolution}, the function
       \begin{equation*}
           \Tilde{u}(x):= \mathrm{ess}\varliminf_{y\to x}u(x), \quad x\in\Omega,
       \end{equation*}
       is a superharmonic function in $\Omega$ satisfying $\Tilde{u}=u$ almost everywhere in $\Omega$. Thus each supersolution
       can be redefined in a set of measure zero such that the previous limit holds. From this, a supersolution will be treated as a superharmonic function. In particular, when $\Omega=\mathds{R}^n$, Corollary~\ref{maly's result pratical} holds for supersolutions in $\mathds{R}^n$. 
   \end{remark}

Next, we state a result suitable to apply to the case of Orlicz growth which gives a condition to nonnegative Radon measure $\mu$ belonging to $\big(W_0^{1, G}(\Omega)\big)^\ast$ in terms of the Wolff potential $\mathbf{W}_G\mu$, provided $\Omega$ is bounded and $\mathrm{supp}\,\mu \subset \Omega$. Here $\big(W_0^{1, G}(\Omega)\big)^\ast$ means the dual space of $W_0^{1, G}(\Omega)$. See 
\cite[Theorem~3]{MR4803728}, and also \cite[Theorem~1]{MR727526}.
\begin{theoremletter}\label{Inequality Wolff-Orlicz}
  Suppose that $\Omega$ is bounded. Let $\mu\in M^+(\Omega)$ with $\mathrm{supp}\,\mu\subset\Omega$. Then
  \begin{equation*}
      \mu\in \big(W_0^{1, G}(\Omega)\big)^{\ast} \Longleftrightarrow \int_{\Omega}\mathbf{W}_G^R\mu\,\mathrm{d}\mu< \infty \quad \mbox{for some }R>0.
  \end{equation*}
\end{theoremletter}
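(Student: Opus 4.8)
Write $\mathbf{A}(\xi)=\tfrac{g(|\xi|)}{|\xi|}\xi$ for the vector field appearing in \eqref{eq. general}. The plan is to route both implications through the solution $u$ of $-\mathrm{div}\,\mathbf{A}(\nabla u)=\mu$ and the energy identity obtained by testing with $u$ (or its truncations),
\begin{equation*}
\int_\Omega \mathbf{A}(\nabla u)\cdot\nabla u\,\mathrm{d}x=\int_\Omega g(|\nabla u|)|\nabla u|\,\mathrm{d}x=\int_\Omega u\,\mathrm{d}\mu .
\end{equation*}
By \eqref{data condition elementary} the left side is comparable to $\rho_G(|\nabla u|)=\int_\Omega G(|\nabla u|)\,\mathrm{d}x$, so (using Lemma~\ref{modular poincare ineq} and that $\Omega$ is bounded) $u\in W_0^{1,G}(\Omega)$ iff $\int_\Omega u\,\mathrm{d}\mu<\infty$; while the two-sided pointwise bounds of Theorem~\ref{Maly's type result} and Corollary~\ref{maly's result pratical} tie $u$ to $\mathbf{W}_G\mu$ on $\mathrm{supp}\,\mu$, so $\int_\Omega u\,\mathrm{d}\mu$ is comparable to $\int_\Omega\mathbf{W}_G\mu\,\mathrm{d}\mu$, which, since $\mathrm{supp}\,\mu$ is compact and $\Omega$ bounded, differs from $\int_\Omega\mathbf{W}_G^R\mu\,\mathrm{d}\mu$ by a finite amount ($\mathbf{W}_G\mu-\mathbf{W}_G^R\mu$ being bounded on $\mathrm{supp}\,\mu$ in the relevant range $p<n$). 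Thus I want the chain $\mu\in(W_0^{1,G}(\Omega))^\ast\iff u\in W_0^{1,G}(\Omega)\iff\int_\Omega u\,\mathrm{d}\mu<\infty\iff\int_\Omega\mathbf{W}_G^R\mu\,\mathrm{d}\mu<\infty$.

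For the implication ``$\Rightarrow$'' (necessity) I would argue as follows. By Theorem~\ref{existence in W0} there is $u\in W_0^{1,G}(\Omega)$ solving $-\mathrm{div}\,\mathbf{A}(\nabla u)=\mu$; since $\min\{u,0\}\in W_0^{1,G}(\Omega)$, Lemma~\ref{comparison principle super/subsolution} applied to $u$ and the subsolution $0$ gives $u\ge0$ a.e., and by Remark~\ref{remark supersolution and superharmonic} we may take $u$ superharmonic with Riesz measure $\mu$. As $\mathrm{supp}\,\mu$ is compactly contained in $\Omega$, fix $R>0$ with $B(x,2R)\subset\Omega$ for all $x\in\mathrm{supp}\,\mu$; the lower bound in Theorem~\ref{Maly's type result} gives $C_1\mathbf{W}_G^R\mu(x)\le u(x)$ on $\mathrm{supp}\,\mu$. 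The weak formulation extends from $C_c^\infty(\Omega)$ to $W_0^{1,G}(\Omega)$ by density (the left side is continuous in that norm by Lemma~\ref{Holder-Orlicz} together with \eqref{relation G and tilde G}, the right side because $\mu$ is in the dual); testing with $T_k(u)\in W_0^{1,G}(\Omega)$, using $\nabla T_k(u)=\chi_{\{u<k\}}\nabla u$ and \eqref{data condition elementary},
\begin{equation*}
\int_\Omega T_k(u)\,\mathrm{d}\mu=\int_{\{u<k\}}g(|\nabla u|)|\nabla u|\,\mathrm{d}x\le q\,\rho_G(|\nabla u|)<\infty ,
\end{equation*}
so $\int_\Omega u\,\mathrm{d}\mu\le q\,\rho_G(|\nabla u|)<\infty$ by monotone convergence, and integrating the pointwise lower bound against $\mu$ gives $C_1\int_\Omega\mathbf{W}_G^R\mu\,\mathrm{d}\mu\le\int_\Omega u\,\mathrm{d}\mu<\infty$.

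For ``$\Leftarrow$'' (sufficiency) I treat the main range $p<q<n$ (the remaining borderline cases $q\ge n$ being dealt with separately). I would first note that $\int_\Omega\mathbf{W}_G^R\mu\,\mathrm{d}\mu<\infty$ is independent of $R$ and, since $\mathrm{supp}\,\mu$ is compact, forces $\int_{\mathds{R}^n}\mathbf{W}_G\mu\,\mathrm{d}\mu<\infty$. Then I would invoke the nonlinear potential theory underlying Corollary~\ref{maly's result pratical} to produce a nonnegative superharmonic $u$ on $\mathds{R}^n$ with Riesz measure $\mu$ (extended by zero) and $\inf_{\mathds{R}^n}u=0$ — obtainable, for instance, as the increasing limit, controlled by Theorem~\ref{harnack principle orlicz5}, of solutions with vanishing infimum on an exhausting sequence of balls. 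Corollary~\ref{maly's result pratical} then yields $u(x)\le K\,\mathbf{W}_G\mu(x)$ for all $x$, hence $\int_{\mathds{R}^n}u\,\mathrm{d}\mu\le K\int_{\mathds{R}^n}\mathbf{W}_G\mu\,\mathrm{d}\mu<\infty$. Next, the truncation energy estimate: each $T_k(u)$ is a supersolution whose Riesz measure $\mu_k$ satisfies $\int_{\mathds{R}^n}T_k(u)\,\mathrm{d}\mu_k\le\int_{\mathds{R}^n}u\,\mathrm{d}\mu$, so testing its equation with $T_k(u)$ and using \eqref{data condition elementary},
\begin{equation*}
\int_{\mathds{R}^n}G(|\nabla T_k(u)|)\,\mathrm{d}x\le\tfrac1p\int_{\mathds{R}^n}g(|\nabla T_k(u)|)|\nabla T_k(u)|\,\mathrm{d}x=\tfrac1p\int_{\mathds{R}^n}T_k(u)\,\mathrm{d}\mu_k\le\tfrac1p\int_{\mathds{R}^n}u\,\mathrm{d}\mu ,
\end{equation*}
uniformly in $k$; by Lemma~\ref{lemma relation norm and modular} the $\nabla T_k(u)$ are bounded in $L^G(\mathds{R}^n)$, and since $\nabla T_k(u)\to Du$ pointwise, Theorem~\ref{Lemma weakly convergence} together with weak lower semicontinuity of the modular gives $Du\in L^G(\mathds{R}^n)$ with $\int_{\mathds{R}^n}G(|Du|)\,\mathrm{d}x\le\tfrac1p\int_{\mathds{R}^n}u\,\mathrm{d}\mu<\infty$, i.e.\ $u\in\mathcal{D}^{1,G}(\mathds{R}^n)$. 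Finally, for $\varphi\in C_c^\infty(\Omega)$ (extended by zero), from $-\mathrm{div}\,\mathbf{A}(Du)=\mu$, Lemma~\ref{Holder-Orlicz} and \eqref{relation G and tilde G},
\begin{equation*}
\Big|\int_\Omega\varphi\,\mathrm{d}\mu\Big|=\Big|\int_{\mathds{R}^n}\mathbf{A}(Du)\cdot\nabla\varphi\,\mathrm{d}x\Big|\le 2\,\|g(|Du|)\|_{L^{G^\ast}}\|\nabla\varphi\|_{L^G}\le C\,\|\varphi\|_{W_0^{1,G}},
\end{equation*}
with $\|g(|Du|)\|_{L^{G^\ast}}<\infty$ by \eqref{relation G and tilde G}, $u\in\mathcal{D}^{1,G}(\mathds{R}^n)$ and Lemma~\ref{lemma relation norm and modular} applied to $G^\ast$; hence $\mu\in(W_0^{1,G}(\Omega))^\ast$.

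The half ``$\Rightarrow$'' is essentially mechanical once Theorem~\ref{existence in W0} and the lower bound of Theorem~\ref{Maly's type result} are in hand. The hard part is ``$\Leftarrow$'': one must build a function whose \emph{gradient} has finite Orlicz energy from finiteness of the \emph{scalar} Wolff energy, and the only lever is $u\le K\mathbf{W}_G\mu$ combined with the energy identity. This forces the use of the \emph{global} potential-theoretic solution with vanishing infimum — the interior estimate of Theorem~\ref{Maly's type result} alone does not suffice, since its $\inf_{B(x,R)}u$ term cannot be absorbed for a zero-boundary-data solution on the bounded domain $\Omega$ — and it requires controlling the Riesz measures $\mu_k$ of the truncations $T_k(u)$ against $\mu$; these two points, together with the borderline exponents $q\ge n$, are where I expect the real work to lie.
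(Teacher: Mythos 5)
The paper does not prove Theorem~\ref{Inequality Wolff-Orlicz}: it is quoted verbatim from Chlebicka--Giannetti--Zatorska-Goldstein \cite[Theorem~3]{MR4803728} (with Hedberg--Wolff \cite[Theorem~1]{MR727526} as the historical source), so there is no internal proof to compare your sketch against. What follows is therefore an assessment of your proposal on its own terms.

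Your necessity direction ``$\Rightarrow$'' is sound. Solving the Dirichlet problem (Theorem~\ref{existence in W0}), obtaining $u\ge 0$ superharmonic, testing the extended weak formulation with $T_k(u)\in W_0^{1,G}(\Omega)$ to get $\int_\Omega T_k(u)\,\mathrm{d}\mu=\int_{\{u<k\}}g(|\nabla u|)|\nabla u|\,\mathrm{d}x\le q\,\rho_G(|\nabla u|)$, passing to the limit by monotone convergence, and then integrating the pointwise lower bound $C_1\mathbf{W}_G^R\mu\le u$ on $\mathrm{supp}\,\mu$ against $\mu$ is exactly the expected argument, and every step is justified from the tools available in the paper.

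Your sufficiency direction ``$\Leftarrow$'' has genuine gaps, two of which are structural. First, the reduction from the truncated energy $\int_\Omega\mathbf{W}_G^R\mu\,\mathrm{d}\mu$ to the global energy $\int_{\mathds{R}^n}\mathbf{W}_G\mu\,\mathrm{d}\mu$ requires $p<n$: for $p\ge n$ the tail $\int_R^\infty g^{-1}\big(\mu(B(x,t))/t^{n-1}\big)\,\mathrm{d}t$ diverges on $\mathrm{supp}\,\mu$ whenever $\mu\neq0$, so $\mathbf{W}_G\mu\equiv\infty$ there and the global potential is useless. You flag ``$q\ge n$ to be dealt with separately,'' but (a) the binding constraint for your route is on $p$, not $q$, and (b) neither case is actually treated; yet the theorem as stated, and as invoked in Lemma~\ref{wolff inequality lemma}, carries no such restriction. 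Working directly with $\mathbf{W}_G^R\mu$ on the bounded domain — which is what a local, dyadic-decomposition proof in the Hedberg--Wolff spirit does — avoids this entirely. Second, you treat as available the nonnegative $\mathcal{A}$-superharmonic function $u$ on $\mathds{R}^n$ with $\inf_{\mathds{R}^n}u=0$ and Riesz measure exactly $\mu$; this is a nontrivial existence result for the global potential and cannot simply be ``invoked'' — Corollary~\ref{maly's result pratical} gives the two-sided Wolff bounds \emph{for such a $u$}, but does not produce one. Finally, the inequality $\int_{\mathds{R}^n}T_k(u)\,\mathrm{d}\mu_k\le\int_{\mathds{R}^n}u\,\mathrm{d}\mu$ comparing the Riesz measures $\mu_k$ of the truncations to $\mu$ is asserted without proof; the measures $\mu_k$ carry mass on the level set $\{u=k\}$ and the comparison requires an argument (à la Kilpeläinen--Malý), not just monotonicity of $T_k$. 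Your closing diagnostic — that the interior estimate of Theorem~\ref{Maly's type result} cannot be applied naively because of the $\inf_{B(x,R)}u$ term — is accurate, but it is precisely the reason one should look for a proof of ``$\Leftarrow$'' that does not detour through the global potential at all.
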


\section{Wolff potentials}\label{Wolff potentials section orlicz}

This section will be devoted to establishing preliminary results to the proof of Theorem~\ref{existencewolffequation}. 

From now on, $g$ is the function given by \eqref{our g}, with $p>1$, and $G$ is primitive. We fix
\begin{equation}\label{def of f and F}
    \left\{\begin{aligned}
    & f(t)=g(t^{\gamma}), \quad F(t)=\int_0^t f(s)\,\mathrm{d} s\quad \forall t\geq 0,\quad \mbox{that is} \\
        & f(t)=t^{(p-1)\gamma}+t^{(q-1)\gamma}, \quad F(t)=\frac{t^{(p-1)\gamma+1}}{(p-1)\gamma+1}+\frac{t^{(q-1)\gamma+1}}{(q-1)\gamma+1} \quad \forall t\geq 0,
    \end{aligned}
    \right.
\end{equation}
where $\gamma$ satisfies \eqref{condition gamma}. Using \eqref{assumption on g}, one can verify that
\begin{equation*}
    (p-1)\gamma\leq \frac{tf'(t)}{f(t)}\leq (q-1)\gamma, \quad (p-1)\gamma+1\leq \frac{tf(t)}{F(t)}\leq (q-1)\gamma+1 \quad \forall t>0.
\end{equation*}
From this, $L^{F}(\mathds{R}^n,\mathrm{d} \sigma)$ is a Banach space. Moreover, using \cite[Lemma~3.7.7]{MR3931352}, we can identify $L^{F}(\mathds{R}^n,\mathrm{d} \sigma)$ with $L^{(p-1)\gamma+1}(\mathds{R}^n,\mathrm{d} \sigma)\cap L^{(q-1)\gamma+1}(\mathds{R}^n,\mathrm{d} \sigma)$ as Banach spaces, where the intersection is  equipped with the norm
\begin{equation*}
    \|u\|_{L^{(p-1)\gamma+1}\cap L^{(q-1)\gamma+1}}=\max\left\{\|u\|_{L^{(p-1)\gamma+1}},\|u\|_{L^{(q-1)\gamma+1}}\right\}.
\end{equation*}
Notice that the function $g$ satisfies the ``sub-multiplicity'' condition
\begin{equation}\label{cond. submulti}
    g(a b)\leq g(a)g(b) \quad \forall a,b\geq 0,
\end{equation}
which implies a ``sup-multiplicity'' condition to $g^{-1}$:
\begin{equation}\label{cond. supmulti}
  g^{-1}(ab)\geq g^{-1}(a)g^{-1}(b) \quad \forall a,b\geq 0.  
\end{equation}
In the general context of $N$-functions, these conditions are known as $\Delta'$-\textit{contidion} (see for instance \cite[page~28]{MR1113700}).

We will consider supersolutions to the integral equation \eqref{wolff integral equation}.

\begin{definition}
     A \textit{supersolution} to \eqref{wolff integral equation} is a nonnegative function $u \in L_{\mathrm{loc}}^{f}(\mathds{R}^n,\mathrm{d}\sigma)$ which satisfies (pointwise)
\begin{equation}\label{integral equation-inequalitie}
    u\geq \mathbf{W}_{G}\big(f(u)\mathrm{d}\sigma\big) \quad \mbox{in}\quad \mathds{R}^n.
\end{equation}
The notion of \textit{solution} or \textit{subsolution} to \eqref{wolff integral equation} is defined similarly by replacing ``$\geq$'' by ``$=$'' or ``$\leq$'' in \eqref{integral equation-inequalitie}, respectively.
\end{definition}
In the following theorem, we obtain a lower bound for supersolutions to \eqref{wolff integral equation} in terms of Wolff potentials, whenever \eqref{cond. submulti} holds.
\begin{theorem}\label{estimativainferior orlicz}
  Let $\sigma\in M^+(\mathds{R}^n)$ with  $\mathbf{W}_G\sigma< \infty$ in $\mathds{R}^n$ and let $u$ be a nontrivial supersolution to \eqref{integral equation-inequalitie}. Then there exists a constant $0<C<1$ depending only $n$, $p,q$ and $\gamma$ such that
    \begin{equation}
        u(x)\geq C\left(\mathbf{W}_{G}\sigma(x)\right)^{\frac{1}{1-\gamma}} \quad \forall x\in\mathds{R}^n.
    \end{equation}
\end{theorem}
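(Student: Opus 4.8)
The plan is to bootstrap a pointwise lower bound for a nontrivial supersolution $u$ by iterating the defining inequality $u \geq \mathbf{W}_G(f(u)\,\mathrm{d}\sigma)$ together with the sub-/sup-multiplicativity conditions \eqref{cond. submulti}--\eqref{cond. supmulti}. First I would establish a baseline: since $u$ is nontrivial and nonnegative with $\mathbf{W}_G\sigma < \infty$, there is a point $x_1$ with $u(x_1) > 0$; using lower semicontinuity-type arguments or simply the structure of the Wolff potential (the integrand $g^{-1}(\sigma(B(x,t))/t^{n-1})$ is controlled uniformly on compact sets once $\sigma$ is nontrivial on some ball), I would extract a constant $m_0 > 0$ and a radius so that $u \geq m_0$ on a fixed ball $B_0$, hence $f(u) \geq f(m_0) = m_0^{(p-1)\gamma} + m_0^{(q-1)\gamma} > 0$ there, which feeds a genuinely positive measure $f(u)\,\mathrm{d}\sigma$ into the potential. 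Actually, the cleaner route — and I suspect the one intended — is to avoid any explicit constant and work directly with the monotone structure.

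The core step is the iteration. Set $u_0 = \mathbf{W}_G\sigma$ (finite by hypothesis). I would show by induction that $u \geq c_k \, (\mathbf{W}_G\sigma)^{a_k}$ pointwise, where the exponents $a_k$ and constants $c_k$ are defined recursively, and that $a_k \to 1/(1-\gamma)$ while $c_k$ stays bounded below by a positive constant. The recursion comes from plugging the bound at stage $k$ into $u \geq \mathbf{W}_G(f(u)\,\mathrm{d}\sigma)$: using $f(t) = t^{(p-1)\gamma} + t^{(q-1)\gamma} \geq g(t^\gamma)$ and the monotonicity of $\mathbf{W}_G$ in the measure, together with the key homogeneity-type estimate — if $u \geq c(\mathbf{W}_G\sigma)^a$, then by \eqref{estimate on g} and \eqref{cond. supmulti} one can pull the power of the potential and the constant through $g^{-1}$ inside the Wolff integral. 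Concretely, since $\mathbf{W}_G\sigma(x) = \int_0^\infty g^{-1}(\sigma(B(x,t))/t^{n-1})\,\mathrm{d}t$, and using the sup-multiplicativity $g^{-1}(ab) \geq g^{-1}(a)g^{-1}(b)$ applied with one factor built from $\sigma$ and the other absorbing the accumulated constant, one obtains a bound of the form $\mathbf{W}_G(f(u)\,\mathrm{d}\sigma)(x) \geq \tilde c \, (\mathbf{W}_G\sigma(x))^{1 + \gamma a}$, up to the powers $\min/\max\{p,q\}$ bookkeeping. This gives the recursion $a_{k+1} = 1 + \gamma a_k$ (so $a_k = 1 + \gamma + \cdots + \gamma^k \to 1/(1-\gamma)$ since $\gamma < 1$), and a multiplicative constant recursion $c_{k+1} = \tilde c \cdot c_k^{\theta_k}$ with $\theta_k$ the relevant power from \eqref{estimate on g-1}; one checks $\prod c_k$ converges to a positive limit $C \in (0,1)$ because $\gamma < 1$ forces the exponents $\theta_k$ to be summable in the right sense.

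The main obstacle will be controlling the multiplicative constants uniformly through the iteration — in particular making sure the product of the $c_k$'s does not degenerate to zero. This is where one must use $\gamma < 1$ essentially (indeed \eqref{condition gamma} forces this): the exponents governing how constants propagate are powers of $\gamma$ or $1/(p-1)$-type factors that form a convergent product precisely because of the sublinearity. A secondary technical point is handling the two competing growth rates in $f$ and in $g^{-1}$ — one gets $\min/\max$ of powers $p-1, q-1$ at every step from \eqref{estimate on g} and \eqref{estimate on g-1}, so I would split into the regimes $\mathbf{W}_G\sigma(x) \leq 1$ and $\mathbf{W}_G\sigma(x) > 1$ and track which of the two exponents is active, or more cleanly absorb everything into a single constant depending on $n, p, q, \gamma$ by bounding all the $\min$'s and $\max$'s crudely. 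A final wrinkle: one needs $\mathbf{W}_G\sigma(x) > 0$ somewhere to start — but if $\mathbf{W}_G\sigma \equiv 0$ then the claimed inequality is trivial, and otherwise the iteration propagates positivity, so nontriviality of $u$ is exactly what prevents the degenerate case. Passing to the limit $k \to \infty$ in $u \geq c_k (\mathbf{W}_G\sigma)^{a_k}$ — legitimate since both sides are pointwise and $a_k \uparrow$, $c_k$ bounded below, with the usual care where $\mathbf{W}_G\sigma \gtrless 1$ — yields $u(x) \geq C(\mathbf{W}_G\sigma(x))^{1/(1-\gamma)}$ for all $x$, as claimed.
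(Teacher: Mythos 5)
Your overall architecture matches the paper's: iterate the defining inequality, run a recursion $\delta_j = 1+\gamma\delta_{j-1}$ on the exponents, and exploit $\gamma<1$ to stabilize the constants. But there are two concrete gaps, one of which is the crux of the argument.

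The central step you assert — that $u \geq c(\mathbf{W}_G\sigma)^a$ implies $\mathbf{W}_G(f(u)\,\mathrm{d}\sigma)\geq\tilde c\,(\mathbf{W}_G\sigma)^{1+\gamma a}$ ``using sup-multiplicativity with one factor from $\sigma$ and the other absorbing the constant'' — does not follow from sup-multiplicativity alone. Pulling a constant through $g^{-1}$ only rescales; it cannot raise the power of $\mathbf{W}_G\sigma$ by one. The paper's Lemma~\ref{estimativelambda} is exactly this step, and its proof hinges on two facts you have not supplied: (i) a geometric tail estimate, namely $\mathbf{W}_G\sigma(y)\geq c_1\rho(t)$ for every $y\in B(x,t)$, where $\rho(t)=\int_t^\infty g^{-1}(\sigma(B(x,s))/s^{n-1})\,\mathrm{d}s$ is the tail of $\mathbf{W}_G\sigma(x)$; and (ii) after substituting this bound inside the Wolff integral and using sup-multiplicativity to split $g^{-1}$, one recognizes $\int_0^\infty \rho(t)^{\alpha}(-\rho'(t))\,\mathrm{d}t$ and integrates by parts to produce $\rho(0)^{1+\alpha}/(1+\alpha)=(\mathbf{W}_G\sigma(x))^{1+\alpha}/(1+\alpha)$. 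The integration by parts is what converts the $t$-dependent lower bound into the extra power; without it, your claimed inequality simply isn't established.

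Two further points are not quite right or are left vague. First, the constant recursion is $c_j = K\,c_{j-1}^{\theta}$ with $\theta=\gamma(q-1)/(p-1)<1$ by \eqref{condition gamma}; the relevant fact is that this fixed-point iteration converges to $K^{1/(1-\theta)}$ \emph{independently of $c_1$} — not that ``$\prod c_k$ converges.'' This independence is essential, because the starting lower bound necessarily involves a $c_1$ that depends on $u$ and $\sigma$ (and on $R$ in the localization), yet the final constant $C$ must depend only on $n,p,q,\gamma$. Second, your baseline step is sketchy; the paper localizes to $\sigma_B=\chi_{B(0,R)}\sigma$, shows $\mathbf{W}_G(f(u)\,\mathrm{d}\sigma)(z)\geq A(R)>0$ uniformly for $z\in B(0,R)$ from the tail of the Wolff integral (using $B(z,2t)\supset B(0,t)$ for $z\in B$, $t\geq R$), iterates once to get $u\geq A(R)^{\gamma}\mathbf{W}_G\sigma_B$, runs the recursion with $\sigma_B$, and only sends $R\to\infty$ at the end, relying precisely on the $c_1$-independence of the limit. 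Your alternative of ``extracting $m_0>0$ on a ball'' does not directly produce a bound of the form $u\geq c_1\mathbf{W}_G\sigma$ and would need substantially more work.
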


Let us show the following result before proving Theorem~\ref{estimativainferior orlicz}.
\begin{lemma}\label{estimativelambda}
  Fix $\alpha>0$ and let $\varphi(t)=g(t^{\alpha}), \; t\geq 0$. Let $\sigma\in M^+(\mathds{R}^n)$ with  $\mathbf{W}_G\sigma< \infty$ in $\mathds{R}^n$. Then there exists a constant  $0<\lambda<1$, which depends only on $n$, $p,q$ and $\alpha$, such that
    \begin{equation*}
\mathbf{W}_{G}\big(\varphi(\mathbf{W}_{G}\sigma)\mathrm{d} \sigma\big)(x)\geq \lambda\left(\mathbf{W}_{G}\sigma(x)\right)^{1+\alpha}, \quad \forall x\in \mathds{R}^n.
    \end{equation*}
    \end{lemma}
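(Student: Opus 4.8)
The plan is to exploit the "sup-multiplicativity" \eqref{cond. supmulti} of $g^{-1}$ together with the scaling estimate \eqref{estimate on g-1} inside the defining integral of the Wolff potential. Fix $x\in\mathds{R}^n$ and write $w=\mathbf{W}_G\sigma$. The key observation is a lower bound for the measure $\varphi(w)\,\mathrm{d}\sigma$ on a ball $B(x,t)$ in terms of $\sigma(B(x,t))$ and a power of $w$ evaluated at a controlled scale. Concretely, since $w$ is (by its very definition as an integral over $(0,\infty)$) comparable on $B(x,t)$ to a "tail" contribution, one first records the elementary bound
\begin{equation*}
 w(y)\geq \int_0^{t}g^{-1}\!\bigg(\frac{\sigma(B(y,s))}{s^{n-1}}\bigg)\,\mathrm{d} s \geq c\,t\,g^{-1}\!\bigg(\frac{\sigma(B(x,t/2))}{t^{n-1}}\bigg)\qquad\text{for }y\in B(x,t/2),
\end{equation*}
using monotonicity of $s\mapsto g^{-1}$ and the inclusion $B(x,t/2)\subset B(y,t)$; here I would be slightly careful and instead bound $w(y)$ from below on the smaller ball by the portion of the integral over $(t/2,t)$, say, to get a clean factor of $t$. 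This yields $\sigma$-a.e.\ on $B(x,t/2)$ a lower bound $\varphi(w(y))=g\big(w(y)^\alpha\big)\geq g\big((ct)^\alpha g^{-1}(\sigma(B(x,t/2))/t^{n-1})^\alpha\big)$.

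Next I would integrate this over $B(x,t/2)$ against $\mathrm{d}\sigma$, giving
\begin{equation*}
 \big(\varphi(w)\,\mathrm{d}\sigma\big)\big(B(x,t/2)\big)\geq \sigma\big(B(x,t/2)\big)\, g\!\Big((ct)^\alpha\, g^{-1}\big(\sigma(B(x,t/2))/t^{n-1}\big)^\alpha\Big),
\end{equation*}
and then divide by $(t/2)^{n-1}$ and apply $g^{-1}$, using \eqref{cond. supmulti} to split the product inside $g^{-1}$ and \eqref{estimate on g-1} (with $\alpha$-powers) to extract the relevant powers. The upshot should be
\begin{equation*}
 g^{-1}\!\bigg(\frac{(\varphi(w)\mathrm{d}\sigma)(B(x,t/2))}{(t/2)^{n-1}}\bigg)\geq c'\, t^{\alpha}\, g^{-1}\!\bigg(\frac{\sigma(B(x,t/2))}{t^{n-1}}\bigg)^{1+\alpha},
\end{equation*}
where the $t^\alpha$ is produced by pulling the factor $(ct)^\alpha$ through $g$ and then through $g^{-1}$, and the exponent $1+\alpha$ on $g^{-1}(\cdots)$ comes from combining the $g^{-1}\!\circ g$ on the $\alpha$-power term with the one leftover factor. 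Integrating in $t$ over $(0,\infty)$, changing variables $t\mapsto 2t$, and then comparing $\int_0^\infty t^\alpha\, g^{-1}(\sigma(B(x,t))/t^{n-1})^{1+\alpha}\,\mathrm{d} t$ with $\big(\int_0^\infty g^{-1}(\sigma(B(x,t))/t^{n-1})\,\mathrm{d} t\big)^{1+\alpha}=w(x)^{1+\alpha}$ via the standard "Wolff potential raised to a power" trick (Jensen/Minkowski-type inequality for the measure $\mathrm{d} t/t$, or the elementary fact that $\int_0^\infty h$ dominates $\big(\int_0^\infty h\big)$-type products when $h$ is the integrand of a potential) gives the claimed inequality with $\lambda=\lambda(n,p,q,\alpha)\in(0,1)$.

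The main obstacle is the last comparison step: getting $\int_0^\infty t^{\alpha} h(t)^{1+\alpha}\,\mathrm{d} t \gtrsim \big(\int_0^\infty h(t)\,\mathrm{d} t\big)^{1+\alpha}$ for $h(t)=g^{-1}(\sigma(B(x,t))/t^{n-1})$ is not formal — one must use that $t\mapsto\sigma(B(x,t))$ is nondecreasing, so for any fixed $r$, $\int_0^r h \leq r\,h(r)$-type bounds fail in the wrong direction, and instead one argues that if $w(x)<\infty$ then for the dyadic decomposition $w(x)\approx\sum_k 2^k h(2^k)$ the superlinearity of $s\mapsto s^{1+\alpha}$ together with $t^\alpha\approx(2^k)^\alpha$ on $[2^k,2^{k+1}]$ yields $\int t^\alpha h^{1+\alpha}\gtrsim \sum_k (2^k h(2^k))^{1+\alpha}\gtrsim \big(\sum_k 2^k h(2^k)\big)^{1+\alpha}$, the last step being the embedding $\ell^1\hookrightarrow\ell^{1+\alpha}$. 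Making this rigorous — and in particular handling the constant so that it depends only on $n,p,q,\alpha$ — is where the real work lies; everything else is bookkeeping with \eqref{estimate on g-1} and \eqref{cond. supmulti}. Note the finiteness hypothesis $\mathbf{W}_G\sigma<\infty$ guarantees the right-hand side is finite so the inequality is not vacuous, and monotone convergence justifies all the interchanges.
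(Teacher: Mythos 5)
Your intermediate manipulations with \eqref{cond. supmulti} and \eqref{estimate on g-1} are fine, but the final comparison
\begin{equation*}
\int_0^\infty t^{\alpha}\,h(t)^{1+\alpha}\,\mathrm{d} t\ \gtrsim\ \Big(\int_0^\infty h(t)\,\mathrm{d} t\Big)^{1+\alpha},\qquad h(t)=g^{-1}\!\big(\sigma(B(x,t))/t^{n-1}\big),
\end{equation*}
is where the argument genuinely breaks down, and the dyadic sketch you give does not repair it. Passing to a dyadic sum, what you would need is $\sum_k\big(2^k h(2^k)\big)^{1+\alpha}\gtrsim\big(\sum_k 2^k h(2^k)\big)^{1+\alpha}$; but for $1+\alpha>1$ the elementary embedding $\ell^1\hookrightarrow\ell^{1+\alpha}$ gives exactly the \emph{reverse} inequality, $\sum_k|a_k|^{1+\alpha}\le\big(\sum_k|a_k|\big)^{1+\alpha}$, with no reverse bound uniform over sequences of positive terms. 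A concrete counterexample to the integral inequality is $h(t)=t^{-1}\chi_{[1,N]}(t)$ (which is of the admissible form when, say, $g(s)=s^{p-1}$ and $\sigma(B(x,t))\approx t^{n-p}$ on a range of scales): the left side is $\log N$ while the right side is $(\log N)^{1+\alpha}$, so the ratio tends to $0$ as $N\to\infty$. The extra factor $t^{\alpha}$, which your route produces by estimating the \emph{near} part $\int_0^t$ of the potential on the small ball, is precisely what destroys the comparison.

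The paper avoids this obstruction entirely by working with the \emph{tail} function $\rho(t)=\int_t^\infty g^{-1}\big(\sigma(B(x,s))/s^{n-1}\big)\,\mathrm{d} s$ rather than the near part. For $y\in B(x,t)$ one has $\mathbf{W}_G\sigma(y)\ge c_1\rho(t)$ with no spurious power of $t$; feeding this into the outer $g^{-1}$, using \eqref{cond. supmulti} to split off $g^{-1}(\varphi(\rho(t)))=\rho(t)^\alpha$, and observing that $\rho'(t)=-g^{-1}\big(\sigma(B(x,t))/t^{n-1}\big)$ turns the lower bound into $\int_0^\infty\rho(t)^\alpha\big(-\rho'(t)\big)\,\mathrm{d} t$, which integrates \emph{exactly} to $\rho(0)^{1+\alpha}/(1+\alpha)=\mathbf{W}_G\sigma(x)^{1+\alpha}/(1+\alpha)$. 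No Jensen/Minkowski-type step is needed and the constant is explicit, depending only on $n,p,q,\alpha$. If you want to salvage your approach, replace the near-tail lower bound on $w(y)$ with the far-tail bound $w(y)\ge c_1\rho(t)$ and aim for the integrand $\rho(t)^\alpha\big(-\rho'(t)\big)$ rather than $t^\alpha h(t)^{1+\alpha}$; the rest of your bookkeeping then goes through.
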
 
    \begin{proof}[Proof of Lemma~\ref{estimativelambda}]
        By definition \eqref{definition wolff}, for any $t>0$
        \begin{equation*}
            \mathbf{W}_{G}\sigma(y)\geq \int_t^{\infty} g^{-1}\bigg(\frac{\sigma(B(y,s))}{s^{n-1}}\bigg)\mathrm{d}s, \quad \forall y\in\mathds{R}^n.
        \end{equation*}
        Notice that $B(y,2s)\supset B(x,s)$ for $y\in B(x,t)$ and $s\geq t$, whence by \eqref{estimate on g-1} and the previous estimate it holds
        \begin{align}
           \mathbf{W}_{G}\sigma(y) & \geq  \int_t^{\infty} g^{-1}\bigg(\frac{\sigma(B(y,s))}{s^{n-1}}\bigg)\,\mathrm{d}s = \int_{t/2}^{\infty} g^{-1}\bigg(\frac{\sigma(B(y,2s))}{(2s)^{n-1}}\bigg)\,\mathrm{d}s \nonumber\\
            &  \geq c_1 \int_{t/2}^{\infty} g^{-1}\bigg(\frac{\sigma(B(y,2s))}{s^{n-1}}\bigg)\,\mathrm{d}s  \geq  c_1  \int_{t}^{\infty} g^{-1}\bigg(\frac{\sigma(B(y,2s))}{s^{n-1}}\bigg)\,\mathrm{d}s\nonumber\\    
            & \geq  c_1 \int_{t}^{\infty} g^{-1}\bigg(\frac{\sigma(B(x,s))}{s^{n-1}}\bigg)\,\mathrm{d}s =: c_1\rho(t), \label{est. 8}
        \end{align}
        where $c_1=c_1(n,p,q)>0$. Since $\varphi(t)=g(t^{\alpha})$ is an increasing function, it follows from \eqref{est. 8} that
        \begin{align}          \mathbf{W}_{G}\big(\varphi(\mathbf{W}_{G}\sigma)\mathrm{d} \sigma\big)(x)  & \geq \int_0^{\infty}g^{-1}\bigg[\frac{1}{t^{n-1}}\int_{B(x,t)}\varphi\big(\mathbf{W}_{G}\sigma(y)\big)\mathrm{d}\sigma(y)\bigg]\,\mathrm{d} t\nonumber\\
            & \geq \int_0^{\infty}g^{-1}\bigg[\frac{1}{t^{n-1}}\int_{B(x,t)}\varphi(c_1 \rho(t))\mathrm{d}\sigma(y)\bigg]\,\mathrm{d} t\nonumber\\
            & = \int_0^{\infty}g^{-1}\bigg[\varphi(c_1 \rho(t))\frac{\sigma(B(x,t))}{t^{n-1}}\bigg]\,\mathrm{d} t\quad  \forall x\in \mathds{R}^n. \label{est. 9}
        \end{align}
        By \eqref{estimate on g} and \eqref{estimate on g-1}, we have respectively $\varphi(c_1 \rho(t))\geq c_2\varphi(\rho(t))$ and  $g^{-1}(c_2 \varphi(\rho(t)))\geq c_3 g^{-1}(\varphi(\rho(t)))$, where $c_2=c_2(n,p,q,\alpha)>0$ and $c_3=c_3(n,p,q,\alpha)>0$. Using these estimates in \eqref{est. 9}, with the aid of \eqref{cond. supmulti}, we obtain
        \begin{align}             \mathbf{W}_{G}\big(\varphi(\mathbf{W}_{G}\sigma)\mathrm{d} \sigma\big)(x)&\geq c_3 \int_0^{\infty}g^{-1}\bigg[\varphi(\rho(t))\frac{\sigma(B(x,t))}{t^{n-1}}\bigg]\,\mathrm{d} t \nonumber\\
             &\geq c_3\int_0^{\infty}g^{-1}(\varphi(\rho(t))) g^{-1}\bigg(\frac{\sigma(B(x,t))}{t^{n-1}}\bigg)\,\mathrm{d} t. \label{est. 10}
        \end{align}
       Note that $g^{-1}(\varphi(\rho(t)))=\rho(t)^{\alpha}$ and,  by Fundamental Theorem of Calculus, 
       \begin{equation*}
           \rho'(t)=\frac{\mathrm{d}}{\mathrm{d} t}\bigg( \int_{t}^{\infty} g^{-1}\bigg(\frac{\sigma(B(x,s))}{s^{n-1}}\bigg)\,\mathrm{d}s\bigg)=-g^{-1}\bigg(\frac{\sigma(B(x,t))}{t^{n-1}}\bigg),
       \end{equation*}
       Hence, we may rewrite \eqref{est. 10} as follows:
       \begin{equation*}
           \mathbf{W}_{G}\big(\varphi(\mathbf{W}_{G}\sigma)\mathrm{d} \sigma\big)(x)\geq c_3\int_0^{\infty}\rho(t)^{\alpha} \left(-\rho'(t)\right) \mathrm{d} t.
       \end{equation*}
      Integrating by parts, we concluded from the previous inequality that
      \begin{equation*}
               \mathbf{W}_{G}\big(\varphi(\mathbf{W}_{G}\sigma)\mathrm{d} \sigma\big)(x)\geq \frac{c_3}{1+\alpha}\rho(0)^{1+\alpha}=\frac{c_3}{1+\alpha}\left(\mathbf{W}_{G}\sigma(x)\right)^{1+\alpha}, 
      \end{equation*}
      which completes the proof of Lemma~\ref{estimativelambda} by taking
      \begin{equation}\label{const lambda.}
          \lambda=\frac{c_3}{1+\alpha}=\frac{1}{1+\alpha}\Big(\frac{p}{q}\Big)^{\frac{2}{p-1}}\Big(\frac{p}{2^{n-1}q}\Big)^{\frac{\alpha (q-1)}{(p-1)^2}}.
      \end{equation}
    \end{proof}
\begin{proof}[Proof of Theorem~\ref{estimativainferior orlicz}]
The main idea of the proof is to iterate the inequality \eqref{integral equation-inequalitie} with Lemma~\ref{estimativelambda}.
First, we prove the following claim.
\begin{claim}\label{claim1 orlicz}
Let $\sigma\in M^{+}(\mathds{R}^n)$  with  $\mathbf{W}_G\sigma< \infty$ in $\mathds{R}^n$. Suppose that $u$ is a nontrivial supersolution to \eqref{wolff integral equation} such that it holds
\begin{equation*}
    u(x)\geq c\left(\mathbf{W}_{G}\sigma(x)\right)^{\delta}, \quad x\in\mathds{R}^n,
\end{equation*}
where $0<c<1$ and $\delta>0$. Then
\begin{equation*}
    u(x)\geq \Big(\frac{p}{q}\Big)^{\frac{2}{p-1}}c^{\frac{\gamma(q-1)}{p-1}}\lambda\left(\mathbf{W}_{G}\sigma(x)\right)^{1+\delta\gamma} \quad x\in \mathds{R}^n,
\end{equation*}
where $\lambda$ is the constant given in Lemma~\ref{estimativelambda}.
\end{claim}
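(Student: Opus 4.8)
The plan is to substitute the assumed estimate $u\ge c(\mathbf{W}_G\sigma)^{\delta}$ back into the supersolution inequality \eqref{integral equation-inequalitie} and then close the argument with Lemma~\ref{estimativelambda}. First I would use that $f(t)=g(t^{\gamma})$ is nondecreasing together with the monotonicity of $\mathbf{W}_G$ in its measure argument (immediate from the monotonicity of $g^{-1}$ in \eqref{definition wolff}): since $f\big(c(\mathbf{W}_G\sigma)^{\delta}\big)=g\big(c^{\gamma}(\mathbf{W}_G\sigma)^{\delta\gamma}\big)$, this gives, pointwise on $\mathds{R}^n$,
\[
u\ \ge\ \mathbf{W}_G\big(f(u)\,\mathrm{d}\sigma\big)\ \ge\ \mathbf{W}_G\big(f(c(\mathbf{W}_G\sigma)^{\delta})\,\mathrm{d}\sigma\big)\ =\ \mathbf{W}_G\big(g(c^{\gamma}(\mathbf{W}_G\sigma)^{\delta\gamma})\,\mathrm{d}\sigma\big).
\]

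Next I would peel off the constants one at a time. Setting $\varphi(t)=g(t^{\delta\gamma})$ and applying \eqref{estimate on g} with $\alpha=c^{\gamma}\in(0,1)$ — so that, because $1<p<q$, the relevant minimum is $\alpha^{q-1}$ — yields $g(c^{\gamma}(\mathbf{W}_G\sigma)^{\delta\gamma})\ge c^{\gamma(q-1)}\varphi(\mathbf{W}_G\sigma)$, and hence $u\ge \mathbf{W}_G\big(c^{\gamma(q-1)}\varphi(\mathbf{W}_G\sigma)\,\mathrm{d}\sigma\big)$. Then I would pull the scalar $c^{\gamma(q-1)}\in(0,1)$ out of the argument of $g^{-1}$ under the potential integral via \eqref{estimate on g-1} — where the minimum is now attained at the exponent $\tfrac{1}{p-1}$ — which leaves a factor $\left(\tfrac{p}{q}\right)^{1/(p-1)}c^{\gamma(q-1)/(p-1)}$ in front of $\mathbf{W}_G\big(\varphi(\mathbf{W}_G\sigma)\,\mathrm{d}\sigma\big)$.

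Finally I would invoke Lemma~\ref{estimativelambda} with its exponent taken to be $\delta\gamma$, so that the auxiliary function there coincides with our $\varphi$; this produces $\mathbf{W}_G\big(\varphi(\mathbf{W}_G\sigma)\,\mathrm{d}\sigma\big)(x)\ge \lambda\,\big(\mathbf{W}_G\sigma(x)\big)^{1+\delta\gamma}$ with $\lambda=\lambda(n,p,q,\delta\gamma)$ the constant in \eqref{const lambda.}. Multiplying the accumulated factors then gives $u(x)\ge \left(\tfrac{p}{q}\right)^{1/(p-1)}c^{\gamma(q-1)/(p-1)}\lambda\,\big(\mathbf{W}_G\sigma(x)\big)^{1+\delta\gamma}$ for every $x\in\mathds{R}^n$, which in particular yields the asserted inequality (the claimed constant being smaller, as $p/q<1$). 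The only delicate point is the bookkeeping of exponents when extracting a constant below $1$: it must be the $q-1$ power that comes out of $g$ and the $\tfrac{1}{p-1}$ power that comes out of $g^{-1}$, and throughout one relies on the standing hypothesis $\mathbf{W}_G\sigma<\infty$ so that Lemma~\ref{estimativelambda} and the integration by parts inside its proof apply.
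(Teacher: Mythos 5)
Your proof is correct and follows the paper's own route: feed the assumed lower bound back into the supersolution inequality, extract $c^{\gamma(q-1)}$ from $f$ via \eqref{estimate on g}, pull that scalar through $g^{-1}$ under the potential integral via \eqref{estimate on g-1}, and close with Lemma~\ref{estimativelambda} applied with exponent $\delta\gamma$. You in fact obtain the sharper factor $(p/q)^{1/(p-1)}$ in place of the claimed $(p/q)^{2/(p-1)}$ — the paper wastefully inserts an extra $p/q$ at the step $f(c\,s^\delta)\ge \tfrac{p}{q}c^{\gamma(q-1)}g(s^{\delta\gamma})$, whereas \eqref{estimate on g} alone already gives $f(c\,s^\delta)\ge c^{\gamma(q-1)}g(s^{\delta\gamma})$ — and since $p/q<1$ this still implies the stated inequality, as you correctly note.
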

\noindent Indeed, a combination of \eqref{estimate on g} and \eqref{estimate on g-1}  with  Lemma~\ref{estimativelambda} gives
\begin{align*}
   u(x) & \geq \mathbf{W}_{G}\big(f(u)\mathrm{d}\sigma\big)\geq \mathbf{W}_{G}\big(f\big(c\big(\mathbf{W}_{G}\sigma(x)\big)^{\delta}\big)\mathrm{d}\sigma\big) \\
    & \geq \mathbf{W}_{G}\bigg(\frac{p}{q}c^{\gamma (q-1)}g(\big(\mathbf{W}_{G}\sigma(x)\big)^{\delta\gamma})\mathrm{d}\sigma\bigg)\\
    & \geq  \bigg(\frac{p}{q}\bigg)^{\frac{2}{p-1}}c^{\frac{\gamma(q-1)}{p-1}}\mathbf{W}_{G}\Big(g\big((\mathbf{W}_{G}\sigma(x))^{\delta\gamma}\big)\mathrm{d}\sigma\Big)\geq \bigg(\frac{p}{q}\bigg)^{\frac{2}{p-1}}c^{\frac{\gamma(q-1)}{p-1}} \lambda \left(\mathbf{W}_{G}\sigma(x)\right)^{1+\delta\gamma},
\end{align*}
which is our Claim~\ref{claim1 orlicz}. 

Now, fix $x\in \mathds{R}^n$ and $R>|x|$, and let $\sigma_B=\chi_B\sigma$, where $B=B(0,R)$.  Setting $\mathrm{d}\mu=f(u)\mathrm{d}\sigma$, we estimate $\mathbf{W}_{G}\mu(z)$ as follows
\begin{align*}
    \mathbf{W}_{G}\mu(z)&=\int_0^{\infty}g^{-1}\bigg(\frac{\mu (B(z,t))}{t^{n-1}}\bigg)\,\mathrm{d} t \geq \int_R^{\infty} g^{-1}\bigg(\frac{\mu (B(z,t))}{t^{n-1}}\bigg)\,\mathrm{d} t\\
    &=  \int_{R/2}^{\infty} g^{-1}\bigg(\frac{\mu (B(z,2t))}{(2t)^{n-1}}\bigg)2\,\mathrm{d} t\\
    &\geq c_1 \int_{R}^{\infty} g^{-1}\bigg(\frac{\mu (B(z,2t))}{(2t)^{n-1}}\bigg)\,\mathrm{d} t,
\end{align*}
where $c_1=c_1(n,p,q)>0$ was obtained in a light of \eqref{estimate on g-1}. Since $B(z,2t)\supset B(0,t)$ for $t\geq R$ and $z\in B$, it follows from the previous inequality that for all $z\in B$
\begin{equation}\label{est.theo 3.4 - 1}
     \mathbf{W}_{G}\mu(z)\geq c_1 \int_{R}^{\infty} g^{-1}\bigg(\frac{\mu (B(0,t))}{(2t)^{n-1}}\bigg)\mathrm{d} t=: A(R).
\end{equation}
We may assume $A(R)<1$ for $R>0$ large enough. Thus, iterating \eqref{integral equation-inequalitie} with \eqref{est.theo 3.4 - 1}, we obtain for all $x\in \mathds{R}^n$
\begin{align}
 u(x)   & \geq  \mathbf{W}_{G}(f(\mathbf{W}_{G}\mu)\mathrm{d} \sigma_B)(x) \geq  \mathbf{W}_{G}(f(A(R))\mathrm{d} \sigma_B)(x) \nonumber\\
    & \geq \int_{0}^{\infty} g^{-1}\bigg(\frac{f(A(R))\sigma_B(B(x,t))}{t^{n-1}}\bigg)\,\mathrm{d} t \nonumber\\
    & \geq A(R)^{\gamma}\,\mathbf{W}_{G}{\sigma_B}(x), \label{est.theo 3.4 - 2}
\end{align}
where in the last line was used \eqref{cond. supmulti}. Setting  $c_1=A(R)^{\gamma}$ and $\delta_1=1$, by Claim~\ref{claim1 orlicz}, with $\sigma_B$ in place of $\sigma$, and \eqref{est.theo 3.4 - 2}, we construct a sequence of lower bounds for $u$ as follows:
\begin{equation}\label{est.theo 3.4 - 3}
    u(x)\geq c_j\left(\mathbf{W}_{G}\sigma_B(x)\right)^{\delta_j},\quad x\in\mathds{R}^n,
\end{equation}
where for $j=2,3,\ldots$, $\delta_j$ and $c_j$ are given by
\begin{equation}
    \begin{aligned}
    \delta_j&=1+\gamma \delta_{j-1},\label{seqdelta_j} \\
    c_j&= \lambda\,\Big(\frac{p}{q}\Big)^{\frac{2}{p-1}}c_{j-1}^{\frac{\gamma(q-1)}{p-1}}.   
    \end{aligned}
\end{equation}
Since $0<\gamma<(p-1)/(q-1)\leq 1$, letting the limit $j\to \infty$ in \eqref{seqdelta_j}, it is straightforward to conclude that
\begin{align}
    \lim_{j\to \infty}\delta_j&=\frac{1}{1-\gamma}, \label{limit delta_j} \\
    \lim_{j\to \infty}c_j&= \lambda^{\frac{p-1}{p-1 - \gamma (q-1)}}\Big(\frac{p}{q}\Big)^{\frac{2(p-1)}{(p-1)(q-1)-\gamma (q-1)^2}}=: C.\nonumber
\end{align}
Passing to the limit as $j\to \infty$ in \eqref{est.theo 3.4 - 3}, we deduce
\begin{equation*}
     u(x)\geq C\left(\mathbf{W}_{G}\sigma_B(x)\right)^{\frac{1}{1-\gamma}} \quad \forall x\in\mathds{R}^n.
\end{equation*}
Since $C$ does not depend on $R$, the proof of Theorem~\ref{estimativainferior orlicz} is established after letting $R\to \infty$ in the previous inequality.
\end{proof}

Suppose that $\mathbf{W}_G\sigma< \infty$ in $\mathds{R}^n$. In the view of Theorem~\ref{estimativainferior orlicz}, if $u\in L^F(\mathds{R}^n,\mathrm{d}\sigma)$ is a solution to \eqref{wolff integral equation}, then $(\mathbf{W}_{G}\sigma)^{{1}/{(1-\gamma)}}\in L^{F}(\mathds{R}^n,\mathrm{d}\sigma)$, that is  
\begin{equation}\label{necessarycondition}
    \int_{\mathds{R}^n}F\big(\mathbf{W}_G\sigma^{\frac{1}{1-\gamma}}\big)\,\mathrm{d}\sigma < \infty.
\end{equation}
Hence condition \eqref{necessarycondition} is necessary to the existence of solutions to \eqref{wolff integral equation} in $L^F(\mathds{
R}^n,\mathrm{d}\sigma)$. However, this condition is far to be sufficient (at least) to ensure such existence in $L^{F}(\mathds{R}^n,\mathrm{d}\sigma)$.  We will show that \eqref{cond sufficient to exist} is a sufficient condition to the existence of a solution to \eqref{wolff integral equation} in $L^{F}(\mathds{R}^n,\mathrm{d}\sigma)$. Before that, the following result will be needed.
\begin{lemma}\label{technical lemma orlicz} 
    Let $\sigma\in M^+(\mathds{R}^n)$ satisfying \eqref{cond sufficient to exist}. Then there exists a constant $c>0$ such that for all $u\in L^F(\mathds{R}^n,\mathrm{d} \sigma)$, $u\geq 0$, it holds
\begin{multline*}
\int_{\mathds{R}^n}F\big(\mathbf{W}_G(f(u)\mathrm{d}\sigma)\big)\,\mathrm{d}\sigma \\ \leq c\bigg[\bigg(\int_{\mathds{R}^n}F(u)\,\mathrm{d}\sigma\bigg)^{\frac{p-1}{q-1}\gamma}+\bigg(\int_{\mathds{R}^n}F(u)\,\mathrm{d}\sigma\bigg)^{\gamma}+\bigg(\int_{\mathds{R}^n}F(u)\,\mathrm{d}\sigma\bigg)^{\frac{q-1}{p-1}\gamma}\bigg].    
\end{multline*}
The constant $c$ depends only on $n$, $p,q$, and the $L^F$-norms of the Wolff potentials mentioned in \eqref{cond sufficient to exist}.
\end{lemma}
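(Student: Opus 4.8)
The plan is to bound the modular $\int_{\mathds{R}^n}F\big(\mathbf{W}_G(f(u)\,\mathrm d\sigma)\big)\,\mathrm d\sigma$ by first reducing it to a fixed finite list of Lebesgue-type integrals and then estimating each of those by a power of $M:=\int_{\mathds{R}^n}F(u)\,\mathrm d\sigma$ via Hölder's inequality and the hypothesis \eqref{cond sufficient to exist}. First I would use $F(r)\le r^{(p-1)\gamma+1}+r^{(q-1)\gamma+1}$ (from \eqref{def of f and F}) together with the quasi-subadditivity $g^{-1}(a+b)\le C(g^{-1}(a)+g^{-1}(b))$ coming from \eqref{estimate on g-1}, which yields $\mathbf{W}_G(\mu_1+\mu_2)\le C(\mathbf{W}_G\mu_1+\mathbf{W}_G\mu_2)$. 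Splitting $f(u)\,\mathrm d\sigma=u^{(p-1)\gamma}\,\mathrm d\sigma+u^{(q-1)\gamma}\,\mathrm d\sigma$, it then suffices to bound $\int_{\mathds{R}^n}\big(\mathbf{W}_G(u^{\theta}\,\mathrm d\sigma)\big)^{s}\,\mathrm d\sigma$ for $\theta\in\{(p-1)\gamma,(q-1)\gamma\}$ and $s\in\{(p-1)\gamma+1,(q-1)\gamma+1\}$, and to show that each of these is $\le cM^{\kappa}$ with $\kappa\in\{\tfrac{p-1}{q-1}\gamma,\,\gamma,\,\tfrac{q-1}{p-1}\gamma\}$.

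The heart of the argument is a pointwise estimate. For each $\theta$ I choose a reference exponent $\tau+1\in\{(p-1)\gamma+1,(q-1)\gamma+1\}$; since $\theta-\tau\le(q-p)\gamma<1$ by \eqref{condition gamma} we have $0<\theta\le\tau+1$, so Hölder's inequality with respect to $\sigma$ on each ball, followed by $r^{\tau+1}\le(\tau+1)F(r)$, gives
\[
\frac{1}{t^{n-1}}\int_{B(x,t)}u^{\theta}\,\mathrm d\sigma
\le C\left(\frac{(F(u)\,\mathrm d\sigma)(B(x,t))}{t^{n-1}}\right)^{\frac{\theta}{\tau+1}}\left(\frac{\sigma(B(x,t))}{t^{n-1}}\right)^{1-\frac{\theta}{\tau+1}}.
\]
Applying $g^{-1}(r)\le c(r^{1/(p-1)}+r^{1/(q-1)})$ from \eqref{estimate g sum} and then Hölder's inequality in the radial variable $t$, with exponents arranged so that the $\sigma$-factor recombines into a classical Wolff potential, I expect to obtain, for every $x$,
\[
\mathbf{W}_G(u^{\theta}\,\mathrm d\sigma)(x)\le c\sum_{j\in\{p,q\}}\big(\mathbf{W}_{j}(F(u)\,\mathrm d\sigma)(x)\big)^{\frac{\theta}{\tau+1}}\big(\mathbf{W}_{j}\sigma(x)\big)^{1-\frac{\theta}{\tau+1}}.
\]

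Raising this to the power $s$, integrating against $\sigma$, and using Hölder's inequality in $\mathrm d\sigma$, each summand splits as $\big\|\mathbf{W}_{j}(F(u)\,\mathrm d\sigma)\big\|_{L^{\beta_j}(\mathrm d\sigma)}^{s\theta/(\tau+1)}\big\|\mathbf{W}_{j}\sigma\big\|_{L^{\delta_j}(\mathrm d\sigma)}^{s(1-\theta/(\tau+1))}$. I would choose the $\sigma$-Hölder exponents so that $\delta_j$ is one of the finitely many exponents for which \eqref{cond sufficient to exist} guarantees $\mathbf{W}_{j}\sigma\in L^{\delta_j}(\mathrm d\sigma)$, and I would estimate the first factor by the trace inequality for the Wolff potential $\mathbf{W}_j$, $\big\|\mathbf{W}_{j}(F(u)\,\mathrm d\sigma)\big\|_{L^{\beta_j}(\mathrm d\sigma)}\le C\,M^{1/(j-1)}$, which — in the range $0<\beta_j<j-1$, which one checks holds thanks to $\gamma<1$ — is valid precisely because $\mathbf{W}_j\sigma\in L^{\beta_j(j-1)/(j-1-\beta_j)}(\mathrm d\sigma)$, and this last exponent is again among those supplied by \eqref{cond sufficient to exist}. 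Collecting, the power of $M$ produced in the $(\theta,s,j)$-combination is $\tfrac{s\theta}{(j-1)(\tau+1)}$, and as $(\theta,s,j)$ (and the matching $\tau$) run over their $2\times2\times2$ choices these values collapse onto the three claimed exponents $\tfrac{p-1}{q-1}\gamma,\ \gamma,\ \tfrac{q-1}{p-1}\gamma$; the constant $c$ depends only on $n,p,q$ and the $L^F$-norms of the six potentials in \eqref{cond sufficient to exist}.

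The step I expect to be the main obstacle is precisely this final matching of exponents: for each of the eight combinations one must pick the reference exponent $\tau$ and the $\sigma$-Hölder pair so that \emph{both} auxiliary exponents (the one for $\|\mathbf{W}_j\sigma\|_{L^{\delta_j}}$ and the one $\beta_j(j-1)/(j-1-\beta_j)$ forced by the trace inequality) land in the six-element admissible set coming from \eqref{cond sufficient to exist}, while the $\mathbf{W}_j(F(u)\,\mathrm d\sigma)$-factor is controlled by $M$ alone. It is this constraint — together with $(q-p)\gamma<1$, needed to keep the inside-ball Hölder step licit, and $\gamma<\tfrac{p-1}{q-1}$, needed to keep all the exponents positive and the trace condition $\beta_j<j-1$ in force — that dictates the precise form of hypothesis \eqref{cond sufficient to exist}.
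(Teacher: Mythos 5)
Your route is genuinely different from the paper's, and it fails at one crucial step. The paper's proof rests on a single sub-lemma (its Claim~\ref{claim2 orlicz}): pull out the centered maximal operator, $\mathbf W_s(u^r\,\mathrm{d}\sigma)\le (M_\sigma u^r)^{1/(s-1)}\mathbf W_s\sigma$ pointwise, H\"older in $\mathrm{d}\sigma$, then the strong $L^{(1+\alpha)/r}$-boundedness of $M_\sigma$. The hypothesis $\alpha>r-1$ of Claim~\ref{claim2 orlicz} is precisely what keeps $(1+\alpha)/r>1$, so $M_\sigma$ is used strictly inside its strong-type range; the eight integrals $I_1,\dots,I_8$ are then dispatched by choosing $(\alpha,r,s)$ so that $\frac{s-1}{s-1-r}$ runs over the three exponents in \eqref{cond sufficient to exist}. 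Your intermediate steps, by contrast, are fine: the H\"older split on the ball is licit since $\theta<\tau+1$ by \eqref{condition gamma}, the radial H\"older does recombine the two factors into $\mathbf W_j(F(u)\,\mathrm{d}\sigma)$ and $\mathbf W_j\sigma$, and with $\tau+1=s$ the requirement $\delta_j=\beta_j(j-1)/(j-1-\beta_j)$ forces $\beta_j=\frac{(j-1)s}{j-1+s-\theta}$, $\delta_j=\frac{(j-1)s}{j-1-\theta}$, which does land on the exponents supplied by \eqref{cond sufficient to exist} and makes $s\theta/((\tau+1)(j-1))$ collapse to the claimed three $M$-powers.

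The gap is the Wolff trace inequality $\|\mathbf W_j(F(u)\,\mathrm{d}\sigma)\|_{L^{\beta_j}(\mathrm{d}\sigma)}\le C\,M^{1/(j-1)}$, which you present as routine. It is not: it is the $L^1$ endpoint of the maximal-function scheme, and the maximal-function argument does \emph{not} give it. If you bound $\mathbf W_j(F(u)\,\mathrm{d}\sigma)\le (M_\sigma F(u))^{1/(j-1)}\mathbf W_j\sigma$ and apply H\"older with conjugate $m,m'$, then in order for $m'\beta_j$ to equal the one exponent $\delta_j$ that \eqref{cond sufficient to exist} provides you are forced to $m\beta_j/(j-1)=1$, exactly where $M_\sigma$ fails to be strong $(1,1)$; taking $m\beta_j/(j-1)>1$ instead gives $\int F(u)^{m\beta_j/(j-1)}\,\mathrm{d}\sigma$, which is not controlled by $M$. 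The endpoint trace inequality is a genuine theorem of Verbitsky type (proved by dyadic discretization and Wolff's inequality; cf.\ \cite{MR3556326,MR3567503}), but it is at least as deep as Lemma~\ref{technical lemma orlicz} itself, and you neither prove nor cite it. The paper's Claim~\ref{claim2 orlicz} sidesteps the issue by keeping $\alpha>r-1$ strictly — apply $M_\sigma$ to the density $u^r$ rather than to $F(u)$, so that $(1+\alpha)/r>1$ keeps you off the endpoint. To repair your argument, either supply a proof or reference for the endpoint trace inequality, or reorganize along Claim~\ref{claim2 orlicz}'s pattern.
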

\begin{proof}
    We begin the proof with the following claim.
    \begin{claim}\label{claim2 orlicz}
        Fix $1<s<\infty$, $0<r<s-1$ and $\alpha>r-1$. Let $\sigma\in M^+(\mathds{R}^n)$ satisfying 
        \begin{equation*}
            \big(\mathbf{W}_s\sigma\big)^{\frac{s-1}{s-1-r}}\in L^{1+\alpha}(\mathds{R}^n,\mathrm{d}\sigma).
        \end{equation*}
         Then there exists a constant $c_0=c_0 \Big(n,r,s, \alpha,\|\big(\mathbf{W}_s\sigma\big)^{(s-1)/(s-1-r)}\|_{L^{1+\alpha}}\Big)>0$ such that for all $u\in L^{1+\alpha}(\mathds{R}^n,\mathrm{d} \sigma)$, $u\geq 0$, it holds
        \begin{equation*}
\int_{\mathds{R}^n}\big(\mathbf{W}_s(u^r\mathrm{d}\sigma)\big)^{1+\alpha}\,\mathrm{d}\sigma\leq c_0\bigg(\int_{\mathds{R}^n}u^{1+\alpha}\,\mathrm{d}\sigma\bigg)^{\frac{r}{s-1}}.
        \end{equation*}
    \end{claim}
    \noindent Indeed, fix $0\leq u\in L^{1+\alpha}(\mathds{R}^n,\mathrm{d} \sigma)$. By definition \eqref{wolff potential standard}, we have
    \begin{equation*}
        \begin{aligned}
           \mathbf{W}_s(u^r\mathrm{d}\sigma)(x) &= \int_0^{\infty}\bigg(\frac{\int_{B(x,t)}u^r\,\mathrm{d}\sigma}{t^{n-1}}\bigg)^{\frac{1}{s-1}}\,\mathrm{d} t \\
            &\leq  \int_0^{\infty}\bigg(M_{\sigma}u^r(x)\frac{\sigma(B(x,t))}{t^{n-1}}\bigg)^{\frac{1}{s-1}}\,\mathrm{d} t\\
            &= \big(M_{\sigma}u^r(x)\big)^{\frac{1}{s-1}}\mathbf{W}_s\sigma(x) \quad \forall x\in \mathds{R}^n, 
        \end{aligned}
    \end{equation*}
    where $M_{\sigma}\cdot$ is the centered maximal operator defined by
    \begin{equation*}
        M_{\sigma}v(x)=\sup_{t>0}\frac{1}{\sigma(B(x,t))}\int_{B(x,t)}v\,\mathrm{d}\sigma, \quad v\in L_{\mathrm{loc}}^1(\mathds{R}^n,\mathrm{d}\sigma).
    \end{equation*}
Then using the classical H\"{o}lder's inequality with exponents $\beta=(s-1)/r$ and $\beta'=(s-1)/(s-1-r)$ in the preceding inequality, we obtain
\begin{align}
   \int_{\mathds{R}^n} \big(\mathbf{W}_s(u^r\mathrm{d}\sigma)\big)^{1+\alpha}\,\mathrm{d}\sigma &\leq \int_{\mathds{R}^n}\big(M_{\sigma}u^r\big)^{\frac{1+\alpha}{s-1}}\big(\mathbf{W}_s\sigma\big)^{1+\alpha}\,\mathrm{d}\sigma  \nonumber\\
    &\leq  \bigg(\int_{\mathds{R}^n}\big(\mathbf{W}_s\sigma\big)^{\frac{(s-1)(1+\alpha)}{s-1-r}}\,\mathrm{d}\sigma\bigg)^\frac{s-1-r}{s-1}\bigg(\int_{\mathds{R}^n}\big(M_{\sigma}u^r\big)^{\frac{1+\alpha}{r}}\,\mathrm{d}\sigma\bigg)^\frac{r}{s-1}.\label{estimate1 orlicz}
\end{align}
Being $(1+\alpha)/r>1$,   $M_{\sigma}:L^{(1+\alpha)/r}(\mathds{R}^n,\mathrm{d}\sigma)\to L^{(1+\alpha)/r}(\mathds{R}^n,\mathrm{d} \sigma)$ is a bounded operator (see for instance \cite[Theorem~1.22]{MR1461542}),  that is, there exists a constant $\tilde{c}=\tilde{c}(n,r,\alpha)>0$ such that
\begin{equation*}
    \bigg(\int_{\mathds{R}^n}\big(M_{\sigma}u^r\big)^{\frac{1+\alpha}{r}}\,\mathrm{d}\sigma\bigg)^\frac{r}{s-1}\leq \tilde{c}^{\frac{r}{s-1}}\bigg(\int_{\mathds{R}^n}\big(u^r\big)^{\frac{1+\alpha}{r}}\,\mathrm{d}\sigma\bigg)^\frac{r}{s-1}.
\end{equation*}
Using this in \eqref{estimate1 orlicz}, we arrive at
\begin{equation*}
    \int_{\mathds{R}^n} \big(\mathbf{W}_s(u^r\mathrm{d}\sigma)\big)^{1+\alpha}\,\mathrm{d}\sigma \leq c_0 \bigg(\int_{\mathds{R}^n}u^{1+\alpha}\,\mathrm{d}\sigma\bigg)^\frac{r}{s-1},
\end{equation*}
with $c_0=c_0 \Big(n,r,s, \alpha,\|\big(\mathbf{W}_s\sigma\big)^{(s-1)/(s-1-r)}\|_{L^{1+\alpha}}\Big)>0$, which proves Claim~\ref{claim2 orlicz}. 

Next, fix $0\leq u\in L^F(\mathds{R}^n,\mathrm{d}\sigma)$. Note that from \eqref{estimate g sum} and \eqref{def of f and F},  there exists a constant $c_1=c_1(p,q,\gamma)>0$ such that
\begin{equation*}
    \begin{aligned}
         \mathbf{W}_G\sigma (x) &\leq c_1\big(\mathbf{W}_p\sigma(x)+\mathbf{W}_q\sigma(x)\big) \quad \forall x\in\mathds{R}^n,\\
         F(t)&\leq c_1(t^{(p-1)\gamma+1}+t^{(q-1)\gamma+1}) \quad \forall t\geq 0.
    \end{aligned}
\end{equation*}
Then, combining the previous inequalities, we can show that
\begin{multline}
\int_{\mathds{R}^n}F\big(\mathbf{W}_G(f(u)\mathrm{d}\sigma)\big)\,\mathrm{d}\sigma  \\\leq  \int_{\mathds{R}^n}\big(\mathbf{W}_G(f(u)\mathrm{d}\sigma)\big)^{(p-1)\gamma+1}\mathrm{d}\sigma +\int_{\mathds{R}^n}\big(\mathbf{W}_G(f(u)\mathrm{d}\sigma)\big)^{(q-1)\gamma+1}\mathrm{d}\sigma\\
    \leq  c_2\bigg[\int_{\mathds{R}^n}\big(\mathbf{W}_p(f(u)\mathrm{d}\sigma)\big)^{(p-1)\gamma+1}\mathrm{d}\sigma + \int_{\mathds{R}^n}\big(\mathbf{W}_q(f(u)\mathrm{d}\sigma)\big)^{(p-1)\gamma+1}\mathrm{d}\sigma\\
    + \int_{\mathds{R}^n}\big(\mathbf{W}_p(f(u)\mathrm{d}\sigma)\big)^{(q-1)\gamma+1}\mathrm{d}\sigma + \int_{\mathds{R}^n}\big(\mathbf{W}_q(f(u)\mathrm{d}\sigma)\big)^{(q-1)\gamma+1}\bigg], \label{estimate2 orlicz}
\end{multline}
where $c_2=c_2(p,q)>0$. 
We shall make use of the following elementary inequality \cite[Lemma~1.1]{MR1461542}:
given $\delta>0$, for all $a,b\in \mathds{R}$ it holds
\begin{equation*}
    |a+b|^{\delta}\leq 2^{\delta-1}(|a|^{\delta}+|b|^{\delta}). 
\end{equation*}
Because of this inequality, reminding of the definition of $f(t)$ in \eqref{def of f and F}, we will split each integral in \eqref{estimate2 orlicz} into another two ones:
\begin{multline}
\int_{\mathds{R}^n}F\big(\mathbf{W}_G(f(u)\mathrm{d}\sigma)\big)\,\mathrm{d}\sigma  \\\leq c_2\bigg[\int_{\mathds{R}^n}\big(\mathbf{W}_p(f(u)\mathrm{d}\sigma)\big)^{(p-1)\gamma+1}\mathrm{d}\sigma + \int_{\mathds{R}^n}\big(\mathbf{W}_q(f(u)\mathrm{d}\sigma)\big)^{(p-1)\gamma+1}\mathrm{d}\sigma\\
    + \int_{\mathds{R}^n}\big(\mathbf{W}_p(f(u)\mathrm{d}\sigma)\big)^{(q-1)\gamma+1}\mathrm{d}\sigma + \int_{\mathds{R}^n}\big(\mathbf{W}_q(f(u)\mathrm{d}\sigma)\big)^{(q-1)\gamma+1}\bigg]\\
    \leq c_3\bigg[ \int_{\mathds{R}^n}\big(\mathbf{W}_p(u^{(p-1)\gamma}\mathrm{d}\sigma)\big)^{(p-1)\gamma+1}\mathrm{d}\sigma + \int_{\mathds{R}^n}\big(\mathbf{W}_p(u^{(q-1)\gamma}\mathrm{d}\sigma)\big)^{(p-1)\gamma+1}\mathrm{d}\sigma \\
    + \int_{\mathds{R}^n}\big(\mathbf{W}_q(u^{(p-1)\gamma}\mathrm{d}\sigma)\big)^{(p-1)\gamma+1}\mathrm{d}\sigma + \int_{\mathds{R}^n}\big(\mathbf{W}_q(u^{(q-1)\gamma}\mathrm{d}\sigma)\big)^{(p-1)\gamma+1}\mathrm{d}\sigma\\
    + \int_{\mathds{R}^n}\big(\mathbf{W}_p(u^{(q-1)\gamma}\mathrm{d}\sigma)\big)^{(p-1)\gamma+1}\mathrm{d}\sigma + \int_{\mathds{R}^n}\big(\mathbf{W}_p(u^{(q-1)\gamma}\mathrm{d}\sigma)\big)^{(q-1)\gamma+1}\mathrm{d}\sigma\\
    + \int_{\mathds{R}^n}\big(\mathbf{W}_q(u^{(q-1)\gamma}\mathrm{d}\sigma)\big)^{(p-1)\gamma+1}\mathrm{d}\sigma + \int_{\mathds{R}^n}\big(\mathbf{W}_q(u^{(q-1)\gamma}\mathrm{d}\sigma)\big)^{(q-1)\gamma+1}\mathrm{d}\sigma\bigg]\\
    =:c_3\sum_{j=1}^{8}I_j,\label{estimate3 orlicz}
\end{multline}
where $c_3=c_3(\gamma,p,q)>0$. By assumption on $\gamma$, one has $(p-1)\gamma/(q-1)<\gamma<(q-1)\gamma/(p-1)<1$ and $(p-1)\gamma+1>(q-1)\gamma$. Hence we estimate $I_1,\,I_2,\,,I_3$ and $I_4$ by applying Claim~\ref{claim2 orlicz} with $\alpha=(p-1)\gamma$, $r_1=(p-1)\gamma$, $s_1=p$, $r_2=(q-1)\gamma$, $s_2=p$, $r_3=(p-1)\gamma$, $s_3=q$ and $r_4=(q-1)\gamma$, $s_4=q$, to deduce
\begin{equation}\label{estimate4 orlicz}
    \begin{aligned}
        I_1&= \int_{\mathds{R}^n}\big(\mathbf{W}_p(u^{(p-1)\gamma}\mathrm{d}\sigma)\big)^{(p-1)\gamma+1}\mathrm{d}\sigma \leq c_{0,1}\bigg(\int_{\mathds{R}^n}u^{(p-1)\gamma+1}\,\mathrm{d}\sigma\bigg)^{\gamma},\\
        I_2&= \int_{\mathds{R}^n}\big(\mathbf{W}_p(u^{(q-1)\gamma}\mathrm{d}\sigma)\big)^{(p-1)\gamma+1}\mathrm{d}\sigma \leq c_{0,2}\bigg(\int_{\mathds{R}^n}u^{(p-1)\gamma+1}\,\mathrm{d}\sigma\bigg)^{\frac{q-1}{p-1}\gamma},\\
        I_3&= \int_{\mathds{R}^n}\big(\mathbf{W}_q(u^{(p-1)\gamma}\mathrm{d}\sigma)\big)^{(p-1)\gamma+1}\mathrm{d}\sigma \leq c_{0,3}\bigg(\int_{\mathds{R}^n}u^{(p-1)\gamma+1}\,\mathrm{d}\sigma\bigg)^{\frac{p-1}{q-1}\gamma},\\
        I_4&= \int_{\mathds{R}^n}\big(\mathbf{W}_q(u^{(q-1)\gamma}\mathrm{d}\sigma)\big)^{(p-1)\gamma+1}\mathrm{d}\sigma \leq c_{0,4}\bigg(\int_{\mathds{R}^n}u^{(p-1)\gamma+1}\,\mathrm{d}\sigma\bigg)^{\gamma}.
    \end{aligned}
\end{equation}
Similarly, to estimate $I_5,\,I_6,\,I_7$ and $I_8$, we apply Claim~\ref{claim2 orlicz} with $\alpha=(q-1)\gamma$, $r_5=(p-1)\gamma$, $s_5=p$, $r_6=(q-1)\gamma$, $s_6=p$, $r_7=(p-1)\gamma$, $s_7=q$ and $r_8=(q-1)\gamma$, $s_8=q$, to deduce
\begin{equation}\label{estimate5 orlicz}
    \begin{aligned}
        I_5&= \int_{\mathds{R}^n}\big(\mathbf{W}_p(u^{(p-1)\gamma}\mathrm{d}\sigma)\big)^{(q-1)\gamma+1}\mathrm{d}\sigma \leq c_{0,5}\bigg(\int_{\mathds{R}^n}u^{(q-1)\gamma+1}\,\mathrm{d}\sigma\bigg)^{\gamma},\\
        I_6&= \int_{\mathds{R}^n}\big(\mathbf{W}_p(u^{(q-1)\gamma}\mathrm{d}\sigma)\big)^{(q-1)\gamma+1}\mathrm{d}\sigma \leq c_{0,6}\bigg(\int_{\mathds{R}^n}u^{(q-1)\gamma+1}\,\mathrm{d}\sigma\bigg)^{\frac{q-1}{p-1}\gamma},\\
        I_7&= \int_{\mathds{R}^n}\big(\mathbf{W}_q(u^{(p-1)\gamma}\mathrm{d}\sigma)\big)^{(q-1)\gamma+1}\mathrm{d}\sigma \leq c_{0,7}\bigg(\int_{\mathds{R}^n}u^{(q-1)\gamma+1}\,\mathrm{d}\sigma\bigg)^{\frac{p-1}{q-1}\gamma},\\
        I_8&= \int_{\mathds{R}^n}\big(\mathbf{W}_q(u^{(q-1)\gamma}\mathrm{d}\sigma)\big)^{(q-1)\gamma+1}\mathrm{d}\sigma \leq c_{0,8}\bigg(\int_{\mathds{R}^n}u^{(q-1)\gamma+1}\,\mathrm{d}\sigma\bigg)^{\gamma}.
    \end{aligned}
\end{equation}
Here the constants $c_{0,j}>0$, $j=1,\ldots,8$, are given by Claim~\ref{claim2 orlicz}, whose depend only on $n,\,p,\,q,\,\gamma$ and the $L^F$-norms of the Wolff potentials presented in \eqref{cond sufficient to exist}. Since $\max\{t^{(p-1)\gamma+1},\, t^{(q-1)\gamma+1}\}\leq c_4 F(t)$ for all $t>0$, where $c_4=c_4(p,q,\gamma)>0$, a combination of \eqref{estimate3 orlicz} with \eqref{estimate4 orlicz} and \eqref{estimate5 orlicz}, completes the proof of Lemma~\ref{technical lemma orlicz}. \end{proof}

\begin{remark} \label{remark cond sufficient to exist idea}
Because of \eqref{estimate g sum}, $\mathbf{W}_G\sigma$ is controlled from above by the sum of $\mathbf{W}_p\sigma$ with $\mathbf{W}_q\sigma$. Consequently, \eqref{cond sufficient to exist} implies that
    \begin{equation}\label{cond sufficient to exist idea}
        \mathbf{W}_G\sigma^{\frac{1}{1-\gamma}},\, \mathbf{W}_G\sigma^{\frac{p-1}{p-1-\gamma(q-1)}},\,\mathbf{W}_G\sigma^{\frac{q-1}{q-1-\gamma(p-1)}}\in  L^F(\mathds{R}^n,\mathrm{d}\sigma).
    \end{equation}
  It would be desirable to show that \eqref{cond sufficient to exist idea} is a sufficient condition to ensure the existence of solutions to \eqref{wolff integral equation} in $L^F(\mathds{R}^n,\mathrm{d} \sigma)$, but we have not been able to do
this.     
\end{remark}

\subsection{Proof of Theorem~\ref{existencewolffequation}}
Let $\sigma\in M^+(\mathds{R}^n)$ satisfying \eqref{cond sufficient to exist}. Our proof starts with the assertion that there exists a constant  $\varepsilon>0$ sufficiently small such that
\begin{equation}
    u_0(x)=\varepsilon \,(\mathbf{W}_{G}\sigma(x))^{\frac{1}{1-\gamma}}, \quad x\in\mathds{R}^n,
\end{equation}
is a subsolution to \eqref{wolff integral equation}. Indeed, combining Lemma~\ref{estimativelambda} with \eqref{estimate on g-1}, we obtain
\begin{equation*}
    \mathbf{W}_{G}(f(u_0)\,\mathrm{d}\sigma)\geq \Big(\frac{p}{q}\Big)^{\frac{2}{p-1}}\lambda\,\varepsilon^{\frac{(q-1)\gamma}{p-1}}(\mathbf{W}_{G}\sigma)^{\frac{1}{1-\gamma}},
\end{equation*}
and consequently,  picking $\varepsilon>0$ such that  
\begin{equation*}
    \varepsilon\leq \Big(\Big(\frac{p}{q}\Big)^{\frac{2}{p-1}}\lambda\Big)^{\frac{p-1}{p-1-\gamma(q-1)}},
\end{equation*}
we concluded that $\mathbf{W}_{G}(f(u_0)\,\mathrm{d}\sigma)\geq u_0$, which is our assertion.

Now, let us construct a sequence of iterations of functions $u_j$ as follows:
\begin{equation}
    u_j=\mathbf{W}_{G}(f(u_{j-1})\,\mathrm{d}\sigma), \quad j=1,2,\ldots
\end{equation}
From the previous assertion, $u_1\geq u_0$ in $\mathds{R}^n$. Arguing by induction, one has  $u_{j-1}\leq u_{j}$ in $\mathds{R}^n$ for all $j\geq 1$. By assumption \eqref{cond sufficient to exist}, $u_0\in L^F(\mathds{R}^n,\mathrm{d}\sigma)$. Consequently, we can verify by induction that $u_j\in L^F(\mathds{R}^n,\mathrm{d}\sigma)$ for all $j=1,2,\ldots$. Indeed, suppose that $u_{j-1}\in L^F(\mathds{R}^n,\mathrm{d}\sigma)$ for some $j\geq 1$. Using Lemma~\ref{technical lemma orlicz}, we have
\begin{multline*}
\int_{\mathds{R}^n}F(u_j)\,\mathrm{d}\sigma= \int_{\mathds{R}^n}F\big(\mathbf{W}_G(f(u_{j-1})\mathrm{d}\sigma)\big)\,\mathrm{d}\sigma\\
\leq c\bigg[\bigg(\int_{\mathds{R}^n}F(u_{j-1})\,\mathrm{d}\sigma\bigg)^{\frac{p-1}{q-1}\gamma}+\bigg(\int_{\mathds{R}^n}F(u_{j-1})\,\mathrm{d}\sigma\bigg)^{\gamma}+\bigg(\int_{\mathds{R}^n}F(u_{j-1})\,\mathrm{d}\sigma\bigg)^{\frac{q-1}{p-1}\gamma}\bigg]<\infty.    
\end{multline*}
This shows $u_{j}\in L^F(\mathds{R}^n,\mathrm{d}\sigma)$. Furthermore, being $\{u_j\}$ an increasing sequence (pointwise), it follows from the preceding inequality that
\begin{multline}\label{estimate6 orlicz}
    \int_{\mathds{R}^n}F(u_j)\,\mathrm{d}\sigma\\
\leq c\bigg[\bigg(\int_{\mathds{R}^n}F(u_{j})\,\mathrm{d}\sigma\bigg)^{\frac{p-1}{q-1}\gamma}+\bigg(\int_{\mathds{R}^n}F(u_{j})\,\mathrm{d}\sigma\bigg)^{\gamma}+\bigg(\int_{\mathds{R}^n}F(u_{j})\,\mathrm{d}\sigma\bigg)^{\frac{q-1}{p-1}\gamma}\bigg] \quad \forall j\geq 1.
\end{multline}
We claim that $\{u_j\}$ is uniformly bounded in $L^F(\mathds{R}^n,\mathrm{d}\sigma)$. Indeed, consider the real continuous function on $[0,\infty)$
\begin{equation*}
    h (t)=t-c\big(t^{\frac{p-1}{q-1}\gamma}+t^{\gamma}+t^{\frac{q-1}{p-1}\gamma}\big).
\end{equation*}
Notice that \eqref{estimate6 orlicz} is equivalently to $h\big( \int_{\mathds{R}^n}F(u_j)\,\mathrm{d}\sigma\big)\leq 0$ for all $j\geq 1$. Clearly, $h(0)=0$.
Since $(p-1)\gamma/(q-1)<\gamma<(q-1)\gamma/(p-1)<1$, $h(t)$ decreases for $t$ sufficiently small. Also,
\begin{equation*}
    \lim_{t\to \infty}h(t)=\infty.
\end{equation*}
Hence the subset $\{t\geq 0: h(t)\leq 0\}$ is bounded in $[0,\infty)$, that is, there exists a constant $C=C(p,q,\gamma,c)>0$ such that $h(t)\leq 0$ if and only if $t\leq C$. Thus
\begin{equation*}
     \int_{\mathds{R}^n}F(u_j)\,\mathrm{d}\sigma\leq C \quad \forall j\geq 1.
\end{equation*}

Therefore, letting $j\to\infty$ in the previous inequality, by the Monotone Convergence Theorem, there exists $u=\lim_{j\to\infty}u_j$ such that $u\in L^{F}(\mathds{R}^n,\,\mathrm{d}\sigma)$. Combining H\"{o}lder's inequality (Lemma~\ref{Holder-Orlicz}) with \eqref{relation G and tilde G}, $L^F(\mathds{R}^n,\mathrm{d}\sigma)\subset L_{\mathrm{loc}}^f(\mathds{R}^n,\mathrm{d}\sigma)$, whence $u$ satisfies \eqref{wolff integral equation}. This completes the proof of Theorem~\ref{existencewolffequation}.

\section{Applications}\label{applications orlicz}
In this section, we prove Theorem~\ref{solution to orlicz-measure eq}. Before that, we state and prove some preliminary results concerned with \eqref{equation nonstandard}. It is worth pointing out that these results hold for any $N$-functions $G$ which enjoy the property \eqref{data condition elementary}. First, we will use the following lemma to ensure the existence of solutions to \eqref{equation nonstandard}.
\begin{lemma}\label{wolff inequality lemma}
    Let $v\in L^{F}(\mathds{R}^n,\mathrm{d}\sigma)$ be a supersolution to \eqref{wolff integral equation}. Then $f(v)\,\mathrm{d}\sigma\in \big(\mathcal{D}^{1,G}(\mathds{R}^n)\big)^{\ast}$.
\end{lemma}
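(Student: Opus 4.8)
The plan is to set $\mu:=f(v)\,\mathrm{d}\sigma\in M^+(\mathds{R}^n)$ and to prove the quantitative estimate $\big|\int_{\mathds{R}^n}\varphi\,\mathrm{d}\mu\big|\le C\|\nabla\varphi\|_{L^G}$ for all $\varphi\in C_c^\infty(\mathds{R}^n)$ with $C$ independent of $\varphi$; since $C_c^\infty(\mathds{R}^n)$ is dense in $\mathcal{D}^{1,G}(\mathds{R}^n)$ (Remark~\ref{remark q>n orlicz}; here and below we work in the range $q<n$ in which \eqref{seminorm Dirichlet-Orlicz-Sobolev space} is a norm, the one relevant to Theorem~\ref{solution to orlicz-measure eq}), this yields $\mu\in\big(\mathcal{D}^{1,G}(\mathds{R}^n)\big)^{\ast}$. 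The first and decisive step is the observation that $\mu$ has finite Wolff energy. Indeed, since $v$ is a supersolution to \eqref{wolff integral equation} we have $v\ge\mathbf{W}_G\mu$ pointwise in $\mathds{R}^n$, so, using the elementary bound $tf(t)\le\big((q-1)\gamma+1\big)F(t)$ from the estimates recorded right after \eqref{def of f and F} together with $v\in L^F(\mathds{R}^n,\mathrm{d}\sigma)$,
\begin{equation*}
\int_{\mathds{R}^n}\mathbf{W}_G\mu\,\mathrm{d}\mu=\int_{\mathds{R}^n}\big(\mathbf{W}_G\mu\big)f(v)\,\mathrm{d}\sigma\le\int_{\mathds{R}^n}vf(v)\,\mathrm{d}\sigma\le\big((q-1)\gamma+1\big)\int_{\mathds{R}^n}F(v)\,\mathrm{d}\sigma<\infty .
\end{equation*}

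Next I would localize and bring in the potential estimates. For $R>0$ put $\mu_R:=\chi_{B_R}\mu$, a compactly supported measure with $\mathbf{W}_G^\rho\mu_R\le\mathbf{W}_G\mu$ for every $\rho>0$. By the previous display and Theorem~\ref{Inequality Wolff-Orlicz}, $\mu_R\in\big(W_0^{1,G}(B_j)\big)^{\ast}$ for every $j>R$, so Theorem~\ref{existence in W0} provides nonnegative solutions $w_j\in W_0^{1,G}(B_j)$ of $-\mathrm{div}\big(g(|\nabla w_j|)|\nabla w_j|^{-1}\nabla w_j\big)=\mu_R$ in $B_j$. Since $\mu_R\ge0$, each $w_{j+1}$ is a supersolution in $B_j$ with $w_{j+1}\ge0=w_j$ on $\partial B_j$, so a comparison argument for the measure-data equation (cf.\ Lemma~\ref{comparison principle super/subsolution}) gives $w_j\le w_{j+1}$; by Harnack's Principle (Theorem~\ref{harnack principle orlicz5}) the increasing limit $W_R:=\lim_j w_j$ is superharmonic on $\mathds{R}^n$ with Riesz measure $\mu_{W_R}=\mu_R$. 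A routine use of Theorem~\ref{Maly's type result} applied to the $w_j$, together with the fact that $\mathbf{W}_G\mu_R(x)\to0$ as $|x|\to\infty$ (which holds for $q<n$, using \eqref{estimate g sum}), shows $\inf_{\mathds{R}^n}W_R=0$, and then Corollary~\ref{maly's result pratical} yields the pointwise bound $w_j\le W_R\le K\,\mathbf{W}_G\mu_R\le K\,\mathbf{W}_G\mu$ on $\mathds{R}^n$. Testing the equation for $w_j$ against $w_j$ itself and invoking $tg(t)\ge pG(t)$ from \eqref{data condition elementary},
\begin{equation*}
p\int_{B_j}G(|\nabla w_j|)\,\mathrm{d} x\le\int_{B_j}g(|\nabla w_j|)\,|\nabla w_j|\,\mathrm{d} x=\int_{B_j}w_j\,\mathrm{d}\mu_R\le K\int_{\mathds{R}^n}\mathbf{W}_G\mu\,\mathrm{d}\mu=:M<\infty ,
\end{equation*}
uniformly in $j$; combining \eqref{relation G and tilde G} with \eqref{relation norm and modular} (applied to $G^{\ast}$) then gives $\|g(|\nabla w_j|)\|_{L^{G^{\ast}}(B_j)}\le c_2M/p+1=:C$, again uniformly in $j$.

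Finally I would conclude by duality. Given $\varphi\in C_c^\infty(\mathds{R}^n)$, choose $R$ with $\mathrm{supp}\,\varphi\subset B_R$ and any $j>R$; testing the equation for $w_j$ with $\varphi$ and using $\mathrm{supp}\,\mu_R\subset B_R$ one gets $\int_{\mathds{R}^n}\varphi\,\mathrm{d}\mu=\int_{B_j}\varphi\,\mathrm{d}\mu_R=\int_{B_j}\frac{g(|\nabla w_j|)}{|\nabla w_j|}\nabla w_j\cdot\nabla\varphi\,\mathrm{d} x$, whence, by the H\"older--Orlicz inequality (Lemma~\ref{Holder-Orlicz}) and $\big|\tfrac{g(|\xi|)}{|\xi|}\xi\big|=g(|\xi|)$,
\begin{equation*}
\Big|\int_{\mathds{R}^n}\varphi\,\mathrm{d}\mu\Big|\le 2\,\|g(|\nabla w_j|)\|_{L^{G^{\ast}}(B_j)}\,\|\nabla\varphi\|_{L^G}\le 2C\,\|\nabla\varphi\|_{L^G},
\end{equation*}
with $C$ independent of $\varphi$; density of $C_c^\infty(\mathds{R}^n)$ in $\mathcal{D}^{1,G}(\mathds{R}^n)$ finishes the proof. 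The step I expect to be the real obstacle is the middle one: extracting from the bounded-domain solutions $w_j$ a superharmonic function on all of $\mathds{R}^n$ whose Riesz measure is \emph{exactly} $\mu_R$ and whose infimum is $0$, so that Corollary~\ref{maly's result pratical} applies and $\int_{B_j}w_j\,\mathrm{d}\mu_R$ is bounded \emph{uniformly in} $j$ by the finite quantity $\int_{\mathds{R}^n}\mathbf{W}_G\mu\,\mathrm{d}\mu$; this uniformity is precisely what makes the final bound independent of $\varphi$, and it rests on the interplay of the comparison principle, Harnack's Principle, and the two-sided Wolff potential estimates of Theorem~\ref{Maly's type result} and Corollary~\ref{maly's result pratical}.
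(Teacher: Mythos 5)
Your overall strategy coincides with the paper's: reduce the claim to the quantitative estimate $\big|\int\varphi\,\mathrm{d}\mu\big|\le C\|\nabla\varphi\|_{L^G}$ for $\varphi\in C_c^\infty(\mathds{R}^n)$; observe that the supersolution property $v\ge\mathbf{W}_G\mu$ together with $v\in L^F(\mathds{R}^n,\mathrm{d}\sigma)$ gives the finite Wolff energy $\int\mathbf{W}_G\mu\,\mathrm{d}\mu<\infty$; truncate $\mu$, use Theorem~\ref{Inequality Wolff-Orlicz} and Theorem~\ref{existence in W0} to produce solutions with zero boundary data on large balls; derive a \emph{uniform} modular bound for their gradients from a pointwise Wolff-potential upper bound; then conclude by H\"older--Orlicz and density. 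The place where you diverge from the paper is precisely the step you flag as the ``real obstacle'': how to obtain the uniform pointwise bound $w_j\le K\,\mathbf{W}_G\mu$. You build a global superharmonic $W_R=\lim_j w_j$ via a monotone limit (Harnack's Principle), verify $\inf W_R=0$ from tail decay of $\mathbf{W}_G\mu_R$, and then apply Corollary~\ref{maly's result pratical} to $W_R$ as a barrier. The paper instead sidesteps the whole limiting procedure by a cleaner trick: it truncates the measure on $B_j$ but solves the Dirichlet problem on the \emph{strictly larger} ball $B_{j+1}$, so that $u_j$ is $G$-harmonic in the annulus $B_{j+1}\setminus\overline{B}_j$ and therefore attains $0$ continuously on $\partial B_{j+1}$; extending $u_j$ by zero then yields a nonnegative superharmonic function on all of $\mathds{R}^n$ with infimum $0$, and Corollary~\ref{maly's result pratical} applies directly to each $u_j$ with no passage to the limit.

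Two points in your middle step need attention. First, before invoking Harnack's Principle (Theorem~\ref{harnack principle orlicz5}) you must rule out $W_R\equiv\infty$; your argument does not do this, and the natural way to do it — applying Corollary~\ref{maly's result pratical} to each $w_j$ after extending by zero (which is legitimate since $\mathrm{supp}\,\mu_R\subset B_R\subset\subset B_j$ makes $w_j$ harmonic near $\partial B_j$) — already gives $w_j\le K\,\mathbf{W}_G\mu_R$ directly and renders the entire $W_R$ detour superfluous. Second, the monotonicity $w_j\le w_{j+1}$ is not an instance of Lemma~\ref{comparison principle super/subsolution}, which compares super- and subsolutions of the \emph{homogeneous} operator; what you need is a comparison of two solutions of the same measure-data problem with ordered boundary values (test the difference of the two weak formulations against $(w_j-w_{j+1})^+\in W_0^{1,G}(B_j)$ and use the monotonicity \eqref{monotonicity usual g}), as in the proof of Lemma~\ref{comparison principle used}. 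Neither issue is fatal, but both are repaired most economically by adopting the paper's extension-by-zero device instead of the $W_R$ construction.
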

\begin{proof}
    Let $\omega=\sigma\,f(v)\in M^+(\mathds{R}^n)$. We need to prove there exists a constant $c>0$ such that
    \begin{equation}\label{estimate7 orlicz}
        \bigg|\int_{\mathds{R}^n}\varphi\,\mathrm{d}\omega\bigg|\leq c\,\|\nabla \varphi\|_{L^G} \quad \forall \varphi\in C_c^{\infty}(\mathds{R}^n).
    \end{equation}
Since $v\geq \mathbf{W}_G(f(v)\mathrm{d}\sigma)$ in $\mathds{R}^n$ with $v\in L^F(\mathds{R}^n,\,\mathrm{d}\sigma)$, it follows from \eqref{def of f and F} that
\begin{align}
    \int_{\mathds{R}^n}\mathbf{W}_G\omega\,\mathrm{d}\omega &= \int_{\mathds{R}^n} \mathbf{W}_G(f(v)\mathrm{d}\sigma)\,\mathrm{d}\sigma \leq \int_{\mathds{R}^n} v f(v)\,\mathrm{d}\sigma \nonumber\\ 
    & \leq \tilde{c}_1\int_{\mathds{R}^n} F(v)\,\mathrm{d}\sigma<\infty,\label{estimate8 orlicz}
\end{align}
where $\tilde{c}_1=\tilde{c}_1(p,q,\gamma)>0$. Let $B_j=B(0,j)$ and let $\sigma_j=\chi_{B_j}\sigma$, for $j>1$. Setting $\omega_j=f(v)\mathrm{d}\sigma_j$, one has $\mathrm{supp}\,\omega_j\subset B_{j}$. By \eqref{estimate8 orlicz},
\begin{equation*}
    \int_{B_{j+1}}\mathbf{W}_G\omega_j\,\mathrm{d}\omega_j \leq \int_{\mathds{R}^n}\mathbf{W}_G\omega\,\mathrm{d}\omega < \infty \quad \forall j>1.
\end{equation*}
Using Theorem~\ref{Inequality Wolff-Orlicz}, we infer from the previous inequality that $\omega_j\in \big(W_0^{1,G}(B_{j+1})\big)^{\ast}$ for all $j>1$.

Next, let $\varphi\in C_c^{\infty}(\mathds{R}^n)$ with $\mathrm{supp}\,\varphi \subset B_{j+1}$, for some $j>1$. Then there exists a constant $c_j>0$ such that
\begin{equation}\label{constants cj orlicz}
    \bigg|\int_{\mathds{R}^n}\varphi\,\mathrm{d}\omega_j \bigg|\leq c_j\,\|\nabla \varphi\|_{L^G} .
\end{equation}
Applying Theorem~\ref{existence in W0}, there exists $u_j\in W_0^{1,G}(B_{j+1})$ satisfying \eqref{eq. general} with $\mu=\omega_j$, that is,

\begin{equation}\label{equation1 orlicz}
    -\mathrm{div}\bigg(\frac{g(|\nabla u_j|)}{|\nabla u_j|}\nabla u_j\bigg)=\omega_j  \quad \mbox{in}\quad B_{j+1}.
\end{equation}
\begin{claim}\label{claim3 orlicz}
    The constant $c_j$ given in \eqref{constants cj orlicz} are uniformly bounded for all $j>1$.
\end{claim}
\noindent Indeed, first note that we may assume 
\begin{equation}\label{estimate11 orlicz}
    c_j\leq \tilde{c}_2\,\bigg(\int_{B_{j+1}}G(|\nabla u_j|)\,\mathrm{d} x\bigg) +1 \quad \forall j>1,
\end{equation}
where $\tilde{c}_2=\tilde{c}_2(p,q)>0$. This is seen by  testing  \eqref{equation1 orlicz} with $\varphi$ and by using a combination of Cauchy–Schwarz inequality and H\"{o}lder's inequality (Lemma~\ref{Holder-Orlicz}) with \eqref{relation norm and modular} and \eqref{relation G and tilde G}:
\begin{equation*}
    \begin{aligned}
\bigg|\int_{B_{j+1}}\varphi\,\mathrm{d}\omega_j\bigg|&=\bigg|\int_{B_{j+1}}\frac{g(|\nabla u_j|)}{|\nabla u_j|}\nabla u_j\cdot \nabla\varphi\,\mathrm{d} x\bigg| \\
&\leq \int_{B_{j+1}}\bigg|\frac{g(|\nabla u_j|)}{|\nabla u_j|}\nabla u_j\bigg||\nabla\varphi|\,\mathrm{d} x  \leq \int_{B_{j+1}}g(|\nabla u_j|)|\nabla \varphi|\,\mathrm{d} x \\
&\leq 2\|g(\nabla u_j)\|_{L^{G^{\ast}}}\|\nabla \varphi\|_{L^G} \leq \bigg[2\bigg(\int_{B_{j+1}}G^{\ast}\big(g(|\nabla u_j|)\big)\,\mathrm{d} x\bigg) +1 \bigg]\|\nabla \varphi\|_{L^G} \\
&\leq \bigg[\tilde{c}_2\,\bigg(\int_{B_{j+1}}G(|\nabla u_j|)\,\mathrm{d} x\bigg) +1 \bigg]\|\nabla \varphi\|_{L^G} .
    \end{aligned}
\end{equation*}
Notice that $u_j$ is harmonic in $B_{j+1}\setminus \overline{B}_{j}$ since $\mathrm{supp}\,\omega_j \subset B_j$, whence $u_j$ takes continuously zero
boundary values on $\partial B_{j+1}$. Extending $u_j$ by zero away from $\partial B_{j+1}$ and with the aid of Remark~\ref{remark supersolution and superharmonic}, we infer from Lemma~\ref{comparison principle super/subsolution} that $u_j$ is a nonnegative (almost everywhere) superharmonic in whole $\mathds{R}^n$. By Corallary~\ref{maly's result pratical}, for almost everywhere in $\mathds{R}^n$ it holds
\begin{equation}\label{estimate10 orlicz}
   0\leq u_j \leq K\,\mathbf{W}_G\omega_j\leq K\,\mathbf{W}_G\omega \quad \forall j>1.
\end{equation}
Testing \eqref{equation1 orlicz} with $u_j$, a combination of \eqref{data condition elementary} and  \eqref{estimate8 orlicz} with \eqref{estimate10 orlicz} yields
\begin{equation*}
    \begin{aligned}
       \int_{\mathds{R}^n}G(|\nabla u_{j}|)\,\mathrm{d} x &\leq \frac{1}{p}\int_{\mathds{R}^n}{g(|\nabla u_j|)}{|\nabla u_j|}\,\mathrm{d} x=  \frac{1}{p}\int_{\mathds{R}^n}u_j\,\mathrm{d}\omega_j\\
       &\leq  \frac{1}{p}\int_{\mathds{R}^n}u_j\,\mathrm{d}\omega\leq  \frac{K}{p}\int_{\mathds{R}^n}\mathbf{W}_{G}\omega\,\mathrm{d}\omega<\infty \quad \forall j>1.
    \end{aligned}
\end{equation*}
Consequently, by \eqref{estimate11 orlicz}, $\{c_j\}$ is uniformly bounded, with
\begin{equation*}
\sup_{j>1}c_j\leq\tilde{c}_3\,\bigg(\int_{\mathds{R}^n}\mathbf{W}_{G}\omega\,\mathrm{d}\omega\bigg) +1, 
\end{equation*}
where $\tilde{c}_3=\tilde{c}_3(p,q, K)>0$. This shows Claim~\ref{claim3 orlicz}.

On the other hand, by the Monotone Theorem Convergence, we have
\begin{equation*}
    \int_{\mathds{R}^n}\varphi\,\mathrm{d}\omega=\lim_{j\to \infty}\int_{\mathds{R}^n}\varphi\,\mathrm{d}\omega_j.
\end{equation*}
Thus, letting $j\to\infty$ in \eqref{constants cj orlicz}, we obtain
\begin{equation*}
\bigg|\int_{\mathds{R}^n}\varphi\,\mathrm{d}\omega\bigg|=\lim_{j\to \infty}\bigg| \int_{\mathds{R}^n}\varphi\,\mathrm{d}\omega_j\bigg|\leq c\,\|\nabla \varphi\|_{L^G},
\end{equation*}
where $c=\sup_j c_{j}$. This shows \eqref{estimate7 orlicz} and proves Lemma~\ref{wolff inequality lemma}.
\end{proof}

The following lemma gives a version of the comparison principle in $\mathcal{D}^{1,G}(\mathds{R}^n)$.
\begin{lemma}\label{comparison principle used}
    Let $\mu,\,\nu \in M^+(\mathds{R}^n)\cap \big(\mathcal{D}^{1,G}(\mathds{R}^n)\big)^{\ast}$ such that $\mu\leq\nu$. Suppose that $u,\,v \in \mathcal{D}^{1,G}(\mathds{R}^n)$ are solutions (respectively) to
    \begin{equation*}
        -\mathrm{div}\bigg(\frac{g(|\nabla u|)}{|\nabla u|}\nabla u\bigg)=\mu, \quad -\mathrm{div}\bigg(\frac{g(|\nabla v|)}{|\nabla v|}\nabla v\bigg)=\nu \quad \mbox{in}\quad \mathds{R}^n.
    \end{equation*}
    Then $u\leq v$ almost everywhere in $\mathds{R}^n$.
\end{lemma}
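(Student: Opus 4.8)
The plan is to test both equations against the nonnegative function $(u-v)_+:=\max\{u-v,0\}$ and to exploit the strict monotonicity of the vector field $\mathbf{A}(\xi):=g(|\xi|)\xi/|\xi|$ (with $\mathbf{A}(0):=0$), which follows from the convexity of $G$ encoded in \eqref{assumption on g}. First I would check that $(u-v)_+$ is an admissible test function. Since $u,v\in\mathcal{D}^{1,G}(\mathds{R}^n)$ we have $u-v\in\mathcal{D}^{1,G}(\mathds{R}^n)$, and truncation preserves membership in $W_{\mathrm{loc}}^{1,G}(\mathds{R}^n)$ while $|\nabla(u-v)_+|=|\nabla(u-v)|\,\chi_{\{u>v\}}\le|\nabla u-\nabla v|\in L^G(\mathds{R}^n)$; hence $(u-v)_+\in\mathcal{D}^{1,G}(\mathds{R}^n)$. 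Next, because $|\mathbf{A}(\nabla u)|=g(|\nabla u|)\in L^{G^{\ast}}(\mathds{R}^n)$ by \eqref{relation G and tilde G} (and likewise for $v$), Hölder's inequality (Lemma~\ref{Holder-Orlicz}) shows that $\varphi\mapsto\int_{\mathds{R}^n}\mathbf{A}(\nabla u)\cdot\nabla\varphi\,\mathrm{d} x$ is continuous with respect to the seminorm $\|\nabla\varphi\|_{L^G}$; combining this with $\mu,\nu\in\big(\mathcal{D}^{1,G}(\mathds{R}^n)\big)^{\ast}$ and the density of $C_c^{\infty}(\mathds{R}^n)$ in $\mathcal{D}^{1,G}(\mathds{R}^n)$ (Remark~\ref{remark q>n orlicz}, recalling $1<p\le q<n$), the weak formulations for $u$ and for $v$ extend to all test functions in $\mathcal{D}^{1,G}(\mathds{R}^n)$, in particular to $(u-v)_+$.

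Subtracting the two weak identities tested with $(u-v)_+$ and using $\mu\le\nu$ together with $(u-v)_+\ge 0$ gives
\[
\int_{\mathds{R}^n}\big(\mathbf{A}(\nabla u)-\mathbf{A}(\nabla v)\big)\cdot\nabla(u-v)_+\,\mathrm{d} x
=\int_{\mathds{R}^n}(u-v)_+\,\mathrm{d}\mu-\int_{\mathds{R}^n}(u-v)_+\,\mathrm{d}\nu\le 0.
\]
All four products on the left are integrable by the Hölder bound just used, and since $\nabla(u-v)_+=(\nabla u-\nabla v)\chi_{\{u>v\}}$ the left-hand side equals $\int_{\{u>v\}}\big(\mathbf{A}(\nabla u)-\mathbf{A}(\nabla v)\big)\cdot(\nabla u-\nabla v)\,\mathrm{d} x$, whose integrand is nonnegative by monotonicity of $\mathbf{A}$ and strictly positive wherever $\nabla u\ne\nabla v$. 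Hence this integral is at once $\le 0$ and $\ge 0$, so it vanishes, forcing $\nabla u=\nabla v$ a.e.\ on $\{u>v\}$, i.e.\ $\nabla(u-v)_+=0$ a.e.\ in $\mathds{R}^n$. Thus $(u-v)_+$ agrees a.e.\ with a constant; being an element of $\mathcal{D}^{1,G}(\mathds{R}^n)$ with $q<n$, that constant must be $0$ by Remark~\ref{remark q>n orlicz}. Therefore $(u-v)_+=0$ a.e., that is $u\le v$ a.e.\ in $\mathds{R}^n$.

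The main obstacle is the justification that $(u-v)_+$ may legitimately serve as a test function: this requires pairing the density of $C_c^{\infty}(\mathds{R}^n)$ in the homogeneous space $\mathcal{D}^{1,G}(\mathds{R}^n)$ with the continuity, in the $\mathcal{D}^{1,G}$-seminorm, of \emph{both} sides of the weak formulation --- the left side via the bound $g(|\nabla u|)\in L^{G^{\ast}}(\mathds{R}^n)$, which is exactly where the $\Delta_2$--$\nabla_2$ structure and \eqref{relation G and tilde G} enter, and the right side via the hypothesis $\mu,\nu\in\big(\mathcal{D}^{1,G}(\mathds{R}^n)\big)^{\ast}$. A secondary, routine point is the strict monotonicity inequality for $\mathbf{A}$, which is classical for operators generated by a strictly convex $N$-function and follows from \eqref{assumption on g}; once that is in hand, the remainder is a standard energy comparison.
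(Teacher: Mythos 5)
Your proof is correct and follows essentially the same strategy as the paper's: test both equations with $\varphi=(u-v)_+$, subtract, use $\mu\le\nu$ together with the strict monotonicity of $\xi\mapsto g(|\xi|)\xi/|\xi|$ to force $\nabla(u-v)_+=0$, and then conclude $(u-v)_+=0$. What you add is the careful bookkeeping the paper compresses into "the proof is standard'': the verification that $(u-v)_+\in\mathcal{D}^{1,G}(\mathds{R}^n)$, and the density/continuity argument (via $g(|\nabla u|)\in L^{G^\ast}$, Lemma~\ref{Holder-Orlicz}, and $\mu,\nu\in(\mathcal{D}^{1,G})^\ast$) that upgrades the weak formulation from $C_c^\infty$ test functions to $\mathcal{D}^{1,G}$ test functions. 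You also correctly observe that the last step—from $\nabla(u-v)_+=0$ to $(u-v)_+=0$—requires $q<n$ so that constants in $\mathcal{D}^{1,G}(\mathds{R}^n)$ vanish (Remark~\ref{remark q>n orlicz}); this hypothesis is not written in the statement of Lemma~\ref{comparison principle used} but is implicitly in force, since the lemma is only invoked in the proof of Theorem~\ref{solution to orlicz-measure eq} under the standing assumption $1<p<q<n$. That is a genuine, though minor, clarification of the paper's argument.
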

\begin{proof}
The proof is standard and relies on the use of the test function $\varphi=(u-v)^{+}=\max\{u-v,0\}\in \mathcal{D}^{1,G}(\mathds{R}^n)$. Indeed, testing both equations with such $\varphi$, one has
\begin{equation*}
    \begin{aligned}
    \int_{\mathds{R}^n} \frac{g(|\nabla u|)}{|\nabla u|}\nabla u\cdot\nabla\varphi\,\mathrm{d} x &= \int_{\mathds{R}^n}\varphi\,\mathrm{d}\mu,\\
    \int_{\mathds{R}^n} \frac{g(|\nabla v|)}{|\nabla v|}\nabla v\cdot\nabla\varphi\,\mathrm{d} x&=\int_{\mathds{R}^n}\varphi\,\mathrm{d}\nu.
    \end{aligned}
\end{equation*}
Hence
\begin{align}
    0&\leq \int_{\mathds{R}^n}\varphi\,\mathrm{d}\nu - \int_{\mathds{R}^n}\varphi\,\mathrm{d}\mu=   \int_{\mathds{R}^n} \frac{g(|\nabla v|)}{|\nabla v|}\nabla v\cdot\nabla\varphi\,\mathrm{d} x - \int_{\mathds{R}^n} \frac{g(|\nabla u|)}{|\nabla u|}\nabla u\cdot\nabla\varphi\,\mathrm{d} x \nonumber\\
    & =\int_{\mathds{R}^n}\bigg(\frac{g(|\nabla v|)}{|\nabla v|}\nabla v- \frac{g(|\nabla u|)}{|\nabla u|}\nabla u\bigg)\cdot\nabla\varphi\,\mathrm{d} x.\label{estimate12 orlicz}
\end{align}
By \cite[Theorem~1]{MR4518648}, the following monotonicity for $g$ holds
\begin{equation}\label{monotonicity usual g}
    \bigg(\frac{g(|\xi|)}{|\xi|}\xi- \frac{g(|\eta|)}{|\eta|}\eta\bigg)\cdot (\xi-\eta)>0 \quad \forall \xi, \eta\in \mathds{R}^n,\, \xi\neq \eta.
\end{equation}
Since $\nabla\varphi=\nabla (u-v)=\nabla u- \nabla v$ in $\mathrm{supp}\,\varphi\subset \{u>v\}$, it follows from \eqref{estimate12 orlicz} and \eqref{monotonicity usual g} that
\begin{equation*}
    0\leq \int_{\mathds{R}^n}\bigg(\frac{g(|\nabla v|)}{|\nabla v|}\nabla v- \frac{g(|\nabla u|)}{|\nabla u|}\nabla u\bigg)\cdot(\nabla u-\nabla v)\,\mathrm{d} x\leq 0,
\end{equation*}
which implies that $\nabla\varphi =0$ almost everywhere in $\mathds{R}^n$. Thus $\varphi =0$ almost everywhere in $\mathds{R}^n$, and proves Lemma~\ref{comparison principle used}.
\end{proof}

The following lemma concerning weak compactness in $\mathcal{D}^{1,G}(\mathds{R}^n)$, and it will be useful in the proof of Theorem~\ref{solution to orlicz-measure eq}.
\begin{lemma}\label{weak compactness in D orlicz}
    Suppose that $\{u_j\}\subset \mathcal{D}^{1,G}(\mathds{R}^n)$ is a sequence converging pointwise to $u$ almost everywhere in $\mathds{R}^n$. If $\{\nabla u_j\}$ is bounded in $L^G(\mathds{R}^n)$, then $u\in \mathcal{D}^{1,G}(\mathds{R}^n)$ and $\nabla u_j \rightharpoonup \nabla u$ in $L^G(\mathds{R}^n)$. 
\end{lemma}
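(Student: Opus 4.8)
The plan is to combine reflexivity of $L^G$ with the distributional characterization of the weak gradient. Since \eqref{data condition elementary} forces $L^G(\mathds{R}^n;\mathds{R}^n)$ to be reflexive and $\{\nabla u_j\}$ is bounded there, a subsequence satisfies $\nabla u_{j_k}\rightharpoonup V$ in $L^G(\mathds{R}^n;\mathds{R}^n)$ for some $V\in L^G(\mathds{R}^n;\mathds{R}^n)$. The whole proof then reduces to identifying $V=\nabla u$ in the sense of distributions: once this is done, $u\in W_{\mathrm{loc}}^{1,G}(\mathds{R}^n)$ with $|\nabla u|=|V|\in L^G(\mathds{R}^n)$, i.e. $u\in\mathcal{D}^{1,G}(\mathds{R}^n)$, and a standard argument (every subsequence of $\{\nabla u_j\}$ has a further weakly convergent subsequence, whose limit must again be $\nabla u$ because the corresponding subsequence of $\{u_j\}$ still converges to $u$ a.e.) upgrades the convergence to $\nabla u_j\rightharpoonup\nabla u$ for the full sequence.

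The crucial and most delicate step is a local bound $\sup_j\|u_j\|_{L^G(B_R)}<\infty$ for every ball $B_R=B(0,R)$; here the hypothesis that $u$ is finite a.e. (and not merely that the $\nabla u_j$ are bounded) is exactly what is needed to rule out the $u_j$ drifting by unbounded constants. I would argue as follows. First, $u_j\in W^{1,G}(B_R)\subset L^1(B_R)$ since $B_R\Subset\mathds{R}^n$. Because $u$ is finite a.e., for $M$ large the set $\{|u|\le M\}\cap B_R$ has positive measure, and intersecting it with the set produced by Egorov's theorem gives a measurable $E\subset B_R$ with $|E|>0$ on which $|u|\le M$ and $u_j\to u$ uniformly; hence for $j$ large $|u_j|\le M+1$ on $E$, so $(u_j)_E:=\frac1{|E|}\int_E u_j\,\mathrm{d} x$ satisfies $\sup_j|(u_j)_E|<\infty$. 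A Poincaré–Wirtinger inequality in Orlicz spaces (valid under \eqref{data condition elementary}; see e.g. \cite{MR3931352}, in the spirit of Lemma~\ref{modular poincare ineq}) yields
\[
\|u_j-(u_j)_E\|_{L^G(B_R)}\le C(n,p,q,R,|E|)\,\|\nabla u_j\|_{L^G(B_R)}\le C\,\sup_j\|\nabla u_j\|_{L^G(\mathds{R}^n)},
\]
so that $\|u_j\|_{L^G(B_R)}\le\|u_j-(u_j)_E\|_{L^G(B_R)}+|(u_j)_E|\,\|1\|_{L^G(B_R)}$ is bounded in $j$.

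With this bound at hand, Theorem~\ref{Lemma weakly convergence} applied on $B_R$ gives $u_j\rightharpoonup u$ in $L^G(B_R)$, in particular $u\in L^G(B_R)$. For $\varphi\in C_c^{\infty}(B_R)$ one has $\varphi,\partial_i\varphi\in L^{G^{\ast}}(B_R)$, being bounded with compact support, so passing to the limit along $j_k$ in $\int_{B_R}u_{j_k}\,\partial_i\varphi\,\mathrm{d} x=-\int_{B_R}\partial_i u_{j_k}\,\varphi\,\mathrm{d} x$ (using $u_{j_k}\rightharpoonup u$ on the left and $\nabla u_{j_k}\rightharpoonup V$ on the right) gives $\int_{B_R}u\,\partial_i\varphi\,\mathrm{d} x=-\int_{B_R}V_i\,\varphi\,\mathrm{d} x$. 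Hence $\partial_i u=V_i$ on each $B_R$, therefore on all of $\mathds{R}^n$, which shows $u\in W_{\mathrm{loc}}^{1,G}(\mathds{R}^n)$ and $|\nabla u|=|V|\in L^G(\mathds{R}^n)$, i.e. $u\in\mathcal{D}^{1,G}(\mathds{R}^n)$; the full-sequence weak convergence $\nabla u_j\rightharpoonup\nabla u$ then follows from the uniqueness-of-limit argument described above. The only real obstacle is the local $L^G$-bound on $\{u_j\}$ of the second paragraph; the remaining steps are routine consequences of reflexivity, Theorem~\ref{Lemma weakly convergence} and Hölder's inequality (Lemma~\ref{Holder-Orlicz}).
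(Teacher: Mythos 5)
Your proof is correct and follows essentially the same strategy as the paper: extract a weakly convergent (sub)sequence of gradients, use Theorem~\ref{Lemma weakly convergence} to get $u_j\rightharpoonup u$ locally in $L^G$, identify the weak limit of $\nabla u_j$ with $\nabla u$ via the distributional characterization of the weak derivative, and then upgrade to the full sequence by uniqueness of weak limits. You work globally first (extracting $V\in L^G(\mathds{R}^n;\mathds{R}^n)$ by reflexivity and then identifying it locally) whereas the paper works locally first and only afterwards concludes $\nabla u\in L^G(\mathds{R}^n)$; this is a cosmetic difference. The paper packages the distributional-derivative identification as a consequence of the Mazur lemma (citing \cite[Lemma~1.29]{MR2305115}), while you carry out the integration-by-parts computation directly; again equivalent.

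The one place where you are noticeably more careful than the paper is the local $L^G(B_R)$ bound on $\{u_j\}$. The paper asserts ``by Lemma~\ref{modular poincare ineq}, the sequence $\{u_j\}$ is bounded in $W^{1,G}(B)$,'' but Lemma~\ref{modular poincare ineq} is stated only for $u\in W_0^{1,G}(B_R)$, and a generic $u_j\in\mathcal{D}^{1,G}(\mathds{R}^n)$ has no reason to vanish on $\partial B$. As you correctly observe, a Poincar\'e--Wirtinger inequality (with a subtracted mean) together with a uniform control on the means $(u_j)_E$ over a set of positive measure is what is actually needed, and controlling those means is precisely where the hypothesis that $u_j\to u$ pointwise a.e.\ to a finite function enters (via Egorov). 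Your handling of this step fills a genuine imprecision in the paper's proof rather than departing from it; the rest of your argument matches theirs point for point.
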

\begin{proof}
    The proof is similar in spirit to the proof of \cite[Lemma~1.33]{MR2305115}. Let $B\subset \mathds{R}^n$ a ball. First, with the aid of the Mazur lemma (see for instance \cite[Lemma~1.29]{MR2305115}), we infer that if for some sequence $\{v_j\}\subset W^{1,G}(B)$, there exist $v\in L^G(B)$ and $X$ with $|X| \in L^G(B)$ satisfying
    \begin{equation}\label{consequence mazur lemma}
        \begin{aligned}
            v_j &\rightharpoonup  v  &\mbox{in}& &L^G, &\\
            \nabla v_j &\rightharpoonup  X  &\mbox{in}&  &L^G,&
        \end{aligned}
    \end{equation}
then $v\in W^{1,G}(B)$ and $X=\nabla v$.    

Next, by Lemma~\ref{modular poincare ineq}, the sequence $\{u_j\}$ is  bounded in $W^{1,G}(B)$. Since $u_j$ converges pointwise (almost everywhere) to $u$ in $B$, from Theorem~\ref{Lemma weakly convergence}, $u_j\rightharpoonup u$ in $L^G(B)$. Moreover, a subsequence $\nabla u_{j_k} \rightharpoonup \nabla u$ in $L^G(B)$ by \eqref{consequence mazur lemma}. Since the weak limit is independent of the choice of the subsequence, it follows that $\nabla u_j \rightharpoonup \nabla u\in L^G(B)$.

On the other hand, being $\{\nabla u_j\}$ bounded in $L^G(\mathds{R}^n)$, it has a weakly converging subsequence in $L^G(\mathds{R}^n)$. This subsequence also converge weakly in $L^G(B)$, whence it converges weakly to $\nabla u$, which gives  $\nabla u\in L^G(\mathds{R}^n)$, that is $u\in \mathcal{D}^{1,G}(\mathds{R}^n)$. Therefore, $\nabla u_j\rightharpoonup \nabla u$ in $L^G(\mathds{R}^n)$ since the weak limit is independent of the subsequence. This proves Lemma~\ref{weak compactness in D orlicz}. 
\end{proof}

The following lemma uses the Monotone Operator Theory to prove
the existence of solutions to \eqref{eq. general} in $\mathcal{D}^{1,G}(\mathds{R}^n)$.
\begin{lemma}\label{existence in D1G orlicz}
    For all $\mu\in M^+(\mathds{R}^n)\cap\big(\mathcal{D}^{1,G}(\mathds{R}^n)\big)^{\ast}$, there exists a unique solution $u\in \mathcal{D}^{1,G}(\mathds{R}^n)$ to \eqref{eq. general}, which is a superharmonic and nonnegative function (almost everywhere).
\end{lemma}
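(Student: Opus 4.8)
The plan is to produce $u$ as the solution of $\mathcal{A}u=\mu$, where $\mathcal{A}$ is the monotone operator naturally attached to \eqref{eq. general}, and to invoke the Browder--Minty surjectivity theorem. Assume first $1<p\le q<n$, so that by Remark~\ref{remark q>n orlicz} the seminorm \eqref{seminorm Dirichlet-Orlicz-Sobolev space} is a genuine norm; since $u\mapsto\nabla u$ is then an isometric embedding of $\mathcal{D}^{1,G}(\mathds{R}^n)$ onto a closed subspace of the reflexive space $L^G(\mathds{R}^n;\mathds{R}^n)$, the space $\mathcal{D}^{1,G}(\mathds{R}^n)$ is itself a reflexive Banach space. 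Define $\mathcal{A}\colon\mathcal{D}^{1,G}(\mathds{R}^n)\to\big(\mathcal{D}^{1,G}(\mathds{R}^n)\big)^{\ast}$ by
\begin{equation*}
    \langle\mathcal{A}u,\varphi\rangle=\int_{\mathds{R}^n}\frac{g(|\nabla u|)}{|\nabla u|}\nabla u\cdot\nabla\varphi\,\mathrm{d} x,\qquad u,\varphi\in\mathcal{D}^{1,G}(\mathds{R}^n),
\end{equation*}
and check that $\mathcal{A}$ is bounded, hemicontinuous, strictly monotone, and coercive.

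Boundedness: $|\langle\mathcal{A}u,\varphi\rangle|\le\int_{\mathds{R}^n}g(|\nabla u|)|\nabla\varphi|\,\mathrm{d} x\le 2\|g(|\nabla u|)\|_{L^{G^{\ast}}}\|\nabla\varphi\|_{L^G}$ by Lemma~\ref{Holder-Orlicz}, with $\|g(|\nabla u|)\|_{L^{G^{\ast}}}$ controlled by $\rho_G(|\nabla u|)$ through \eqref{relation norm and modular}, \eqref{relation G and tilde G} and Lemma~\ref{lemma relation norm and modular}. Strict monotonicity is the pointwise inequality \eqref{monotonicity usual g} integrated over $\mathds{R}^n$, exactly as in the proof of Lemma~\ref{comparison principle used}. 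Coercivity: by \eqref{data condition elementary},
\begin{equation*}
    \langle\mathcal{A}u,u\rangle=\int_{\mathds{R}^n}g(|\nabla u|)|\nabla u|\,\mathrm{d} x\ge p\int_{\mathds{R}^n}G(|\nabla u|)\,\mathrm{d} x=p\,\rho_G(|\nabla u|)\ge p\min\{\|u\|_{\mathcal{D}^{1,G}}^{p},\|u\|_{\mathcal{D}^{1,G}}^{q}\}
\end{equation*}
by Lemma~\ref{lemma relation norm and modular}, so $\langle\mathcal{A}u,u\rangle/\|u\|_{\mathcal{D}^{1,G}}\to\infty$ as $\|u\|_{\mathcal{D}^{1,G}}\to\infty$ because $p,q>1$. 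Hemicontinuity follows from dominated convergence: for fixed $u,v,w$ the integrand defining $\langle\mathcal{A}(u+tv),w\rangle$ converges pointwise as $t\to t_0$ (the vector field $\xi\mapsto g(|\xi|)\xi/|\xi|$ is continuous, with value $0$ at $\xi=0$ since its modulus equals $g(|\xi|)$), and for $|t|\le M$ it is dominated by $g(|\nabla u|+M|\nabla v|)|\nabla w|\le C\big(g(|\nabla u|)+g(|\nabla v|)\big)|\nabla w|$ by \eqref{estimate on g}, which is integrable by Lemma~\ref{Holder-Orlicz} together with \eqref{relation G and tilde G}. The Browder--Minty theorem then yields a unique $u\in\mathcal{D}^{1,G}(\mathds{R}^n)$ with $\mathcal{A}u=\mu$, i.e.\ a solution of \eqref{eq. general}.

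For nonnegativity I would test \eqref{eq. general} with $\varphi=\min\{u,0\}\in\mathcal{D}^{1,G}(\mathds{R}^n)$, for which $\nabla\varphi=\chi_{\{u<0\}}\nabla u$; since $\mu\ge0$ and $\varphi\le0$,
\begin{equation*}
    0\le\int_{\{u<0\}}g(|\nabla u|)|\nabla u|\,\mathrm{d} x=\langle\mathcal{A}u,\varphi\rangle=\int_{\mathds{R}^n}\varphi\,\mathrm{d}\mu\le0,
\end{equation*}
so $\nabla u=0$ a.e.\ on $\{u<0\}$, whence $\nabla\varphi=0$ a.e.; since the only element of $\mathcal{D}^{1,G}(\mathds{R}^n)$ with vanishing gradient is $0$ (Remark~\ref{remark q>n orlicz}, using $q<n$), this forces $\varphi\equiv0$, i.e.\ $u\ge0$ almost everywhere. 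Being $\mu\ge0$, $u$ is in particular a supersolution in the sense of Definition~\ref{definition supersolution subsolution orlicz}, so by Remark~\ref{remark supersolution and superharmonic} it agrees a.e.\ with a superharmonic function, which we relabel $u$. In the remaining case $q\ge n$ the seminorm degenerates, but Remark~\ref{measure and capacity orlicz} forces $\mu=0$ and shows any solution in $\mathcal{D}^{1,G}(\mathds{R}^n)$ to be constant, so one takes the nonnegative superharmonic representative $u\equiv0$, uniqueness being understood modulo constants. \textbf{The main obstacle} I anticipate is the bookkeeping for coercivity and hemicontinuity in the non-homogeneous Orlicz setting, where scaling produces the two competing exponents $p$ and $q$ rather than a single one, together with checking throughout that $\mathcal{A}$ genuinely maps into the dual of the \emph{homogeneous} space; boundedness and monotonicity are essentially the estimates already carried out in Lemmas~\ref{comparison principle used} and \ref{wolff inequality lemma}.
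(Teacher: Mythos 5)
Your proof matches the paper's: both set up the monotone operator $T:\mathcal{D}^{1,G}(\mathds{R}^n)\to\big(\mathcal{D}^{1,G}(\mathds{R}^n)\big)^{\ast}$, verify boundedness via Lemma~\ref{Holder-Orlicz} and \eqref{relation G and tilde G}, strict monotonicity via \eqref{monotonicity usual g}, and coercivity via \eqref{data condition elementary} and Lemma~\ref{lemma relation norm and modular}, and then invoke surjectivity of continuous coercive monotone operators on a reflexive space (the paper cites Lions, you cite Browder--Minty---the same framework), concluding nonnegativity from the sign of $\mu$ and superharmonicity from Remark~\ref{remark supersolution and superharmonic}. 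The only differences are cosmetic: the paper checks strong-to-weak sequential continuity of $T$ where you check hemicontinuity, the paper cites Lemma~\ref{comparison principle super/subsolution} for $u\ge0$ where you test directly with $\min\{u,0\}$, and you make explicit the tacit restriction $q<n$ needed for $\|\cdot\|_{\mathcal{D}^{1,G}}$ to be a norm, resolving $q\ge n$ via Remark~\ref{measure and capacity orlicz}.
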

\begin{proof}
    Let us consider the operator
    $T:\mathcal{D}^{1,G}(\mathds{R}^n)\to \big(\mathcal{D}^{1,G}(\mathds{R}^n)\big)^{\ast}$ defined by
    \begin{equation*}
        \langle Tu,\varphi\rangle=\int_{\mathds{R}^n}\frac{g(|\nabla u|)}{|\nabla u|}\nabla u\cdot \nabla\varphi\,\mathrm{d} x \quad \forall \varphi\in \mathcal{D}^{1,G}(\mathds{R}^n),
    \end{equation*}
    where $\langle \cdot,\cdot\rangle $ denote the usual pairing between $\big(\mathcal{D}^{1,G}(\mathds{R}^n)\big)^{\ast}$ and $\mathcal{D}^{1,G}(\mathds{R}^n)$.
    $T$ is well defined. Indeed,  by using a combination of Cauchy–Schwarz inequality and H\"{o}lder's inequality (Lemma~\ref{Holder-Orlicz}) with \eqref{relation norm and modular}, we obtain
\begin{equation*}
    \begin{aligned}
        |  \langle Tu,\varphi\rangle|& \leq 2 \|g(|\nabla u|)\|_{L^{G^{\ast}}}\|\nabla\varphi\|_{L^G}\\
        & \leq c\bigg(\int_{\mathds{R}^n}G(|\nabla u|)\,\mathrm{d} x+1\bigg)\|\varphi\|_{\mathcal{D}^{1,G}},
    \end{aligned}
\end{equation*}
which shows $Tu$ is bounded. A classical result \cite[Theorem~2.1]{MR0259693} assures that $T$ is surjective provided $T$ is coercive and weakly continuous on $\mathcal{D}^{1,G}(\mathds{R}^n)$, that is, if $u=\lim_j u_j$ in $\mathcal{D}^{1,G}(\mathds{R}^n)$, then $Tu_j \rightharpoonup Tu$ in  $\big(\mathcal{D}^{1,G}(\mathds{R}^n)\big)^{\ast}$.

 We first verify the weak continuity of $T$. Fix $\{u_j\}\subset \mathcal{D}^{1,G}(\mathds{R}^n)$ a sequence that converges (in norm) to an element $u\in \mathcal{D}^{1,G}(\mathds{R}^n)$. Using \cite[Lemma~2.1]{MR4274293}, there exists a subsequence of $\{u_j\}$, also denoted $\{u_j\}$ (by abuse of notation), which converges almost everywhere to $u$ in $\mathds{R}^n$. From this, by continuity of $g$, we have
 \begin{equation*}
  \frac{g(|\nabla u|)}{|\nabla u|}\nabla u=\lim_{j\to\infty} \frac{g(|\nabla u_j|)}{|\nabla u_j|}\nabla u_j \quad \mbox{in}\quad \mathds{R}^n \quad \mbox{(almost everywhere).}
 \end{equation*}
Since $\{u_j\}$ converges to $u$ in $\mathcal{D}^{1,G}(\mathds{R}^n)$, it follows that $\{\int_{\mathds{R}^n}G(|\nabla u_j|)\,\mathrm{d} x\}$ is bounded, and  
\begin{equation*}
    \left\{\int_{\mathds{R}^n}G^{\ast}\bigg(\frac{g(|\nabla u_j|)}{|\nabla u_j|}\nabla u_j\bigg)\,\mathrm{d} x\right\} \quad \mbox{is bounded}.
\end{equation*}
    By Lemma~\ref{lemma relation norm and modular}, $\{{g(|\nabla u_j|)}/{|\nabla u_j|}\,\nabla u_j\}$ is bounded in ${L^{G^{\ast}}}$. Applying Theorem~\ref{Lemma weakly convergence},
    \begin{equation*}
        \frac{g(|\nabla u_j|)}{|\nabla u_j|}\nabla u_j \rightharpoonup  \frac{g(|\nabla u|)}{|\nabla u|}\nabla u\quad \mbox{in}\quad L^{G^{\ast}}(\mathds{R}^n),
    \end{equation*}
whence our assertion follows since the weak limit is independent of the choice of a subsequence.

Next, we prove that $T$ is coercive. Let $u\in \mathcal{D}^{1,G}(\mathds{R}^n)$ with $\|u\|_{ \mathcal{D}^{1,G}(\mathds{R}^n)}\geq 1$. Combining \eqref{data condition elementary} with Lemma~\ref{lemma relation norm and modular}, we obtain
\begin{equation*}
    \begin{aligned}
        \langle Tu,u\rangle & = \int_{\mathds{R}^n}{g(|\nabla u|)}{|\nabla u|}\,\mathrm{d} x \geq p \int_{\mathds{R}^n}{G(|\nabla u|)}\,\mathrm{d} x \\
        &\geq p \|\nabla u\|_{L^G}^p = p \| u\|_{{\mathcal{D}}^{1,G}}^p. 
    \end{aligned}
\end{equation*}
Since $p>1$,
\begin{equation*}
    \lim_{\| u\|_{{\mathcal{D}}^{1,G}}\to \infty} \frac{ \langle Tu,u\rangle}{\| u\|_{{\mathcal{D}}^{1,G}}}=\infty,
\end{equation*}
which shows that $T$ is coercive. 

Hence, for $\mu\in M^+(\mathds{R}^n)\cap\big(\mathcal{D}^{1,G}(\mathds{R}^n)\big)^{\ast}$, there exists a $u\in \mathcal{D}^{1,G}(\mathds{R}^n)$, such that $T u= \mu$, that is
\begin{equation*}
    \int_{\mathds{R}^n}\frac{g(|\nabla u|)}{|\nabla u|}\nabla u\cdot\nabla \varphi\,\mathrm{d} x = \int_{\mathds{R}^n}\varphi\,\mathrm{d}\mu \quad \forall \varphi\in \mathcal{D}^{1,G}(\mathds{R}^n).
\end{equation*}
Thus $u$ is a solution to \eqref{eq. general}, and by monotonicity \eqref{monotonicity usual g}, it should be unique. Moreover, since $\mu\in M^+(\mathds{R}^n)$, $u$ is a supersolution in $\mathds{R}^n$ in sense of Definition~\ref{def supersolution}, whence $u\geq 0$ almost everywhere by Lemma~\ref{comparison principle super/subsolution}. Because of Remark~\ref{remark supersolution and superharmonic}, we may suppose that $u$ is superharmonic in $\mathds{R}^n$. This completes the proof of Lemma~\ref{existence in D1G orlicz}.
\end{proof}

\subsection{Proof of Theorem~\ref{solution to orlicz-measure eq}}
We need only consider the case $1<p<q<n$, otherwise by Remark~\ref{remark q>n orlicz} if $q\geq n$, Eq.~\eqref{equation nonstandard} has only the trivial solution $u=c\geq 0$. Let $\sigma\in M^{+}(\mathds{R}^n)$ satisfying \eqref{cond sufficient to exist} and let $K\geq 1$ be the constant given in Corollary~\ref{maly's result pratical}. Using Theorem~\ref{existencewolffequation} with $\big((q/p)^{1/(p-1)}K\big)^{q-1}\sigma$ in place of $\sigma$, we deduce that there exists a solution $0\leq v\in L^F(\mathds{R}^n,\mathrm{d}\sigma)$ in $\mathds{R}^n$ to
\begin{equation}\label{eq. wolff integral orlicz}
    v=\mathbf{W}_G\bigg(f(v)\Big({\frac{q}{p}}\Big)^{\frac{q-1}{p-1}}K^{q-1}\mathrm{d}\sigma\bigg)
\end{equation}
By \eqref{estimate on g-1}, we have
\begin{align}
    v&= \mathbf{W}_G\bigg(f(v)\Big({\frac{q}{p}}\Big)^{\frac{q-1}{p-1}}K^{q-1}\mathrm{d}\sigma\bigg)\nonumber\\
    &\geq \Big(\frac{p}{q}\Big)^{\frac{1}{p-1}}\bigg[\Big({\frac{q}{p}}\Big)^{\frac{q-1}{p-1}}K^{q-1}\bigg]^{\frac{1}{q-1}}\mathbf{W}_G(f(v)\mathrm{d}\sigma)\nonumber\\
    &= K\,\mathbf{W}_G(f(v)\mathrm{d}\sigma) \quad \mbox{in}\quad \mathds{R}^n. \label{estimate13 orlcz}
\end{align}
Consequently, $v$ is a supersolution to \eqref{wolff integral equation}. By Lemma~\ref{wolff inequality lemma}, $f(v)\mathrm{d}\sigma\in \big(\mathcal{D}^{1,G}(\mathds{R}^n)\big)^{\ast}$. Moreover, from Theorem~\ref{estimativainferior orlicz}, one has
\begin{equation*}
    \begin{aligned}
        v&\geq C\,\bigg[\mathbf{W}_G\bigg(\Big({\frac{q}{p}}\Big)^{\frac{q-1}{p-1}}K^{q-1}\sigma\bigg)\bigg]^{\frac{1}{1-\gamma}}\\
        &\geq C\,\Big(\frac{p}{q}\Big)^{\frac{1}{p-1}}\bigg[\Big({\frac{q}{p}}\Big)^{\frac{q-1}{p-1}}K^{q-1}\bigg]^{\frac{1}{(q-1)}\frac{1}{1-\gamma}}\big(\mathbf{W}_G\sigma\big)^{\frac{1}{1-\gamma}}\\
        &= C\,\Big({\frac{q}{p}}\Big)^{\frac{\gamma}{(p-1)(1-\gamma)}}K^{\frac{1}{1-\gamma}}\big(\mathbf{W}_G\sigma\big)^{\frac{1}{1-\gamma}},
    \end{aligned}
\end{equation*}
where $C$ is the constant given in Theorem~\ref{estimativainferior orlicz}. 

Let $\varepsilon$ be a positive constant satisfying
\begin{equation}\label{constant epsilon orlicz}
    \varepsilon\leq \min\left\{\Big({\frac{q}{p}}\Big)^{\frac{\gamma}{(p-1)(1-\gamma)}}K^{\frac{1}{1-\gamma}}\,C,\; \big(K^{-1}\lambda\big)^{\frac{p-1}{p-1-\gamma(q-1)}},\, K^{-\frac{p-1}{(q-1)(1-\gamma)}}\,C\right\},
\end{equation}
where $\lambda$ is the constant given in Lemma~\ref{estimativelambda}.
Setting, $u_0=\varepsilon \big(\mathbf{W}_G\sigma\big)^{\frac{1}{1-\gamma}}$, we obtain $u_0\leq v$ in $\mathds{R}^n$, whence
\begin{equation*}
    u_0\in L^F(\mathds{R}^n,\mathrm{d}\sigma) \quad \mbox{and}\quad f(u_0)\mathrm{d}\sigma \in M^+(\mathds{R}^n)\cap\big(\mathcal{D}^{1,G}(\mathds{R}^n)\big)^{\ast}.
\end{equation*}
From Lemma~\ref{existence in D1G orlicz}, there exists a unique nonnegative superharmonic function $u_1\in \mathcal{D}^{1,G}(\mathds{R}^n)$ satisfying
\begin{equation*}
      -\mathrm{div}\bigg(\frac{g(|\nabla u_1|)}{|\nabla u_1|}\nabla u_1\bigg)=\sigma\,f(u_0) \quad \mbox{in}\quad \mathds{R}^n.
\end{equation*}
A combination of Corollary~\ref{maly's result pratical} with \eqref{estimate13 orlcz}, yields
\begin{equation*}
    u_1\leq K\,\mathbf{W}_G(f(u_0)\mathrm{d}\sigma) \leq K\,\mathbf{W}_G(f(v)\mathrm{d}\sigma)\leq v.
\end{equation*}
From this, $u_1\in L^F(\mathds{R}^n,\mathrm{d}\sigma)$ and  $f(u_1)\mathrm{d}\sigma\in M^+(\mathds{R}^n)\cap\big(\mathcal{D}^{1,G}(\mathds{R}^n)\big)^{\ast}$. In addition, using Lemma~\ref{estimativelambda} and \eqref{estimate on g-1}, we obtain that
\begin{equation*}
    \begin{aligned}
        u_1&\geq K^{-1}\,\mathbf{W}_G(f(u_0)\mathrm{d}\sigma)=K^{-1}\,\mathbf{W}_G\big(f\big(\varepsilon \big(\mathbf{W}_G\sigma\big)^{\frac{1}{1-\gamma}}\big)\mathrm{d}\sigma\big)\\
        &\geq K^{-1}\varepsilon^{\frac{q-1}{p-1}\gamma}\,\mathbf{W}_G\big(f \big(\mathbf{W}_G\sigma\big)^{\frac{1}{1-\gamma}}\mathrm{d}\sigma\big)\\
        & \geq K^{-1}\varepsilon^{\frac{q-1}{p-1}\gamma}\lambda\,\big(\mathbf{W}_G\sigma\big)^{\frac{1}{1-\gamma}}.
    \end{aligned}
\end{equation*}
By choice of $\varepsilon$, we deduce that $u_1\geq u_0$. Summarizing, we have
\begin{equation*}
    \left\{
    \begin{aligned}
&\quad u_1\in \mathcal{D}^{1,G}(\mathds{R}^n)\cap L^F(\mathds{R}^n,\mathrm{d}\sigma),& &&&&\\
&\quad  f(u_1)\mathrm{d}\sigma\in M^+(\mathds{R}^n)\cap\big(\mathcal{D}^{1,G}(\mathds{R}^n)\big)^{\ast}, &&&&\\
& -\mathrm{div}\bigg(\frac{g(|\nabla u_1|)}{|\nabla u_1|}\nabla u_1\bigg)=\sigma\,f(u_0) &\mbox{in}& &\mathds{R}^n,&\\
       &\qquad   u_0\leq u_1\leq v  \quad \mbox{almost everywhere} &\mbox{in}& &\mathds{R}^n.&
    \end{aligned}
    \right.
\end{equation*}

Now, we construct by induction a sequence of nonnegative functions $u_j$, for $j=1,2,\ldots$, satisfying
  \begin{equation}\label{seq. uj solution to P orlicz}
    \left\{
    \begin{aligned}
&\quad u_j\in \mathcal{D}^{1,G}(\mathds{R}^n)\cap L^F(\mathds{R}^n,\mathrm{d}\sigma), &&&\forall j>1,&\\
&\quad  f(u_{j})\mathrm{d}\sigma\in M^+(\mathds{R}^n)\cap\big(\mathcal{D}^{1,G}(\mathds{R}^n)\big)^{\ast}, &&&\forall j>1,&\\
& -\mathrm{div}\bigg(\frac{g(|\nabla u_j|)}{|\nabla u_j|}\nabla u_j\bigg)=\sigma\,f(u_{j-1}) &\mbox{in}& &\mathds{R}^n,\quad \forall j>1,&\\
       &\qquad   u_{j-1}\leq u_j\leq v \quad \mbox{almost everywhere} &\mbox{in}& &\mathds{R}^n, \quad\forall j> 1,&\\
&\quad \sup_{j>1}\int_{\mathds{R}^n}G(|\nabla u_j|)\,\mathrm{d} x \leq c \int_{\mathds{R}^n} F(v)\,\mathrm{d} \sigma, &&&&       
    \end{aligned}
    \right.
\end{equation}
where $c=c(p,q,\gamma)>0$.

\noindent Indeed, suppose that $u_j$ has been obtained for some $j>1$. In the same manner as the case $j=1$, since $u_j\leq v$ in $\mathds{R}^n$, it follows from Lemma~\ref{wolff inequality lemma} that
\begin{equation*}
    f(u_j)\mathrm{d}\sigma\in M^+(\mathds{R}^n)\cap\big(\mathcal{D}^{1,G}(\mathds{R}^n)\big)^{\ast}.
\end{equation*}
Using Lemma~\ref{existence in D1G orlicz}, there exists a unique nonnegative superharmonic function $u_{j+1}\in \mathcal{D}^{1,G}(\mathds{R}^n)$ satisfying
\begin{equation}\label{eq uj+1 orlicz}
    -\mathrm{div}\bigg(\frac{g(|\nabla u_{j+1}|)}{|\nabla u_{j+1}|}\nabla u_{j+1} \bigg)=\sigma\,f(u_{j}) \quad \mbox{in}\quad \mathds{R}^n.
\end{equation}
Since $f(u_{j-1})\mathrm{d}\sigma\leq f(u_j)\mathrm{d}\sigma$, by Lemma~\ref{comparison principle used}, $u_j\leq u_{j+1}$ in $\mathds{R}^n$. 
Applying Corollary~\ref{maly's result pratical},
\begin{equation*}
    u_{j+1}\leq K\,\mathbf{W}_G(f(u_j)\mathrm{d}\sigma) \leq K\,\mathbf{W}_G(f(v)\mathrm{d}\sigma)\leq v.
\end{equation*}
Testing \eqref{eq uj+1 orlicz} with $u_{j+1}$, using \eqref{data condition elementary} and \eqref{def of f and F}, we have
\begin{equation*}
\begin{aligned}
    \int_{\mathds{R}^n}G(|\nabla u_{j+1}|)\,\mathrm{d} x& \leq \frac{1}{p}\int_{\mathds{R}^n} g(|\nabla u_{j+1}|)|\nabla u_{j+1}|\,\mathrm{d} x\\
    &= \frac{1}{p}\int_{\mathds{R}^n}\frac{g(|\nabla u_{j+1}|)}{|\nabla u_{j+1}|}\nabla u_{j+1} \cdot \nabla u_{j+1} \,\mathrm{d} x\\
    &= \frac{1}{p}\int_{\mathds{R}^n} u_{j+1}f(u_j)\,\mathrm{d}\sigma\leq \frac{1}{p}\int_{\mathds{R}^n} vf(v)\,\mathrm{d}\sigma\\
    &\leq \frac{(q-1)\gamma +1}{p}\int_{\mathds{R}^n} F(v)\,\mathrm{d}\sigma.
\end{aligned}
\end{equation*}
Thus, the sequence  \eqref{seq. uj solution to P orlicz} has been constructed.

We set $u=\lim_{j}u_j$ in $\mathds{R}^n$  (pointwise). Hence $u\leq v$ almost everywhere and, by Lemma~\ref{weak compactness in D orlicz}, $u\in \mathcal{D}^{1,G}(\mathds{R}^n)$ with $\nabla u_j\rightharpoonup \nabla u$ in $L^G(\mathds{R}^n)$. As in the proof of Lemma~\ref{existence in D1G orlicz}, the last line in \eqref{seq. uj solution to P orlicz} gives 
\begin{equation*}
    \frac{g(|\nabla u_{j}|)}{|\nabla u_{j}|}\nabla u_{j}\ \quad \mbox{is uniformly bounded in}\quad L^{G^{\ast}}(\mathds{R}^n).
\end{equation*}
From Harnack's Principle (Theorem~\ref{harnack principle orlicz5}), up to a subsequence, $\nabla u=\lim_j\nabla u_j$ pointwise in $\mathds{R}^n$, since $u<\infty$. Consequently, the continuity of the function $g$ and Theorem~\ref{Lemma weakly convergence} assure
\begin{equation}\label{estimate14 orlicz}
    \frac{g(|\nabla u_{j}|)}{|\nabla u_{j}|}\nabla u_{j}\rightharpoonup  \frac{g(|\nabla u|)}{|\nabla u|}\nabla u \quad \mbox{in}\quad L^{G^{\ast}}(\mathds{R}^n).
\end{equation}
On the other hand, by the Monotone Convergence Theorem, one has 
\begin{equation}\label{estimate15 orlicz}
   \lim_{j\to\infty}\int_{\mathds{R}^n}\varphi f(u_j)\,\mathrm{d}\sigma=\int_{\mathds{R}^n}\varphi f(u)\,\mathrm{d}\sigma \quad \forall \varphi\in C_c^{\infty}(\mathds{R}^n).
\end{equation}

Therefore, letting $j\to\infty$ in the first line of \eqref{seq. uj solution to P orlicz}, from \eqref{estimate14 orlicz} and \eqref{estimate15 orlicz}, $u\in \mathcal{D}^{1,G}(\mathds{R}^n)\cap L^F(\mathds{R}^n,\mathrm{d}\sigma)$ is a solution to \eqref{equation nonstandard}.

It remains to prove $u$ is minimal. Suppose that $0\leq w\in \mathcal{D}^{1,G}(\mathds{R}^n)\cap L_{\mathrm{loc}}^f(\mathds{R}^n,\mathrm{d}\sigma)$ is any nontrivial solution to \eqref{equation nonstandard}.
First, we will show $w\in L^{F}(\mathds{R}^n,\mathrm{d}\sigma)$, that is, $\int_{\mathds{R}^n}F(w)\,\mathrm{d}\sigma<\infty$. Note that $f(w)\mathrm{d}\sigma\in M^{+}(\mathds{R}^n)\cap\big(\mathcal{D}^{1,G}(\mathds{R}^n)\big)^{\ast}$. By density of $C_c^{\infty}(\mathds{R}^n)$ in $\mathcal{D}^{1,G}(\mathds{R}^n)$, there exists a sequence $\{\varphi_j\}\subset C_c^{\infty}(\mathds{R}^n)$ with $w=\lim_j\varphi_j$ in $\mathcal{D}^{1,G}(\mathds{R}^n)$. It follows from \eqref{def of f and F} that
\begin{equation*}
    \begin{aligned}
         \int_{\mathds{R}^n}F(w)\,\mathrm{d}\sigma & \leq \frac{1}{(p-1)\gamma+1}\int_{\mathds{R}^n}w f(w)\,\mathrm{d}\sigma = \frac{\langle f(w)\mathrm{d}\sigma,w\rangle}{(p-1)\gamma+1}\\
         &= \lim_{j\to\infty}\frac{\langle f(w)\mathrm{d}\sigma,\varphi_j\rangle}{(p-1)\gamma+1}   = \frac{1}{(p-1)\gamma + 1}\lim_{j\to\infty}\int_{\mathds{R}^n} \frac{g(|\nabla w|)}{|\nabla w|}\nabla w \cdot  \nabla\varphi_j\,\mathrm{d} x,
    \end{aligned}
\end{equation*}
where the last equality follows by testing the equation of $w$ with each $\varphi_j$. Since $\nabla \varphi\rightharpoonup \nabla w$ in $L^G(\mathds{R}^n)$ and $ {g(|\nabla w|)}/{|\nabla w|}\,\nabla w\in L^{G^{\ast}}(\mathds{R}^n)$, we deduce
\begin{equation*}
    \begin{aligned}
        \int_{\mathds{R}^n}F(w)\,\mathrm{d}\sigma & \leq \frac{1}{(p-1)\gamma + 1}\int_{\mathds{R}^n} \frac{g(|\nabla w|)}{|\nabla w|}\nabla w \cdot  \nabla w\,\mathrm{d} x \\
        & =\frac{1}{(p-1)\gamma + 1}\int_{\mathds{R}^n}{g(|\nabla w|)}{|\nabla w|}\,\mathrm{d} x\\
        & \leq \frac{q}{(p-1)\gamma + 1} \int_{\mathds{R}^n}G(|\nabla w|)\,\mathrm{d} x< \infty.
    \end{aligned}
\end{equation*}

Next, by Corollary~\ref{maly's result pratical}, we have
\begin{equation*}
    w\geq K^{-1}\,\mathbf{W}_G(f(w)\mathrm{d}\sigma)\geq \mathbf{W}_G(f(w)K^{-(p-1)}\mathrm{d}\sigma),
\end{equation*}
where in the last inequality was used \eqref{estimate on g-1}, since $K\geq 1$. Using Theorem~\ref{estimativainferior orlicz} with $K^{-(p-1)}\sigma$ in place of $\sigma$, we obtain that
\begin{equation*}
    w\geq C\big(\mathbf{W}_G(K^{-(p-1)}\sigma)\big)^{\frac{1}{1-\gamma}}\geq C\,K^{-\frac{p-1}{(q-1)(1-\gamma)}}\big(\mathbf{W}_G\sigma\big)^{\frac{1}{1-\gamma}}.
\end{equation*}
By choice of $\varepsilon$ in \eqref{constant epsilon orlicz}, one has
$u_0\leq w$ in $\mathds{R}^n$, whence $f(u_0)\mathrm{d}\sigma\leq f(w)\mathrm{d}\sigma$ in  $M^{+}(\mathds{R}^n)\cap\big(\mathcal{D}^{1,G}(\mathds{R}^n)\big)^{\ast}$. From Lemma~\ref{comparison principle used}, $u_1\leq w$ almost everywhere in $\mathds{R}^n$. Proceeding by induction, we deduce that $u_j \leq w $ holds almost everywhere in $\mathds{R}^n$ for all $j\geq 1$. Thus
\begin{equation*}
    u=\lim_{j\to\infty}u_j\leq w \quad\mbox{almost everywhere in}\quad \mathds{R}^n,
\end{equation*}
which shows that $u$ is minimal and completes the proof of Theorem~\ref{solution to orlicz-measure eq}.

\subsection{Final comments}

\begin{remark}\label{remark extension of Igor}
    For the case $p=q$ in \eqref{our g}, $\gamma$ only satisfies $0<\gamma<1$. Setting $r=\gamma (p-1)$, obliviously 
    \begin{equation}\label{condition p=q}
        \frac{1}{1-\gamma}=\frac{p-1}{p-1-\gamma(q-1)}=\frac{q-1}{q-1-\gamma(p-1)}=\frac{p-1}{p-1-r}.
    \end{equation}
    Hence, condition \eqref{cond sufficient to exist} reduces to 
    \begin{equation}\label{cond suf/nec Igor}
        \big(\mathbf{W}_p\sigma\big)^{\frac{p-1}{p-1-r}}\in L^{1+r}(\mathds{R}^n,\mathrm{d}\sigma).
    \end{equation}
    Moreover, for $p=q$, Eq.~\eqref{equation nonstandard} becomes in \eqref{equation standard}, that is
    \begin{equation}\label{equation standard 2}
        -\Delta_p u =\sigma\, u^r \quad\mbox{in}\quad \mathds{R}^n.
    \end{equation}
Thus, Theorem~\ref{solution to orlicz-measure eq} is a partial extension of \cite[Theorem~3.8]{MR3311903}, for the case $p=q$. 
\end{remark}

\begin{remark}\label{remark important A-operator orlicz}
The same conclusion of Theorem~\ref{solution to orlicz-measure eq} can be drawn for the $\mathcal{A}$-equation 
\begin{equation}\label{A-div operator orlicz}
    -\mathrm{div}\big(\mathcal{A}(x,\nabla u)\big)=\sigma\,g(u^{\gamma})\quad \mbox{in}\quad\mathds{R}^n,
\end{equation}
where $\mathcal{A}$ is a Carath\'{e}odory regular vector field satisfying the Orlicz growth $\mathcal{A}(x,\xi)\cdot\xi  \approx G(|\xi|)$, where $G$ is the primitive of $g$, given by \eqref{our g}. 

To proceed formally, suppose \eqref{assumption on g}
and let $\mathcal{A}:\mathds{R}^n\times\mathds{R}^n\to \mathds{R}^n$ be a mapping satisfying the following conditions:
\begin{equation}\label{condition A1 orlicz}
    \begin{aligned}
        &x\longmapsto \mathcal{A}(x,\xi) \quad\mbox{is measurable for all }\xi\in\mathds{R}^n, \\
        & \xi\longmapsto \mathcal{A}(x,\xi) \quad\mbox{is continuous for almost everywhere }x\in\mathds{R}^n,
    \end{aligned}
\end{equation}
and there exist structural constants $\alpha>0$ and $\beta>0$ such that for almost everywhere $x\in\mathds{R}^n$, for all $\xi,\,\eta \in \mathds{R}^n$, $\xi\neq \eta$, it holds
\begin{equation}\label{condition A2 orlicz}
    \begin{aligned}
         &\mathcal{A}(x,\xi)\cdot\xi \geq \alpha \,G(|\xi|), \quad |\mathcal{A}(x,\xi)|\leq \beta\,g(|\xi|),\\
         &\big(\mathcal{A}(x,\xi)-\mathcal{A}(x,\eta)\big)\cdot(\xi-\eta)>0.
    \end{aligned}
\end{equation}
 In particular, $\mathcal{A}(x,0)=0$ for almost everywhere $x\in \mathds{R}^n$. A typical example is what was treated in this work:
\begin{equation*}
    \mathcal{A}_0: (x,\xi)\longmapsto\mathcal{A}_0(x,\xi)=\frac{g(|\xi|)}{|\xi|}\xi.
\end{equation*}
Let $\Omega\subseteq \mathds{R}^n$ be a domain. Similarly to Definition~\ref{definition supersolution subsolution orlicz}, we say that a continuous function $u\in W_{\mathrm{loc}}^{1,G}(\Omega) $ is \textit{$\mathcal{A}$-harmonic}  in $\Omega$ if it satisfies $-\mathrm{div}\big(\mathcal{A}(x,\nabla u)\big)=0$ weakly in $\Omega$, that is
\begin{equation*}
    \int_\Omega \mathcal{A}(x,\nabla u)\cdot\nabla \varphi\,\mathrm{d} x=0 \quad \forall \varphi\in C_c^{\infty}(\Omega).
\end{equation*}
 A function $u\in W_{\mathrm{loc}}^{1,G}(\Omega)$ is called \textit{$\mathcal{A}$-supersolution} in $\Omega$ if it satisfies  $-\mathrm{div}\big(\mathcal{A}(x,\nabla u)\big)\geq 0$ weakly in $\Omega$, and by \textit{$\mathcal{A}$-subsolution} in $\Omega$ we mean a function $u$ such that $-u$ is \textit{$\mathcal{A}$-supersolution} in $\Omega$. The classes of \textit{$\mathcal{A}$-superharmonic} and \textit{$\mathcal{A}$-subharmonic} functions are defined likewise Definition~\ref{def superharmonic}. We denote $\mathcal{S}_{\mathcal{A}}(\Omega)$ the set of all $\mathcal{A}$-superharmonic functions in $\Omega$. 

According to the above definitions,  we mention that all basic facts stated in Section~\ref{preliminaries orlicz}, and the preliminaries lemmas of Section~\ref{applications orlicz}, remain 
 true for quasilinear elliptic $\mathcal{A}$-equations with measure data, that is equations of the type
 \begin{equation}\label{eq. general app}
     -\mathrm{div}\big(\mathcal{A}(x,\nabla u)\big)=\mu\quad \mbox{in} \quad \Omega.
 \end{equation}
 See for  instance \cite{MR4274293, MR4258794, MR2834769, MR4482108, MR4568881}. In particular, for $g$ given by \eqref{our g}, Corollary~\ref{maly's result pratical} is still true for $\mathcal{A}$-equations with measure data. In Section~\ref{Potential estimates App} we deal with this approach. Summarizing, we have the following theorem
 \begin{theorem}
      Let $\mathcal{A}$ be a mapping satisfying \eqref{condition A1 orlicz} and \eqref{condition A2 orlicz} with $g$ given by \eqref{our g}. Under the hypotheses of Theorem~\ref{solution to orlicz-measure eq}, there exists a nonnegative solution $u\in \mathcal{D}^{1,G}(\mathds{R}^n)\cap L^F(\mathds{R}^n,\mathrm{d}\sigma)$ to \eqref{A-div operator orlicz}, provided \eqref{cond sufficient to exist} holds.  Moreover, $u$ is minimal.
 \end{theorem}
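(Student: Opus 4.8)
The plan is to run the very same argument that proves Theorem~\ref{solution to orlicz-measure eq}, replacing the model field $\xi\mapsto(g(|\xi|)/|\xi|)\xi$ by the general vector field $\mathcal{A}(x,\xi)$ everywhere, and to verify that each ingredient survives under \eqref{condition A1 orlicz}--\eqref{condition A2 orlicz}. The first observation is that the purely potential-theoretic part of the machinery does not see the operator at all: the Wolff integral equation \eqref{wolff integral equation} and the potentials $\mathbf{W}_G\sigma,\mathbf{W}_p\sigma,\mathbf{W}_q\sigma$ are operator-free, so Theorem~\ref{existencewolffequation} and Theorem~\ref{estimativainferior orlicz} apply verbatim. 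Thus, assuming \eqref{cond sufficient to exist} and taking $K\geq1$ to be the constant of the $\mathcal{A}$-version of Corollary~\ref{maly's result pratical} discussed below, one still gets from Theorem~\ref{existencewolffequation} applied to $\big((q/p)^{1/(p-1)}K\big)^{q-1}\sigma$ a nontrivial upper barrier $v\in L^F(\mathds{R}^n,\mathrm{d}\sigma)$, and Theorem~\ref{estimativainferior orlicz} still forces the lower bound $v\geq C(\mathbf{W}_G\sigma)^{1/(1-\gamma)}$.

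Next I would record the $\mathcal{A}$-analogues of the preliminary results of Sections~\ref{preliminaries orlicz} and \ref{applications orlicz}. The comparison principles (Lemma~\ref{comparison principle super/subsolution} and Lemma~\ref{comparison principle used}) go through using the strict monotonicity $(\mathcal{A}(x,\xi)-\mathcal{A}(x,\eta))\cdot(\xi-\eta)>0$ from \eqref{condition A2 orlicz} in place of \eqref{monotonicity usual g}. Existence in $\mathcal{D}^{1,G}(\mathds{R}^n)$ for $\mu\in M^+(\mathds{R}^n)\cap(\mathcal{D}^{1,G}(\mathds{R}^n))^{\ast}$ (Lemma~\ref{existence in D1G orlicz}) follows from the same monotone-operator argument: the operator $Tu=-\mathrm{div}(\mathcal{A}(x,\nabla u))$ is bounded by the growth $|\mathcal{A}(x,\xi)|\leq\beta g(|\xi|)$, coercive by $\mathcal{A}(x,\xi)\cdot\xi\geq\alpha G(|\xi|)$ together with Lemma~\ref{lemma relation norm and modular}, and weakly continuous by the continuity of $\xi\mapsto\mathcal{A}(x,\xi)$ combined with Theorem~\ref{Lemma weakly convergence}. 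Similarly, the growth $|\mathcal{A}(x,\xi)|\leq\beta g(|\xi|)$ makes the proof of Lemma~\ref{wolff inequality lemma} run unchanged (only $g(|\nabla u_j|)$ enters the test-function estimates), so that $f(v)\mathrm{d}\sigma\in(\mathcal{D}^{1,G}(\mathds{R}^n))^{\ast}$ as before.

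With these in hand, I would set $u_0=\varepsilon(\mathbf{W}_G\sigma)^{1/(1-\gamma)}$ with $\varepsilon$ chosen exactly as in \eqref{constant epsilon orlicz}, observe $u_0\leq v$, and build the iteration $-\mathrm{div}(\mathcal{A}(x,\nabla u_{j+1}))=\sigma f(u_j)$ via the $\mathcal{A}$-version of Lemma~\ref{existence in D1G orlicz}. Monotonicity $f(u_{j-1})\leq f(u_j)$ and Lemma~\ref{comparison principle used} give $u_j\leq u_{j+1}$; the $\mathcal{A}$-version of Corollary~\ref{maly's result pratical} gives $u_{j+1}\leq K\mathbf{W}_G(f(u_j)\mathrm{d}\sigma)\leq K\mathbf{W}_G(f(v)\mathrm{d}\sigma)\leq v$; testing with $u_{j+1}$ and using $\mathcal{A}(x,\xi)\cdot\xi\geq\alpha G(|\xi|)$ yields the uniform energy bound $\int_{\mathds{R}^n}G(|\nabla u_{j+1}|)\,\mathrm{d} x\leq\alpha^{-1}\int_{\mathds{R}^n}v f(v)\,\mathrm{d}\sigma$. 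Passing to the limit $u=\lim_j u_j$ through Lemma~\ref{weak compactness in D orlicz}, Harnack's Principle (Theorem~\ref{harnack principle orlicz5}) and Theorem~\ref{Lemma weakly convergence}, exactly as at the end of the proof of Theorem~\ref{solution to orlicz-measure eq}, produces a finite-energy solution $u$ to \eqref{A-div operator orlicz}. For minimality, any finite-energy solution $w\geq0$ satisfies $w\geq K^{-1}\mathbf{W}_G(f(w)\mathrm{d}\sigma)\geq\mathbf{W}_G(f(w)K^{-(p-1)}\mathrm{d}\sigma)$ by Corollary~\ref{maly's result pratical} and \eqref{estimate on g-1}, hence $w\geq C\,K^{-(p-1)/((q-1)(1-\gamma))}(\mathbf{W}_G\sigma)^{1/(1-\gamma)}\geq u_0$ by Theorem~\ref{estimativainferior orlicz}, and induction with Lemma~\ref{comparison principle used} gives $u_j\leq w$ for all $j$, whence $u\leq w$.

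The single genuinely new point, and the main obstacle, is the $\mathcal{A}$-version of Corollary~\ref{maly's result pratical}: the two-sided pointwise estimate $K^{-1}\mathbf{W}_G\mu_u(x)\leq u(x)\leq K\mathbf{W}_G\mu_u(x)$ for an $\mathcal{A}$-superharmonic $u$ with $\inf_{\mathds{R}^n}u=0$ when $g$ is given by \eqref{our g}. This is deduced from the $\mathcal{A}$-analogue of Theorem~\ref{Maly's type result}, whose proof is carried out in Section~\ref{Potential estimates App} using only a Harnack-type inequality and the structural bounds \eqref{condition A2 orlicz}, all of which are available in the quasilinear potential theory for $\mathcal{A}$-equations cited there. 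Once this estimate is granted, the remainder of the argument is formal and word-for-word identical to the proof of Theorem~\ref{solution to orlicz-measure eq}.
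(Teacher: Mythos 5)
Your proposal is correct and follows the paper's own route: the paper itself presents this theorem in Remark~\ref{remark important A-operator orlicz} precisely by noting that the potential-theoretic pieces (Theorems~\ref{existencewolffequation}, \ref{estimativainferior orlicz}) are operator-free, that the comparison, existence and duality lemmas of Sections~\ref{preliminaries orlicz}--\ref{applications orlicz} extend under \eqref{condition A1 orlicz}--\eqref{condition A2 orlicz}, and that the one genuinely new ingredient is the $\mathcal{A}$-version of the pointwise Wolff estimate (Corollary~\ref{maly's result pratical}), which Section~\ref{Potential estimates App} proves directly for $\mathcal{A}$-superharmonic functions. You have identified exactly the same decomposition and the same critical step, so no further comment is needed.
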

\end{remark}



\section{Potential estimates}\label{Potential estimates App}
This technical section presents a proof of Theorem~\ref{Maly's type result}. The argument follows from a series of versions of Harnack inequalities. The ideas that inspire our proof in \cite{MR4803728} and \cite{MR2604619}, where the case $G_p(t)=t^p/p$ is treated.

In fact, we will prove an extending version of Theorem~\ref{Maly's type result}. Namely, \textit{ let $\mathcal{A}$ be a mapping satisfying \eqref{condition A1 orlicz} and \eqref{condition A2 orlicz} with $g$ given by \eqref{our g}, suppose $u$ is a nonnegative $\mathcal{A}$-superharmonic in $B(x_0,2R)$, then $u$ satisfies \eqref{potential estimate thm} where 
 \begin{equation}\label{eq. general u appendix super}
     \mu=\mu_u=-\mathrm{div}\big(\mathcal{A}(x,\nabla u)\big).
 \end{equation}
}To simplify notation, we  set $B_R=B(x_0,R)$ for $R>0$, and $\kappa B_R= B(x_0,\kappa R)$ for any $\kappa>0$.
\begin{remark}\label{redution potential estimate}
    We emphasize that \eqref{potential estimate thm} is an estimate only at $x_0$, the center of $B_{2R}$. Hence, we may reduce the proof of the theorem significantly to a more restricted case. Namely, we only consider the class of continuous $\mathcal{A}$-supersolutions functions. 
    
     Indeed, since $u$ is a nonnegative $\mathcal{A}$-superharmonic in $B_{2R}$, applying \cite[Proposition~4.5]{MR4482108}, there exists a nondecreasing sequence of nonnegative functions $\{u_j\}\subset C(\overline{B}_R)\cap \mathcal{S}_{\mathcal{A}}(B_R)$ satisfying $u_j=0$ on $\partial B_R$ (for $j=1,2,\ldots$) and
     \begin{equation}\label{estimate22 orlicz}
     u=\lim_{j\to\infty} u_j \quad \mbox{in}\quad B_R\ \mbox{(pointwise)}.   
     \end{equation}
     From \cite[Lemma~4.6]{MR4482108}, $u_j$ is an $\mathcal{A}$-supersolution in $B_R$, for all $j\geq 1$. This implies that $u_j\in W_{\mathrm{loc}}^{1,G}(B_R)$, and $D u_j=\nabla u_j$ for all $j\geq 1$. By Theorem~\ref{harnack principle orlicz5}, we have $Du=\lim_j \nabla u_j$ pointwise in $B_R$, possibly passing to a subsequence. On the other hand, notice that $\{u_j\}$ is bounded in $B_R$ (pointwise), since $u_j\leq u$ in $B_R$ and $u_j=0$ on $\partial B_R$ for all $j\geq 1$. Extending $u_j$ by zero away from $\partial B_R$, we may consider $ u_j$ as an $\mathcal{A}$-supersolution in $B_{3R}$ for all $j\geq 1$. From this, an appeal to \cite[Lemma~5.2]{MR4258794} ensure that there exists $c_0=c_0(p,q,\alpha,\beta)>0$ such that
     \begin{equation*}
         \begin{aligned}
                \int_{B_{R}}G(|\nabla u_j|)\,\mathrm{d} x & 
         \leq c_0\int_{B_{2R}}G\bigg(\frac{\mathrm{osc}_{B_{2R}} u_j}{R}\bigg)\,\mathrm{d} x\leq c_0\int_{B_{2R}}G\bigg(\frac{\sup_{B_{2R}} u_j}{R}\bigg)\,\mathrm{d} x \nonumber\\
        & \leq c_1\,R^nG\bigg(\frac{\sup_{B_{2R}} u_j}{R}\bigg) \quad \forall j\geq 1,
         \end{aligned}
     \end{equation*}
     where $c_1=c_1(n,p,q,\alpha,\beta)>0$. Consequently, $\{\nabla u_j\}$ is bounded in $L^G(B_R)$, and by Theorem~\ref{Lemma weakly convergence}, $\nabla u_j\rightharpoonup Du$ in $L^G(B_R)$. In particular, $Du\in L^G({B_R})$. From \eqref{condition A1 orlicz} and \eqref{condition A2 orlicz}, we deduce that
     \begin{equation*}
         \mathcal{A}(x,\nabla u_j)\rightharpoonup \mathcal{A}(x,Du) \quad \mbox{in}\quad L^{G^{\ast}}(B_R).
     \end{equation*}
     This follows by the same method as in \eqref{estimate14 orlicz}. Thus we also have the weak convergence of corresponding measures $\mu_j=\mu_{u_j}$ to $\mu=\mu_u$ in $B_R$:
     \begin{equation*}
         \begin{aligned}
            \int_{B_R}\varphi\,\mathrm{d}\mu & = \int_{B_R} \mathcal{A}(x,Du)\cdot \nabla \varphi \,\mathrm{d} x= \lim_{j\to\infty} \int_{B_R} \mathcal{A}(x,\nabla u_j)\cdot\nabla \varphi \,\mathrm{d} x \\
             &=\lim_{j\to\infty}\int_{B_R}\varphi\,\mathrm{d}\mu_j \quad \forall \varphi\in C_c^{\infty}(B_R).
         \end{aligned}
     \end{equation*}
     Applying \cite[Theorem~2.2.5]{MR3837546}, 
     \begin{align}
&\varliminf_{j\to\infty}\mu_j(B(x_0,s))\geq \mu(B(x_0,s)) \quad \forall s\leq R, \label{estimate20 orlicz}\\
& \varlimsup_{j\to\infty}\mu_j(\overline{B(x_0,s)})\leq \mu(\overline{B(x_0,s)}) \quad \forall s\leq R. \label{estimate21 orlicz} 
     \end{align}

Now, suppose the bounds in \eqref{potential estimate thm} holds for $u_j$, $j=1,2,\ldots$. Using \eqref{estimate20 orlicz}, we obtain from the lower bound in \eqref{potential estimate thm} and from \eqref{estimate22 orlicz} that
\begin{equation*}
    \begin{aligned}
        u(x_0)&=\lim_{j\to\infty}u_j(x_0)\geq C_1\varliminf_{j\to\infty}\mathbf{W}_G^R\mu_j(x_0)
        = C_1\varliminf_{j\to\infty}\int_0^Rg^{-1}\bigg(\frac{\mu_j(B(x_0,s))}{s^{n-1}}\bigg)\,\mathrm{d} s\\
&=C_1\int_0^Rg^{-1}\bigg(\frac{\varliminf_{j\to\infty}\mu_j(B(x_0,s))}{s^{n-1}}\bigg)\,\mathrm{d} s
        \geq C_1\int_0^Rg^{-1}\bigg(\frac{\mu(B(x_0,s))}{s^{n-1}}\bigg)\,\mathrm{d} s,
    \end{aligned}
\end{equation*}
which shows the lower bound in \eqref{potential estimate thm} for $u(x_0)$. To verify the upper bound in \eqref{potential estimate thm} for $u(x_0)$, first notice that $\inf_{B_R}u_j\leq \inf_{B_R}u$ for all $j\geq 1$. Combining  this with \eqref{estimate21 orlicz} and \eqref{estimate22 orlicz}, one has 
\begin{equation*}
    \begin{aligned}
        u(x_0)&=\lim_{j\to\infty}u_j(x_0)\leq C_2\bigg(\inf_{B_R}u+\varlimsup_{j\to\infty}\mathbf{W}_G^R\mu_j(x_0)\bigg)\\
        & \leq C_2\bigg(\inf_{B_R}u+\varlimsup_{j\to\infty}\int_0^Rg^{-1}\bigg(\frac{\mu_j(\overline{B(x_0,s))}}{s^{n-1}}\bigg)\,\mathrm{d} s\bigg)\\
        &=C_2\bigg(\inf_{B_R}u+\int_0^Rg^{-1}\bigg(\frac{\varlimsup_{j\to\infty}\mu_j(\overline{B(x_0,s))}}{s^{n-1}}\bigg)\,\mathrm{d} s\bigg)\\
        &\leq C_2\bigg(\inf_{B_R}u+\int_0^Rg^{-1}\bigg(\frac{\mu(\overline{B(x_0,s))}}{s^{n-1}}\bigg)\,\mathrm{d} s\bigg).
    \end{aligned}  
\end{equation*}
The upper bound in \eqref{potential estimate thm} for $u(x_0)$ follows from the fact 
\begin{equation}\label{estimate24 orlicz}
    \int_0^Rg^{-1}\bigg(\frac{\mu(\overline{B(x_0,s))}}{s^{n-1}}\bigg)\,\mathrm{d} s=\int_0^Rg^{-1}\bigg(\frac{\mu({B(x_0,s))}}{s^{n-1}}\bigg)\,\mathrm{d} s.
\end{equation}
To prove \eqref{estimate24 orlicz}, note that the function $t\mapsto\mu(B(x_0,t))$ is monotone in $t\geq 0$, whence the set $\{t_0>0: t\mapsto\mu(B(x_0,t))\mbox{ is discontinuous in }t_0\}$ is enumerable (see for instance \cite[Chapter~6, Theorem~1]{MR1013117}). But this set is equal to the set $\{t_0>0: \mu(\partial B(x_0,t_0))\neq 0\}$, which yields \eqref{estimate24 orlicz}.   
\end{remark}

Let us list some preliminary results. Except for the Harnack inequalities given in \eqref{weak harnack inequality}-\eqref{harnack inequality boundary 2} below, these results hold for $N$-functions $G$ satisfying \eqref{data condition elementary}.
The following type-Caccioppoli estimate will be useful to show the lower bound \cite[Proposition~3.24]{MR4803728}. As usual, $\Omega\subseteq \mathds{R}^n$ means a domain.

\begin{lemmaletter}\label{caccioppoli estimate orlicz} If $u\in W_{\mathrm{loc}}^{1,G}(\Omega)$ is a nonnegative $\mathcal{A}$-subsolution, then  there exists a constant $C=C(p,q,\alpha,\beta)>0$, such that
\begin{equation*}
    \int_{\Omega}G(|\nabla u|)\eta^q\,\mathrm{d} x \leq C\int_{\Omega} G(u|\nabla\eta|)\,\mathrm{d} x \quad \forall \eta \in C_c^{\infty}(\Omega).
\end{equation*}
\end{lemmaletter}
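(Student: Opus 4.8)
The plan is to run the classical Caccioppoli argument, adapted to the Orlicz setting through the $\Delta_2$--$\nabla_2$ bounds \eqref{estimate on G}, \eqref{estimate on G ast} and the equivalence \eqref{relation G and tilde G}. First I would test the subsolution inequality against $\varphi = u\,\eta^{q}$. Since $u\in W_{\mathrm{loc}}^{1,G}(\Omega)$ is nonnegative and $\eta\in C_c^{\infty}(\Omega)$, the function $\varphi$ is nonnegative, lies in $W^{1,G}(\Omega)$ and has compact support; because $|\mathcal{A}(x,\nabla u)|\le\beta\,g(|\nabla u|)$ and $G^{\ast}(g(|\nabla u|))\le c_2\,G(|\nabla u|)\in L_{\mathrm{loc}}^1(\Omega)$ by \eqref{relation G and tilde G}, a standard density argument extends the $\mathcal{A}$-subsolution inequality to such test functions, so that $\int_\Omega\mathcal{A}(x,\nabla u)\cdot\nabla\varphi\,\mathrm{d} x\le 0$. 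Computing $\nabla\varphi=\eta^{q}\nabla u+q\,u\,\eta^{q-1}\nabla\eta$ and inserting gives
\begin{equation*}
\int_\Omega \mathcal{A}(x,\nabla u)\cdot\nabla u\;\eta^{q}\,\mathrm{d} x \le -q\int_\Omega u\,\eta^{q-1}\,\mathcal{A}(x,\nabla u)\cdot\nabla\eta\,\mathrm{d} x .
\end{equation*}

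Next I would invoke the structure conditions \eqref{condition A2 orlicz}: on the left, $\mathcal{A}(x,\nabla u)\cdot\nabla u\ge\alpha\,G(|\nabla u|)$; on the right, $|\mathcal{A}(x,\nabla u)|\le\beta\,g(|\nabla u|)$. This yields
\begin{equation*}
\alpha\int_\Omega G(|\nabla u|)\,\eta^{q}\,\mathrm{d} x \le q\beta\int_\Omega g(|\nabla u|)\,\eta^{q-1}\;u\,|\nabla\eta|\,\mathrm{d} x .
\end{equation*}
The heart of the matter is to absorb the gradient term on the right into the left-hand side. To do so I would apply Young's inequality \eqref{young inequality} to the product $\big[\delta\,g(|\nabla u|)\,\eta^{q-1}\big]\cdot\big[\delta^{-1}u|\nabla\eta|\big]$ for a small parameter $\delta\in(0,1]$, getting
\begin{equation*}
g(|\nabla u|)\,\eta^{q-1}\,u|\nabla\eta| \le G^{\ast}\!\big(\delta\,g(|\nabla u|)\,\eta^{q-1}\big)+G\!\big(\delta^{-1}u|\nabla\eta|\big).
\end{equation*}
Here I would use \eqref{estimate on G ast} to extract the factors $\delta\,\eta^{q-1}$ from $G^{\ast}$, observing that $(q-1)\cdot\tfrac{q}{q-1}=q$ is precisely the power of $\eta$ occurring on the left (and $q\ge p$ makes the relevant maxima of powers of $\eta\le1$ equal to $\eta^{q}$), and then \eqref{relation G and tilde G} to replace $G^{\ast}(g(|\nabla u|))$ by $c_2\,G(|\nabla u|)$; this controls the first term by $c\,\delta^{q/(q-1)}\eta^{q}G(|\nabla u|)$. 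Similarly \eqref{estimate on G} gives $G(\delta^{-1}u|\nabla\eta|)\le\delta^{-q}G(u|\nabla\eta|)$. Choosing $\delta=\delta(p,q,\alpha,\beta)$ so that $q\beta\,c\,\delta^{q/(q-1)}\le\alpha/2$, the term $\int_\Omega G(|\nabla u|)\eta^{q}\,\mathrm{d} x$ is absorbed, leaving
\begin{equation*}
\frac{\alpha}{2}\int_\Omega G(|\nabla u|)\,\eta^{q}\,\mathrm{d} x \le q\beta\,\delta^{-q}\int_\Omega G(u|\nabla\eta|)\,\mathrm{d} x ,
\end{equation*}
which is the claimed estimate with $C=C(p,q,\alpha,\beta)>0$.

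I expect the only genuinely Orlicz-specific obstacle to be this absorption step: unlike the power case $G(t)=t^{p}/p$, Young's inequality produces $G^{\ast}\circ g$ rather than a pure power of $|\nabla u|$, and the cutoff enters through $G^{\ast}(\delta\eta^{q-1}g(\cdot))$, so one must track how the doubling estimates let the powers of $\delta$ and $\eta$ come out as $\delta^{q/(q-1)}$ and $\eta^{q}$ respectively — it is exactly the combination of \eqref{estimate on G}, \eqref{estimate on G ast} and $G^{\ast}(g(t))\approx G(t)$ that makes this work. It is convenient (and, as in the classical argument, harmless for all uses of the lemma in the sequel) to carry out the computation for cutoffs with $0\le\eta\le1$, so that $\eta^{q-1}\le1$ and the exponents behave monotonically; the remaining points — admissibility of $\varphi=u\eta^{q}$ and the convergence in the density approximation — are routine given $u\in W_{\mathrm{loc}}^{1,G}(\Omega)$ and the growth bound on $\mathcal{A}$.
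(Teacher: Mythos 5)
The paper does not prove Lemma~\ref{caccioppoli estimate orlicz} itself: it is quoted verbatim from \cite[Proposition~3.24]{MR4803728} (Chlebicka--Giannetti--Zatorska-Goldstein), which is why it appears with a \texttt{lemmaletter} label. Your argument is the standard Caccioppoli scheme in the Orlicz setting and, as far as I can tell, coincides with the proof in that reference: test with $u\eta^{q}$, apply the structure conditions \eqref{condition A2 orlicz}, split the mixed term by Young's inequality \eqref{young inequality}, pull the cut-off out of $G^{\ast}$ via \eqref{estimate on G ast}, convert $G^{\ast}\circ g$ back to $G$ by \eqref{relation G and tilde G}, and absorb. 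The bookkeeping is right: with $\delta\eta^{q-1}\le 1$ and $q\ge p$ the relevant maximum in \eqref{estimate on G ast} is the exponent $q/(q-1)$, so $(\eta^{q-1})^{q/(q-1)}=\eta^{q}$ matches the left-hand side exactly, and choosing $\delta=\delta(p,q,\alpha,\beta)$ with $q\beta c_2\,\delta^{q/(q-1)}\le\alpha/2$ closes the absorption. The resulting $C$ depends only on $p,q,\alpha,\beta$ as claimed.

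One point worth being explicit about: the restriction $0\le\eta\le 1$ that you impose is not merely a convenience --- it is \emph{required} for the constant to be independent of $\eta$. Unlike the homogeneous case $G(t)=t^{p}/p$, where one may normalize $\eta$ away, here the two sides scale differently under $\eta\mapsto\lambda\eta$: the left picks up $\lambda^{q}$ while the lower bound in \eqref{estimate on G} only returns $\lambda^{p}$ on the right when $\lambda>1$, and the absorption step itself would produce $\eta^{(q-1)p/(p-1)}$ (an exponent $>q$ when $q>p$) if $\delta\eta^{q-1}$ were allowed to exceed~$1$. So the statement as printed, with an unqualified ``$\forall\,\eta\in C_c^{\infty}(\Omega)$'', is slightly imprecise; it should carry the restriction $0\le\eta\le1$, which is indeed how it is invoked in Section~\ref{Potential estimates App} (the cut-offs there satisfy \eqref{estimate42 orlicz}). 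You correctly identified and flagged this, so your proof is sound for the statement as actually used. The remaining admissibility points (density approximation of the test function, local integrability of $G^{\ast}(g(|\nabla u|))$ via \eqref{relation G and tilde G}) are handled as you indicate and pose no difficulty.
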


 The following Minimum and Maximum Principles \cite[Corollary~4.15 and Corollary~4.16]{MR4482108} will be a helpful ingredient to work together with Harnack's inequalities and consequently to prove the bounds in \eqref{potential estimate thm}.
\begin{lemmaletter}\label{Minimum and Maximum Principles orlicz}
Suppose $\Omega\subset\mathds{R}^n$ bounded and let $D\subset\Omega$ be a connect open subset compactly contained in $\Omega$.
    \begin{flushleft}
        {$\mathbf{(a)}$}  Suppose $u$ is $\mathcal{A}$-superharmonic function and finite (almost everywhere) in $\Omega$. Then
\begin{equation*}
    \inf_D u=\inf_{\partial D}u.
\end{equation*}
{$\mathbf{(b)}$} Suppose $u$ is $\mathcal{A}$-subharmonic function and finite (almost everywhere) in $\Omega$. Then
        \begin{equation*}
            \sup_D u=\sup_{\partial D}.
\end{equation*}       
    \end{flushleft}
\end{lemmaletter}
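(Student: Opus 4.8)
The plan is to reduce both assertions to the comparison property (iii) in Definition~\ref{def superharmonic}, using that every constant function is $\mathcal{A}$-harmonic: since $\mathcal{A}(x,0)=0$ for a.e.\ $x$ by \eqref{condition A2 orlicz}, a constant $h$ satisfies $-\mathrm{div}\big(\mathcal{A}(x,\nabla h)\big)=0$ weakly and lies in $C(\overline{D})$. First I would note that $\mathbf{(b)}$ is just $\mathbf{(a)}$ applied to $-u$: this function is $\mathcal{A}$-superharmonic and finite almost everywhere whenever $u$ is $\mathcal{A}$-subharmonic and finite almost everywhere, and $\inf_D(-u)=-\sup_D u$, $\inf_{\partial D}(-u)=-\sup_{\partial D}u$. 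Thus it suffices to establish $\mathbf{(a)}$.

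For $\mathbf{(a)}$, the inequality $\inf_D u\ge\inf_{\partial D}u$ is the easy half. Since $D$ is compactly contained in $\Omega$, the boundary $\partial D$ is a compact subset of $\Omega$; as $u$ is lower semicontinuous and never takes the value $-\infty$, the number $m:=\inf_{\partial D}u$ is finite and attained (the exceptional set $\{u=\infty\}$ being of zero $G$-capacity, it cannot contain $\partial D$). Comparing $u$ with the constant $\mathcal{A}$-harmonic function $h\equiv m$ on $V=D$ via property (iii) — one has $h\le u$ on $\partial D$ — gives $h\le u$ throughout $D$, i.e.\ $u\ge m$ on $D$, whence $\inf_D u\ge m=\inf_{\partial D}u$.

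For the reverse inequality $\inf_D u\le\inf_{\partial D}u$ I would invoke the strong minimum principle for $\mathcal{A}$-superharmonic functions — equivalently the super-mean-value property — which forces an $\mathcal{A}$-superharmonic function on a connected open set attaining its infimum at an interior point to be constant there. Since $u$ is lower semicontinuous and $\overline D$ is compact, $\inf_{\overline D}u$ is attained, say at $y_0\in\overline D$. If $y_0\in D$, then $u$ attains $\inf_D u$ at an interior point of the connected open set $D$, so $u\equiv\inf_D u$ on $D$; lower semicontinuity then forces $u\le\inf_D u$ on $\partial D$, which gives $\inf_{\partial D}u\le\inf_D u$ and hence equality together with the previous step. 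If instead $y_0\in\partial D$, one already has $\inf_{\overline D}u=u(y_0)\ge\inf_{\partial D}u\ge\inf_{\overline D}u$, and concludes by a standard exhaustion of $D$ by connected open subsets combined with the first inequality; the precise bookkeeping for $\mathcal{A}$-superharmonic functions with Orlicz growth is carried out in \cite[Corollaries~4.15 and 4.16]{MR4482108}.

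The step I expect to be the genuine obstacle is exactly this reverse inequality: lower semicontinuity controls $u$ from below but not from above near $\partial D$, so one cannot simply recover boundary values as limits of interior values, and the argument truly requires the interior regularity theory for $\mathcal{A}$-(super)solutions — the Harnack-type estimates recalled in this section — and not merely the definitions. The first inequality, by contrast, needs nothing beyond the defining comparison property and the triviality that constants solve the homogeneous $\mathcal{A}$-equation.
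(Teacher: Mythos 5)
This lemma is not proved in the paper at all: it is imported verbatim as Lemma~\ref{Minimum and Maximum Principles orlicz} with the citation \cite[Corollaries~4.15 and 4.16]{MR4482108}, so there is no ``paper's proof'' against which to compare your approach. Judged on its own terms, your proposal establishes only half of the assertion and explicitly punts on the other half.

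Your treatment of the inequality $\inf_D u\ge\inf_{\partial D}u$ is sound: constants are $\mathcal{A}$-harmonic because $\mathcal{A}(x,0)=0$, and the comparison clause (iii) of Definition~\ref{def superharmonic} applied with $h\equiv\inf_{\partial D}u$ does the job. The reduction of $\mathbf{(b)}$ to $\mathbf{(a)}$ is likewise fine, since the paper \emph{defines} $\mathcal{A}$-sub\-harmonicity of $u$ as $\mathcal{A}$-super\-harmonicity of $-u$, so no oddness of $\mathcal{A}$ is needed. (Your parenthetical justification that $m=\inf_{\partial D}u<\infty$ via zero $G$-capacity of $\{u=\infty\}$ invokes a fact about superharmonic functions that is itself nowhere recorded in this paper, but let that pass.)

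The genuine gap is precisely where you flag it, namely the reverse inequality $\inf_D u\le\inf_{\partial D}u$. Your Case~1 ($\inf_{\overline D}u$ attained at an interior point) appeals to a strong minimum principle for $\mathcal{A}$-superharmonic functions that you neither prove nor pin down with a citation, and which in this generality requires Harnack-type input. Your Case~2 ($\inf_{\overline D}u$ attained only on $\partial D$) is not an argument: the identity $\inf_{\overline D}u=\inf_{\partial D}u$ that you derive there is equivalent to the \emph{first} inequality, not the reverse one, and the proposed exhaustion of $D$ by $D_k\Subset D$ goes the wrong way — one has $\partial D_k\subset D$, hence $\inf_{\partial D_k}u\ge\inf_D u$, while lower semicontinuity only controls $\liminf_k\inf_{\partial D_k}u$ from below, not from above; nothing forces $\inf_{\partial D_k}u$ down to $\inf_{\partial D}u$. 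In the end you resolve Case~2 by citing \cite[Corollaries~4.15 and 4.16]{MR4482108}, i.e.\ the very source the paper cites for the whole lemma, so the proposal is circular there rather than self-contained. The claim itself is plausible — superharmonicity rules out a downward spike of $u$ on $\partial D$ — but extracting it really does seem to require the interior regularity machinery (weak Harnack, or the essential-liminf characterization) rather than the comparison axiom alone, and that part of the argument is missing.
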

In what follows, we use the abbreviation
\begin{equation*}
\dashint_{\Omega}u\,\mathrm{d} x=\frac{1}{|\Omega|}\int_{\Omega}   u\,\mathrm{d} x, \quad \Omega\subset\mathds{R}^n \; \mbox{bounded}.
\end{equation*}

Next, we state versions of Harnack's inequalities, which will play a crucial role in the proof of bounds in \eqref{potential estimate thm}. Recall $W^{1,G}(B_{2R})\subset W^{1,p}(B_{2R})$ \cite[Lemma~6.1.6]{MR3931352}. Suppose $g$ is given by \eqref{our g}. With the aid of  \cite[Lemma~3.7 and Corollary~3.8]{MR4482108}, we infer from \cite[Theorem~2.5]{MR3348922} the weak Harnack inequality for $\mathcal{A}$-supersolutions in $B_{2R}$ and for $\mathcal{A}$-harmonic functions in $B_{2R}$, respectively. This is the content of the following theorem.

\begin{theoremletter}
   Let $g$ be given by \eqref{our g}. Let $u$ be a nonnegative $\mathcal{A}$-supersolution in $B_{2R}$. Then there exist constants $c_0=c_0(n,p,q,\alpha,\beta)>0$ and $s_0=s_0(n,p,q,\alpha,\beta)\in (0,1)$ such that
    \begin{equation}\label{weak harnack inequality}
        \bigg(\dashint_{B_{2R}}u^{s_0}\,\mathrm{d} x\bigg)^{\frac{1}{s_0}}\leq c_0\,\inf_{B_R}u.
    \end{equation}
  Furthermore, if $u$ is $\mathcal{A}$-harmonic in $B_{2R}$, there exists $c=c(n,p,q,\alpha,\beta)>0$ such that
  \begin{equation}\label{harnack inequality}
      \sup_{B_R}u\leq c\,\inf_{B_R}u.
  \end{equation}
\end{theoremletter}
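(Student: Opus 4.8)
The plan is to reduce both inequalities to Harnack-type estimates already available for quasilinear equations of $(p,q)$-growth, the role of the structure conditions \eqref{condition A1 orlicz}--\eqref{condition A2 orlicz} together with \eqref{data condition elementary} being precisely to place $\mathcal{A}$ in the admissible class. First I would record the two-sided polynomial control of $G$ and $g$ coming from \eqref{data condition elementary}: since $t\mapsto G(t)/t^p$ is nondecreasing and $t\mapsto G(t)/t^q$ is nonincreasing, one has $G(1)\min\{t^p,t^q\}\le G(t)\le G(1)\max\{t^p,t^q\}$, and analogously for $g$ via \eqref{assumption on g} (for the model $g(t)=t^{p-1}+t^{q-1}$ this is immediate). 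Combined with \eqref{condition A2 orlicz}, this yields, on $\mathds{R}^n\times\mathds{R}^n$, a coercivity bound $\mathcal{A}(x,\xi)\cdot\xi\ge c_1|\xi|^p-c_2$, a growth bound $|\mathcal{A}(x,\xi)|\le c_3|\xi|^{q-1}+c_4$, and the strict monotonicity already assumed, so that $\mathcal{A}$ satisfies the hypotheses under which \cite[Theorem~2.5]{MR3348922} is proved.

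Next I would use \cite[Lemma~3.7 and Corollary~3.8]{MR4482108} to bridge the notions of solution. An $\mathcal{A}$-supersolution in the sense of Definition~\ref{definition supersolution subsolution orlicz} belongs to $W^{1,G}_{\mathrm{loc}}(B_{2R})\subset W^{1,p}_{\mathrm{loc}}(B_{2R})$ by \cite[Lemma~6.1.6]{MR3931352}, and by those two results it is a local weak supersolution of the $(p,q)$-type equation to which \cite[Theorem~2.5]{MR3348922} applies; an $\mathcal{A}$-harmonic function is simultaneously an $\mathcal{A}$-sub- and $\mathcal{A}$-supersolution. Applying \cite[Theorem~2.5]{MR3348922} to a nonnegative $\mathcal{A}$-supersolution $u$ in $B_{2R}$ then produces constants $c_0=c_0(n,p,q,\alpha,\beta)>0$ and $s_0=s_0(n,p,q,\alpha,\beta)\in(0,1)$ with
\[
\Big(\dashint_{B_{2R}}u^{s_0}\,\mathrm{d} x\Big)^{1/s_0}\le c_0\,\inf_{B_R}u,
\]
which is \eqref{weak harnack inequality}.

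For \eqref{harnack inequality}, let $u$ be nonnegative and $\mathcal{A}$-harmonic in $B_{2R}$. Regarded as an $\mathcal{A}$-subsolution it satisfies a local boundedness estimate $\sup_{B_R}u\le c\big(\dashint_{B_{2R}}u^{s_0}\,\mathrm{d} x\big)^{1/s_0}$ — the $L^\infty$-side of the De Giorgi--Nash--Moser machinery in \cite[Theorem~2.5]{MR3348922}, valid in particular for this exponent $s_0$ (for small exponents one first controls a large exponent and then interpolates). Chaining this with \eqref{weak harnack inequality} gives $\sup_{B_R}u\le c\,\inf_{B_R}u$ with $c=c(n,p,q,\alpha,\beta)$, establishing \eqref{harnack inequality}.

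The genuine obstacle, and the reason this is not a one-line citation, is the compatibility check between the potential-theoretic notion of $\mathcal{A}$-superharmonicity used here and the weak-solution framework of \cite{MR3348922}: one must know that $\mathcal{A}$-supersolutions have the requisite Sobolev regularity and satisfy the Caccioppoli-type estimates entering the iteration, and that the exponent $s_0$ from the weak Harnack bound may be taken among the exponents for which the sup-estimate holds, with all constants depending only on $p,q$ and the structure constants $\alpha,\beta$. This is exactly what \cite[Lemma~3.7 and Corollary~3.8]{MR4482108} supply; once it is in place the chaining above is routine.
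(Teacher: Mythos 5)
Your proposal follows the same route the paper takes: the paper's ``proof'' is precisely the paragraph preceding the statement, which invokes the inclusion $W^{1,G}_{\mathrm{loc}}\subset W^{1,p}_{\mathrm{loc}}$, the bridging results \cite[Lemma~3.7 and Corollary~3.8]{MR4482108}, and the Harnack-type estimates of \cite[Theorem~2.5]{MR3348922}. Your write-up simply makes explicit the translation of the structure conditions \eqref{condition A2 orlicz} into $(p,q)$-growth bounds and the standard chaining of the weak Harnack estimate with the sup-bound for subsolutions, which the paper leaves implicit; nothing is missing and nothing differs in substance.
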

Combining Lemma~\ref{Minimum and Maximum Principles orlicz} with \eqref{harnack inequality}, we establish the following result, which will be useful in the proof of the upper bound \eqref{potential estimate thm}.
\begin{corollary}
    Let $g$ be given by \eqref{our g}. Suppose $u$ is $\mathcal{A}$-harmonic in $B_{3R/2}\setminus B_R$. Then there exists a constant $c=c(n,p,q,\alpha,\beta)>0$ such that
    \begin{equation}\label{harnack inequality boundary}
        \sup_{\partial B_{\frac{4}{3}R}} u\leq c\,\inf_{\partial B_{\frac{4}{3}R}}u.
    \end{equation}
\end{corollary}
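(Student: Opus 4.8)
The plan is to deduce \eqref{harnack inequality boundary} from the interior Harnack inequality \eqref{harnack inequality} by a standard covering and chaining argument along the sphere $\partial B_{\frac43 R}$, together with the continuity of $\mathcal{A}$-harmonic functions. First I would fix a dimensional length scale: set $\rho=R/20$, so that for every $y\in\partial B_{\frac43 R}$ the ball $B(y,2\rho)=B(y,R/10)$ is compactly contained in the annulus $B_{3R/2}\setminus \overline{B_R}$ (indeed, points of $B(y,R/10)$ have distance to $x_0$ strictly between $\tfrac{37}{30}R>R$ and $\tfrac{43}{30}R<\tfrac32 R$). Hence $u$ is $\mathcal{A}$-harmonic on each such ball, and \eqref{harnack inequality} applies with $B_{2\rho}(y)$ in place of $B_{2R}$, yielding $\sup_{B(y,\rho)}u\le c\,\inf_{B(y,\rho)}u$ with $c=c(n,p,q,\alpha,\beta)$.

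Next I would cover the compact connected set $\partial B_{\frac43 R}$ by finitely many balls $B(y_1,\rho/2),\dots,B(y_N,\rho/2)$ with centers $y_i\in\partial B_{\frac43 R}$; rescaling all lengths by $R$ shows $N$ may be taken to depend only on $n$. Since $\partial B_{\frac43 R}$ is connected and covered by these open balls, the associated intersection graph is connected, so any two of the balls are joined by a chain of at most $N$ of them with consecutive members overlapping. Because $\mathcal{A}$-harmonic functions are continuous, the supremum $M$ and infimum $m$ of $u$ over $\partial B_{\frac43 R}$ are attained, say at $\bar x$ and $\underline x$. Connecting the ball containing $\bar x$ to the one containing $\underline x$ by such a chain and iterating the elementary inequality $u(z)\le c\,u(w)$ — valid whenever $z,w$ lie in a common ball $B(y_i,\rho/2)\subset B(y_i,\rho)$, since $u(z)\le \sup_{B(y_i,\rho)}u\le c\inf_{B(y_i,\rho)}u\le c\,u(w)$ — I would obtain $M=u(\bar x)\le c^{N}u(\underline x)=c^{N}m$. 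This is \eqref{harnack inequality boundary} with constant $c^{N}$, which depends only on $n,p,q,\alpha,\beta$ because $N=N(n)$.

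Two minor points deserve care rather than difficulty. One should record that the $u$ relevant in the proof of Theorem~\ref{Maly's type result} is continuous and nonnegative (it is the restriction of a nonnegative superharmonic function), which is what legitimizes \eqref{harnack inequality} and the attainment of $M,m$; alternatively the Minimum Principle of Lemma~\ref{Minimum and Maximum Principles orlicz}(a) shows that $u$ cannot vanish on $\partial B_{\frac43 R}$ unless $u\equiv 0$ on the annulus, in which case \eqref{harnack inequality boundary} is trivial. The only genuinely technical point — and the main (mild) obstacle — is the geometric bookkeeping: choosing $\rho$ so that the doubled balls $B(y_i,2\rho)$ stay inside the open annulus $B_{3R/2}\setminus B_R$ where harmonicity is available, and confirming that the number of covering balls is a purely dimensional constant so that the final constant $c^{N}$ has the claimed dependence. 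Everything else is the routine iteration of the interior Harnack inequality along a chain.
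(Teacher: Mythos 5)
Your proposal is correct and takes essentially the same route as the paper: cover a neighborhood of $\partial B_{\frac43 R}$ by finitely many balls compactly contained in the open annulus, apply the interior Harnack inequality \eqref{harnack inequality} on each ball, and chain through overlapping balls to control $\sup/\inf$ on the sphere, with the number of balls purely dimensional. The paper phrases this by covering a thin closed annulus $A_\varepsilon$ around $\partial B_{\frac43 R}$, invoking the Minimum and Maximum Principles to pass between $\partial A_\varepsilon$ and $A_\varepsilon$, and letting $\varepsilon\to 0$ — and it is terser than you are about the chaining that produces the constant $c^N$ — but the underlying argument is the same.
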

\begin{proof}
    Let $\varepsilon>0$ be a constant sufficiently small for which
    \begin{equation*}
        \overline{B}_R\subset B_{\frac{4}{3}R-\varepsilon} \subset \overline{B}_{\frac{4}{3}R+\varepsilon} \subset  B_{\frac{3}{2}R}.
    \end{equation*}
        Then $u$ is $\mathcal{A}$-harmonic in the annulus $A_{\varepsilon}:=B_{4R/3+\varepsilon}\setminus B_{4R/3-\varepsilon}$. Recall that $B_{3R/2}\setminus B_R=\{z: R< |z-x_0|< 3/2 R\}$.  We claim that there exists a constant $\delta>0$ sufficiently small such that for all $x\in A_{\varepsilon}$, and for all $y\in B(x,\delta)$, it holds $y\in A_{\varepsilon}$. Indeed, on the contrary, we would find 
 sequences $x_i\in A_{\varepsilon}$ and $y_i\in B(x_i, 1/i)$, satisfying either $|y_i-x_0|\leq R$ or $|y_i-x_0|\geq 3/2 R$. By choice of $\varepsilon$, $A_{\varepsilon}$ is compactly contained in $B_{3R/2}\setminus B_R$, whence we may assume that $x_i$ converges to $\overline{x}$ in $\overline{A}_{\varepsilon}$. Thus, $y_i$ converges to $\overline{x}$, which implies that either $|\overline{x}-x_0|\leq R$ or $|\overline{x}-x_0|\geq 3/2 R$. This contradicts the fact that $\overline{x}\in \overline{A}_{\varepsilon}$ and establishes the claim.
    
    We may cover $A_{\varepsilon}$ with finite number of balls of the form $\{B(x_i,\delta)\}$, $x_i\in A_{\varepsilon}$, $i=1,\ldots, N$. From \eqref{harnack inequality}, it follows
    \begin{equation*}
        \sup_{B(x_i,\delta)}u\leq c\,\inf_{B(x_i,\delta)}u \quad \forall i=1,\ldots, N.
    \end{equation*}
    Using Lemma~\ref{Minimum and Maximum Principles orlicz}, we deduce that
    \begin{equation*}
    \begin{aligned}
        \sup_{\partial {B}_{\frac{4}{3}R+\varepsilon}}u&\leq \sup_{\partial A_{\varepsilon}} u= \sup_{A_{\varepsilon}} u \leq \sup_{\bigcup_{i=1}^N B(x_i,\delta)} u \\
        &\leq c \inf_{\bigcup_{i=1}^N B(x_i,\delta)} u \leq c\, \inf_{A_{\varepsilon}} u\leq  c\,\inf_{\partial A_{\varepsilon}} u\leq c\inf_{\partial{B}_{\frac{4}{3}R+\varepsilon}} u.
    \end{aligned}
    \end{equation*}
    Letting $\varepsilon\to 0$, the corollary follows since $u$ is continuous in $B_{3R/2}\setminus B_R$.
\end{proof}
The same reasoning applies to the following case: \textit{if $u$ is an $\mathcal{A}$-harmonic in $B_{2R}\setminus B_{5R/4}$, then there exists $c=c(n,p,q,\alpha,\beta)>0$ such that
\begin{equation}\label{harnack inequality boundary 2}
    \sup_{\partial B_{\frac{4}{3}R}}u\leq c\inf_{\partial B_{\frac{4}{3}R}}u
\end{equation}
}
 
The following result involves the Poisson modification of superharmonic functions, which together with \eqref{weak harnack inequality} and \eqref{harnack inequality} will be decisive in the proof of the upper bound \eqref{potential estimate thm}. We recall the notion of a bounded \textit{regular} set $\Omega$. A bounded domain $\Omega$ is called regular if, on any boundary point, the boundary value of any
$\mathcal{A}$-harmonic function is attained in the distributional sense and pointwise. A known criterion, so-called \textit{the Wiener Criterion}, characterizes a bounded regular set by a geometric quantity on $\partial\Omega$. We refer the reader to \cite[Remark~3.8]{MR4258773} for more details. In particular, balls and annuli are regular sets. 

Now let $\Omega'$ be a bounded domain and $\Omega\subset \Omega'$ an open subset compactly contained in $\Omega'$ with $\overline{\Omega}$ regular. Let $u$ be an $\mathcal{A}$-superharmonic and finite (almost everywhere) in $\Omega'$, that is $u\in \mathcal{S}_{\mathcal{A}}(\Omega')$. For $x\in \Omega$, we define
\begin{equation*}
    u_{\Omega}(x)=\inf\left\{v(x): v\in\mathcal{S}_{\mathcal{A}}(\Omega),\;\; \varliminf\limits_{\;y\to x\; y\in \Omega}v(y)\geq u(x)\right\},
\end{equation*}
and the \textit{Poisson modification} of $u$ in $\Omega$ is given by
\begin{equation*}
    P(u,\Omega)(x):=\left\{\begin{aligned}
        & u_{\Omega}(x) &\mbox{if }&x\in \Omega  \\
        & u(x) & \mbox{if }& x\in \Omega'\setminus \Omega.
    \end{aligned}
    \right.
\end{equation*}
The Poisson modification carries the idea of local smoothing of an $\mathcal{A}$-superharmonic function in a regular set. This is the content of the next theorem \cite[Theorem~3]{MR4482108}
\begin{theoremletter}\label{Poisson modification thm}
    Let $u\in \mathcal{S}_{\mathcal{A}}(\Omega')$ and let $\Omega\subset \Omega'$ an open subset compactly contained in $\Omega'$ with $\overline{\Omega}$ regular. Then
    \begin{flushleft}
 {$\mathbf{(i)}$} $P(u,\Omega)\in \mathcal{S}_{\mathcal{A}}(\Omega')$,   \\
 {$\mathbf{(ii)}$} $P(u,\Omega)$ is $\mathcal{A}$-harmonic in $\Omega$, \\
 {$\mathbf{(iii)}$} $P(u,\Omega)\leq u$ in $\Omega'$.
    \end{flushleft}
\end{theoremletter}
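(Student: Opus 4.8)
The plan is to prove the three assertions in the order (iii), (ii), (i): the third is immediate from the definition, the second is the substantive analytic point, and the first reduces to a pasting argument fed by the other two. For (iii), the restriction of $u$ to $\Omega$ belongs to $\mathcal{S}_{\mathcal{A}}(\Omega)$ and, being lower semicontinuous, satisfies $\liminf_{y\to x,\,y\in\Omega}u(y)=u(x)$ for every $x\in\Omega$; hence $u$ itself is admissible in the infimum defining $u_{\Omega}$, so $u_{\Omega}\le u$ on $\Omega$, while $P(u,\Omega)=u$ on $\Omega'\setminus\Omega$. Thus $P(u,\Omega)\le u$ throughout $\Omega'$, and in particular $P(u,\Omega)=\min\{u,u_{\Omega}\}$ on $\Omega$, a form I will use below.

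For (ii), the plan is to run the Perron method for the $\mathcal{A}$-Dirichlet problem in $\Omega$ with boundary datum $u|_{\partial\Omega}$ and to recognise $u_{\Omega}$ as its upper Perron solution. Fix $x_{0}\in\Omega$ and a ball $B$ compactly contained in $\Omega$ and centred at $x_{0}$. Given functions $v_{j}\in\mathcal{S}_{\mathcal{A}}(\Omega)$ from the defining class with $v_{j}(x_{0})\to u_{\Omega}(x_{0})$, I would replace each $v_{j}$ by the running minimum of $v_{1},\dots,v_{j}$ (finite minima of superharmonic functions are superharmonic, and the constraint $\liminf v\ge u$ near $\partial\Omega$ is stable under pairwise minima) and then by its Poisson modification in $B$, using solvability of the $\mathcal{A}$-Dirichlet problem in balls (balls are regular, and $\mathcal{A}$-harmonic functions with prescribed continuous boundary values exist by the cited works, in particular via Theorem~\ref{existence in W0}) together with the comparison principle, Lemma~\ref{comparison principle super/subsolution}. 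This produces a nonincreasing sequence of functions $\mathcal{A}$-harmonic in $B$, still lying in the defining class and still $\ge u_{\Omega}$; by the Harnack inequality~\eqref{harnack inequality} and Harnack's principle, Theorem~\ref{harnack principle orlicz5}, they converge locally uniformly in $B$ to an $\mathcal{A}$-harmonic function $w$ with $w(x_{0})=u_{\Omega}(x_{0})$ and $w\ge u_{\Omega}$ on $B$. A standard second-point argument — redoing the construction at an arbitrary $z\in B$ and comparing the two $\mathcal{A}$-harmonic limits by means of the Harnack inequality and Lemma~\ref{comparison principle super/subsolution} in place of the linear maximum principle for a difference — forces $w=u_{\Omega}$ on $B$. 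Since $x_{0}$ and $B$ were arbitrary, $u_{\Omega}$ is $\mathcal{A}$-harmonic in $\Omega$.

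For (i), the plan is a pasting lemma. By (ii) the function $P(u,\Omega)$ is $\mathcal{A}$-harmonic, hence belongs to $\mathcal{S}_{\mathcal{A}}$, on $\Omega$, it coincides with $u\in\mathcal{S}_{\mathcal{A}}(\Omega')$ on the open set $\Omega'\setminus\overline{\Omega}$, and it is not identically $+\infty$. Two things must still be checked: lower semicontinuity of $P(u,\Omega)$ on $\Omega'$, and its minimum property. For semicontinuity the only delicate points are $\xi\in\partial\Omega$: approaching from $\Omega'\setminus\overline{\Omega}$ gives $\liminf u(y)\ge u(\xi)$ by lower semicontinuity of $u$, while approaching from within $\Omega$ requires $\liminf_{y\to\xi,\,y\in\Omega}u_{\Omega}(y)\ge u(\xi)$ — precisely the statement that, $\overline{\Omega}$ being a regular set and $u|_{\partial\Omega}$ lower semicontinuous, the upper Perron solution attains its boundary datum from below; this is where regularity of $\overline{\Omega}$ enters. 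For the minimum property, let $V$ be open and compactly contained in $\Omega'$ and let $h\in C(\overline{V})$ be $\mathcal{A}$-harmonic in $V$ with $h\le P(u,\Omega)$ on $\partial V$. Then $h\le u$ on $\partial V$ by (iii), hence $h\le u$ in $V$ since $u\in\mathcal{S}_{\mathcal{A}}(\Omega')$. On the subdomain $V\cap\Omega$ the function $u_{\Omega}-h$ is $\mathcal{A}$-superharmonic, and its boundary $\liminf$ along $\partial(V\cap\Omega)$ is nonnegative: on $\partial V\cap\Omega$ because there $P(u,\Omega)=\min\{u,u_{\Omega}\}\le u_{\Omega}$, on $\partial\Omega\cap\overline{V}$ because $h(\xi)\le u(\xi)\le\liminf_{y\to\xi}u_{\Omega}(y)$ by the boundary behaviour just established, and at interior points of $V\cap\Omega$ trivially. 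Hence Lemma~\ref{comparison principle super/subsolution} gives $h\le u_{\Omega}$ in $V\cap\Omega$. Combining, $h\le\min\{u,u_{\Omega}\}=P(u,\Omega)$ on $V\cap\Omega$ and $h\le u=P(u,\Omega)$ on $V\setminus\Omega$, so $h\le P(u,\Omega)$ in $V$.

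The main obstacle is part (ii): in the quasilinear, non-homogeneous setting one cannot subtract $\mathcal{A}$-harmonic functions, so both the identification of the locally uniform limit with the Perron solution and the boundary-regularity fact feeding into (i) have to be built purely from the Harnack inequality~\eqref{harnack inequality}, Harnack's principle, and the comparison principle, rather than from a maximum principle for a difference. Keeping the truncation and Poisson-modification steps inside $W_{\mathrm{loc}}^{1,G}$ with the uniform energy control already exploited in the reduction of Theorem~\ref{Maly's type result} is the technical but routine bookkeeping.
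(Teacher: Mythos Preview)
The paper does not prove this theorem: it is quoted verbatim from \cite[Theorem~3]{MR4482108} and merely used as a black box in the reduction step for Theorem~\ref{Maly's type result}. So there is no ``paper's own proof'' to compare against; your proposal is an independent attempt at the Perron-theoretic argument that the cited reference carries out.

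That said, your sketch contains a genuine slip that contradicts your own closing remark. In the verification of the minimum property for part~(i) you write that ``on the subdomain $V\cap\Omega$ the function $u_{\Omega}-h$ is $\mathcal{A}$-superharmonic''. This is exactly the move you correctly forbid yourself two paragraphs later: for a nonlinear, non-homogeneous operator the difference of an $\mathcal{A}$-superharmonic and an $\mathcal{A}$-harmonic function need not be $\mathcal{A}$-superharmonic. The conclusion $h\le u_{\Omega}$ in $V\cap\Omega$ is still true, but it must be obtained without subtracting: either invoke the comparison principle Lemma~\ref{comparison principle super/subsolution} directly with $u_{\Omega}$ as supersolution and $h$ as subsolution on $V\cap\Omega$ (which then forces you to justify $\min\{u_{\Omega}-h,0\}\in W_0^{1,G}(V\cap\Omega)$, a nontrivial point on the portion $\partial\Omega\cap V$ of the boundary where $u_{\Omega}$ is a priori only known via a $\liminf$ condition), or use the defining comparison property of $\mathcal{A}$-superharmonic functions on suitably chosen compactly contained subsets together with an exhaustion. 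Either route works, but neither is the linear-style ``$u_{\Omega}-h$ is superharmonic with nonnegative boundary $\liminf$'' that you wrote.
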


We need the following result regarding Sobolev functions \cite[Lemma~3.5]{MR3606780}.
\begin{lemmaletter}\label{lemma regarding sobolev function}
    If $u\in W^{1,G}(\Omega)$ with $\mathrm{supp }\,u\subset \Omega$, then $u\in W_0^{1,G}(\Omega)$.
\end{lemmaletter}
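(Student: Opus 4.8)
The plan is to exhibit $u$ as the limit in $W^{1,G}(\Omega)$ of a sequence from $C_c^{\infty}(\Omega)$, which is exactly what membership in $W_0^{1,G}(\Omega)$ requires. The mechanism is standard mollification; the one point where the structure of the problem enters is that $G$ satisfies the $\Delta_2$ condition~\eqref{data condition elementary}, which makes mollification converge in the Orlicz norm.

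First I would set $K=\mathrm{supp}\,u$, which by hypothesis is a compact subset of $\Omega$ (so that $d:=\mathrm{dist}(K,\partial\Omega)>0$), and extend $u$ by zero to a function $\widetilde u$ on $\mathds{R}^n$. One checks that $\widetilde u\in W^{1,G}(\mathds{R}^n)$ with $\nabla\widetilde u$ the zero extension of $\nabla u$: testing against $\varphi\in C_c^{\infty}(\mathds{R}^n)$ and inserting a cutoff $\psi\in C_c^{\infty}(\Omega)$ that is identically $1$ on a neighborhood of $K$ reduces everything to the identity $\int_\Omega u\,\partial_i(\psi\varphi)\,\mathrm{d} x=-\int_\Omega\partial_i u\,(\psi\varphi)\,\mathrm{d} x$, together with the elementary fact that a Sobolev function has vanishing weak gradient a.e.\ on the open set $\Omega\setminus K$ where it vanishes. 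Then, with a standard mollifier $\rho_{\varepsilon}$ supported in $B(0,\varepsilon)$, I would set $u_{\varepsilon}=\rho_{\varepsilon}\ast\widetilde u$; for $0<\varepsilon<d$ this is smooth with $\mathrm{supp}\,u_{\varepsilon}\subset\{x:\mathrm{dist}(x,K)\le\varepsilon\}\Subset\Omega$, hence $u_{\varepsilon}\in C_c^{\infty}(\Omega)$, and $\nabla u_{\varepsilon}=\rho_{\varepsilon}\ast\nabla\widetilde u$.

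The remaining task is to show $u_{\varepsilon}\to\widetilde u$ and $\nabla u_{\varepsilon}\to\nabla\widetilde u$ in $L^{G}(\mathds{R}^n)$ as $\varepsilon\to0^{+}$; restricting to $\Omega$ then gives $u_{\varepsilon}\to u$ in $W^{1,G}(\Omega)$ and the lemma follows. For this I would first note that Jensen's inequality for the probability density $\rho_{\varepsilon}$ together with Fubini gives $\rho_G(\rho_{\varepsilon}\ast v)\le\rho_G(v)$ for every $v\in L^{G}(\mathds{R}^n)$, so by Lemma~\ref{lemma relation norm and modular} the family $\{\,v\mapsto\rho_{\varepsilon}\ast v\,\}$ is uniformly bounded on $L^{G}(\mathds{R}^n)$. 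Since $\rho_{\varepsilon}\ast w\to w$ uniformly, with supports in a fixed compact set, for $w\in C_c(\mathds{R}^n)$, and since $C_c(\mathds{R}^n)$ is dense in $L^{G}(\mathds{R}^n)$ precisely because of $\Delta_2$, an $\varepsilon/3$-type argument (carried through the modular, via Lemma~\ref{lemma relation norm and modular}) yields $\rho_{\varepsilon}\ast v\to v$ in $L^{G}(\mathds{R}^n)$. Applying this with $v=\widetilde u$ and with $v=\partial_i\widetilde u$ completes the argument; alternatively one could simply cite the mollification and density results underlying \cite[Theorem~6.4.4]{MR3931352}.

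The only genuinely non-routine step is this last one — convergence of mollifications in the Orlicz norm — and it is exactly where the $\Delta_2$ hypothesis~\eqref{data condition elementary} is indispensable; the zero extension and the support bookkeeping for convolutions are routine.
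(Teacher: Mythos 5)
The paper does not prove this lemma; it is cited from \cite[Lemma~3.5]{MR3606780}, so there is no internal argument to compare yours against. Your mollification proof is correct and follows the standard route: extend $u$ by zero, verify via a cutoff supported near $\mathrm{supp}\,u$ that the weak gradient of the extension is the zero extension of $\nabla u$, mollify at scale $\varepsilon<\mathrm{dist}(\mathrm{supp}\,u,\partial\Omega)$, and carry the convergence of mollifications through the Orlicz modular using Jensen and Fubini (giving $\rho_G(\rho_\varepsilon\ast v)\le\rho_G(v)$) together with density of $C_c$ in $L^G$ --- exactly where the $\Delta_2$ condition~\eqref{data condition elementary} enters.

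One caveat worth flagging: the hypothesis as stated reads ``$\mathrm{supp}\,u\subset\Omega$'' and does not by itself assert that the support is compact, whereas your opening line declares compactness ``by hypothesis.'' In every invocation inside the paper $\Omega$ is a ball, so $\mathrm{supp}\,u$ is a closed subset of a bounded set and hence automatically compact, and your argument applies verbatim. To obtain the lemma for a general (possibly unbounded) $\Omega$ one must first reduce to compact support: replace $u$ by $\eta_j u$ with $\eta_j\in C_c^\infty(\mathds{R}^n)$, $\eta_j=1$ on $B(0,j)$, $|\nabla\eta_j|\le C/j$, note that $\mathrm{supp}(\eta_j u)$ is compact and contained in $\Omega$, and show $\eta_j u\to u$ in $W^{1,G}(\Omega)$ by the $\Delta_2$-version of dominated convergence applied to $G(|\eta_j u-u|)\le G(|u|)$ and $G(|\nabla(\eta_j u)-\nabla u|)\le c\,(G(|\nabla u|)+G(|u|))$. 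This reduction is routine but is a genuine additional step if the lemma is to be read at face value.
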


 We are now able to prove the estimates in \eqref{potential estimate thm}.  By Remark~\ref{redution potential estimate}, we suppose $u$ is a continuous bounded $\mathcal{A}$-supersolution in $B_{2R}=B(x_0,2R)$. To prove the upper bound, we may also reduce it to a simpler case. We will modify $u$ to be a $\mathcal{A}$-harmonic function in a countable union of disjoint annuli shrinking to the reference point $x_0$.  For this purpose, we use the Poisson Modification of $u$ over a family of annuli. The crucial fact is that the corresponding measure in each annulus concentrates on the boundary of the particular annulus, but in a controllable way since the measure corresponding to the new solution belongs to $\big(W^{1,G}(B_R)\big)^{\ast}$.

 To be more precise, let $R_k=2^{1-k}R$ and $B_k=2^{1-k}B_R=B(x_0,R_k)$, $k=0,1,2,\ldots$. Consider the union of annuli
 \begin{equation*}
     \Omega=\bigcup_{k=1}^\infty\frac{3}{2}B_k\setminus \overline{B}_k.
 \end{equation*}
By definition, $\Omega$ is regular, and we can consider $v:=P(u,\Omega)$. From Theorem~\ref{Poisson modification thm}, $v=u$ in $B_{2R}\setminus \Omega$, $\,v\in \mathcal{S}_{\mathcal{A}}(B_{2R})$ and $v$ is $\mathcal{A}$-harmonic in $\Omega$, that is
 \begin{equation*}
     -\mathrm{div}\big(\mathcal{A}(x,\nabla v)\big)=0 \quad \mbox{in}\quad \Omega.
 \end{equation*}
Notice that $v$ is continuous by the assumed continuity of $u$, whence $v$ is also $\mathcal{A}$-supersolution in $B_{2R}$. Consequently, there exists $\mu_v\in M^+(B_{2R})$ such that
\begin{equation}\label{eq. general Poisson v}
    -\mathrm{div}\big(\mathcal{A}(x,\nabla v)\big)=\mu_v \quad \mbox{in}\quad B_{2R}.
\end{equation}
 
\begin{proof}[Proof of the upper bound in Theorem~\ref{Maly's type result}]
 The basic idea is introducing comparison solutions with zero boundary values and measures given by $\mu_v$. Recall $\mu=\mu_u$. We begin by establishing that
 \begin{equation}\label{estimate25 orlicz}
     \mu_v(B_k)=\mu(B_k) \quad \forall k\geq 0.
 \end{equation}
 This is a consequence of the inner regularity. Indeed, let $\varphi\in C_c^{\infty}(B_k)$ such that $0\leq \varphi\leq 1$ and $\varphi=1$ on a compact set $K$ satisfying
 \begin{equation*}
     \frac{3}{2}\overline{B}_{k+1}\subset K\subset B_k.
 \end{equation*}
Note that $u=v$ in $\mathrm{supp}\nabla\varphi \subset B_k\setminus (3/2\overline{B}_{k+1})\nsubseteq \Omega$. From this, we deduce by testing $\varphi$ in \eqref{eq. general u appendix super} and  in \eqref{eq. general Poisson v} that
\begin{equation*}
\begin{aligned}
     \int_{B_k}\varphi\,\mathrm{d}\mu_v &= \int_{B_k}\mathcal{A}(x,\nabla v)\cdot \nabla \varphi \,\mathrm{d} x\\
     &= \int_{B_k}\mathcal{A}(x,\nabla u)\cdot \nabla \varphi \,\mathrm{d} x =   \int_{B_k}\varphi\,\mathrm{d}\mu.
\end{aligned}
\end{equation*}
Consequently, $\mu_v(K)=\mu(K)$ and, by exhausting $B_k$ with such $K$, the inner regularity of these measures yields \eqref{estimate25 orlicz}.

Next, notice that $\mu_v$ belongs to $ \big(W_0^{1,G}(4/3 B_{k+1})\big)^{\ast}$ for all $k\geq 0$, since $\mu_v\in \big(W_{\mathrm{loc}}^{1,G}(B_{2R})\big)^{\ast}\hookrightarrow \big(W_0^{1,G}(4/3 B_{k+1})\big)^{\ast}$ for all $k\geq 0$. From
 Theorem~\ref{existence in W0}, there exists $v_k\in W_0^{1,G}(4/3 B_{k+1})$ satisfying
 \begin{equation}\label{estimate26 orlicz}              
      -\mathrm{div}\big(\mathcal{A}(x,\nabla v_k)\big)=\mu_v \quad \mbox{in}\quad \frac{4}{3}B_{k+1}.
 \end{equation}
Setting $\mu_k=\mu_{v_k}$, one has
\begin{equation}\label{estimate27 orlicz}
    \mu_k\Big(\frac{4}{3}B_{k+1}\Big)=\mu_v\Big(\frac{4}{3}B_{k+1}\Big) \quad \forall k\geq 0.
\end{equation}
In light of Lemma~\ref{comparison principle super/subsolution}, we may assume $v_k\geq 0$ (almost everywhere). Since $4/3 B_{k+1}\setminus \overline{B}_{k+1} \subset 3/2 B_{k+1}\setminus \overline{B}_{k+1}$ and $v$ is $\mathcal{A}$-harmonic in $3/2 B_{k+1}\setminus \overline{B}_{k+1}$, it follows from \eqref{estimate27 orlicz} that $v_k$ is  $\mathcal{A}$-harmonic in $4/3 B_{k+1}\setminus \overline{B}_{k+1}$, whence $v_k$ takes continuously zero boundary value on $\partial (4/3 B_{k+1})$. Moreover, $v-\max_{\partial (4/3 B_{k+1})}v\leq 0$ on $\partial (4/3 B_{k+1})$. From this,

\begin{equation*}
    \Big( v-\max_{\partial \frac{4}{3} B_{k+1}}v - v_k \Big)_+ =0 \quad \mbox{on} \quad \partial \frac{4}{3} B_{k+1}.
\end{equation*}
This means that $\mathrm{supp}\big(v-\max_{\partial (4/3 B_{k+1})}v - v_k\big)_+ \subset 4/3 B_{k+1}$ and, by Lemma~\ref{lemma regarding sobolev function},
\begin{equation*}
    \Big( v-\max_{\partial \frac{4}{3} B_{k+1}}v - v_k \Big)_+\in W_0^{1,G}\Big(\frac{4}{3}B_{k+1}\Big).
\end{equation*}
A subtraction of $v$ and $v_k$ equations, \eqref{eq. general Poisson v} and \eqref{estimate26 orlicz} respectively, with the previous test function, gives
\begin{multline*}
     0=\int_{\frac{4}{3}B_{k+1}}\Big( v-\max_{\partial \frac{4}{3} B_{k+1}}v - v_k \Big)_+\mathrm{d}\mu_k- \int_{\frac{4}{3}B_{k+1}}\Big( v-\max_{\partial \frac{4}{3} B_{k+1}}v - v_k \Big)_+\mathrm{d}\mu_v\\
        =\int_{\frac{4}{3}B_{k+1}}\mathcal{A}(x,\nabla v_k)\cdot \nabla \Big( v-\max_{\partial \frac{4}{3} B_{k+1}}v - v_k \Big)_+\mathrm{d} x- \int_{\frac{4}{3}B_{k+1}}\mathcal{A}(x,\nabla v)\cdot \nabla \Big( v-\max_{\partial \frac{4}{3} B_{k+1}}v - v_k \Big)_+\mathrm{d} x.
\end{multline*}
On account of $\mathrm{supp}\nabla\big(v-\max_{\partial (4/3 B_{k+1})}v - v_k\big)_+ \subset 4/3 B_{k+1}\cap \{v-\max_{\partial (4/3 B_{k+1})}v \geq  v_k\}$, we have from the previous equality
\begin{equation*}
    \begin{aligned}
    0&=\int_{\frac{4}{3}B_{k+1}\cap \{v-\max_{\partial (4/3 B_{k+1})}v \geq  v_k\}}\mathcal{A}(x,\nabla v_k)\cdot \nabla(v-v_k)\,\mathrm{d} x\\
    & \quad - \int_{\frac{4}{3}B_{k+1}\cap \{v-\max_{\partial (4/3 B_{k+1})}v \geq  v_k\}}\mathcal{A}(x,\nabla v_k)\cdot \nabla(v-v_k)\,\mathrm{d} x\\
    & = \int_{\frac{4}{3}B_{k+1}\cap \{v-\max_{\partial (4/3 B_{k+1})}v \geq  v_k\}}\big(\mathcal{A}(x,\nabla v_k)- \mathcal{A}(x,\nabla v)\big)\cdot \nabla(v-v_k)\,\mathrm{d} x \leq 0,
    \end{aligned}
\end{equation*}
where the last inequality is due the monotonicity of $\mathcal{A}$ in \eqref{condition A2 orlicz}.  Accordingly, $\nabla\big(v-\max_{\partial (4/3 B_{k+1})}v - v_k\big)_+=0$ in $4/3 B_{k+1}$, whence
\begin{equation}\label{estimate28 orlicz}
  v_k\geq  v-\max_{\partial \frac{4}{3} B_{k+1}}v \quad\mbox{in}\quad \frac{4}{3}B_{k+1}.
\end{equation}
Note that $3/2 B_{k+2}\setminus \overline{B}_{k+2}\subset 4/3 B_{k+1}$. By \eqref{estimate27 orlicz}, we have $\mu_k(3/2 B_{k+2}\setminus \overline{B}_{k+2})=\mu_v(3/2 B_{k+2}\setminus \overline{B}_{k+2})=0$, since $v$ is $\mathcal{A}$-harmonic in $3/2 B_{k+2}\setminus \overline{B}_{k+2}$. From this, $v_k$ is $\mathcal{A}$-harmonic in $3/2 B_{k+2}\setminus \overline{B}_{k+2}$. Using Harnack's inequality \eqref{harnack inequality boundary}, there exists a constant $c_1=c_1(n,p,q,\alpha,\beta)>0$ such that
\begin{equation}\label{estimate29 orlicz}
    \max_{\partial \frac{4}{3} B_{k+2}}v_k \leq c_1\,\min_{\partial \frac{4}{3} B_{k+2}}v_k.
\end{equation}

We will consider two cases. First, assume that $\min_{\partial (4/3)B_{k+2}}v_k=0$. From \eqref{estimate29 orlicz}, $\max_{\partial (4/3 B_{k+2})}v_k=0$, and by \eqref{estimate28 orlicz}
\begin{equation}\label{estimate30 orlicz}
     \max_{\partial \frac{4}{3} B_{k+2}}v- \max_{\partial \frac{4}{3} B_{k+1}}v\leq 0.
\end{equation}
Next, suppose that $\min_{\partial (4/3)B_{k+2}}v_k>0$. For $k=0,1,\ldots$, we set
\begin{equation*}
        w_k(x)=\min\Big\{v_k(x),\, \min_{\partial \frac{4}{3} B_{k+2}}v_k\Big\}, \quad x\in \frac{4}{3}B_{k+1}.
\end{equation*}
From \cite[Cor.~4.2]{MR4482108}, $w_k\in \mathcal{S}_{\mathcal{A}}(4/3 B_{k+1})$. We claim that
\begin{equation}\label{estimate31 orlicz}
\mu_{w_k}\Big(\frac{4}{3} B_{k+1}\Big)= \mu_{v}\Big(\frac{4}{3} B_{k+1}\Big)   \quad \forall k\geq 0.
\end{equation}
Indeed, if we prove that $\mu_{w_k}(4/3 B_{k+1})=\mu_k(4/3 B_{k+1})$ for all $k\geq 0$, the assertion follows by \eqref{estimate27 orlicz}. By the inner regularity,
\begin{equation*}
    \mu_{w_k}\Big(\frac{4}{3}B_{k+1}\Big)=\sup \mu_{w_k}(K),
\end{equation*}
where the supremum is taken over all compact sets $K\subset 4/3 B_{k+1}$. Let $\varphi_K\in C_c^{\infty}(4/3 B_{k+1})$ such that $0\leq\varphi_K\leq 1$ and $\varphi_K=1$ on $K$. We may suppose that $4/3 B_{k+2}\subset K$. By continuity of $v_k$, $v_k\leq \min_{\partial (4/3 B_{k+2})}v_k$ in a neighborhood $V$ of $\partial (4/3 B_{k+1})$, since $v_k=0$ on $\partial (4/3 B_{k+1})$. It follows that $w_k=v_k$ in $V$ and $\mathrm{supp}\nabla \varphi_K \subset V$. By taking the supremum in all compact sets $K$ with $4/3 B_{k+1}\setminus K \subset V$, we arrive at
\begin{equation*}
    \begin{aligned}
      \mu_{w_k}\Big(\frac{4}{3}B_{k+1}\Big)   & = \sup \mu_{w_k}(K)= \sup \int_{\frac{4}{3}B_{k+1}} \varphi_K\,\mathrm{d}\mu_{w_k}\\
        &= \sup  \int_{\frac{4}{3}B_{k+1}} \mathcal{A}(x,\nabla w_k)\cdot \nabla \varphi_K\,\mathrm{d} x\\
        &= \sup  \int_{\frac{4}{3}B_{k+1}} \mathcal{A}(x,\nabla v_k)\cdot \nabla \varphi_K\,\mathrm{d} x=\sup \int_{\frac{4}{3}B_{k+1}} \varphi_K\,\mathrm{d}\mu_{k}=\mu_k\Big(\frac{4}{3}B_{k+1}\Big).
    \end{aligned}
\end{equation*}
Accordingly, it follows from \eqref{estimate31 orlicz} and \eqref{condition A2 orlicz} that
\begin{align}
    \Big(\min_{\partial \frac{4}{3} B_{k+2}}v_k\Big)\mu_v\Big(\frac{4}{3}B_{k+1}\Big)&=\int_{\frac{4}{3}B_{k+1}} \Big(\min_{\partial \frac{4}{3} B_{k+2}}v_k\Big)\,\mathrm{d} \mu_v \geq  \int_{\frac{4}{3}B_{k+1}} w_k\,\mathrm{d} \mu_{v}\nonumber\\
&= \int_{\frac{4}{3}B_{k+1}} w_k\,\mathrm{d} \mu_{w_k} =\int_{\frac{4}{3}B_{k+1}} \mathcal{A}(x,\nabla w_k)\cdot \nabla w_k\,\mathrm{d} x\nonumber\\
&\geq \alpha \int_{\frac{4}{3}B_{k+1}} G(|\nabla w_k|)\,\mathrm{d} x.\nonumber
\end{align}
Combining the Modular Poincaré inequality (Lemma~\ref{modular poincare ineq}) with \eqref{estimate on G} in the previous estimate, we obtain
\begin{equation*}
    \begin{aligned}
           \Big(\min_{\partial \frac{4}{3} B_{k+2}}v_k\Big)\mu_v\Big(\frac{4}{3}B_{k+1}\Big)&\geq c_2 \int_{\frac{4}{3}B_{k+1}} G\Big( \frac{w_k}{R_{k+1}}\Big)\,\mathrm{d} x \geq c_3 \int_{\frac{4}{3}B_{k+1}} G\Big( \frac{w_k}{R_{k}}\Big)\,\mathrm{d} x\\
           & \geq \int_{\frac{4}{3}B_{k+2}} G\Big( \frac{w_k}{R_{k}}\Big)\,\mathrm{d} x \geq  \int_{\frac{4}{3}B_{k+2}} G\Big( \frac{\min_{\partial \frac{4}{3} B_{k+2}}v_k}{R_{k}}\Big)\,\mathrm{d} x\\
           &=c_4 R_k^{n}\, G\Big( \frac{\min_{\partial \frac{4}{3} B_{k+2}}v_k}{R_{k}}\Big),
    \end{aligned}
\end{equation*}
where $c_2,\, c_3$ and $c_4$ are positive constants depending only on $n$ and $p,q,\alpha,\beta$. Consequently, by \eqref{data condition elementary}
\begin{equation*}
    \frac{\mu_v\Big(\frac{4}{3}B_{k+1}\Big)}{R_k^{n-1}}\geq c_4 \Big(\frac{\min_{\partial \frac{4}{3} B_{k+2}}v_k}{R_k}\Big)^{-1}\, G\Big( \frac{\min_{\partial \frac{4}{3} B_{k+2}}v_k}{R_{k}}\Big)\geq c_4\, g\Big( \frac{\min_{\partial \frac{4}{3} B_{k+2}}v_k}{R_{k}}\Big),
\end{equation*}
where $c_4=c_4(n,p,q,\alpha,\beta)>0$. Since $\min_{\partial(4/3 B_{k+2})}v_k>0$, \eqref{estimate29 orlicz} leads to
\begin{equation*}
    \max_{\partial \frac{4}{3}B_{k+2} }v_k\leq c_5\,R_k \,g^{-1}\Big(\frac{\mu_v\big(\frac{4}{3}B_{k+1}\big)}{R_{k}^{n-1}}\Big),
\end{equation*}
where $c_5=c_5(n,p,q,\alpha,\beta)>0$ is obtained from combining $c_4$ with \eqref{estimate on g-1}. By \eqref{estimate28 orlicz}, the preceding inequality gives
\begin{equation}\label{estimate32 orlicz}
     \max_{\partial \frac{4}{3}B_{k+2} }v-  \max_{\partial \frac{4}{3}B_{k+1} }v \leq c_5\,R_k \,g^{-1}\Big(\frac{\mu_v\big(\frac{4}{3}B_{k+1}\big)}{R_{k}^{n-1}}\Big).
\end{equation}

Thus, in all cases, by summing up \eqref{estimate30 orlicz} and \eqref{estimate32 orlicz} in $k=2,3,\ldots$, we deduce from \eqref{harnack inequality boundary} that
\begin{align}
    \varlimsup_{k\to \infty} \max_{\partial \frac{4}{3}B_{k+2} }v&\leq \max_{\partial \frac{4}{3}B_{3} }v +c_5\sum_{k=2}^{\infty}\,R_k \,g^{-1}\Big(\frac{\mu_v\big(\frac{4}{3}B_{k+1}\big)}{R_{k}^{n-1}}\Big)\nonumber\\
    & \leq c_6\min_{\partial \frac{4}{3}B_{3} }v +c_5\sum_{k=2}^{\infty}\,R_k \,g^{-1}\Big(\frac{\mu_v\big(\frac{4}{3}B_{k+1}\big)}{R_{k}^{n-1}}\Big),\label{estimate33 orlicz}
\end{align}
    since $v$ is $\mathcal{A}$-harmonic in $3/2 B_3\setminus \overline{B}_3$. Recall that $v\leq u$ in $B_{2R}$. From this, combining the weak Harnack inequality \eqref{weak harnack inequality} with Minimum Principle (Lemma~\ref{Minimum and Maximum Principles orlicz} (\textbf{a})), we obtain
    \begin{equation*}
        \begin{aligned}
          \min_{\partial \frac{4}{3}B_{3} }v&\leq \inf_{\partial \frac{4}{3}B_{3} } u \leq \bigg(\dashint_{ \frac{4}{3}B_{3} } u^{s_0}\,\mathrm{d} x\bigg)^{\frac{1}{s_0}}\\
          &\leq c_7 \bigg(\dashint_{B_{2R}} u^{s_0}\,\mathrm{d} x\bigg)^{\frac{1}{s_0}}\leq c_8\, \inf_{B_R}u,
        \end{aligned}
    \end{equation*}
where $c_7=c_7(n)>0$ and $c_8=c_8(n,p,q,\alpha,\beta)>0$. Using this in \eqref{estimate33 orlicz}, there exists $c_9=c_9(n,p,q,\alpha,\beta)>0$ such that
\begin{equation}\label{estimate34 orlicz}
    \varlimsup_{k\to \infty} \max_{\partial \frac{4}{3}B_{k+2} }v \leq c_9\bigg(\inf_{B_R}u +\sum_{k=2}^{\infty}\,R_k \,g^{-1}\Big(\frac{\mu_v\big(\frac{4}{3}B_{k+1}\big)}{R_{k}^{n-1}}\Big)\bigg).
\end{equation}

On the other hand, by definition of Poisson modification, $u=v$ in $B_k\setminus (3/2 \overline{B}_{k+1}) \subset B_{2R}\setminus\Omega$ for all $k\geq 2$. Due to continuity of $u$ and $v$, we have
\begin{equation*}
    u(x_0)=\lim_{k\to\infty}\min_{B_k\setminus \frac{3}{2} \overline{B}_{k+1}}u=\lim_{k\to\infty}\min_{B_k\setminus \frac{3}{2} \overline{B}_{k+1}}v\leq \varlimsup_{k\to \infty} \max_{\partial \frac{4}{3}B_{k+2} }v. 
\end{equation*}
A combination of \eqref{estimate34 orlicz} with the previous inequality, yields
\begin{equation*}
      u(x_0)\leq c_9\bigg(\inf_{B_R}u +\sum_{k=2}^{\infty}\,R_k \,g^{-1}\Big(\frac{\mu_v\big(\frac{4}{3}B_{k+1}\big)}{R_{k}^{n-1}}\Big)\bigg).
\end{equation*}
According to \eqref{estimate25 orlicz} and reminding of $R_k=2^{1-k}R$ for $k\geq 0$,  we estimate the preceding series as follows:
\begin{equation*}
    \begin{aligned}
\sum_{k=2}^{\infty}\,R_k \,g^{-1}\Big(\frac{\mu_v\big(\frac{4}{3}B_{k+1}\big)}{R_{k}^{n-1}}\Big)&= \sum_{k=2}^{\infty}\,R_k \,g^{-1}\Big(\frac{\mu\big(\frac{4}{3}B_{k+1}\big)}{R_{k}^{n-1}}\Big)\\
&\leq \sum_{k=2}^{\infty}\,R_k \,g^{-1}\Big(\frac{\mu\big(\frac{4}{3}B_{k+1}\big)}{R_{k}^{n-1}}\Big)\\
& =\sum_{k=2}^{\infty}\,(R_{k-1}-R_k) \,g^{-1}\Big(\frac{\mu\big(\frac{2}{3}B_{k}\big)}{R_{k}^{n-1}}\Big)\\
&\leq c_{10}\sum_{k=2}^{\infty}\,(R_{k-1}-R_k) \,g^{-1}\Big(\frac{\mu(B_{k})}{R_{k-1}^{n-1}}\Big)\\
&= c_{10}\sum_{k=2}^{\infty}\int_{R_k}^{R_{k-1}} g^{-1}\Big(\frac{\mu(B(x_0,R_k))}{R_{k-1}^{n-1}}\Big)\,\mathrm{d} x\\
&\leq c_{10}\sum_{k=2}^{\infty}\int_{R_k}^{R_{k-1}} g^{-1}\Big(\frac{\mu(B(x_0,s))}{s^{n-1}}\Big)\,\mathrm{d} x\leq c_{10}\, \mathbf{W}_G^R\mu(x_0),
    \end{aligned}
\end{equation*}
where $c_{10}=c_{10}(p,q,\alpha,\beta)>0$. This completes the proof of the upper bound in \eqref{potential estimate thm} by taking $C_2=c_9\max\{1,\,c_{10}\}$.
\end{proof}

We next prove the lower bound. Observe that here, we do not need to use the Poisson modification of $u$.
\begin{proof}[Proof of the lower bound in Theorem~\ref{Maly's type result}]

For $k=0,1,2,\ldots$, let $\eta_k\in C_c^{\infty}(B_k)$ satisfying $0\leq\eta_k\leq 1$, $\mathrm{supp}\,\eta_k\subset 5/4 B_{k+1}$ and $\eta_k=1$ in $B_{k+1}$. We set $\mu_k=\eta_k\mu$. Notice that $\mu_k\in \big(W_0^{1,G}(B_k)\big)^{\ast}$ and $\mu_k(B_{k+1})=\mu(B_{k+1})$. Using Theorem~\ref{existence in W0}, there exists $u_k\in W_0^{1,G}(B_k)$ satisfying
\begin{equation}\label{estimate40 orlicz}
    -\mathrm{div}\big( \mathcal{A}(x,\nabla u_k)\big)=\mu_k \quad \mbox{in}\quad B_k.
\end{equation}
On account of the $\mathrm{supp}\,\eta_k\subset 5/4 B_{k+1}$, one has
\begin{equation*}
    u_k \quad\mbox{is }\mathcal{A}\mbox{-harmonic in}\quad B_k\setminus \frac{5}{4}\overline{B}_{k+1}, 
\end{equation*}
and $u_k=0$ continuously on $\partial B_k$. Since $(-u+\min_{\partial B_k}u)_+=0$ on $\partial B_k$, it follows that $(u_k-u+\min_{\partial B_k}u)_+=0$ on $\partial B_k$, whence $\mathrm{supp}(u_k-u+\min_{\partial B_k}u)_+\subset B_k$ and, by Lemma~\ref{lemma regarding sobolev function},
\begin{equation*}
    \big(u_k-u+\min_{\partial B_k}u\big)_+\in W_0^{1,G}(B_k).
\end{equation*}
A subtraction of equations of $u$ and $u_k$, \eqref{eq. general u appendix super} and \eqref{estimate40 orlicz}, respectively, with the preceding test function gives
\begin{equation*}
\begin{aligned}
    0&\leq \int_{B_k}   \big(u_k-u+\min_{\partial B_k}u\big)_+\,\mathrm{d}\mu - \int_{B_k}   \big(u_k-u+\min_{\partial B_k}u\big)_+\,\mathrm{d}\mu_k\\
    &=\int_{B_k\cap\{u-\min_{\partial B_k}u\leq u_k\}}\big(\mathcal{A}(x,\nabla u)-\mathcal{A}(x,\nabla u_k)\big)\cdot \nabla (u_k- u)\,\mathrm{d} x\leq 0,
\end{aligned}    
\end{equation*}
where in the last inequality was used the monotonicity of $\mathcal{A}$ \eqref{condition A2 orlicz}. From this, $\nabla(u_k-u+\min_{\partial B_k}u)_+=0$ in $B_k$, and consequently
\begin{equation}\label{estimate41 orlicz}
     u_k\leq u-\min_{\partial B_k}u \quad \mbox{in}\quad B_k.
\end{equation}
Let $\varphi\in C_c^{\infty}(B_k)$ be such that
\begin{equation}\label{estimate42 orlicz}
    \left\{ 
    \begin{aligned}
        & 0\leq \varphi\leq 1\quad\mbox{in}\quad B_k, \\
        &\varphi=1 \quad\mbox{in}\quad \frac{2}{3}B_k, \quad |\nabla \varphi|\leq \frac{c}{R_k}.
    \end{aligned}
    \right.
\end{equation}
Observe that $\mathrm{supp}\nabla \varphi \subset B_k\setminus (2/3\overline{B}_k)\subset B_k\setminus (5/4 \overline{B}_{k+1})$. Hence $u_k$ is $\mathcal{A}$-harmonic in $\mathrm{supp}\nabla \varphi$. By Maximum and Minimum Principles (Lemma~\ref{Minimum and Maximum Principles orlicz}), we obtain respectively
\begin{align}
     u_k(x) &=\min\Big\{u_k(x),\, \max_{\partial \frac{2}{3}B_k}u_k\Big\} \quad x\in\mathrm{supp}\nabla\varphi,\label{estimate43 orlicz}\\
         \min_{\partial \frac{2}{3}B_k} u_k&\leq \min\Big\{ u_k,\, \max_{\partial \frac{2}{3}B_k}u_k\Big\} \quad \mbox{in}\quad \frac{2}{3}B_k.\label{estimate44 orlicz}
\end{align}

We will consider two cases. First assume $\min_{\partial B_{k+1}}u_k>0$. By the weak Harnack inequality \eqref{weak harnack inequality}, this positivity implies
\begin{equation*}
    \begin{aligned}
    \min_{\partial \frac{2}{3}B_k}u_k\geq \frac{1}{c_1}\bigg(\dashint_{\frac{2}{3}B_k}u_k^{s_0}\,\mathrm{d} x\bigg)^{\frac{1}{s_0}}\geq \Big(\frac{4}{3}\Big)^{n}\frac{1}{c_1}\bigg(\dashint_{B_{k+1}}u_k^{s_0}\,\mathrm{d} x\bigg)^{\frac{1}{s_0}}  \geq \frac{1}{c_2}\min_{\partial B_{k+1}}u_k>0, 
    \end{aligned}
\end{equation*}
where $c_1>0$ and $c_2>0$ are constants depending only on $n,\,p,q,\alpha,\beta$. Using the previous inequality and taking into account $\varphi$ given in \eqref{estimate42 orlicz}, we compute by \eqref{estimate44 orlicz}
\begin{align}
   \Big(\min_{\partial \frac{2}{3}B_k}u_k\Big)\mu(B_{k+1})&=\int_{B_{k+1}} \min_{\partial \frac{2}{3}B_k}u_k\,\mathrm{d}\mu = \int_{B_{k+1}} \min_{\partial \frac{2}{3}B_k}u_k\,\mathrm{d}\mu_k  \nonumber\\
   &= \int_{B_{k+1}} \min_{\partial \frac{2}{3}B_k}u_k\,\varphi^q\,\mathrm{d}\mu_k \leq \int_{\frac{2}{3}B_k} \min_{\partial \frac{2}{3}B_k}u_k\,\varphi^q\,\mathrm{d}\mu_k \nonumber\\
   & \leq \int_{\frac{2}{3}B_k} \min\Big\{ u_k,\, \max_{\partial \frac{2}{3}B_k}u_k\Big\}\,\varphi^q\,\mathrm{d}\mu_k \leq \int_{B_k} \min\Big\{ u_k,\, \max_{\partial \frac{2}{3}B_k}u_k\Big\}\,\varphi^q\,\mathrm{d}\mu_k. \label{estimate45 orlicz}
\end{align}
 From Lemma~\ref{lemma regarding sobolev function}, $\phi:= \min\big\{ u_k,\, \max_{\partial (2/3B_k)}u_k\big\}\varphi^q\in W_0^{1,G}(B_k)$, whence testing $\phi$ in \eqref{estimate40 orlicz} and using \eqref{estimate45 orlicz}, it follows
 \begin{align}
       \Big(\min_{\partial \frac{2}{3}B_k}u_k\Big)\mu(B_{k+1})&\leq \int_{B_k}\mathcal{A}(x,\nabla u_k)\nabla \cdot\Big(\min\Big\{ u_k,\, \max_{\partial \frac{2}{3}B_k}u_k\Big\}\varphi^q\Big) \mathrm{d} x \nonumber\\
      &= \int_{B_k}\mathcal{A}(x,\nabla u_k)\cdot\Big(\varphi^q\, \nabla \min\Big\{ u_k,\, \max_{\partial \frac{2}{3}B_k}u_k\Big\}\Big) \mathrm{d} x \nonumber\\
  & \quad + q\int_{B_k}\mathcal{A}(x,\nabla u_k)\cdot\Big(\min\Big\{ u_k,\, \max_{\partial \frac{2}{3}B_k}u_k\Big\}\varphi^{q-1}\, \nabla \varphi\Big) \mathrm{d} x \nonumber\\
  & =:  I_1+ I_2\label{estimate46 orlicz}
 \end{align}
Since $\nabla \min\big\{ u_k,\, \max_{\partial \frac{2}{3}B_k}u_k\big\}=0$ in $B_k\cap\{u_k>\max_{\partial(2/3)B_k} u_k\}$, combining Cauchy-Schawrz inequality with \eqref{data condition elementary} and \eqref{condition A2 orlicz}, we estimate $I_1$ as follows
\begin{multline}
    I_1=\int_{B_k\cap\{u_k\leq \max_{\partial \frac{2}{3}B_k}u_k\}}\mathcal{A}(x,\nabla u_k)\cdot\Big(\varphi^q\, \nabla \min\Big\{ u_k,\, \max_{\partial \frac{2}{3}B_k}u_k\Big\}\Big) \mathrm{d} x \\
= \int_{B_k\cap\{u_k\leq \max_{\partial \frac{2}{3}B_k}u_k\}}\mathcal{A}(x,\nabla u_k)\cdot\nabla 
u_k\, \varphi^q\,\mathrm{d} x 
\leq q\beta \int_{B_k\cap\{u_k\leq \max_{\partial \frac{2}{3}B_k}u_k\}}G(|\nabla u_k|)\, \varphi^q\,\mathrm{d} x \\
\leq q\beta \int_{B_k\cap\{u_k\leq \max_{\partial \frac{2}{3}B_k}u_k\}}G\Big(\big|\nabla \min\Big\{ u_k,\, \max_{\partial \frac{2}{3}B_k}u_k\big|\Big)\, \varphi^q\,\mathrm{d} x \qquad \qquad \qquad\\
\leq q\beta \int_{B_k}G\Big(\big|\nabla \min\Big\{ u_k,\, \max_{\partial \frac{2}{3}B_k}u_k\big|\Big)\, \varphi^q\,\mathrm{d} x.\qquad\qquad\qquad\qquad \label{estimate47 orlicz}
\end{multline}
Note that being $u_k$ an $\mathcal{A}$-supersolution in $B_k$, $\min\big\{u_k, \max_{\partial (2/3 B_k)}u_k\big\}$ is also an $\mathcal{A}$-supersolution in $B_k$, thence
\begin{equation*}
    w_k:= \max_{\partial \frac{2}{3}B_k}u_k- \min\Big\{u_k, \max_{\partial \frac{2}{3}B_k}u_k\Big\} \quad \mbox{is an }\mathcal{A}\mbox{-subsolution in }B_k,
\end{equation*}
and it is nonnegative by definition. Applying Caccioppoli estimate (Lemma~\ref{caccioppoli estimate orlicz}) to $w_k$ in $B_k$ and taking into account \eqref{estimate42 orlicz},
\begin{equation*}
    \begin{aligned}
       \int_{B_k}G\Big(\big|\nabla \min\Big\{ u_k,\, \max_{\partial \frac{2}{3}B_k}u_k\big|\Big)\, \varphi^q\,\mathrm{d} x &\leq \int_{B_k}G(|\nabla w_k|)\, \varphi^q\,\mathrm{d} x\\
       &\leq c_3\int_{B_k}G(w_k\,|\nabla \varphi|)\,\mathrm{d} x \leq c_3\int_{B_k\cap \,\mathrm{supp }\nabla \varphi}G\Big(\frac{c\,w_k}{R_k}\Big)\,\mathrm{d} x\\
       &\leq c_4\int_{B_k\cap \,\mathrm{supp }\nabla \varphi}G\Big(\frac{\max_{\partial \frac{2}{3}B_k}u_k}{R_k}\Big)\,\mathrm{d} x\\
        &\leq c_4\int_{ B_k\setminus (5/4 \overline{B}_{k+1})}G\Big(\frac{\max_{\partial \frac{2}{3}B_k}u_k}{R_k}\Big)\,\mathrm{d} x,
    \end{aligned}
\end{equation*}
where $c_3,\;c_4$ depending only $n,\,p,\,q,\,\alpha$ and $\beta$. Recall that $u_k$ is $\mathcal{A}$-harmonic in $ B_k\setminus (5/4 \overline{B}_{k+1})$. By \eqref{harnack inequality boundary 2},
\begin{equation*}
   \max_{\partial \frac{2}{3}B_k}u_k\leq c_5 \min_{\partial \frac{2}{3}B_k}u_k.
\end{equation*}
Combining this with \eqref{estimate47 orlicz}, we obtain
\begin{align}
     I_1&\leq c_6\int_{ B_k\setminus (5/4 \overline{B}_{k+1})}G\Big(\frac{ c_5 \min_{\partial \frac{2}{3}B_k}u_k}{R_k}\Big)\,\mathrm{d} x \nonumber\\
    &\leq c_6\int_{ B_k}G\Big(\frac{ c_5 \min_{\partial \frac{2}{3}B_k}u_k}{R_k}\Big)\,\mathrm{d} x \leq c_7\,R_k^n\,  G\Big(\frac{\min_{\partial \frac{2}{3}B_k}u_k}{R_k}\Big) \label{estimate48 orlicz}.
\end{align}
Next, using the Cauchy-Schwarz inequality, \eqref{condition A2 orlicz} and \eqref{estimate43 orlicz}, one has
\begin{align}
    I_2&\leq c_8\int_{B_k\cap\, \mathrm{supp }\nabla \varphi}g(|\nabla u_k|)\Big|\min\Big\{ u_k,\, \max_{\partial \frac{2}{3}B_k}u_k\Big\}\Big|\varphi^{q-1}\, |\nabla \varphi|\, \mathrm{d} x\nonumber\\
    &= c_8\int_{\mathrm{supp }\nabla \varphi}g\Big(\Big|\nabla \min\Big\{ u_k,\, \max_{\partial \frac{2}{3}B_k}u_k\Big\}\Big|\Big)\Big|\min\Big\{ u_k,\, \max_{\partial \frac{2}{3}B_k}u_k\Big\}\Big|\varphi^{q-1}\, |\nabla \varphi|\, \mathrm{d} x\nonumber\\
    & \leq c_8\int_{\mathrm{supp }\nabla \varphi}g\Big(\Big|\nabla\min\Big\{ u_k,\, \max_{\partial \frac{2}{3}B_k}u_k\Big\}\Big|\Big)\varphi^{q-1}\, \max_{\partial \frac{2}{3}B_k}u_k \, |\nabla \varphi|\, \mathrm{d} x \nonumber\\
    &\leq c_8\int_{\mathrm{supp }\nabla \varphi}g\Big(\Big|\nabla\Big(\max_{\partial \frac{2}{3}B_k}u_k- \min\Big\{ u_k,\, \max_{\partial \frac{2}{3}B_k}u_k\Big\}\Big)\Big|\Big)\varphi^{q-1}\, \max_{\partial \frac{2}{3}B_k}u_k \, |\nabla \varphi|\, \mathrm{d} x.\nonumber
\end{align}
Since $0\leq\varphi\leq 1$, $G^{\ast}(\varphi^{q-1}t)\leq c_8\, \varphi^q G^{\ast}(t)$ for all $t\geq 0$ by \eqref{estimate on G ast}. From this, with the aid of Caccioppoli estimate (Lemma~\ref{caccioppoli estimate orlicz}), a combination of Young's inequality \eqref{young inequality} and \eqref{relation G and tilde G} gives
\begin{align}
    I_2&\leq c_8 \int_{\mathrm{supp }\nabla \varphi} G\bigg(\max_{\partial \frac{2}{3}B_k}u_k \, |\nabla \varphi|\bigg)\, \mathrm{d} x \nonumber\\
    &\quad +  c_8 \int_{\mathrm{supp }\nabla \varphi}G^{\ast}\bigg(g\Big(\Big|\nabla\Big(\max_{\partial \frac{2}{3}B_k}u_k- \min\Big\{ u_k,\, \max_{\partial \frac{2}{3}B_k}u_k\Big\}\Big)\Big|\Big)\varphi^{q-1}\bigg)\,\mathrm{d} x \nonumber\\
    &\leq  c_8 \int_{ \mathrm{supp }\nabla \varphi} G\bigg(\max_{\partial \frac{2}{3}B_k}u_k \, |\nabla \varphi|\bigg)\, \mathrm{d} x\nonumber \\
    & \quad+c_{10}\int_{\mathrm{supp }\nabla \varphi}G\bigg(\Big|\nabla\Big(\max_{\partial \frac{2}{3}B_k}u_k- \min\Big\{ u_k,\, \max_{\partial \frac{2}{3}B_k}u_k\Big\}\Big)\Big|\bigg)\varphi^{q}\,\mathrm{d} x \nonumber\\
    & \leq c_8 \int_{ \mathrm{supp }\nabla \varphi} G\bigg(\max_{\partial \frac{2}{3}B_k}u_k \, |\nabla \varphi|\bigg)\, \mathrm{d} x \nonumber\\
    & \quad \quad  +  c_{11}\int_{\mathrm{supp }\nabla \varphi} G\bigg(\Big(\max_{\partial \frac{2}{3}B_k}u_k- \min\big\{ u_k,\, \max_{\partial \frac{2}{3}B_k}u_k \big\}\Big)\, |\nabla \varphi|\bigg)\,\mathrm{d} x\nonumber\\
    & \leq c_{12}\int_{ \mathrm{supp }\nabla \varphi} G\bigg(\max_{\partial \frac{2}{3}B_k}u_k \, |\nabla \varphi|\bigg)\, \mathrm{d} x \leq c_{13}\,R_k^n\,  G\Big(\frac{\min_{\partial \frac{2}{3}B_k}u_k}{R_k}\Big),  \label{estimate49 orlicz}
\end{align}
the last inequality follows the same method as in \eqref{estimate48 orlicz}. Here $c_i>0$, $i=5,\ldots,13$ are constants depending only on $n,\,p,\,q,\alpha$ and $\beta$. Applying \eqref{estimate48 orlicz} and \eqref{estimate49 orlicz} in \eqref{estimate46 orlicz},
\begin{equation*}
    \begin{aligned}
       \frac{\mu(B_{k+1})}{R_k^{n-1}}&\leq c_{14}\frac{R_k}{\min_{\partial \frac{2}{3}B_k}u_k}G\Big(\frac{\min_{\partial \frac{2}{3}B_k}u_k}{R_k}\Big)\\
        &\leq c_{15}\,g\Big(\frac{\min_{\partial \frac{2}{3}B_k}u_k}{R_k}\Big).
    \end{aligned}
\end{equation*}
Accordingly, for all $k\geq 0$ with $\min_{\partial B_{k+1}}u_k>0$, it holds 
\begin{equation*}
    R_kg^{-1}\Big(\frac{\mu(B_{k+1})}{R_k^{n-1}}\Big) \leq c_{16}\min_{\partial \frac{2}{3}B_k}u_k,
\end{equation*}
where $c_i>0$, $i=14,\, 15,\, 16$, depend only on $n,\,p,\,q,\,\alpha$ and $\beta$. Since $u$ is $\mathcal{A}$-superharmonic in $B_k$, we have
\begin{equation*}
    \min_{\partial \frac{2}{3}B_k}u=\min_{\frac{2}{3}B_k}u\leq \min_{B_{k+1}}u\leq \min_{\partial B_{k+1}}u.
\end{equation*}
Using this in \eqref{estimate41 orlicz}, we deduce that
\begin{equation*}
   \min_{\partial \frac{2}{3}B_k}u_k\leq \min_{\partial B_{k+1}}u - \min_{\partial B_{k}}u.
\end{equation*}
Hence
\begin{equation}\label{estimate50 orlicz}
    R_kg^{-1}\Big(\frac{\mu(B_{k+1})}{R_k^{n-1}}\Big) \leq c_{16}\big(\min_{\partial B_{k+1}}u - \min_{\partial B_{k}}u\big).
\end{equation}

Next, assume that $\min_{\partial B_{k_0}}u=0$ for some $k_0\geq 0$. The weak Harnack inequality \eqref{weak harnack inequality} implies that $u_{k_0}=0$ in $B_{k_0}$. From this, we infer that $u$ is $\mathcal{A}$-harmonic in $B_{k_0+1}$ since $\mu(B_{k_0+1})=\mu_{k_0}(B_{k_0+1})=0$, whence
\begin{equation}\label{estimate51 orlicz}
    \mu(B_j)=0 \quad \forall j\geq k_0+1.
\end{equation}
By Minimum Principle (Lemma~\ref{Minimum and Maximum Principles orlicz}), $\min_{\partial B_{k_0+1}}u=\min_{B_{k_0+1}}u\leq u(x_0)$. 

Thus, by summing up all cases, we concluded from \eqref{estimate50 orlicz} and \eqref{estimate51 orlicz} that
\begin{equation*}
\begin{aligned}
    u(x_0)&\geq  u(x_0) - \min_{\partial B_0}u \geq  \lim_{k\to\infty} \big(\min_{\partial B_{k+1}}u- \min_{\partial B_0}u\big)\\
    &= \sum_{k=0}^{\infty}\big(\min_{\partial B_{k+1}}u - \min_{\partial B_{k}}u\big)\geq \frac{1}{c_{16}}\sum_{k=0}^{\infty}R_kg^{-1}\Big(\frac{\mu(B_{k+1})}{R_k^{n-1}}\Big)
\end{aligned}
\end{equation*}
Reminding of $R_k=2^{1-k}R$ for $k\geq 0$, by \eqref{estimate on g-1}, we deduce that
\begin{equation*}
    \begin{aligned}
        u(x_0)& \geq  \frac{1}{c_{16}}\sum_{j=1}^{\infty}R_{j-1}g^{-1}\Big(\frac{\mu(B_{j})}{R_{j-1}^{n-1}}\Big)\\
        &= \frac{4}{c_{16}}\sum_{j=1}^{\infty}R_{j+1}g^{-1}\Big(\frac{\mu(B_{j})}{4^{n-1}R_{j+1}^{n-1}}\Big)\geq c_{17}\sum_{j=1}^{\infty}R_{j+1}g^{-1}\Big(\frac{\mu(B_{j})}{R_{j+1}^{n-1}}\Big)
    \end{aligned}
\end{equation*}
The estimate
\begin{equation*}
    \begin{aligned}
    \mathbf{W}_G^R\mu(x_0)&=\int_0^Rg^{-1}\Big(\frac{\mu(B(x_0,s)}{s^{n-1}}\Big)\,\mathrm{d} s \\
        & = \sum_{k=1}^{\infty}\int_{R_{k+1}}^{R_k}g^{-1}\Big(\frac{\mu(B(x_0,s)}{s^{n-1}}\Big)\,\mathrm{d} s \leq \sum_{k=1}^{\infty}\int_{R_{k+1}}^{R_k}g^{-1}\Big(\frac{\mu(B(x_0,R_k)}{R_{k+1}^{n-1}}\Big)\,\mathrm{d} s\\
        & = \sum_{k=1}^{\infty}(R_k-R_{k+1})g^{-1}\Big(\frac{\mu(B_k)}{R_{k+1}^{n-1}}\Big) = \sum_{k=1}^{\infty}R_{k+1}g^{-1}\Big(\frac{\mu(B_k)}{R_{k+1}^{n-1}}\Big)
    \end{aligned}
\end{equation*}
completes the proof of the lower bound in \eqref{potential estimate thm} by taking $C_1=c_{17}$, which depends only on $n,\,p,\,q,\,\alpha$ and $\beta$.
\end{proof}

\bigskip 

\section{Potential further developments}\label{Further works}
Our viewpoint sheds new light on the class of quasilinear problems with Orlicz growth and measure data. Here we indicate some questions related to this kind of problem.
\medskip
\begin{flushleft}
{$\mathbf{1}$.}  Observe that, by \eqref{condition p=q}, \eqref{cond sufficient to exist idea} coincides with \eqref{cond suf/nec Igor} if $p=q$. As it was mentioned in Remark~\ref{remark cond sufficient to exist idea}, would be interesting to show that condition \eqref{cond sufficient to exist idea} is sufficient to guarantee the existence of a nontrivial solution to \eqref{equation nonstandard} in $\mathcal{D}^{1,G}(\mathds{R}^n)\cap L^F(\mathds{R}^n,\mathrm{d}\sigma)$, whether $1< p<q<n$.  The motivation for this is that in \cite{MR3311903} was proved that condition \eqref{cond suf/nec Igor} is not only necessary but also sufficient to ensure the existence of a nontrivial solution to \eqref{equation standard 2} in $\mathcal{D}^{1,p}(\mathds{R}^n)\cap L^{1+r}(\mathds{R}^n,\mathrm{d} \sigma)$, when $p<n$. Certainly, an answer to this question relies on the refinement of Lemma~\ref{technical lemma orlicz}.
  \\
 {$\mathbf{2}$.} In \cite{MR3556326}, D. Cao and I. Verbitsky using a capacitary condition were able to show that there exists a nontrivial solution to \cref{equation standard 2} which satisfies pointwise the so-called estimates of the Brezis-Kamin type in terms of the usual Wolf potential $\mathbf{W}_p\sigma$. In view of the results obtained in this paper, it should be possible to construct similar pointwise estimates for solutions to ``sublinear'' problems like \cref{equation nonstandard} in terms of the generalized Wolf potential $\mathbf{W}_G\sigma$. The crucial key would be establishing a capacitary condition for the $G$-capacity, given in Definition~\ref{definition G capacity}. 
\end{flushleft}

\bigskip

\begin{flushleft}
 {\bf Funding:}  
 E. da Silva acknowledges partial support  from 
	CNPq through grants 140394/2019-2 and J. M. do \'O acknowledges partial support from CNPq through grants 312340/2021-4, 409764/2023-0, 443594/2023-6, CAPES MATH AMSUD grant 88887.878894/2023-00
and Para\'iba State Research Foundation (FAPESQ), grant no 3034/2021. \\
 {\bf Ethical Approval:}  Not applicable.\\
 {\bf Competing interests:}  Not applicable. \\
 {\bf Authors' contributions:}    All authors contributed to the study conception and design. All authors performed material preparation, data collection, and analysis. The authors read and approved the final manuscript.\\
{\bf Availability of data and material:}  Not applicable.\\
{\bf Ethical Approval:}  All data generated or analyzed during this study are included in this article.\\
{\bf Consent to participate:}  All authors consent to participate in this work.\\
{\bf Conflict of interest:} The authors declare no conflict of interest. \\
{\bf Consent for publication:}  All authors consent for publication. \\
\end{flushleft}

\bigskip





    \end{document}